\newcommand{\eqmathbox}[2][M]{\eqmakebox[#1]{$\displaystyle#2$}}
\renewcommand{\O}{\mathcal{O}} % the symbol used with sheaves
\newcommand{\nequiv}{\not\equiv}
\newcommand{\set}[1]{\left\lbrace #1 \right\rbrace}
\newcommand{\field}[1]{\mathbb{#1}}  % the font for a mathematical field is blackboard
\newcommand{\Q}{\field{Q}} % the field of the rationals
\newcommand{\R}{\field{R}} % the field of the reals
\newcommand{\C}{\field{C}} % the field of complex number
\newcommand{\Z}{\field{Z}} % the ring of integers
\newcommand{\F}{\field{F}} % finite fields
\newcommand{\T}{\field{T}} % hecke algebra
\newcommand{\I}{\field{I}}
\renewcommand{\P}{\field{P}}
\newcommand{\mc}[1]{\mathcal{#1}}
\newcommand{\Mod}[1]{\ (\mathrm{mod}\ #1)}
\newcommand{\Nm}{\textup{Nm}}
\newcommand{\Tr}{\textup{Tr}}
\newcommand{\fp}{\mathfrak{p}}
\newcommand{\fm}{\mathfrak{m}}
\newcommand{\fq}{\mathfrak{q}}
\newcommand{\lcm}{\textup{lcm}}
\newcommand{\Gen}{\textup{\textsf{Gen}}}
\newcommand{\Aux}{\textup{\textsf{Aux}}}
\newcommand{\AuxGen}{\textup{\textsf{AuxGen}}}
\newcommand{\TypeTwoNotMomoseBound}{\textup{\textsf{TypeTwoNotMomoseBound}}}
\newcommand{\TypeThreeNotMomoseBound}{\textup{\textsf{TypeThreeNotMomoseBound}}}
\DeclareMathOperator{\im}{im}
\DeclareMathOperator{\id}{Id}
\DeclareMathOperator{\Frob}{Frob}
\DeclareMathOperator{\frob}{Frob}
\DeclareMathOperator{\Hom}{Hom}
\DeclareMathOperator{\Aut}{Aut}
\DeclareMathOperator{\Cl}{Cl}
\DeclareMathOperator{\Pic}{Pic}
\DeclareMathOperator{\Spec}{Spec}
\DeclareMathOperator{\Gal}{Gal}
\DeclareMathOperator{\HCF}{\textup{\textsf{HCF}}}
\DeclareMathOperator{\IsogPrimeDeg}{\textup{\textsf{IsogPrimeDeg}}}
\DeclareMathOperator{\BadFormalImmersion}{\textup{\textsf{BadFormalImmersion}}}
\DeclareMathOperator{\AGFI}{\textup{\textsf{AGFI}}}
\newcommand{\eps}{\varepsilon} % lazy is good
\newcommand{\TypeTwoPrimes}{\textup{\textsf{TypeTwoPrimes}}}
\newcommand{\TypeOneBound}{\textup{\textsf{TypeOneBound}}}
\newcommand{\GenericBound}{\textup{\textsf{GenericBound}}}
\newcommand{\MMIB}{\textup{\textsf{MMIB}}}
\newtheorem{lemma}{Lemma}
\newtheorem{theorem}[lemma]{Theorem}
\newtheorem{proposition}[lemma]{Proposition}
\newtheorem{corollary}[lemma]{Corollary}
\newtheorem{algorithm}[lemma]{Algorithm}
\theoremstyle{definition}
\newtheorem{definition}[lemma]{Definition}
\newtheorem{example}[lemma]{Example}
\newtheorem{remark}[lemma]{Remark}
\numberwithin{lemma}{section}
\numberwithin{equation}{section} 
\numberwithin{figure}{section}
\title{Explicit isogenies of prime degree over number fields}
\author{Barinder S. Banwait}
\address{Barinder S. Banwait \\
Department of Mathematics \& Statistics\\  
Boston University\\
111 Cummington Mall\\
Boston, MA 02215\\
USA}
\email{barinder.s.banwait@gmail.com}
\author{Maarten Derickx}
\address{Maarten Derickx,
Den Haag,
The Netherlands}
\email{maarten@mderickx.nl}
\date{}
\providecommand\@dotsep{5}
\renewcommand{\listoftodos}[1][\@todonotes@todolistname]{%
  \@starttoc{tdo}{#1}}
\subjclass[2010]
{11G05  (primary), %   Elliptic curves over global fields
11Y60,   %	Evaluation of number-theoretic constants
11G15.   %	Complex multiplication and moduli of abelian varieties
(secondary)}
\begin{document}

\dedicatory{Dedicated to the memory of Sebastiaan Johan Edixhoven, 1962 - 2022}

\maketitle

\begin{abstract}
We provide an explicit and algorithmic version of a theorem of Momose classifying isogenies of prime degree of elliptic curves over number fields, which we implement in Sage and PARI/GP. Combining this algorithm with recent work of Box-Gajovi\'c-Goodman we obtain the first classifications of the possible prime degree isogenies of elliptic curves over cubic number fields, as well as for several quadratic fields not previously known. While the correctness of the general algorithm relies on the Generalised Riemann Hypothesis, the algorithm is unconditional for the restricted class of semistable elliptic curves.
\end{abstract}

% \listoftodos

\section{Introduction}

Let $k$ be a number field, and consider the set $\IsogPrimeDeg(k)$ of primes $p$ which arise as $k$-rational isogenies of degree $p$ as one varies over all elliptic curves over $k$; we refer to such $p$ as \emph{isogeny primes for $k$}. The set $\IsogPrimeDeg(k)$ is necessarily infinite if $k$ contains the Hilbert class field of an imaginary quadratic field, which follows from the basic theory of complex multiplication on elliptic curves, and it is a consequence of work of Momose \cite{momose1995isogenies} with Merel's proof of uniform boundedness for torsion on elliptic curves \cite{merel1996bornes} that the converse holds assuming the Generalised Riemann Hypothesis (GRH). This consequence was written in the literature explicitly by Larson and Vaintrob.

\begin{theorem}[Larson and Vaintrob, Corollary 2 in \cite{larson_vaintrob_2014}\label{thm:isog-finite}]
Assume the Generalised Riemann Hypothesis. For a number field $k$, $\IsogPrimeDeg(k)$ is finite if and only if $k$ does not contain the Hilbert class field of an imaginary quadratic field.
\end{theorem}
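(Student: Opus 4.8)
The plan is to prove the two implications separately; only the direction ``$k$ contains no such Hilbert class field $\Rightarrow$ $\IsogPrimeDeg(k)$ is finite'' will need GRH, and it is the deep one. \textbf{Start with the complex-multiplication direction (unconditional).} Suppose $k$ contains the Hilbert class field $H_K$ of an imaginary quadratic field $K$. Fix an elliptic curve $E$ with $\End(E)\cong\O_K$; by the main theorem of complex multiplication $E$ may be defined over $H_K\subseteq k$, and $\Gal(\bar k/H_K)$ acts $\O_K$-linearly on every $E[p]$. For each rational prime $p$ that splits in $K$, say $p\O_K=\fp\bar\fp$, the group $E[\fp]$ has order $p$ and is $\Gal(\bar k/k)$-stable, so $E$ admits a $k$-rational $p$-isogeny and $p\in\IsogPrimeDeg(k)$. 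Infinitely many rational primes split in $K$ (density $\tfrac12$ by Chebotarev), so $\IsogPrimeDeg(k)$ is infinite.

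\textbf{Setting up the finiteness direction.} Now assume $k$ contains the Hilbert class field of no imaginary quadratic field, and take $p\in\IsogPrimeDeg(k)$. Choose $E/k$ with a $k$-rational cyclic $C\subset E[p]$ of order $p$; let $\lambda\colon\Gal(\bar k/k)\to\F_p^\times$ be the isogeny character (the action on $C$) and $\lambda'=\overline\chi_p\lambda^{-1}$ the one on $E[p]/C$, where $\overline\chi_p$ is the mod-$p$ cyclotomic character. The first step is the classical local analysis at the primes above $p$, combining Serre's results on the image of inertia with Raynaud's classification of finite flat group schemes killed by $p$ and the fact that the semistability defect of $E$ at any prime divides $12$: it yields that $\lambda^{12}$ and $(\lambda')^{12}$ are unramified outside $p$, and that --- after possibly swapping $\lambda$ and $\lambda'$ --- on inertia at each prime above $p$ the character $\lambda^{12}$ is a prescribed power of $\overline\chi_p$ with exponent in a finite set depending only on $[k:\Q]$. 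This is the starting point of Momose's method.

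\textbf{Momose's trichotomy.} One then splits into cases following Momose. If $\lambda^{12}$ is also unramified at $p$ (``type $1$''), it factors through the narrow class group of $k$ and so has order bounded in terms of $k$; hence $\lambda$ has bounded order, $E$ acquires a rational point of order $p$ over an extension of $k$ of bounded degree, and Merel's uniform boundedness theorem bounds $p$ in terms of $[k:\Q]$. Otherwise ($\lambda^{12}$ ramified at $p$, ``types $2$ and $3$''), $\lambda^{12}$ is a ray class character of $k$, unramified outside $p$, with a prescribed power-of-the-norm infinity type; Momose's analysis shows that for $p$ above an explicit bound this is possible only when $E$ has complex multiplication by an order in some imaginary quadratic field $K$ and $\lambda^{12}$ is the reduction of an algebraic Hecke character of $k$ --- and then, since that Hecke character is defined over $k$, one gets $H_K\subseteq k$, contradicting our hypothesis. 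Hence only finitely many $p$ remain.

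\textbf{The main obstacle.} The technical heart, and the only place GRH is used, is supplying the quantitative input for the last step. One evaluates $\lambda^{12}$ at Frobenius elements $\Frob_{\fq}$ for primes $\fq\nmid p$ of good reduction and \emph{small} norm: there $\lambda^{12}(\Frob_\fq)$ is congruent mod $p$ both to $\alpha_\fq^{12}$, for $\alpha_\fq$ a Frobenius eigenvalue of $E$ at $\fq$, and --- by the local analysis at $p$ --- to a fixed power of $N\fq$ times a bounded root of unity; once $p$ exceeds a bound polynomial in $N\fq$ this upgrades to an equality of algebraic integers, which either violates $|\alpha_\fq|=\sqrt{N\fq}$ (bounding $p$) or forces $\alpha_\fq$ into a fixed imaginary quadratic field, and ranging over enough small $\fq$ then either bounds $p$ or exhibits the CM of $E$ and the Hecke character above. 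To carry this out one must produce primes $\fq$ of norm bounded independently of $p$ lying in prescribed Frobenius classes, and the only available tool is the effective Chebotarev density theorem --- this is where GRH enters; for semistable $E$ the configurations requiring it do not arise, which is why that case is unconditional, as the abstract says. I expect the most delicate bookkeeping to be Momose's ``type $2$'' --- $\lambda^{12}$ a power of $\overline\chi_p$ twisted by a quadratic character of $k$ ramified only above $p$ --- which I would settle by a dedicated case analysis as Momose does.
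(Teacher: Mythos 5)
The paper does not itself prove \Cref{thm:isog-finite}: it is stated as a citation of Corollary~2 of Larson and Vaintrob \cite{larson_vaintrob_2014}, and the rest of the paper is devoted to an explicit, algorithmic refinement of the underlying isogeny classification. That said, your sketch faithfully reproduces the Momose/Larson--Vaintrob strategy. The CM direction is exactly as you write it, and the finiteness direction does proceed by the trichotomy of isogeny characters. Three points of detail are worth flagging. First, Momose, Larson--Vaintrob, and this paper all handle Type~1 primes by a Mazur--Kamienny--Merel formal immersion argument on $X_0(p)^{(d)}\to J_e$, not by bounding the order of $\lambda$ and invoking Merel's uniform boundedness theorem over a controlled extension of $k$; your route is defensible (it works for a fixed $k$, since $\lambda^{12}$ unramified everywhere forces the order of $\lambda$ to divide $12h_k$, giving a $p$-torsion point over a field of degree at most $12h_k d$), but it is not how the sources argue, it gives weaker constants, and it hides the fact that Merel's theorem itself rests on the same formal immersion input. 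Second, the Momose Type~2 condition is that $\lambda^{12}=\chi_p^6$ up to an everywhere-unramified character, not a quadratic twist ramified above $p$. Third, in Type~3 one does not literally conclude that $E$ has CM: rather, $\lambda^{12}$ has the shape of the reduction of a CM Hecke character attached to an imaginary quadratic $L\subseteq k$, and a class-group argument then forces $k$ to contain the Hilbert class field of $L$ (compare \Cref{cor:rocket_step_1}), which is what contradicts the hypothesis. Your closing remark is right: semistability forces the signature to be constant $0$ or $12$, so the Type~2 configuration, the only place GRH is used, cannot occur.
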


The question of exactly determining $\IsogPrimeDeg(k)$ when it is finite originated with the seminal work of Mazur who determined the base case of $k = \Q$.

\begin{theorem}[Mazur, Theorem 1 in \cite{mazur1978rational}]
\[ \IsogPrimeDeg(\Q) = \left\{2, 3, 5, 7, 11, 13, 17, 19, 37, 43, 67, 163\right\}.\]
\end{theorem}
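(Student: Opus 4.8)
This is Mazur's theorem; the plan is to translate it into a statement about $\Q$-rational points on the modular curve $X_0(p)$ and then run the Eisenstein-ideal machinery of \cite{mazur1978rational}.

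\textbf{Reformulation and the easy direction.} A prime $p$ belongs to $\IsogPrimeDeg(\Q)$ exactly when $X_0(p)$ has a non-cuspidal $\Q$-rational point, since such a point is precisely a pair $(E,C)$ with $E/\Q$ an elliptic curve and $C\subset E$ a $\Q$-rational cyclic subgroup of order $p$, i.e.\ a $\Q$-rational $p$-isogeny out of $E$; moreover both cusps of $X_0(p)$ are $\Q$-rational. One must therefore (a) produce such a point for each $p$ in the list, and (b) show that for every other prime $p$ the only $\Q$-points of $X_0(p)$ are the cusps. For (a): when $p\in\{2,3,5,7,13\}$ the curve $X_0(p)$ has genus $0$ with a rational cusp, hence is $\P^1_\Q$ and has infinitely many non-cuspidal rational points; when $p\in\{11,17,19\}$ the curve $X_0(p)$ is an elliptic curve of conductor $p$ and of Mordell--Weil rank $0$ over $\Q$, and listing its finitely many rational points turns up non-cuspidal ones (the CM curve of discriminant $-p$ when $p\in\{11,19\}$, two sporadic non-CM points when $p=17$); and when $p\in\{37,43,67,163\}$ one exhibits a non-cuspidal point directly --- the CM curve of discriminant $-p$ when $p\in\{43,67,163\}$ (these $-p$ having class number one), and two sporadic non-CM rational points on $X_0(37)$. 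This settles (a).

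\textbf{The hard direction: the Eisenstein quotient has rank zero.} For (b), fix a prime $p$ not in the list; then $p\ge 23$ and $X_0(p)$ has genus $\ge 2$. Let $J=J_0(p)$, let $\mathbb{T}\subset\End J$ be its Hecke algebra, and let $I\subset\mathbb{T}$ be the Eisenstein ideal, generated by $1+w_p$ together with the elements $T_\ell-\ell-1$ for primes $\ell\neq p$. The cuspidal divisor class $c=[(0)-(\infty)]\in J(\Q)$ is killed by $I$ and has exact order $n$, the numerator of $\tfrac{p-1}{12}$ (so $n>1$ in our range). Passing to Mazur's Eisenstein quotient $\widetilde J$ of $J$ --- the non-zero quotient cut out by the Eisenstein primes --- the decisive input is that $\widetilde J(\Q)$ is finite, equivalently that $\widetilde J$ has Mordell--Weil rank $0$ over $\Q$, and is generated by the image of $c$. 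This is the arithmetic heart of the proof: it is obtained by a Selmer/descent computation in which the residual representations at the Eisenstein maximal ideals $\mathfrak m\supset I$ are reducible and pinned down by the structure of $\mathbb{T}/I$ and the cuspidal group, ruling out the unramified quotients that a positive rank would require. (Alternatively one may use the winding quotient $J_e$ of $J$, whose rank-$0$ property follows from $L(J_e,1)\neq 0$ via Kolyvagin--Logachev / Gross--Zagier.)

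\textbf{From rank zero to no rational points.} Granting finiteness of $\widetilde J(\Q)$, suppose $x\in X_0(p)(\Q)$ is non-cuspidal. Spread $x$ and the cusps out over $\Z[1/p]$, where $J$ has good reduction, and consider the morphism $\varphi\colon X_0(p)\to\widetilde J$, $P\mapsto[P]-[\infty]$, which sends $\infty$ to $0$; then $\varphi(x)$ is a torsion point. For a well-chosen auxiliary prime $\ell\neq p$ with $\ell\ge 3$ --- so that reduction mod $\ell$ is injective on $\widetilde J(\Q)_{\mathrm{tors}}$ --- the Eichler--Shimura relation (reducing $T_\ell$ to $\Frob_\ell+\ell\,\Frob_\ell^{-1}$ on $J_{\F_\ell}$) shows that the relevant Hecke action on cotangent spaces is invertible, so that $\varphi$ is a \emph{formal immersion} at $\infty$ in characteristic $\ell$ --- its cotangent map there is surjective mod $\ell$ because normalised eigenforms have first $q$-expansion coefficient $1$. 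Mazur's formal-immersion lemma, applied at $\infty$ and (via $w_p$) at $0$, then forces $x$ to coincide with a cusp, a contradiction; hence $X_0(p)(\Q)=\{0,\infty\}$. Finally the four primes $p\in\{37,43,67,163\}$, excluded above because they genuinely carry extra rational points, are disposed of individually, using the explicit isogeny decomposition of $J_0(p)$ (a rank-$0$ factor or quotient, the Atkin--Lehner involution, and a finer analysis when $p=163$) to enumerate $X_0(p)(\Q)$ and confirm that its only non-cuspidal points are the CM and sporadic points already exhibited. Combining (a) and (b) gives the stated equality.

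\textbf{The main obstacle.} The crux is the rank statement of the second step: proving that the Eisenstein (or winding) quotient of $J_0(p)$ has Mordell--Weil rank $0$ over $\Q$. This rests on the full theory of the Eisenstein ideal, the geometry of the N\'eron model of $J_0(p)$ at $p$, and a delicate descent, and is by far the deepest ingredient; by contrast the formal-immersion step --- though it requires, for every $p$, producing an auxiliary prime $\ell$ that dodges finitely many bad configurations --- and the case-by-case treatment of $p\in\{37,43,67,163\}$ are comparatively routine.
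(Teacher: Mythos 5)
The paper does not prove this result: it is stated as a verbatim citation of Theorem~1 of \cite{mazur1978rational}, so there is no in-paper argument to compare your proposal against. What you have written is a high-level sketch of Mazur's original proof, and as such it is broadly faithful: the reformulation in terms of noncuspidal points of $X_0(p)$, the dispatch of the genus~$0$ primes $\{2,3,5,7,13\}$ and the genus~$1$ primes $\{11,17,19\}$, the rank-zero property of the Eisenstein quotient $\widetilde J$ (the genuinely deep input, proved by a descent anchored in the structure of $\mathbb{T}/I$ and the cuspidal group), and the formal-immersion argument at the cusps are indeed the key ingredients in Mazur's paper.

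Two small points of emphasis. First, for the equality of sets as stated, the four primes $p\in\{37,43,67,163\}$ need only be handled in the easy direction (exhibit a $p$-isogeny over $\Q$, which the CM curves and the two sporadic $j$-invariants for $37$ supply); they do not have to be ``disposed of individually'' in the hard direction, since that part of the argument only ranges over primes \emph{outside} the displayed set. The extra case analysis for $37,43,67,163$ is required only for Mazur's finer result classifying all of $X_0(p)(\Q)$ for $p$ in the list. Second, in the degree-one case the formal-immersion condition at $\infty$ in odd characteristic $\ell$ is essentially automatic from the fact that normalised eigenforms have $a_1=1$: the one-dimensional cotangent space of $X_0(p)$ at $\infty$ is hit. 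Invoking Eichler--Shimura and the invertibility of a Hecke matrix on cotangent spaces is more the language of Kamienny's and Merel's refinements in higher degree (which this paper builds on elsewhere) than a necessary step in Mazur's original argument. These are matters of framing rather than correctness; as an outline of Mazur's proof your sketch is sound.
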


Recent work of the first named author \cite{banwait2021explicit} found the first instances of the determination of $\IsogPrimeDeg(k)$ for some quadratic fields.

\begin{theorem}[Banwait]
Assuming GRH, we have the following.
\begin{align*}
\IsogPrimeDeg(\Q(\sqrt{-10})) &= \IsogPrimeDeg(\Q),\\
\IsogPrimeDeg(\Q(\sqrt{5})) &= \IsogPrimeDeg(\Q) \cup \left\{23, 47\right\},\\
\IsogPrimeDeg(\Q(\sqrt{7})) &= \IsogPrimeDeg(\Q).
\end{align*}
\end{theorem}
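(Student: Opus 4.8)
The plan is to prove each equality by establishing two inclusions: that every prime on the right-hand side lies in $\IsogPrimeDeg(k)$ for the relevant $k$, and that no prime outside the stated set does. The first inclusion is almost free: if $E/\Q$ admits a $\Q$-rational $p$-isogeny then so does its base change $E_k$, so $\IsogPrimeDeg(\Q)\subseteq\IsogPrimeDeg(k)$ for every number field $k$, which settles $\Q(\sqrt{-10})$ and $\Q(\sqrt{7})$ entirely on this side. For $\Q(\sqrt{5})$ it remains to exhibit elliptic curves over $\Q(\sqrt{5})$ with rational $23$- and $47$-isogenies; since $X_0(23)$ and $X_0(47)$ are both hyperelliptic, I would produce such curves from quadratic points lying above suitable $\Q$-rational points of the hyperelliptic $\mathbb{P}^1$ (or simply locate suitable curves in the LMFDB), and then verify the isogeny by an explicit computation with the $p$-division polynomial or with the corresponding order-$p$ subgroup scheme.

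For the reverse inclusion, the starting point is the explicit form of Momose's theorem proved in this paper, which --- conditionally on GRH, via Theorem~\ref{thm:isog-finite} and Merel's uniform boundedness theorem --- returns for a given quadratic field $k$ a finite, explicitly computable set $S(k)$ with $\IsogPrimeDeg(k)\subseteq S(k)$, together with the classification of each $p\in S(k)$ according to the Momose type (Type~1, Type~2, or Type~3) of the isogeny character $\lambda$ occurring in the semisimplification $\rho_{E,p}^{\mathrm{ss}}\cong\lambda\oplus\lambda'$, where $\lambda\lambda'$ is the mod-$p$ cyclotomic character. Running the algorithm on $k=\Q(\sqrt{-10}),\Q(\sqrt{5}),\Q(\sqrt{7})$ produces such an $S(k)$; for all but finitely many ``small'' candidate primes the structural bound attached to each type already forces $p\in\IsogPrimeDeg(\Q)$ or yields an outright contradiction, so the genuine work is confined to a short list of small primes.

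For each such small candidate prime $p$ not already in $\IsogPrimeDeg(\Q)$ (and, over $\Q(\sqrt{5})$, not equal to $23$ or $47$), the task is to show that $X_0(p)$ has no $k$-rational point other than a cusp or a point with complex multiplication by an order whose fraction field embeds in $k$. I would proceed case by case: when an Atkin--Lehner quotient $X_0^{+}(p)$ (or another natural quotient) has Mordell--Weil rank zero over $k$, or admits a formal immersion at a prime of good reduction, apply Mazur's method in the style of the Derickx--Kamienny--Stein--Stoll analysis of low-degree points; otherwise determine the quadratic points on $X_0(p)$ directly from an explicit model, using the rank of $J_0(p)(k)$, a Chabauty--Coleman computation on a suitable quotient, or the existing classifications of quadratic points on modular curves, and then check each point obtained against the finite list of cusps and of CM $j$-invariants.

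The main obstacle is exactly this last step for the larger candidate primes: there the relevant $X_0(p)$ have fairly high genus, their Jacobians can have positive Mordell--Weil rank over $k$, and no single technique applies uniformly, so each problematic prime needs either a carefully chosen quotient whose Mordell--Weil rank can be pinned down, or an ad hoc argument exploiting the ramification of $\lambda$ together with the splitting behaviour of small rational primes in $k$ to contradict the existence of the putative point. A secondary technical point to watch is the CM bookkeeping: one must check that the genuinely $k$-rational CM $j$-invariants (coming from imaginary quadratic orders of small discriminant whose ring class field meets $k$, together with the ramified primes in the associated CM fields) contribute no isogeny primes beyond those already listed.
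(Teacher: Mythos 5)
The paper does not actually prove this theorem: it is cited as prior work from \cite{banwait2021explicit}, with the strategy only sketched in the text immediately following the statement (algorithmic superset from the explicit Momose-type analysis, then descent from superset to exact set via known classifications of quadratic points on low-genus modular curves and everywhere-local solubility of twists $X_0^d(p)$). Your proposal reconstructs precisely this two-step strategy — base change for the easy inclusion, explicit curves realising $23$ and $47$ over $\Q(\sqrt{5})$, the conditional finite superset via Momose, and case-by-case elimination of the small residual candidates via rank-zero quotients, Chabauty--Coleman, or tabulated quadratic points — so it is in essence the same approach; the only minor slip is the parenthetical suggestion that a CM point of $X_0(p)$ over $k$ could be \emph{excluded} when showing $p\notin\IsogPrimeDeg(k)$, whereas in fact such a point would by itself certify $p\in\IsogPrimeDeg(k)$, a point you do correctly acknowledge in your closing remark on CM bookkeeping.
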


More generally, an algorithm was presented in \emph{loc. cit.} which took as input a quadratic field which is not imaginary quadratic of class number one, and output a small superset for $\IsogPrimeDeg(k)$. Results on quadratic points on low-genus modular curves \cite{bruin2015hyperelliptic, ozman2019quadratic, box2021quadratic} together with work on everywhere local solubility of twists of modular curves \cite{ozman2012points} then allowed one to pass from the superset to the actual set itself. 

This algorithm relied crucially on work of Momose, who proved (Theorem A/Theorem 1 in \cite{momose1995isogenies}) that if there exists an elliptic curve admitting a $k$-rational $p$-isogeny, then for $p$ larger than a constant $C_k$ depending only on $k$, the associated isogeny character obtained from considering the Galois action on the kernel of the isogeny must be one of three defined ``Types'', hereafter referred to as \textbf{Momose Types $1$, $2$ and $3$}. We refer to this result as \emph{Momose's isogeny classification theorem}. We will recap the definition of these ``Types'' in \Cref{ssec:from_sig_types_to_momose}; for now we mention only that Momose Type 3 requires that $k$ contain the Hilbert class field of an imaginary quadratic field, and that, with this assumption, the infinitely many resulting isogeny primes from CM elliptic curves are of Momose Type 3. In principle this reduces the task of bounding isogeny primes for $k$ (when this set of isogeny primes is finite) to bounding isogeny primes arising from Momose Types $1$ and $2$. The former appears as Theorem 3 of \cite{momose1995isogenies}, and the latter - which requires GRH - as Remark 8 of \cite{momose1995isogenies}. Taken together we refer to these three results as \emph{Momose's isogeny theorems}, and we note furthermore that Momose assumed in his work that $p$ is unramified in $k$.

Momose did not make the constant $C_k$ explicit; this was done subsequently by David (Section 2.4 of \cite{david2012caractere} or Section 2.3 of \cite{david2008caractere}) in the case that $k$ is Galois over $\Q$. In the course of this work, David gave a more precise and careful treatment of the proof of Momose's Theorem 1 in the Galois case; in particular a reproof of Momose's Lemmas 1 and 2 in \cite{momose1995isogenies} (which originally assumed that $k$ is Galois over $\Q$); see Section 2 of \cite{banwait2021explicit} for an overview of David's work and for precise references to it. We note that David also assumed that $p$ is unramified in $k$.

While Momose's Lemmas 1 and 2 \emph{do} assume that $k$ is Galois over $\Q$, Momose gives an argument in the proof of Theorem 1 to cover the non-Galois case by passing to the Galois closure $K$ over $\Q$. Unfortunately, the details given there contain some mistakes and gaps, which we indicate as the first three items of the following; the last item indicates a mistake in Momose's Theorem 3.

\begin{enumerate}
    \item
    Momose defines a certain group ring character $\eps$ at the outset of the proof, but it is unclear whether he is taking this to be over $k$ or over $K$. Based on his exposition, and his definition of $d$ as $[k : \Q]$, it appears that he is taking the base field to be $k$; however he is using Lemma 1 to define $\eps$, and Lemma 1 assumes that the field is Galois over $\Q$, suggesting he is taking $K$ to be the ground field. It is not \emph{a priori} clear that such an $\eps$ exists over non-Galois number fields $k$.
    \item Later in the proof Momose takes a set of generators of the class group of $k$ consisting of completely split primes in $k$. Such a generating set only necessarily exists in the Galois case.
    \item
    Immediately after the displayed equation (2), Momose writes ``In the case of Type 2, $\beta = \zeta\sqrt{-q}$ for a $12h$\textsuperscript{th} root $\zeta$ of unity''. This would follow if one had the interpretation of ``Type 2'' as $\eps = 6\Nm_{k/\Q}$ as promised by Lemma 2. However, again, since Lemma 2 also assumes that the ground field is Galois over $\Q$, this move is not valid.
    \item
    In the proof of Theorem 3 of \cite{momose1995isogenies}, Momose concludes that $p - 1 \mid 12h_k$ in the potentially multiplicative reduction case ($h_k$ being the class number of $k$); however this bound is too strict, and should rightly be $p \mid \Nm(\fq)^{12h_k}-1$, for $\fq$ a prime of $k$ coprime to $p$. See \Cref{rem:mom_mistake_type_1} and its preceding discussion for more details.
\end{enumerate}

One of the main contributions of this paper is to offer corrected proofs of Momose's isogeny theorems which furthermore strengthen them to deal with the case that $p$ ramifies in $k$. In addition, since our chief motivation is to compute exact sets of isogeny primes, we take this opportunity to recast Momose's Theorem 1 into a more algorithmic framing, providing a generalisation of the previous algorithm of the first named author to arbitrary number fields. This algorithm may be found as \Cref{alg:main}, and it yields the following explicit version of Momose's isogeny classification theorem.

\begin{theorem}\label{thm:main-brief}
Let $k$ be a number field. Then \Cref{alg:main} computes a nonzero integer $\MMIB(k)$ such that, if $p$ is an isogeny prime for $k$ whose associated isogeny character is not of Momose Type $1$, $2$ or $3$, then $p$ divides $\MMIB(k)$.
\end{theorem}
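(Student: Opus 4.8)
The plan is to analyse the mod-$p$ isogeny character attached to a $k$-rational $p$-isogeny and to show that, unless it falls into one of Momose's three Types, it is rigid enough to force $p$ to divide an explicit nonzero integer.

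Let $E/k$ admit a $k$-rational $p$-isogeny $\varphi\colon E\to E'$ with cyclic kernel $C\cong\Z/p\Z$, and let $\lambda\colon G_k\to\F_p^{\times}$ record the Galois action on $C$; the dual isogeny has isogeny character $\lambda'=\chi_p\lambda^{-1}$, where $\chi_p$ is the mod-$p$ cyclotomic character (the determinant of $E[p]$, by the Weil pairing). Pass to the Galois closure $K/\Q$, set $G=\Gal(K/\Q)$, and restrict. From the corrected and sharpened forms of Momose's Lemmas~1 and 2 established earlier one extracts the \emph{isogeny signature} of $\lambda$: the character $\lambda^{12}$ is unramified outside $p$, and at each prime $\fp$ of $K$ above $p$ one has $\lambda^{12}\big|_{I_{\fp}}=\chi_p^{a_{\fp}}$ for an integer $a_{\fp}\in\{0,1,\dots,12\}$ depending only on the reduction type of $E$ at $\fp$; since $G$ permutes the $\fp\mid p$, these assemble into a group-ring element $\eps=\sum_{\sigma\in G}a_{\sigma}\,\sigma\in\Z[G]$ with $0\le\eps\le 12$. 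This is exactly the step where one must go beyond Momose and David — they assume $p$ unramified in $k$, and Momose's reduction of the non-Galois case to $K$ has the gaps flagged in the introduction — so the construction of $\eps$ has to be redone over $K$ with no hypothesis on $p$ modulo $\operatorname{disc}(k)$, tracking $\lambda\big|_{I_{\fp}}$ carefully through the ramification.

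There are finitely many possible $\eps$, and \Cref{alg:main} enumerates them. Momose Types~$1$, $2$ and $3$ correspond precisely to the \emph{distinguished} signatures: roughly, $\eps$ constant equal to $0$ or $12$ gives Type~1 (after combining with the good-versus-potentially-multiplicative dichotomy), $\eps$ constant equal to $6$ gives Type~2, and certain constant CM-values (such as $4$ or $8$) give Type~3, which occurs only when $k$ contains the Hilbert class field of an imaginary quadratic field. A finite, explicitly bounded set of small primes $p$ — those for which the signature analysis does not apply cleanly (e.g.\ where the level-$2$ fundamental character intervenes in the supersingular case) — is simply placed into $\MMIB(k)$; so assume from now on that $p$ is large and $\eps$ is \emph{not} distinguished, and we must produce a divisibility condition on $p$. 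Pick an auxiliary rational prime $q\ne p$ that splits completely in $K$ and at which $E$ has good reduction, a prime $\fq$ of $K$ over it, and a generator $\pi_{\fq}$ of the principal ideal $\fq^{h_K}$. On one hand, $\lambda(\Frob_{\fq})$ and $\lambda'(\Frob_{\fq})=q\,\lambda(\Frob_{\fq})^{-1}$ are the two roots in $\overline{\F}_p$ of (the reduction of) $X^2-a_{\fq}X+q$, where $a_{\fq}\in\Z$ is a Weil-admissible trace, $|a_{\fq}|\le 2\sqrt{q}$. On the other hand, the signature relation, after raising to the power $12h_K$ to neutralise the class group, identifies $\lambda(\Frob_{\fq})^{12h_K}\bmod\fP$ (for a prime $\fP\mid p$) with the reduction of $\prod_{\sigma\in G}\sigma(\pi_{\fq})^{a_{\sigma}}\in K$. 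Equating the two descriptions and ranging over the finitely many admissible $a_{\fq}$ and the two roots, $p$ divides the integer
\[
B_{\eps,q}\;=\;\gcd_{|a_{\fq}|\le 2\sqrt{q}}\ \Nm_{K(\alpha_{\fq})/\Q}\!\left(\prod_{\sigma\in G}\sigma(\pi_{\fq})^{a_{\sigma}}\;-\;\alpha_{\fq}^{\,12h_K}\right),
\]
where $\alpha_{\fq}$ is a root of $X^2-a_{\fq}X+q$. The one place the hypothesis ``$\eps$ not of Type~$1,2,3$'' is used is to guarantee that $B_{\eps,q}\ne 0$: the two products of Frobenius data agree identically as functions of the parameters $(q,a_{\fq})$ precisely when $\eps$ is distinguished, so for a non-distinguished $\eps$ one can choose $q$ (as \Cref{alg:main} does, minimising the resulting value) with $B_{\eps,q}\ne 0$. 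Finally, set $\MMIB(k)$ to be the least common multiple, over all non-distinguished signatures $\eps$ admissible for $k$, of the gcd over a few such auxiliary primes of $B_{\eps,q}$, together with the finitely many small primes above; then $\MMIB(k)\ne 0$, and any isogeny prime $p$ for $k$ whose isogeny character is not of Momose Type~$1$, $2$ or $3$ divides it.

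The main obstacle is exactly this generalisation beyond the Galois, $p$-unramified setting of Momose and David: one must reprove Momose's Lemmas~1 and 2 over the Galois closure $K$ without assuming $p\nmid\operatorname{disc}(k)$, so that the signature $\eps$, the algebraic numbers $\pi_{\fq}$ and $\alpha_{\fq}$, and the congruence relating them, all remain valid when $p$ ramifies and when $k$ is not Galois — in particular the class group must be trivialised over $K$ rather than $k$ and the auxiliary primes taken in $K$, which repairs the gaps flagged in the introduction (notably that the class group of a non-Galois $k$ need not be generated by split primes). A secondary, more book-keeping obstacle is certifying the non-vanishing uniformly: one needs a clean statement that for every non-distinguished $\eps$ that can occur for $k$ the relevant expression in $(q,a_{\fq})$ is not identically zero, so that $\MMIB(k)$ is provably nonzero and \Cref{thm:main-brief} is a genuine finite-divisibility statement rather than a vacuous one.
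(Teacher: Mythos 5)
Your proposal shares some structural DNA with the paper's argument (local signature at $p$, auxiliary primes $\fq$, trivialising the class group by raising to a power, and a norm-form integer that $p$ must divide), but it departs from the paper at two points, one of which is a genuine gap.

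\textbf{The genuine gap: signature types versus Momose types.} You write that ``Momose Types~$1$, $2$ and~$3$ correspond precisely to the distinguished signatures'' and then restrict attention to the generic (non-distinguished) signatures. This identification fails, and the theorem statement is sensitive to the failure. In the paper, Momose Type~$2$ is the condition $\lambda^{12}=\chi_p^6$, whereas a signature of Type~$2$ (all $a_\sigma=6$) only guarantees that $\lambda^{12}\chi_p^{-6}$ is \emph{unramified}; the two differ by an arbitrary everywhere-unramified character, and the passage from one to the other (\Cref{prop:type_2_to_momose_type_2}, \Cref{cor:type_2_not_momose}) is nontrivial. Likewise for Type~$3$: \Cref{def:momose_type_3} is a congruence on $\lambda^{12}(\Frob_\fq)$ for every $\fq$, and getting from a Type~$3$ signature to that congruence occupies \Cref{cor:rocket_step_1}, \Cref{prop:rocket_step_2} and \Cref{prop:rocket_step_3}, producing the extra contributions $\TypeTwoNotMomoseBound(k)$ and $\TypeThreeNotMomoseBound(k)$ to $\MMIB(k)$. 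Your proof bounds only primes whose \emph{signature} is generic, so it leaves completely unbounded the primes whose character has signature Type~$2$ or~$3$ but is \emph{not} of Momose Type~$2$ or~$3$ --- exactly the primes the theorem also claims to divide $\MMIB(k)$. Relatedly, your description of Type~$3$ as ``constant CM-values (such as $4$ or $8$)'' is wrong: in \Cref{def:sig_types} the Type~$3$ signatures take values $0$ and $12$ on the two cosets of $\Gal(K/k)\subseteq H$ for an index-two $H$ cutting out an imaginary quadratic subfield of $k$; the constant signatures $(4,\dots,4)$ and $(8,\dots,8)$ are \emph{generic} (``sextic constant'' in \Cref{tab:eps_types}) and must be fed into the main divisibility argument, not set aside.

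\textbf{Different route, and why the paper avoids it.} You pass to the Galois closure $K$, set $G=\Gal(K/\Q)$, index the signature by $\sigma\in G$, and take auxiliary primes and class-group data in $K$ rather than $k$. The paper does the opposite, on purpose: the signature is indexed by $\Sigma=\Hom(k,K)$ (size $d=[k:\Q]$, not $|G|$), auxiliary primes are primes of $k$, and $h_\fq$ is an order in $\Cl_k$. The paper's strengthened Lemmas 1 and~2 (\Cref{prop:momose_1_non_galois_md}, \Cref{prop:momose_lemma_2_non_galois}) are designed precisely to remove the Galois hypothesis without passing to $K$; \Cref{rem:not_passing_to_gal_closure_is_faster} records that the enumeration of signatures is exponential in the degree, so working over $K$ would be prohibitively slower. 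Your route is closer to Momose's original sketch, and is the one the paper argues cannot be cleanly executed (e.g.\ the issue of which field $\eps$ lives over, and whether $\Cl_k$ is generated by completely split primes). Also, two smaller points: your local exponents $a_\fp$ lie in $\{0,4,6,8,12\}$ (after decomposing along ramification), not all of $\{0,\dots,12\}$; and your inner $\gcd$ over the traces $a_\fq$ should be an $\lcm$ --- $p$ divides the term corresponding to the \emph{actual} (unknown) Frobenius trace, hence the lcm over all Weil-admissible traces, not their gcd.
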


We refer to $\MMIB(k)$ as the \textbf{Momose Multiplicative Isogeny Bound} of $k$. Contrary to Momose's passing to the Galois closure of $k$ over $\Q$, our approach will be to strengthen Momose's Lemmas 1 and 2 (or rather, David's versions of these Lemmas) to remove the Galois assumption, which furthermore provides for a favourable improvement to the algorithm (see \Cref{rem:not_passing_to_gal_closure_is_faster}).

Since isogenies arising from CM elliptic curves are necessarily of Momose Type 3, we cannot hope to bound such `Momose Type $3$ isogeny primes'. Thus, in our attempt to find a multiplicative bound on $\IsogPrimeDeg(k)$, we are reduced to bounding isogeny primes which arise from isogeny characters of Momose Types $1$ or $2$.

By building on the explicit criteria - given by work of the second named author with Kamienny, Stein and Stoll \cite{derickx2019torsion} - for when the natural map $X_0(p)^d \rightarrow J_e$ of the $d$\textsuperscript{th} symmetric power modular curve into the winding quotient of $J_0(p)$ is a formal immersion in positive characteristic - we are able to explicitly and algorithmically determine a multiplicative bound on Momose Type 1 primes.

\begin{theorem}\label{thm:type_1_brief}
Let $k$ be a number field. Then \Cref{alg:type_1_primes} computes a nonzero integer $\TypeOneBound(k)$ such that, if $p$ is an isogeny prime for $k$ whose associated isogeny character is of Momose Type $1$, then $p$ divides $\TypeOneBound(k)$.
\end{theorem}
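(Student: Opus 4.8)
The plan is to follow Momose's proof of his Theorem~3 (the Type~$1$ case of his isogeny theorems), repair the potentially multiplicative step flagged in \Cref{rem:mom_mistake_type_1}, and route the potentially good case through the explicit formal immersion criterion of \cite{derickx2019torsion}. Throughout put $d=[k:\Q]$, and suppose $E/k$ admits a $k$-rational cyclic $p$-isogeny $\phi\colon E\to E'$ whose isogeny character $\lambda\colon G_k\to\F_p^\times$ is of Momose Type~$1$. Replacing $\phi$ by its dual if necessary --- which replaces $\lambda$ by $\chi_p\lambda^{-1}$ and so turns the isogeny signature $(a_\fp)_{\fp\mid p}$ into $(12-a_\fp)_{\fp\mid p}$ --- we may assume the signature is $(0,\dots,0)$, i.e.\ $\lambda^{12}$ is unramified at every prime of $k$ above $p$. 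Away from $p$, $\lambda$ is unramified at primes of good reduction of $E$ by N\'eron--Ogg--Shafarevich; at a prime of additive potentially good reduction the image of inertia under $\lambda$ has order dividing $12$; and at a prime $\fq\nmid p$ of potentially multiplicative reduction the theory of the Tate curve shows the only Galois-stable cyclic $p$-subgroup is $\mu_p$ (after a possible unramified quadratic twist, and, if $p\mid v_\fq(j_E)$, after moving within the isogeny class), so there $\lambda$ is an unramified twist of $\chi_p$; in every case $\lambda^{12}$ is unramified. Hence $\lambda^{12}$ is unramified everywhere, so it factors through $\Cl(k)$ and $\lambda^{12h_k}=1$. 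Meanwhile $(E,\ker\phi)$ defines a non-cuspidal point $x\in X_0(p)(k)$.

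Next, split according to whether $j_E\in\O_k$. If $E$ has potentially multiplicative reduction at some $\fq\nmid p$, then evaluating $\lambda^{12h_k}=1$ on $\Frob_\fq$ and using $\lambda(\Frob_\fq)\equiv\pm\Nm(\fq)\pmod p$ gives $p\mid \Nm(\fq)^{12h_k}-1$, the correct replacement for Momose's ``$p-1\mid 12h_k$'' (see \Cref{rem:mom_mistake_type_1}). If instead $j_E\in\O_k$, then $x$ extends to a point of $X_0(p)$ over $\O_k$ localised at any rational prime $\ell\ne p$, all of whose conjugates reduce to non-cuspidal points in characteristic $\ell$; writing $D=\sum_\sigma x^\sigma$ for the $\Q$-rational effective divisor of degree $d$ cut out by $x$ and composing the Abel--Jacobi map with the projection $t\colon J_0(p)\to J_e$ onto the winding quotient, the class $t([D]-d[0])$ lies in the finite group $J_e(\Q)$ (the winding quotient has rank $0$ by Kolyvagin--Logachev). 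Kamienny's argument, made explicit and algorithmic in \cite{derickx2019torsion}, then shows that if $X_0(p)^{(d)}\to J_e$ is a formal immersion at the cusp $d[0]$ in characteristic $\ell$, one is forced to conclude $D=d[0]$, contradicting the non-cuspidality of $x$ once $p$ exceeds the thresholds built into the argument.

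To make this effective, note that one knows neither the bad prime $\fq$ nor that $j_E$ is everywhere integral; so for each rational prime $\ell$ in a suitable finite set $T$ --- those for which the criterion of \cite{derickx2019torsion} can be certified --- the algorithm argues: if $x$ reduces to a cusp at some $\fq\mid\ell$ (equivalently $v_\fq(j_E)<0$), the first case gives $p\mid\Nm(\fq)^{12h_k}-1$; otherwise the second case gives $p\in\BadFormalImmersion(d,\ell)$, the finite set of primes for which $X_0(p)^{(d)}\to J_e$ fails to be a formal immersion in characteristic $\ell$, which \cite{derickx2019torsion} computes via a rank computation on a matrix of Hecke eigenvalue data. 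Adjoining the finitely many primes below the thresholds above, \Cref{alg:type_1_primes} outputs
\[
\TypeOneBound(k)\ :=\ N_0\cdot\!\!\prod_{\ell\in T}\Bigl(\ \prod_{\fq\mid\ell}\bigl(\Nm(\fq)^{12h_k}-1\bigr)\ \cdot\!\!\prod_{p\in\BadFormalImmersion(d,\ell)}\!\!p\ \Bigr),
\]
which is nonzero (each $\Nm(\fq)^{12h_k}-1\ge 2^{12}-1$ and each $\BadFormalImmersion(d,\ell)$ is finite) and, by the case analysis, is divisible by every Momose Type~$1$ isogeny prime of $k$.

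The hard part is the potentially good case and its packaging. One must (i) actually produce an integral non-cuspidal point on $X_0(p)$ with the reduction behaviour at $\ell$ that Kamienny's lemma needs --- handling the quadratic twist, the choice between $\phi$ and $\phi^\vee$, and the potentially multiplicative primes correctly, which is precisely where the argument in \cite{momose1995isogenies} slips --- and (ii) verify the hypotheses of \cite{derickx2019torsion}: that $t([D]-d[0])$ is controlled well enough that reduction mod $\ell$ forces $D=d[0]$, and that for the given $d$ there exists $\ell\in T$ for which $\BadFormalImmersion(d,\ell)$ is computable and the relevant formal immersion certified, including the refinements required when $\ell\in\{2,3\}$ is ``bad'' for $X_0(p)^{(d)}$.
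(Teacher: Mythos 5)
Your proposal has the two sub-arguments (Mazur--Kamienny formal immersion vs.\ Frobenius-eigenvalue congruence) attached to the \emph{wrong} cases, and this is a genuine gap rather than a choice of route.

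In the paper's proof of \Cref{thm:type_1_main}, the split at an auxiliary prime $\fq$ is:
\begin{enumerate}
\item potentially \emph{good} reduction at $\fq$: here $\lambda(\Frob_\fq)\equiv\beta_\fq\pmod{\mathcal P^\fq}$ for a Frobenius root $\beta_\fq$ (David), which combined with $\eps=(0,\dots,0)$ yields $p\mid C(\eps_0,\fq)$; the $X_0(p)$-point does \emph{not} reduce to a cusp, so no formal-immersion argument is possible;
\item potentially multiplicative reduction at $\fq$ with $x$ or some conjugate $x^\sigma$ reducing to the \emph{zero} cusp: here $\lambda^2(\sigma_\fq)\equiv\Nm(\fq)^2$, giving $p\mid B(\eps_0,\fq)$;
\item potentially multiplicative reduction at $\fq$ with \emph{all} conjugates reducing to the $\infty$ cusp: $\lambda$ is unramified at $\fq$ and $\lambda(\Frob_\fq)\equiv\pm 1$, so the character congruence is vacuous; but now the sections $(x^\sigma)_\sigma$ and $\infty^d$ of $X_0(p)^{(d)}_{/S}$ \emph{meet} at $\fq$, which is exactly the hypothesis needed for the Mazur--Kamienny formal-immersion argument.
\end{enumerate}

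Your proposal instead sends the $j_E\in\O_k$ (potentially good everywhere) case through the formal immersion. But if all conjugates of $x$ reduce to \emph{non-cuspidal} points mod $\ell$, the divisor $D=\sum_\sigma x^\sigma$ does not meet $d[0]$ mod $\ell$, and the formal-immersion machine has no purchase: the argument deduces $D=d[0]$ only after first showing $D$ and $d[0]$ coincide in the special fibre (so that $[D-d[0]]$ reduces to $0$ in $J_e(\F_\ell)$, hence is $0$ by torsion injectivity). Without that meeting there is no contradiction with non-cuspidality of $x$, and your ``second case'' proves nothing.

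Conversely, your ``first case'' asserts $\lambda(\Frob_\fq)\equiv\pm\Nm(\fq)\pmod p$ whenever $E$ has potentially multiplicative reduction at $\fq$. This is the congruence for $\ker\phi$ reducing to $\mu_p$ (zero cusp). If $\ker\phi$ reduces to the \'etale quotient ($\infty$ cusp) one instead has $\lambda(\Frob_\fq)\equiv\pm 1$, and $\lambda^{12h_k}(\Frob_\fq)=1$ gives nothing. Your parenthetical ``after a possible unramified quadratic twist, and, if $p\mid v_\fq(j_E)$, after moving within the isogeny class'' cannot repair this: moving within the isogeny class is the Atkin--Lehner swap $\phi\leftrightarrow\phi^\vee$, which replaces the signature $(0,\dots,0)$ by $(12,\dots,12)$, contradicting your opening WLOG; and even if it did not, the zero/$\infty$ cusp-reduction behaviour can vary between $\fq$ (and between conjugates of $x$), so no single global swap or twist normalizes it everywhere. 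The case (3) above --- all conjugates reduce to $\infty$ --- is precisely the one that falls through the cracks in your analysis.

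So the two tools need to be swapped: Frobenius-root congruences (the $C$-integer of \Cref{def:ABC}) for potentially good reduction, the cusp-$0$ congruence ($B$-integer) when some conjugate hits the zero cusp, and formal immersion reserved for the all-conjugates-at-$\infty$ sub-case of multiplicative reduction. These three together, plus the effective formal-immersion criterion from \cite{derickx2019torsion}, make up the integer $\TypeOneBound(k)$ as defined in \Cref{alg:type_1_primes}.

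Two smaller issues: you use $h_k$ rather than $h_\fq$ in the exponent, which is valid but weaker; and the final product formula (product over all $\fq\mid\ell$ of $\Nm(\fq)^{12h_k}-1$, times the bad formal-immersion primes) is not the integer $\TypeOneBound(k)$ the paper defines --- the paper takes $\gcd$s over auxiliary primes and separates $\BadFormalImmersion(d)$ from the characteristic-specific $\AGFI_d(q)$, which keeps the output much smaller.
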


\Cref{alg:main,alg:type_1_primes} referenced respectively in \Cref{thm:main-brief,thm:type_1_brief} are of a similar nature, and are implemented in the computer algebra system Sage \cite{sagemath}. Dealing with isogeny primes of Momose Type 2, however, requires a different approach. Rather than identifying integers which such isogeny primes must divide, we instead, on the one hand, identify a certain necessary condition (\Cref{cond:CC}) which such isogeny primes must satisfy; and then, on the other hand, determine an upper bound - conditional on GRH - for such isogeny primes. This is the only result in our work which requires GRH. The situation is summarised as follows, the details being given in \Cref{sec:momose_type_2}.

\begin{theorem}\label{thm:type_2_brief}
Let $k$ be a number field of degree $d$ and discriminant $\Delta_k$, and let $E/k$ be an elliptic curve admitting a $k$-rational $p$-isogeny of Momose Type 2. Then we have the following.
\begin{enumerate}
    \item $E$ is not semistable.
    \item The pair $(k,p)$ satisfies the necessary condition in \Cref{cond:CC}.
    \item Assuming GRH, $p$ satisfies
    \begin{equation}\label{eqn:type_2_intro_bound}
        p \leq (8d\log(12p) + 16\log(\Delta_k) + 10d + 6)^4
    \end{equation}
    and hence there are only finitely many isogeny primes of Momose Type 2.
\end{enumerate}
\end{theorem}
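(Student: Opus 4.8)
The plan is to treat the three assertions in turn. Write $\lambda\colon G_k\to\F_p^{\times}$ for the isogeny character on the kernel of the $p$-isogeny, $\chi_p$ for the mod-$p$ cyclotomic character, and $\lambda'=\chi_p\lambda^{-1}$ for the character on the quotient. Recall from \Cref{ssec:from_sig_types_to_momose} that the Momose Type 2 hypothesis means, concretely, that $\lambda^{12}|_{I_\fp}=\chi_p^{6}|_{I_\fp}$ at every prime $\fp\mid p$ while $\lambda^{12}\chi_p^{-6}$ is everywhere unramified (hence of order dividing $h_k$); equivalently, the isogeny character behaves, up to bounded twists, like the reduction of a Hecke character of an imaginary quadratic field. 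For part~(1), I would argue by contradiction: if $E$ were semistable, then at $\fp\mid p$ it has good or multiplicative reduction, and in both cases $\lambda|_{I_\fp}\in\{1,\chi_p\}$ — in the multiplicative case because $E[p]$ is, after an unramified twist, an extension of the trivial character by $\chi_p$, and in the good case because $E$ is then necessarily ordinary (a supersingular curve has no $I_\fp$-stable line in its $p$-torsion) so the connected-\'etale sequence of the $p$-divisible group applies. Either way $\lambda^{12}|_{I_\fp}\in\{1,\chi_p^{12}\}$, so the Type~2 relation would force $\chi_p^{6}|_{I_\fp}=1$, i.e.\ the order of $\chi_p|_{I_\fp}$ — which is at least $(p-1)/d$ — to divide $6$; since a Momose Type~2 prime exceeds Momose's constant $C_k$ and in particular satisfies $p>6d+1$, this is impossible, so $E$ has additive reduction at every prime above $p$.

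For part~(2) the idea is to unwind the Type~2 condition at Frobenius. For a prime $\fq\nmid p$ of good reduction, $\Frob_\fq$ acts on the isogeny kernel by $\lambda(\Frob_\fq)$, which is the reduction, modulo a fixed prime above $p$, of a root $\beta_\fq$ of $T^2-a_\fq T+\Nm(\fq)$; the Type~2 structure forces $\beta_\fq=\zeta\sqrt{-\Nm(\fq)}$ for a root of unity $\zeta$ of order dividing $12h_k$ — this is the point behind Momose's ``$\beta=\zeta\sqrt{-q}$'' — which translates into the congruence $a_\fq^2\equiv\Nm(\fq)\bigl(4-(\zeta+\overline{\zeta})^2\bigr)\pmod{\fp}$. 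Clearing the finitely many choices of $\zeta$ yields an explicit integer that $p$ must divide, unless $\beta_\fq^2/\Nm(\fq)$ is itself a root of unity of small order; imposing these constraints as $\fq$ ranges over the finite, effectively describable list of primes isolated in \Cref{sec:momose_type_2} is, by construction, exactly the statement that $(k,p)$ satisfies \Cref{cond:CC}.

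Part~(3) is where GRH enters, and here the plan is to make the argument of part~(2) quantitative by exhibiting a \emph{single} auxiliary prime $\fq$ of controlled norm for which the integer above is nonzero — that is, for which $\beta_\fq^2/\Nm(\fq)$ is genuinely not a root of unity; note that an imaginary quadratic field contains only $2$nd, $4$th and $6$th roots of unity, so this condition does not involve $h_k$ — together with $\fq\nmid p$, good reduction at $\fq$, and a Frobenius class pinning down the relevant root of unity. I would obtain such a $\fq$ by applying the GRH-conditional effective Chebotarev density theorem (in the form of Lagarias--Odlyzko, sharpened by Bach--Sorenson and Oesterl\'e) to a suitable conjugacy class of $\Gal(L/\Q)$, where $L$ is $k(\zeta_{12})$ enlarged just enough to record the local conditions at the primes dividing $12$ and $p$, so that its degree and root discriminant are bounded by expressions linear in $d$, $\log(12p)$ and $\log\Delta_k$; Chebotarev then produces $\fq$ with $\Nm(\fq)$ at most the square of such an expression, and the divisibility $p\mid N(\fq)$, with $|N(\fq)|$ bounded by a fixed power of $\Nm(\fq)$, collapses to the inequality $p\le(8d\log(12p)+16\log\Delta_k+10d+6)^4$. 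Since the right-hand side grows only like $(\log p)^4$, this bounds $p$, giving the finiteness.

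The main obstacle is part~(3): one must arrange that good reduction at $\fq$, coprimality to $12p$, and a Frobenius condition strong enough to force $N(\fq)\neq0$ all hold simultaneously, while keeping the Galois extension $L$ through which Chebotarev is applied small — crucially of discriminant and degree polynomial in $\log p$ rather than in $p$, which is why the class-group ambiguity left over in part~(2) must be sidestepped rather than absorbed. Tracking these two quantities through the effective Chebotarev bound is precisely what produces the constants and exponents appearing in \eqref{eqn:type_2_intro_bound}, and is the sole reason GRH is invoked.
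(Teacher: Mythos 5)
Your plan for part~(1) is sound and, in fact, a touch more direct than the paper's: the paper deduces ``$E$ semistable $\Rightarrow a_\fp \equiv 0 \pmod{12}$'' from \Cref{prop:local_momose_1} and then observes that the Type~2 signature $(6,\ldots,6)$ is incompatible; you bypass the fundamental-character bookkeeping by using the connected--\'etale sequence to pin $\lambda|_{I_\fp}$ to $\{1,\chi_p\}$, which is equivalent but cleaner. Your appeal to ``$p>C_k$'' is not literally part of the paper's \emph{definition} of Momose Type~2, but the paper's own argument has the same implicit size restriction, so this is not a defect relative to the source.

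Parts~(2) and~(3), however, have genuine gaps. For~(2), the paper's \Cref{cond:CC} is proved by case-splitting on the reduction type of $E$ at $\fq$ and invoking Momose's Lemmas~3, 4 and~5 directly: potentially good reduction gives $q$ inert in $\Q(\sqrt{-p})$ via Lemma~5, and potentially multiplicative reduction with $q$ split forces $q^{2f}+q^f+1\equiv 0\pmod p$ via Lemmas~3 and~4. Your sketch instead produces a congruence $a_\fq^2 \equiv \Nm(\fq)(4-(\zeta+\bar\zeta)^2)\pmod\fp$ and then asserts that imposing it ``is, by construction, exactly'' Condition CC. That equivalence is never established; the statement of \Cref{cond:CC} is a non-splitting condition on $q$ in $\Q(\sqrt{-p})$, and you would still need Lemma~5 (or an argument of the same depth) to pass from your congruence to a constraint on $\legendre{-p}{q}$. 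Moreover, the premise ``$\beta_\fq=\zeta\sqrt{-\Nm(\fq)}$'' needs an argument that $\beta_\fq^{12}=\Nm(\fq)^6$ holds in characteristic $0$ and not merely modulo $\fp$; this is available once $\Nm(\fq)$ is small relative to $p$, but you never say so, and this lifting step is precisely where the size conditions in \Cref{cond:CC} enter.

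For~(3), the gap is quantitative and would change the exponent. The bound $p\le(\cdots)^4$ does not follow from ``$p\mid N(\fq)$ with $|N(\fq)|$ a fixed power of $\Nm(\fq)$'': if $|N(\fq)|$ is, say, on the order of $\Nm(\fq)^{12}$ and $\Nm(\fq)$ is bounded by the square of the Chebotarev expression, you would obtain an exponent of $24$, not $4$. The paper instead follows the proof of Theorem~6.4 of Larson--Vaintrob, whose key input is the sharper inequality $p \le (1+\sqrt{\Nm(v)})^4$ derived (via Momose's Lemma~5 and a Weil-number / CM-order argument) for an auxiliary prime $v$ split in $k(\sqrt{-p})$ of degree one and coprime to $3$; the specific constants $8,16,10,6$ then come from substituting Theorem~5.1 of Bach~\cite{bach1996explicit} for the generic effective-Chebotarev constant in Larson--Vaintrob's Corollary~6.3. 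Your sketch identifies the right general tool (GRH-effective Chebotarev on a small extension) but omits the Weil-number step that actually produces the fourth power, and without it the inequality in the statement does not follow.
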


Determining whether a pair $(k,p)$ satisfies the necessary condition of \Cref{cond:CC} is a matter of checking splitting conditions on primes in the imaginary quadratic field $\Q(\sqrt{-p})$, and is thus easily checked via Legendre symbol computations. However, the conditional bound implied by \Cref{eqn:type_2_intro_bound} becomes rather large, as illustrated in \Cref{tab:output_summary_type_2}, which shows the conditional bounds on Type 2 primes for the number fields of smallest absolute discriminant and class number one for each $2 \leq d \leq 10$ which do not contain the Hilbert class field of an imaginary quadratic field. Checking the necessary condition of \Cref{cond:CC} on all primes up to this bound is the content of \Cref{alg:type_2_primes}, and is implemented in PARI/GP \cite{PARI}.

\begin{table}[htp]
\begin{center}
\begin{tabular}{|c|c|c|c|}
\hline
$d$ & $\Delta_K$ & LMFDB Label & Type 2 bound\\
\hline
$2$ & $5$ & $\href{https://www.lmfdb.org/NumberField/2.2.5.1}{2.2.5.1}$ & $5.65 \times 10^{10}$\\
$3$ & $49$ & $\href{https://www.lmfdb.org/NumberField/3.3.49.1}{3.3.49.1}$ & $4.09 \times 10^{11}$\\
$4$ & $125$ & $\href{https://www.lmfdb.org/NumberField/4.0.125.1}{4.0.125.1}$ & $1.46 \times 10^{12}$\\
$5$ & $14641$ & $\href{https://www.lmfdb.org/NumberField/5.5.14641.1}{5.5.14641.1}$ & $4.75 \times 10^{12}$\\
$6$ & $300125$ & $\href{https://www.lmfdb.org/NumberField/6.6.300125.1}{6.6.300125.1}$ & $1.12 \times 10^{13}$\\
$7$ & $594823321$ & $\href{https://www.lmfdb.org/NumberField/7.7.594823321.1}{7.7.594823321.1}$ & $2.65 \times 10^{13}$\\
$8$ & $64000000$ & $\href{https://www.lmfdb.org/NumberField/8.0.64000000.2}{8.0.64000000.2}$ & $4.16 \times 10^{13}$\\
$9$ & $16983563041$ & $\href{https://www.lmfdb.org/NumberField/9.9.16983563041.1}{9.9.16983563041.1}$ & $7.60 \times 10^{13}$\\
$10$ & $572981288913$ & $\href{https://www.lmfdb.org/NumberField/10.10.572981288913.1}{10.10.572981288913.1}$ & $1.24 \times 10^{14}$\\
\hline
\end{tabular}
\vspace{0.3cm}
\caption{\label{tab:output_summary_type_2}The bound, conditional on GRH, on Type 2 primes for the smallest Galois number fields of class number one not containing the Hilbert Class field of an imaginary quadratic field for each degree between $2$ and $10$.}
\end{center}
\end{table}

Combining \Cref{alg:main,alg:type_1_primes,alg:type_2_primes} into one - \Cref{alg:combined} - we may summarise \Cref{thm:main-brief,thm:type_1_brief,thm:type_2_brief} as follows. By \emph{semistable isogeny primes for $k$} we mean the isogeny primes for $k$ obtained by varying over only semistable elliptic curves.

\begin{theorem}\label{thm:combined}
Let $k$ be a number field. Then \Cref{alg:combined} outputs a finite set of primes $S_k$ such that, if $p$ is an isogeny prime for $k$ whose associated isogeny character is not of Momose Type 3, then, conditional on GRH, $p \in S_k$. In particular,
\begin{enumerate}
    \item if $k$ does not contain the Hilbert class field of an imaginary quadratic field, then $S_k$ contains $\IsogPrimeDeg(k)$;
    \item the above results are unconditional for the restricted set of semistable isogeny primes for $k$.
\end{enumerate}
\end{theorem}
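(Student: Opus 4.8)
The plan is to recognise \Cref{thm:combined} as a repackaging of \Cref{thm:main-brief,thm:type_1_brief,thm:type_2_brief}, glued together by Momose's isogeny classification theorem. Concretely, \Cref{alg:combined} runs \Cref{alg:main,alg:type_1_primes,alg:type_2_primes} in turn and returns
\[
S_k = \{p \text{ prime} : p \mid \MMIB(k)\} \cup \{p \text{ prime} : p \mid \TypeOneBound(k)\} \cup T_k ,
\]
where $T_k$ is the set of primes $p$ not exceeding the bound of \eqref{eqn:type_2_intro_bound} for which the pair $(k,p)$ satisfies the necessary condition of \Cref{cond:CC}. The first two sets are finite because $\MMIB(k)$ and $\TypeOneBound(k)$ are nonzero integers by \Cref{thm:main-brief,thm:type_1_brief}, and $T_k$ is finite by part (3) of \Cref{thm:type_2_brief}; hence $S_k$ is finite.

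Next I would carry out the case analysis. Let $p$ be an isogeny prime for $k$, witnessed by an elliptic curve $E/k$ with a $k$-rational $p$-isogeny whose isogeny character $\lambda$ is not of Momose Type 3. By Momose's isogeny classification theorem, $\lambda$ is of Momose Type 1, of Momose Type 2, or of none of the three Momose Types. If $\lambda$ is of none of the three Momose Types, then \Cref{thm:main-brief} gives $p \mid \MMIB(k)$. If $\lambda$ is of Momose Type 1, then \Cref{thm:type_1_brief} gives $p \mid \TypeOneBound(k)$. If $\lambda$ is of Momose Type 2, then part (2) of \Cref{thm:type_2_brief} shows that $(k,p)$ satisfies \Cref{cond:CC}, while part (3) --- the only appeal to GRH in the whole argument --- supplies the bound \eqref{eqn:type_2_intro_bound}, so that $p \in T_k$. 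In every case $p \in S_k$, which is the main assertion.

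It then remains to deduce the two itemised consequences. For (1), if $k$ does not contain the Hilbert class field of an imaginary quadratic field, then a Momose Type 3 isogeny character cannot occur over $k$ (Momose Type 3 forces such a containment, as recalled in the introduction); hence every prime in $\IsogPrimeDeg(k)$ satisfies the hypothesis of the main assertion and therefore lies in $S_k$. For (2), part (1) of \Cref{thm:type_2_brief} says that a semistable elliptic curve over $k$ admits no $k$-rational isogeny of Momose Type 2, so a semistable isogeny prime for $k$ which is not of Momose Type 3 falls into one of the first two cases above; both are settled by \Cref{thm:main-brief,thm:type_1_brief}, neither of which invokes GRH, so the statement is unconditional in the semistable setting.

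The only real obstacle in this final assembly --- the mathematical weight residing in the three theorems being quoted --- is making sure the trichotomy is genuinely exhaustive in the generality needed here: one must use Momose's classification theorem for arbitrary (in particular non-Galois) number fields, which in this paper rests on the corrected, Galois-hypothesis-free versions of Momose's Lemmas 1 and 2, and one must confirm that the ``exceptional'' bucket bounded by $\MMIB(k)$ really does capture every isogeny prime whose character is of none of the three Momose Types, including those small $p$ lying below Momose's threshold $C_k$. Granting that, the remainder is bookkeeping across the disjunction Type 1 / Type 2 / exceptional, together with the two structural inputs that Type 3 needs CM-type containment and that Type 2 is incompatible with semistability.
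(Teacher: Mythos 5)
Your proof is correct and takes essentially the same route as the paper: assemble \Cref{thm:main-brief,thm:type_1_brief,thm:type_2_brief} via the exhaustive case split on Momose Types (with the ``none of the three'' bucket captured multiplicatively by $\MMIB(k)$), note that Type 3 forces $k$ to contain the Hilbert class field of an imaginary quadratic field to get item (1), and use the incompatibility of semistability with Momose Type 2 (part (1) of \Cref{thm:type_2_brief}, via \Cref{prop:local_momose_1}) to remove the GRH dependence and get item (2). The caveats you raise at the end are not gaps but simply restatements of the content already carried by the cited theorems, which the paper likewise treats as given.
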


Here we have highlighted that our results are unconditional for semistable isogeny primes since the restricted class of semistable elliptic curves arises naturally in the context of Frey curves and solving Diophantine equations \cite{freitas2015criteria}. Moreover, \Cref{alg:combined} does more than merely combine the different subalgorithms dealing with each of the isogeny types in Momose's classification: it also includes several methods for ruling out possible isogeny primes, based on congruence conditions, class field theory, and explicit computations with Jacobians of modular curves. These methods are discussed in \Cref{sec:automatic_weeding}.

To give the reader a sense of the size of this superset $S_k$ for $\IsogPrimeDeg(k)$ as well as the run-time, we show in \Cref{tab:output_summary} the output of the algorithm on the number fields from \Cref{tab:output_summary_type_2} for $2 \leq d \leq 6$, as well as the time taken for this to complete on an old laptop. Since the output necessarily contains $\IsogPrimeDeg(\Q)$ in each case, we show only those possible primes \emph{not} in $\IsogPrimeDeg(\Q)$. In addition, in \Cref{tab:output_summary} the timings refer to only checking possible Momose Type 2 primes up to $10^6$; checking up to the bounds given in \Cref{tab:output_summary_type_2} is the main bottleneck of the algorithm, and doing so for each number field in the table takes several hours of parallel computation in PARI/GP, which results in no additional possible isogeny primes beyond those listed.

\begin{table}[htp]
\begin{center}
\begin{tabular}{|c|c|c|c|c|}
\hline
$d$ & $\Delta_k$ & LMFDB Label & Possible Isogeny Primes & Time (s)\\
\hline
$2$ & $5$ & \href{https://www.lmfdb.org/NumberField/2.2.5.1}{2.2.5.1} & 23, 47 & $0.92$\\
$3$ & $49$ & \href{https://www.lmfdb.org/NumberField/3.3.49.1}{3.3.49.1} & 23, 29, 31, 73 & $3.23$\\
$4$ & $125$ & \href{https://www.lmfdb.org/NumberField/4.0.125.1}{4.0.125.1} & 23, 29, 31, 41, 47, 53, 61, 73, 97, 103 & $3.77$\\
$5$ & $14641$ & \href{https://www.lmfdb.org/NumberField/5.5.14641.1}{5.5.14641.1} & 23, 29, 31, 41, 47, 59, 71, 73, 97 & $35.42$\\[1ex]
\multirow{3}{*}{$6$} & \multirow{3}{*}{$300125$} & \multirow{3}{*}{\href{https://www.lmfdb.org/NumberField/6.6.300125.1}{6.6.300125.1}} & \multirow{3}{*}{\makecell{23, 29, 31, 41, 47, 53, 59, 61, 71, 73, \\ 79, 83, 97, 103, 107, 109, 113, 127, 131, 191,\\ 211, 263, 311, 503}} & \multirow{3}{*}{$206.62$}\\
& & & &\\
& & & &\\
\hline
\end{tabular}
\vspace{0.3cm}
\caption{\label{tab:output_summary}The possible isogeny primes $p \notin \IsogPrimeDeg(\Q)$ for the number fields from \Cref{tab:output_summary_type_2} for $2 \leq d \leq 6$, as well as the time taken to obtain this running the algorithm on an old laptop. The time only measures checking possible Momose Type 2 primes up to $10^6$; checking all up to the conditional bound adds several hours to the runtime in each case.}
\end{center}
\end{table}

Determining which of the primes $p$ in these supersets are actually isogeny primes for $k$ requires one to determine whether $X_0(p)(k)$ contains any noncuspidal points, a subject which has hardly been explicitly studied for $\deg(k) \geq 3$. However, by building on recent work of Box, Gajovi\'c and Goodman \cite{box2021cubic} in which the authors determine the finitely many cubic points on some modular curves $X_0(N)$, we are able to exactly determine $\IsogPrimeDeg(k)$ for some cubic fields, yielding the first instances of the determination of $\IsogPrimeDeg(k)$ for number fields of degree at least three.

\begin{theorem}\label{thm:first_cubic}
Assuming GRH, we have the following:
\begin{align*}
\IsogPrimeDeg(\Q(\zeta_7)^+) &= \IsogPrimeDeg(\Q),\\
\IsogPrimeDeg(\Q(\alpha)) &= \IsogPrimeDeg(\Q) \cup \left\{29\right\},\\
\IsogPrimeDeg(\Q(\beta)) &= \IsogPrimeDeg(\Q),
\end{align*}
where $\alpha^3 - \alpha^2 - 2\alpha - 20 = 0$ and $\beta^3 - \beta^2 - 3\beta + 1 = 0$.
\end{theorem}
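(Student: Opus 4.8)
The plan is to run \Cref{alg:combined} on each of the three cubic fields to cut down to a short explicit list of candidate primes, and then to decide each candidate by determining whether $X_0(p)$ carries a noncuspidal point whose residue field is the given cubic field. The fields $\Q(\zeta_7)^+$, $\Q(\alpha)$, $\Q(\beta)$ are cubic, hence of odd prime degree over $\Q$, hence contain no quadratic subfield and in particular no Hilbert class field of an imaginary quadratic field; so by part (1) of \Cref{thm:combined}, running \Cref{alg:combined} on each returns, conditional on GRH, a finite superset $S_k\supseteq\IsogPrimeDeg(k)$. For $\Q(\zeta_7)^+$, which is the field of LMFDB label $3.3.49.1$ appearing in \Cref{tab:output_summary}, one reads off $S_k\setminus\IsogPrimeDeg(\Q)=\{23,29,31,73\}$; the analogous runs on $\Q(\alpha)$ and $\Q(\beta)$ produce the corresponding (similarly short) candidate sets, and, as in the discussion following \Cref{tab:output_summary}, pushing the Momose Type $2$ check up to the GRH bound of \Cref{eqn:type_2_intro_bound} contributes no further candidates.

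Next I would reduce to a question about residue fields of cubic points. Since $[k:\Q]=3$ is prime, any $P\in X_0(p)(k)$ has residue field $\Q(P)\in\{\Q,k\}$, and the primes admitting a noncuspidal $\Q$-point on $X_0(p)$ are exactly the elements of $\IsogPrimeDeg(\Q)$ by Mazur's theorem. Hence for a candidate $p\in S_k\setminus\IsogPrimeDeg(\Q)$ one has $p\in\IsogPrimeDeg(k)$ if and only if $X_0(p)$ has a noncuspidal point of residue field exactly $k$; equivalently, the $\Q$-conjugates $P,P',P''$ of such a point form an effective, $\Q$-rational, noncuspidal degree-$3$ divisor whose components are irreducible cubic points with residue field $k$.

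For each candidate $p$ I would then invoke the determination of the cubic points on $X_0(p)$. For the candidates of genus at least three (for $\Q(\zeta_7)^+$ this means $X_0(73)$), the set of cubic points is finite and is computed by Box, Gajovi\'c and Goodman \cite{box2021cubic} via Chabauty-type arguments, and one simply tests each listed residue field for isomorphism with $k$. For the genus-$2$ candidates $X_0(23)$, $X_0(29)$, $X_0(31)$ — whose Jacobians have Mordell--Weil rank $0$ over $\Q$ but whose finite groups $J_0(p)(\Q)$, of orders $11$, $7$, $5$, exceed the number of rational cusps — there are infinitely many cubic points, organised into finitely many one-parameter families: every class in $\Pic^3(X_0(p))(\Q)\cong J_0(p)(\Q)$ is a complete $g^1_3$, and the base-point-free ones give degree-$3$ maps $X_0(p)\to\P^1$ over $\Q$ whose irreducible fibres are exactly the irreducible cubic points; for each such map one decides, by an explicit study of the associated family of cubic fibre polynomials (using ramification and local conditions to eliminate all but finitely many values of the parameter), whether $k$ occurs as a root field. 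This shows that for $k=\Q(\zeta_7)^+$ and $k=\Q(\beta)$ no candidate prime beyond $\IsogPrimeDeg(\Q)$ is realised, while for $k=\Q(\alpha)$ exactly the prime $29$ is realised; the latter is confirmed directly by exhibiting an elliptic curve over $\Q(\alpha)$ with a $\Q(\alpha)$-rational $29$-isogeny.

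The main obstacle is the genus-$2$ case: what is needed is not a mere finiteness statement but a completely explicit description of the infinitely many cubic points, so as to be certain whether a prescribed cubic field arises, and this forces one to combine the rank-$0$ information on $J_0(p)(\Q)$ with a careful linear-systems analysis on each curve and then a Diophantine analysis of each resulting family of fibre polynomials. By contrast the higher-genus curve $X_0(73)$ is handled essentially by quoting the computations of \cite{box2021cubic}, and producing the single extra elliptic curve over $\Q(\alpha)$ is routine once the candidate list is in hand.
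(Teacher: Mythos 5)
Your overall plan---run \Cref{alg:combined} to get a short candidate list, then decide each candidate by studying cubic points on $X_0(p)$, with the genus-$2$ curves requiring a study of the one-parameter families of cubic points---is the same skeleton the paper uses, and the reduction to residue fields $\in\{\Q,k\}$ for prime degree is correct. However, you miss the key observation that makes the first two equalities essentially immediate: part~(1) of \Cref{thm:genus_2_cubic} shows that for $p\in\{23,29,31\}$ any cubic field with a noncuspidal point on $X_0(p)$ has \emph{negative} discriminant, \Cref{thm:rank_0_cubic} gives the same conclusion for $p\in\{41,47,59,71\}$ (the few cubic points that exist all have negative-discriminant residue fields), and the finitely many cubic fields appearing in Box--Gajovi\'c--Goodman for $p\in\{53,61,67,73\}$ are likewise of negative discriminant. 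Aggregating these (this is \Cref{cor:cubic_pos_disc}) shows that a cubic field of \emph{positive} discriminant has no isogeny primes outside $\IsogPrimeDeg(\Q)$ below $79$. Since $\Q(\zeta_7)^+$ and $\Q(\beta)$ are totally real (discriminants $49$ and $148$) and the algorithm's candidate supersets are bounded by $73$, the first and third equalities follow with no further genus-$2$ analysis at all. Your proposal instead does the genus-$2$ work for these fields, which is correct but unnecessary.

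For $\Q(\alpha)$ (discriminant $-2891<0$) your plan is in the right spirit, but your description of the genus-$2$ step (``ramification and local conditions to eliminate all but finitely many values of the parameter'') is vaguer than what actually closes the argument. The mechanism in \Cref{thm:genus_2_cubic}(2) is: for each of the finitely many degree-$3$ maps $X_0(p)\to\P^1$, write the fibre polynomial $f(x,t)$, let $g(t)$ be its discriminant in $x$, and observe that if a cubic field $K$ arises as a fibre field then $\Delta_K\,y^2=g(t)$ has a rational solution, i.e.\ the quadratic twist of the (explicitly computed, genus $2$ or $3$) hyperelliptic curve $y^2=g(x)$ by $\Delta_K$ has a $\Q$-point. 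One then rules out $23$ and $31$ for $\Q(\alpha)$ by verifying that every such twist by $-59$ (the squarefree part of $-2891$) fails to be everywhere locally soluble; one verifies $29$ is genuinely realised by exhibiting a $K$-rational point on $X_0(29)$; and one rules out $73$ by quoting \cite{box2021cubic} directly. Your proposal arrives at the same conclusion, but you should make the twist-and-local-solubility mechanism explicit rather than gesturing at a parameter-elimination argument, and you should flag the discriminant-sign shortcut, which collapses two of the three equalities to one line.
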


In addition to cubic fields, our algorithm provides for several key improvements which allow us to determine several more instances of quadratic isogeny primes. The following is obtained by combining our algorithm with standard techniques for working explicitly with modular curves in Magma \cite{magma} (see Section 7 of \cite{banwait2021explicit} for an overview of these techniques).

\begin{theorem}\label{thm:more_quadratic}
Let $D \neq 1$ be a squarefree integer such that $|D| < 50$ and $\Q(\sqrt{D})$ is not imaginary quadratic of class number one. Then $\IsogPrimeDeg(\Q(\sqrt{D})) = \IsogPrimeDeg(\Q)$ unless $D$ is listed in \Cref{tab:quadratic_isogenies}, in which case the additional isogeny primes are listed in the column `New Isogeny Primes', and any primes not yet determined are listed in the column `Undetermined'.
\end{theorem}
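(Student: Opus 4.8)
The proof is computational. It combines the bound of \Cref{thm:combined} with an analysis of quadratic points on the finitely many relevant modular curves $X_0(p)$. A quadratic field contains the Hilbert class field of an imaginary quadratic field exactly when it is itself imaginary quadratic of class number one, so for every $D$ allowed in the statement the field $k := \Q(\sqrt{D})$ falls under case (1) of \Cref{thm:combined}; running \Cref{alg:combined} on $k$ --- which already discards many candidate primes by the congruence, class field theory and Jacobian-based tests of \Cref{sec:automatic_weeding} --- produces a finite set $S_k$ with $\IsogPrimeDeg(k) \subseteq S_k$, conditional on GRH as in \Cref{thm:combined}. Since base change along $\Q \hookrightarrow k$ turns a $\Q$-rational $p$-isogeny into a $k$-rational one, we always have $\IsogPrimeDeg(\Q) \subseteq \IsogPrimeDeg(k)$, and so the task reduces to deciding, for each $D$ and each of the finitely many $p \in S_k \setminus \IsogPrimeDeg(\Q)$, whether $X_0(p)$ has a noncuspidal $k$-rational point; via the moduli interpretation such a point corresponds to an elliptic curve over $k$ carrying a $k$-rational cyclic subgroup of order $p$ (after a quadratic twist if necessary), which does not affect the isogeny degree.

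For each surviving pair $(k,p)$ exactly one of two things must be established. To place $p$ in the `New Isogeny Primes' column one exhibits an explicit witness: an elliptic curve $E/k$ with a $k$-rational $p$-isogeny, i.e.\ a noncuspidal point of $X_0(p)(k)$; in practice these arise either from CM elliptic curves whose CM order embeds into $k$, or as pullbacks of rational points on an Atkin--Lehner quotient $X_0(p)/\langle w \rangle$. To conclude instead that $p \notin \IsogPrimeDeg(k)$ one shows that $X_0(p)(k)$ consists only of cusps, using --- depending on the genus of $X_0(p)$ --- the classifications of quadratic points on low-genus $X_0(N)$ of \cite{bruin2015hyperelliptic, ozman2019quadratic, box2021quadratic}, estimates for the Mordell--Weil rank of $J_0(p)(k)$ fed into a Chabauty/Mordell--Weil-sieve computation in Magma in the style of Section 7 of \cite{banwait2021explicit}, and, when $X_0(p)(\Q)$ already contains no noncuspidal point, the everywhere-local-solubility obstruction for quadratic twists of $X_0(p)$ from \cite{ozman2012points}. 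Any pair $(k,p)$ resistant to all of these methods is placed in the `Undetermined' column, which explains the corresponding entries of \Cref{tab:quadratic_isogenies}.

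The main obstacle is the ruling-out step for the modular curves of larger genus. Once $g(X_0(p))$ is large enough that the quadratic points are not already in the literature, the remaining tool is a Chabauty argument over $k$, and this requires the Mordell--Weil rank of $J_0(p)(k)$ to be strictly less than $g(X_0(p))$; base change to $k$ can raise this rank up to or past the genus, at which point the method fails and $p$ must be left undetermined for that $k$. A minor but necessary check underlies the clean equality $\IsogPrimeDeg(k) = \IsogPrimeDeg(\Q)$ asserted for the non-exceptional $D$: one must also rule out noncuspidal CM points on $X_0(p)$ over $k$ for the relevant $p$, which reduces to verifying that no imaginary quadratic order of the requisite discriminant embeds into the quadratic field $k$ --- a finite and routine verification.
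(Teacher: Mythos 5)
Your proposal is correct and takes essentially the same approach as the paper, which in fact gives no in-text proof at all: it simply delegates the verification to the Magma script \path{magma_scripts/QuadraticVerifs.m} and refers to Section 7 of \cite{banwait2021explicit} for the underlying techniques (quadratic-point classifications, Chabauty/Mordell--Weil sieve over the quadratic field, and everywhere-local-solubility obstructions for twists of $X_0(p)$), precisely the methods you describe. One small observation worth making: you are right to flag the GRH dependence inherited from \Cref{thm:combined}; the theorem statement itself omits the phrase ``Assuming GRH'' (unlike the parallel \Cref{thm:first_cubic}), which appears to be an oversight in the paper rather than a feature.
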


\begin{table}[htp]
\begin{center}
\begin{tabular}{|c|c|c|}
\hline
$D$ & New Isogeny Primes & Undetermined\\
\hline
$-47$ & $31$ & 61\\
$-39$ & -- & 97\\
$-37$ & -- & 59, 131\\
$-31$ & 73 & --\\
$-23$ & 29, 31 & --\\
$-15$ & 23 & --\\
$-5$ & 23 & --\\
$5$ & 23, 47 & --\\
$13$ & 31 & --\\
$17$ & -- & 23\\
$29$ & 29 & --\\
$37$ & -- & 23\\
$41$ & 41 & --\\
$47$ & -- & 59\\
\hline
\end{tabular}
\vspace{0.3cm}
\caption{\label{tab:quadratic_isogenies}Determination of $\IsogPrimeDeg(\Q(\sqrt{D}))$ for squarefree $|D| < 50$, excluding the nine imaginary quadratic fields of class number one. If $D$ is not listed here, then $\IsogPrimeDeg(\Q(\sqrt{D})) = \IsogPrimeDeg(\Q)$. The primes in `New Isogeny Primes' have been verified to be isogeny primes; it is currently not known whether the primes in `Undetermined' are isogeny primes for $\Q(\sqrt{D})$ or not.}
\end{center}
\end{table}

The implementation of the combined \Cref{alg:combined} is available at:

\begin{center}
\url{https://github.com/isogeny-primes/isogeny-primes}
\end{center}

As with its predecessor \cite{quadratic_isogeny_primes}, it has been released as a command line tool under the GPLv3+ licence, and the \path{README.md} contains detailed instructions on its use and an overview of the testing strategy. All filenames will refer to files in this repository \cite{isogeny_primes}.

The outline of the paper is as follows. \Cref{sec:notation_and_prelims} sets the notation to be used throughout the paper as well as some basic results about roots of characteristic polynomials of Frobenius of elliptic curves over finite fields. The heart of the paper is \Cref{sec:strengthen_momose} which strengthens and fixes Momose's isogeny classification theorem, and presents the algorithmic version of it. \Cref{sec:generic} discusses some optimisation aspects of the implementation of the algorithm dealing with the so-called \emph{generic isogeny primes}, those which arise from isogeny characters which are not of Type\footnote{here we mean `signature Type' rather than `Momose Type'. This distinction will be made clear in \Cref{sec:strengthen_momose}.} $1$, $2$ or $3$. \Cref{sec:momose_type_1,sec:momose_type_2} deal respectively with isogeny primes arising from isogeny characters of Momose Type $1$ and $2$, and \Cref{sec:automatic_weeding} presents methods to further eliminate possible isogeny primes, based largely on methods of explicit class field theory. Combining all of the algorithms up to this point into the main \Cref{alg:combined} is done in \Cref{sec:combined}. Finally in \Cref{sec:cubic_examples} we present results related to cubic points on modular curves, and prove \Cref{thm:first_cubic}. (The verification required for \Cref{thm:more_quadratic} is given in \path{magma_scripts/QuadraticVerifs.m} in the above repository.)

\ack{
We are grateful to Edgar Costa, David Roe and Andrew Sutherland for providing us access to computational resources of the Simons
Collaboration in Arithmetic Geometry, Number Theory, and Computation, with which many of the computations have been performed. The copy of Magma was made available by a generous initiative of the Simons Foundation. We would also like to thank Josha Box for useful conversations and providing some Magma code, as well as Nicolas Billerey, Pip Goodman, Filip Najman and Andrew Sutherland for comments and corrections to an earlier version of the manuscript. The first named author is grateful to Jennifer Balakrishnan and Boston University for hosting a research visit where this version of the manuscript was finalised. We are also extremely grateful to the anonymous Referee for their careful and meticulous review of an earlier version of the manuscript.

During the final stages of this project we were deeply saddened to learn of the untimely passing of Bas Edixhoven, who was the second named author's PhD advisor. We were both touched by his support and care for more junior mathematicians, were inspired by his generous sharing of his time and ideas, and we wish to dedicate this paper in his memory.
}

\section{Notation and Preliminaries}\label{sec:notation_and_prelims}

In this section we set the notation for relevant objects to be used throughout the paper. Certain sections below will introduce their own notation in addition to those set here. This is the default notation to be taken, unless a particular result overrides the notation set here. We will also prove some results of a preliminary nature to be used later.

\begin{align*}
    d &: \mbox{an integer $\geq 1$}\\
    k &: \mbox{a number field of degree $d$}\\
    \Cl_k &: \mbox{the class group of $k$}\\
    h_k &: \mbox{the class number of $k$}\\
    G_k &: \Gal({\overline{k}/k}), \mbox{the absolute Galois group of $k$}\\
    K &: \mbox{the Galois closure of $k$ over $\Q$}\\
    \Sigma &: \Hom(k,K), \mbox{the embeddings of $k$ in $K$}\\
    p &: \mbox{a rational prime (denoting the isogeny prime we seek to bound)}\\
    \chi_p &: \mbox{the mod-$p$ cyclotomic character of $G_k$}\\
    \fp_0 &: \mbox{a chosen and fixed prime ideal of $K$ lying above $p$}\\
    q &: \mbox{a rational prime different from $p$}\\
    \fq &: \mbox{a prime ideal of $k$ lying above $q$}\\
    h_\fq &: \mbox{the order of $\fq$ in $\Cl_k$}\\
    \gamma_\fq &: \mbox{a generator of $\fq^{h_\fq}$ in $\Cl_k$}\\
    \F_\fq &: \mbox{the residue field of $\fq$}\\
    \frob_{\fq} &: \mbox{a choice of lift of Frobenius in $G_k$}\\
    \iota_\fq &: \mbox{the embedding of $k$ into its $\fq$-adic completion}\\
    E &: \mbox{an elliptic curve over $k$ admitting a $k$-rational $p$-isogeny}\\
    \lambda &: \mbox{the isogeny character of $E$; that is, the Galois action on the isogeny's kernel}\\
    \mu &: \lambda^{12}\\
    \eps &: \mbox{the signature of $\lambda$ (see \Cref{subsec:mom_lem_1})}\\
\end{align*}

Note that $\mu$ is unramified outside of $p$ (Propositions 1.4 and 1.5 in \cite{david2012caractere}; Propositions 3.3 and 3.5 in \cite{david2011borne}) and its image is abelian, hence $\mu(\frob_q)$ is well defined. Neither of these facts are true for $\lambda$ itself, which is why we take the $12$\textsuperscript{th} power; it is then expedient and minimises overload of notation to introduce a new symbol for $\lambda^{12}$.

\begin{remark}\label{rem:mu_galois_and_ideal}
    By precomposing with the Artin map from class field theory, we may view each of $\lambda$ and $\mu$ as character on the group of fractional ideals of $k$ coprime to $p$. This is done in a minority of places in the paper (specifically, \Cref{prop:momose_1_non_galois_md}, the proof of \Cref{lem:gal_act_eps}, and \Cref{ssec:ice}), and is done largely so that one may employ the following convenient shorthand notation for $\mu((\alpha))$:
$$\mu((\alpha)) = \prod_{\fq} \mu(\frob_\fq)^{v_\fq(\alpha)}.$$
\end{remark}

The following result will be used in \Cref{ssec:from_sig_types_to_momose}.

\begin{proposition}\label{prop:ideal_generators}
Let $L$ be an imaginary quadratic field, $q$ an odd rational prime, $\fq$ a prime of $\O_L$ above $q$, and $f$ a positive integer such that $\fq^f = \overline{\fq^f}$. Suppose that $\fq^f$ is principal, generated by $\alpha_\fq \in L$. Then $\alpha_\fq^{12}=\Nm_{L/\Q}(\fq)^{6f}$.
\end{proposition}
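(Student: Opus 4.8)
The plan is to work out what $\alpha_\fq$ can be, up to roots of unity, and then observe that raising to the $12$th power kills the ambiguity. First I would record the reflexivity hypothesis: since $\fq^f = \overline{\fq^f}$, the principal ideal $(\alpha_\fq)$ is fixed by complex conjugation, so $(\overline{\alpha_\fq}) = (\alpha_\fq)$ as ideals of $\O_L$. Hence $\overline{\alpha_\fq} = u\,\alpha_\fq$ for some unit $u \in \O_L^\times$. Taking absolute values (under any complex embedding of $L$) gives $|u| = 1$, so $u$ is a root of unity in $\O_L^\times$; as $L$ is imaginary quadratic this means $u$ is a root of unity of order dividing $4$ or $6$, in all cases dividing $12$.

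Next I would compute the norm. On one hand $\Nm_{L/\Q}(\alpha_\fq) = \alpha_\fq \overline{\alpha_\fq}$, and on the other hand $(\alpha_\fq) = \fq^f$ has norm $\Nm(\fq)^f$, a positive integer; since $L$ is imaginary quadratic the field norm of any element is nonnegative, so $\alpha_\fq \overline{\alpha_\fq} = \Nm(\fq)^f$ exactly (not merely up to sign). Combining with $\overline{\alpha_\fq} = u\,\alpha_\fq$ yields $u\,\alpha_\fq^2 = \Nm(\fq)^f$, and therefore
\[
  \alpha_\fq^{12} = \bigl(\alpha_\fq^2\bigr)^6 = \bigl(u^{-1}\Nm(\fq)^f\bigr)^6 = u^{-6}\,\Nm(\fq)^{6f}.
\]
Since $u$ is a root of unity of order dividing $12$, we have $u^{6} = \pm 1$; I would then rule out the minus sign by noting that $u\,\alpha_\fq^2 = \Nm(\fq)^f > 0$ forces $u = \Nm(\fq)^f/\alpha_\fq^2$, and if $u$ had order dividing $12$ but not $6$ (i.e. $u = \pm i$ or a primitive $12$th root) one checks directly that $\alpha_\fq^2$ would be a nonreal multiple of a positive real, a contradiction, so in fact $u$ has order dividing $6$ and $u^6 = 1$. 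This gives $\alpha_\fq^{12} = \Nm(\fq)^{6f}$ as claimed.

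The only genuinely delicate point is pinning down that $u^6 = 1$ rather than merely $u^6 = \pm 1$; a cleaner route that avoids case analysis is to observe that $\alpha_\fq^2 = u^{-1}\Nm(\fq)^f$ shows $\alpha_\fq^2$ lies in $\Q \cdot \O_L^\times$ and has positive real norm, and then argue via $\Nm(\alpha_\fq^2) = \Nm(\fq)^{2f} = \Nm(u^{-1})\Nm(\fq)^{2f}$ that $\Nm(u) = 1$ automatically, together with $\overline{u} = u^{-1}$ (since $|u|=1$) to get $\overline{\alpha_\fq^2} = u\,\alpha_\fq^2 \cdot$(something)—but the most robust phrasing is simply: $\alpha_\fq^2/\Nm(\fq)^f = u^{-1}$ is both a unit and, being a ratio of an element of $L$ by a rational times forcing reality considerations, one shows it equals its own conjugate's inverse and has the stated order. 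I expect writing this last paragraph carefully — i.e. excluding the order-$4$ and primitive-order-$12$ cases for $u$ — to be the main obstacle; everything else is routine.
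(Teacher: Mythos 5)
Your setup is sound up through the line $\alpha_\fq^{12} = u^{-6}\Nm(\fq)^{6f}$, and you correctly locate the delicate point: you must rule out $u^6 = -1$. But the argument you offer for this does not actually close the gap. You assert that if $u = \pm i$ then ``$\alpha_\fq^2$ would be a nonreal multiple of a positive real, a contradiction''; this is not a contradiction, since $L$ is imaginary quadratic and perfectly well contains nonreal elements ($\alpha_\fq^2 \in \O_L$ has no reason to be real). The norm computation you sketch also gives nothing: if $u = \pm i$ and $\alpha_\fq^2 = \mp i\,\Nm(\fq)^f$, then $\Nm_{L/\Q}(\alpha_\fq^2) = \Nm(\fq)^{2f}$ is still consistent. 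To genuinely exclude $u = \pm i$ you have to use the hypothesis $q > 2$ somewhere, and your proposal never does. The clean fix along your lines is: $u^6 = -1$ forces $L = \Q(i)$ and $u = \pm i$; but then $\alpha_\fq^4 = u^{-2}\Nm(\fq)^{2f} = -\Nm(\fq)^{2f}$, so $\alpha_\fq$ is a root of $x^4 + \Nm(\fq)^{2f}$, whose roots are $\zeta_8^{\pm 1}\Nm(\fq)^{f/2}$ with $\zeta_8$ a primitive eighth root of unity; none of these lie in $\Q(i)$ because $\Nm(\fq)$ is odd (so $\sqrt{2}$ would be forced into $\Q(i)$), contradicting $\alpha_\fq \in L$. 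Even simpler is to note that for $q$ odd and $L = \Q(i)$, the prime $q$ is unramified, so $\fq = (q)$ or $\fq\overline\fq = (q)$ with $\fq\neq\overline\fq$; in the inert case $\alpha_\fq = v q^f$ for a unit $v$ whence $u = \overline v / v = v^{-2}$ has order dividing $2$, and in the split case the hypothesis $\fq^f = \overline{\fq^f}$ forces $f = 0$.

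For comparison, the paper avoids your unified ``$\overline{\alpha_\fq} = u\alpha_\fq$'' reduction entirely and just splits on the behaviour of $q$ (split/inert/ramified) from the outset. In the inert case $\alpha_\fq/q^f$ is a unit and one raises to the twelfth power immediately; in the ramified case $\alpha_\fq^2/q^f$ is a unit, and the hypothesis $q > 2$ excludes $L = \Q(i)$ so the unit has order dividing $6$. The paper's route is a little more hands-on but never has to confront the $\Q(i)$ subtlety except in the one place ($q$ ramified) where it is immediately killed by $q > 2$. Your approach is more conceptual — the identity $u\alpha_\fq^2 = \Nm(\fq)^f$ is pleasant and would generalize to arbitrary conjugation-invariant principal ideals coprime to $2$, as the paper's second remark notes — but as written it is missing the one step that actually needs the oddness of $q$.
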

\begin{proof}
We split up the prove according to the splitting behavior of $q$ in $L$. Firstly, if $q$ splits in $L$ then $\fq^f \neq \overline{\fq^f}$ and there is nothing to prove. Secondly, if $q$ is inert in $L$ then $\fq=q\O_L$ and $\Nm(\fq) = q^2$. This implies $\alpha_\fq\O_L= q^f \O_L$ and hence that $\alpha_\fq/q^f$ is a unit. Since $L$ is imaginary quadratic this means $\alpha_\fq^{12} = q^{12f} = \Nm(\fq)^{6f}$.

Finally, suppose that $q$ ramifies in $L$. Then $\fq^2=q\O_L$ and $\Nm(\fq)=q$. This implies $\alpha_\fq^2\O_L= q^f \O_L$ and hence that $\alpha_\fq^2/q^f$ is a unit. Since $q>2$ is ramified in $L$ we know $L\neq \Q(i)$ and hence the order of this unit divides $6$. This means $\alpha_\fq^{12} = q^{6f} = \Nm(\fq)^{6f}$.
\end{proof}
\begin{remark}The condition $q>2$ is necessary, since $\fq := (i+1)\Z[i]$ is an ideal invariant under complex conjugation and $(i+1)^{12}=-64=-\Nm(\fq)^6 \neq \Nm(\fq)^6$. However when $q=2$ one can in general still conclude that $\alpha_\fq^{24}=\Nm(\fq)^{12f}$.
\end{remark}
\begin{remark}
The above proposition is in fact true if one replaces $\fq^f$ by an arbitrary principal ideal $I$ co-prime to 2 such that $I = \overline I$, but we won't need this.
\end{remark}

The remainder of this section will collect consequences and explicit statements derived from Theorem 4.1 of \cite{waterhouse1969abelian}, which is an explicit version of the Honda-Tate theorem that describes isogeny classes of abelian varieties in terms of characteristic polynomials of Frobenius. We will first state the theorem using notation which is compatible with the variables used in our work.

\begin{theorem}[Waterhouse]\label{thm:waterhouse}
Let $q$ be a prime and $f$ an integer. The polynomials that occur as the characteristic polynomial of Frobenius of an elliptic curve over $\F_{q^f}$ are exactly the polynomials of the form $x^2+tx+q^f$ with $t$ an integer such that $|t| \leq 2\sqrt{q^f}$ that satisfy one of the following conditions:
\begin{itemize}
    \item[1.] $(t,q)=1$; \label{item:waterhouse_1}
    \item[2.] $f$ is even and $t=\pm 2 q^{f/2}$;\label{item:waterhouse_2}
    \item[3.] $f$ is even, $q\nequiv 1 \Mod{3}$ and $t=\pm q^{f/2}$;\label{item:waterhouse_3}
    \item[4.] $f$ is odd, $q=2,3$ and $t=\pm q^{(f+1)/2}$;\label{item:waterhouse_4}
    \item[5.] \label{item:waterhouse_5}\begin{itemize}
        \item[i.]  $f$ is odd and $t=0$;
        \item[ii.] $f$ is even, $p\nequiv 1 \Mod{4}$ and $t=0$.
    \end{itemize}
\end{itemize}
The first case only occurs for ordinary elliptic curves, while the  other cases only occur for supersingular elliptic curves.
\end{theorem}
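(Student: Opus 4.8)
The plan is to derive the classification from the Honda--Tate theorem together with Tate's description of the local invariants of the endomorphism algebra of an elliptic curve over a finite field in terms of its Frobenius; this is essentially Waterhouse's route, and I would take Honda--Tate itself as a black box rather than reprove it. Write $Q=q^f$ and let $\pi$ be the Frobenius endomorphism of an elliptic curve $E/\F_Q$, so that its characteristic polynomial has the form $x^2+tx+Q$ with $\pi\bar\pi=Q$ and $|\varphi(\pi)|=\sqrt{Q}$ for every complex embedding $\varphi$; the latter gives at once $|t|=|\pi+\bar\pi|\le 2\sqrt{Q}$, so the issue is purely which integers $t$ in this range are realised. Honda--Tate furnishes a bijection between isogeny classes of elliptic curves over $\F_Q$ and conjugacy classes of Weil $Q$-numbers $\pi$ satisfying $\tfrac12[\Q(\pi):\Q]\cdot d=1$, where $d^2=[\End^0(E):\Q(\pi)]$ and $d$ is the least common multiple of the denominators of the local invariants of $\End^0(E)$; these invariants vanish away from $q$ and $\infty$, equal $\tfrac12$ at a real place and $0$ at a complex place, and at a prime $\fq\mid q$ are given by Tate's formula $\mathrm{inv}_{\fq}\equiv [\Q(\pi)_{\fq}:\Q_q]\cdot v_{\fq}(\pi)/v_{\fq}(Q)\pmod 1$. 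The whole argument is then to read off, from $\tfrac12[\Q(\pi):\Q]\cdot d=1$ and these constraints, exactly which $t$ occur, and to note conversely that each admissible $\pi$ comes from a curve.

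First I would dispose of the case $\pi\in\Z$. Then $\pi^2=Q$, so $f$ is even, $\pi=\pm q^{f/2}$ and $t=\mp 2q^{f/2}$ with $t^2=4Q$; here $[\Q(\pi):\Q]=1$ forces $d=2$, i.e.\ $\End^0(E)$ is the quaternion algebra over $\Q$ ramified exactly at $q$ and $\infty$ (invariants $\tfrac12$ at each), and conversely such supersingular curves exist by Deuring. This is case~2. A real quadratic $\Q(\pi)$ cannot occur, since then both archimedean places are real, forcing $d\ge 2$ and hence $\dim E\ge 2$.

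It remains to treat $L=\Q(\pi)$ imaginary quadratic, where $d=1$ automatically, so an elliptic curve with this Frobenius exists precisely when all the (a priori possibly nonzero) local invariants vanish, i.e.\ $\mathrm{inv}_{\fq}\equiv 0$ for every $\fq\mid q$; I would split according to how $q$ behaves in $L$. If $q$ splits, the vanishing of the two invariants forces $\{v_{\fq}(\pi),v_{\bar{\fq}}(\pi)\}=\{0,f\}$, which is readily seen to be equivalent to $q\nmid t$, and conversely any $t$ with $q\nmid t$ and $t^2<4Q$ produces such a $\pi$; this is case~1, precisely the ordinary curves. If $q$ is inert or ramified I would compare the ideals $(\pi)(\bar\pi)=(q)^f$ against the conjugation-stable prime $\fq$ above $q$: this forces $f$ even when $q$ is inert, and in all remaining situations shows $\pi$ is $q^{f/2}$ times a unit of $\mathcal O_L^{\times}$ (if $f$ is even) or $\gamma\, q^{(f-1)/2}$ times a unit, with $\gamma$ a generator of $\fq$ (if $f$ is odd, which additionally requires $\fq$ principal). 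Since the only imaginary quadratic fields whose unit group exceeds $\{\pm1\}$ are $\Q(i)$ and $\Q(\sqrt{-3})$, and $q$ is non-split in these only for $q\equiv 3\pmod 4$, $q\equiv 2\pmod 3$, or $q\in\{2,3\}$, evaluating $t=-(\pi+\bar\pi)$ over each such unit yields precisely $t=\pm q^{f/2}$ (case~3), $t=\pm q^{(f+1)/2}$ with $q\in\{2,3\}$ (case~4), and $t=0$ (case~5), and one checks the invariant condition holds automatically in each, giving the converse. The ordinary/supersingular dichotomy is then immediate: case~1 is exactly the split case, where $q$ splits in $\End^0(E)=\Q(\pi)$, while all other cases have $q$ non-split in the endomorphism algebra. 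The step I expect to be most delicate is this last bookkeeping — keeping straight the interplay between the ramification of $q$, the principality of the prime above it, and the extra roots of unity in $\Q(i)$ and $\Q(\sqrt{-3})$ for $q=2,3$ — together with fixing the normalisations in Tate's invariant formula so that the inert versus ramified distinction is tracked faithfully.
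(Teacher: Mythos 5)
The paper does not prove this theorem; it is stated as a citation to Waterhouse's Theorem~4.1, so there is no internal argument to compare against. Your sketch is a correct reconstruction of the standard derivation from Honda--Tate together with Tate's formula for the local invariants of $\End^0(E)$: the decomposition by $\Q(\pi)$ (rational, then imaginary quadratic, with real quadratic ruled out by the two archimedean places giving $d\geq 2$) and then by the splitting behaviour of $q$ in $\Q(\pi)$ is exactly the right one, and the role of $\Q(i)$ and $\Q(\sqrt{-3})$ as the only imaginary quadratic fields with extra units is correctly deployed to account for the sporadic $t=\pm q^{f/2}$, $t=\pm q^{(f+1)/2}$ and $t=0$ entries. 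One point worth pinning down in the $f$-odd, $q$-ramified branch you flag as delicate: since $\fq^2=(q)$ is principal, $\fq$ has order at most $2$ in the class group, so $\fq^f=(\pi)$ being principal with $f$ odd already forces $\fq$ principal, which is exactly what licenses writing $\pi = u\gamma q^{(f-1)/2}$; evaluating $-(\pi+\bar\pi)$ over the possible roots of unity $u$ then yields $t=0$ generically and the $t=\pm q^{(f+1)/2}$ values only for $q\in\{2,3\}$. No genuine gaps, only the bookkeeping you already identify.
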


The description of the roots of the characteristic polynomials of Frobenius for supersingular elliptic curves given in \Cref{thm:waterhouse} is summarised in \Cref{tab:ss_roots}. In the sequel we refer to such roots as \textbf{supersingular Frobenius roots}, while by \textbf{ordinary Frobenius roots} we shall mean the roots of the characteristic polynomials of Frobenius for ordinary elliptic curves.

\begin{corollary}\label{cor:ord_frob_pow_not_rat}
Let $\beta$ be an ordinary Frobenius root of an elliptic curve over $\F_{q^f}$. Then for all $n \geq 1$, $\beta^n$ is not rational. 
\end{corollary}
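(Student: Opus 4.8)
The plan is to exploit the explicit shape of ordinary Frobenius roots provided by the first case of \Cref{thm:waterhouse}. First I would record the basic structure: $\beta$ is a root of some $x^2 + tx + q^f$ with $t \in \Z$, $(t,q) = 1$ and $|t| \le 2\sqrt{q^f}$. The inequality is in fact strict, for $|t| = 2\sqrt{q^f}$ would give $t^2 = 4q^f$ and hence $q \mid t$ (note $f \ge 1$), contradicting $(t,q) = 1$. Therefore $t^2 - 4q^f < 0$, so $\beta \notin \R$; writing $\overline{\beta}$ for the complex conjugate of $\beta$, the two roots are $\beta$ and $\overline{\beta}$, and $L := \Q(\beta)$ is an imaginary quadratic field with $\beta\overline{\beta} = q^f$ and $\beta + \overline{\beta} = -t$.

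Next I would argue by contradiction: suppose $\beta^n \in \Q$ for some $n \ge 1$. Then $\beta^n$ is real, so $\overline{\beta}^{\,n} = \overline{\beta^n} = \beta^n$, which means $\zeta := \beta/\overline{\beta}$ is an $n$-th root of unity lying in $L$. Since the group of roots of unity of an imaginary quadratic field has order $2$, $4$ or $6$, we get $\zeta \in \{\pm 1,\ \pm i,\ \pm\omega,\ \pm\omega^2\}$ with $\omega = e^{2\pi i/3}$.

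Finally I would translate this back into an arithmetic constraint on $t$. From $\overline{\beta} = \zeta^{-1}\beta$ and $\beta\overline{\beta} = q^f$ one gets $\beta^2 = \zeta q^f$, and then, using $\zeta^{-1} = \overline{\zeta}$, $t^2 = (\beta + \overline{\beta})^2 = \beta^2(1 + \zeta^{-1})^2 = q^f\bigl(\zeta + \zeta^{-1} + 2\bigr) = q^f\bigl(2 + 2\,\mathrm{Re}(\zeta)\bigr)$. Going through the finitely many values of $\zeta$ above, the integer $2 + 2\,\mathrm{Re}(\zeta)$ lies in $\{0,1,2,3,4\}$, so $t^2 \in \{0,\,q^f,\,2q^f,\,3q^f,\,4q^f\}$. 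As $f \ge 1$, in each case $q \mid t^2$ and hence $q \mid t$ (with $t = 0$ in the first case), contradicting $(t,q) = 1$. This contradiction establishes the claim.

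The only step that takes any thought is the last one, namely producing a contradiction with ordinarity; a variant that avoids the root-of-unity bookkeeping is to note that $(t,q) = 1$ makes $x^2+tx+q^f$ separable modulo $q$, so $q$ splits in $\O_L$ as $\fq\overline{\fq}$ with $(\beta) = \fq^f$ (after possibly swapping $\fq$ and $\overline{\fq}$), whereupon $\beta^n \in \Q \subseteq \Z$ would force the impossible equality of ideals $\fq^{\,nf} = (\beta^n) = \overline{(\beta^n)} = \overline{\fq}^{\,nf}$ with $nf \ge 1$.
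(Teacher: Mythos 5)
Your proof is correct, but it takes a genuinely different route from the paper's. The paper base-changes $E$ to $\F_{q^{fn}}$: the roots of the new characteristic polynomial are $\beta^n, \overline{\beta}^n$, and since being ordinary is preserved under base change, the new polynomial $x^2 + t'x + q^{fn}$ again falls into Case~1 of \Cref{thm:waterhouse}, so $\gcd(t',q)=1$, the Hasse bound is strict, the discriminant is negative, and $\beta^n$ is a root of an irreducible quadratic --- hence irrational. Your argument stays entirely over $\F_{q^f}$: you observe that $\Q(\beta)$ is imaginary quadratic, that $\beta^n \in \Q$ forces $\zeta = \beta/\overline{\beta}$ to be a root of unity in $\Q(\beta)$ (hence of order dividing $4$ or $6$), and then grind out $t^2 = q^f(2+2\,\mathrm{Re}\,\zeta) \in \{0, q^f, 2q^f, 3q^f, 4q^f\}$, in each case contradicting $\gcd(t,q)=1$; your ideal-theoretic variant (split $q = \fq\overline{\fq}$ in $\O_L$, $(\beta) = \fq^f$, and $(\beta^n) = \fq^{nf}$ cannot be conjugation-invariant) is also sound and arguably the cleanest of the three. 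The paper's proof is shorter but leans on the external fact that ordinarity is a geometric property invariant under extension of the ground field; yours is more elementary and self-contained, trading that input for some direct computation in the CM field $\Q(\beta)$.
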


\begin{proof}
Let $E/\F_{q^f}$ be an ordinary elliptic curve with characteristic polynomial of Frobenius equal to $(x - \beta)(x - \bar{\beta})$. Then $(x - \beta^n)(x - \bar{\beta}^n)$ is the characteristic polynomial of Frobenius of $E$ viewed over $\F_{q^{fn}}$. Since being ordinary is invariant under base change one has that $\beta^n$ is a root of a polynomial $x^2+t'x+q^{fn}$ of the form of  Case 1 of \Cref{thm:waterhouse}; that is, $\gcd(t',q) = 1$, and therefore $|t'| < 2\sqrt{q^f}$, which implies that the discriminant of this polynomial is negative. In particular $\beta^n$ is the root of an irreducible polynomial, and hence not rational.
\end{proof}

\begin{table}[]
$$
\renewcommand{\arraystretch}{1.5}
\begin{array}{|c|c|c|c|c|c|c|}
\hline
\text{Case of \Cref{thm:waterhouse}} & f                    & t                                & \beta                                                 & \beta^{12}               \\ \specialrule{.12em}{0em}{0em}  
2.                   & even                 & \pm 2 q^{f/2}                    & \mp q^{f/2}                                           & q^{6f}                   \\ \hline
3.                   & even                 & \pm q^{f/2}                      & \pm \zeta_3 q^{f/2} \text{ or } \pm \zeta_3^2 q^{f/2} & q^{6f}                   \\ \hline
\multirow{4}{*}{4.}  & \multirow{4}{*}{$odd$} & \multirow{2}{*}{$\pm 2^{(f+1)/2}$} & \pm(i-1)2^{(f-1)/2} \text{ or }                       & \multirow{2}{*}{$-q^{6f}$} \\ 
                    &                      &                                  & \pm(-i-1)2^{(f-1)/2}                                  &                          \\ \cline{3-5} 
                    &                      & \multirow{2}{*}{$\pm 3^{(f+1)/2}$} & \pm(\zeta_3-1)3^{(f-1)/2} \text{ or }                 & \multirow{2}{*}{$q^{6f}$}  \\ 
                    &                      &                                  & \pm(\zeta_3^2-1)3^{(f-1)/2}                           &                          \\ \hline
5.i                 & odd                  & \multirow{2}{*}{0}                               & \sqrt{-q^f}                                           & q^{6f}                   \\ \cline{1-2} \cline{4-5}
5.ii                & even                 &                                  & iq^{f/2}                                              & q^{6f}                   \\ \hline

\end{array}
$$
\caption{\label{tab:ss_roots}Description of the roots of the characteristic polynomials of Frobenius of supersingular elliptic curves.}
\end{table}

\begin{corollary}\label{cor:ss_frob_pow_rat}
Let $\beta$ be a supersingular Frobenius root of an elliptic curve over $\F_{q^f}$. Then $\beta^{12}$ is rational. 
\end{corollary}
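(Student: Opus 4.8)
The plan is to run through the classification of supersingular Frobenius roots supplied by \Cref{thm:waterhouse} and to compute $\beta^{12}$ directly in each case; the upshot will be that $\beta^{12} = \pm q^{6f}$, which is rational. Concretely, let $E/\F_{q^f}$ be a supersingular elliptic curve whose characteristic polynomial of Frobenius $x^2 + tx + q^f$ has $\beta$ as a root. By \Cref{thm:waterhouse}, this polynomial falls under one of Cases 2, 3, 4 or 5 (Case 1 is the ordinary locus), and in each of these $t$ is determined up to sign. I would then substitute into $\beta = \frac{1}{2}\bigl(-t + \sqrt{t^2 - 4q^f}\bigr)$ and take twelfth powers; this reproduces exactly the last two columns of \Cref{tab:ss_roots}.

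The case-by-case computation goes as follows. In Case 2 we have $t = \pm 2q^{f/2}$, so $x^2 + tx + q^f = (x \pm q^{f/2})^2$, giving $\beta = \mp q^{f/2}$ and $\beta^{12} = q^{6f}$. In Case 5 we have $t = 0$, so $\beta^2 = -q^f$ and therefore $\beta^{12} = (-q^f)^6 = q^{6f}$. In Case 3 the discriminant equals $q^f - 4q^f = -3q^f$, so $\beta = q^{f/2}\cdot\frac{-1 \pm \sqrt{-3}}{2}$ up to sign; the second factor is a primitive cube root of unity $\zeta_3$, so $\beta^{12} = \zeta_3^{12}\, q^{6f} = q^{6f}$.

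The only case requiring genuine care is Case 4, where $q \in \{2,3\}$, $f$ is odd and $t = \pm q^{(f+1)/2}$, so the discriminant is $q^f(q-4)$. For $q = 2$ this is $-2\cdot 2^f$, giving $\beta = 2^{(f-1)/2}(\pm 1 \pm i)$; one then uses $(1+i)^{12} = (2i)^6 = -2^6$ in $\Z[i]$ to get $\beta^{12} = -2^{6f} = -q^{6f}$. For $q = 3$ the discriminant is $-3^f$, giving $\beta = 3^{(f-1)/2}(\zeta_3^{\pm 1} - 1)$; one uses $(\zeta_3 - 1)^2 = -3\zeta_3$ in $\Z[\zeta_3]$, whence $(\zeta_3-1)^{12} = (-3\zeta_3)^6 = 3^6$ and $\beta^{12} = 3^{6f} = q^{6f}$. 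Since in every case $\beta^{12} \in \{q^{6f}, -q^{6f}\} \subseteq \Q$, the corollary follows. The main obstacle is thus not conceptual but bookkeeping: getting the units and roots of unity right in Case 4, i.e.\ justifying the Case 4 block of \Cref{tab:ss_roots}; every other case is a one-line application of the quadratic formula.
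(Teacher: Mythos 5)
Your proof is correct and takes the same route as the paper: the paper's proof simply cites the last column of \Cref{tab:ss_roots}, and your case-by-case computation from \Cref{thm:waterhouse} is precisely the derivation of that table. All your root-of-unity bookkeeping in Case 4 (namely $(1+i)^{12}=-2^6$ and $(\zeta_3-1)^{12}=3^6$) checks out and reproduces the tabulated values $\beta^{12}=\pm q^{6f}$.
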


\begin{proof}
This is clear from the last column of \Cref{tab:ss_roots}.
\end{proof}

\begin{corollary}\label{cor:ss_frob_pow_norm}
Let $E/k$ be an elliptic curve over a number field, and $\fq$ a prime of $k$ of odd residue characteristic $q$ such that $E$ admits potentially good supersingular reduction at $\fq$. Let $\beta$ be a root of the characteristic polynomial of Frobenius at $\fq$ acting on the $p$-adic Tate module of $E$ (for any $p \neq q$; this is independent of the choice of $p$). Then $\beta^{12} = \Nm(\fq)^6$.
\end{corollary}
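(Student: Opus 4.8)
The plan is to reduce to the case of good reduction and then invoke \Cref{cor:ss_frob_pow_rat}. Since $E$ has potentially good reduction at $\fq$, it acquires good reduction over some finite extension $L/k$ at a prime $\fQ$ above $\fq$; the key point is that one may take $\fQ/\fq$ to be \emph{totally ramified}. Indeed, additive reduction is never improved by an unramified base change (the N\'eron model commutes with unramified extension and the identity component of the additive special fibre stays additive), so ramification is unavoidable; more precisely, writing $e$ for the order of the finite group $\rho_{E,p}(I_\fq)$, which is normal in the decomposition group at $\fq$, the resulting extension $1\to\rho_{E,p}(I_\fq)\to G_{k_\fq}/\ker(\rho_{E,p}|_{I_\fq})\to\widehat{\Z}\to1$ splits because $\mathrm{cd}(\widehat{\Z})=1$ and $\rho_{E,p}(I_\fq)$ is finite, and a splitting cuts out a totally ramified extension of $k_\fq$ of degree $e$ over which $\rho_{E,p}$ is unramified, hence (N\'eron--Ogg--Shafarevich) over which $E$ has good reduction; one then globalises this to obtain $L$ and $\fQ$. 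Because $\fQ/\fq$ is totally ramified we have $\F_\fQ=\F_\fq$ and $\Nm(\fQ)=\Nm(\fq)=q^f$.

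Next I would note that the reduction $\tilde E$ of $E_L$ at $\fQ$ is again supersingular: the height of the formal group of an elliptic curve is invariant under base field extension, and supersingularity is exactly the statement that this height equals $2$ (equivalently, ordinary reduction remains ordinary). Since $\rho_{E,p}$ is unramified at $\fQ$, a Frobenius lift $\phi$ for $\fq$ --- which, as $\F_\fQ=\F_\fq$, is simultaneously a Frobenius lift for $\fQ$ --- acts on the $p$-adic Tate module through a well-defined automorphism whose characteristic polynomial is that of the Frobenius endomorphism of $\tilde E/\F_{q^f}$. Thus $\beta$ is a Frobenius root of the supersingular elliptic curve $\tilde E/\F_{q^f}$, and \Cref{cor:ss_frob_pow_rat} --- more precisely, reading off the final column of \Cref{tab:ss_roots} --- yields $\beta^{12}=q^{6f}=\Nm(\fq)^6$. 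Here the hypothesis that $q$ is odd is used exactly to exclude the Case~4 entry with $q=2$, for which instead $\beta^{12}=-q^{6f}$.

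The main obstacle is the first paragraph: making the passage to good reduction precise while keeping track of Frobenius, since over $k$ itself the ``characteristic polynomial of Frobenius at $\fq$'' is a priori only well-defined up to the action of inertia, so one must ensure the conclusion holds for every admissible choice of $\beta$. Arranging the extension to be totally ramified handles the residue-degree bookkeeping cleanly, so that no root-of-unity ambiguity survives. An alternative that avoids choosing the extension: $\beta$ is a root of $x^2-ax+\Nm(\fq)$ with $a\in\Z$ (Weil--Deligne purity for the potentially good reduction case), some power of $\beta$ is rational (apply \Cref{cor:ord_frob_pow_not_rat} and \Cref{cor:ss_frob_pow_rat} after any base change to good reduction), and these two facts together with Niven's theorem on the rational values of the cosine force $\beta^{12}=\Nm(\fq)^6$ directly, using only that $q$ is odd.
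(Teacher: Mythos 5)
Your argument is correct and follows essentially the paper's route: pass to a totally ramified extension $L/k_\fq$ over which $E$ acquires good reduction (so $\F_\fQ=\F_\fq$), note that the reduced curve is supersingular, and read $\beta^{12}=q^{6f}=\Nm(\fq)^6$ off the last column of \Cref{tab:ss_roots}, with $q$ odd serving exactly to exclude the $q=2$ row of Case~4. The only divergence is in how you produce the totally ramified extension: you invoke projectivity of $\widehat{\Z}$ (equivalently, $\mathrm{cd}\leq 1$) to split off the finite image of inertia, whereas the paper simply cites the Kodaira--N\'eron classification, which additionally pins the degree of $L/k_\fq$ to $1,2,3,4$ or $6$ and thereby sets up the twist observation needed to justify that $\beta^{12}$ is well-defined. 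On that last point your closing remark is slightly misstated: taking $L/k_\fq$ totally ramified settles the residue-degree bookkeeping but does \emph{not} eliminate the root-of-unity ambiguity in $\beta$ coming from the choice of $L$ (different choices give distinct twists of the reduced curve over $\F_\fq$); the reason $\beta^{12}$ is nevertheless unambiguous is that \Cref{tab:ss_roots} covers every supersingular curve over $\F_\fq$ and each entry gives $\beta^{12}=q^{6f}$ once $q$ is odd, which is exactly the paper's stated justification.
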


\begin{proof}
By considering the different types of additive reduction from the Kodaira-N\'{e}ron classification of the special fibre of the N\'{e}ron model of $E$ over $\O_k$, the elliptic curve $E$ attains good reduction over a totally ramified extension $L$ of $k$ of degree $1$, $2$, $3$, $4$ or $6$. Since $L$ is totally ramified, the reduction of $E/L$ at the unique prime of $L$ above $\fq$ is an elliptic curve over the residue field $\F_\fq$ of $k$ at $\fq$, and hence $\beta$ is a root of the characteristic polynomial of Frobenius of a supersingular elliptic curve over $\F_\fq$. This elliptic curve over $\F_\fq$ is not unique, since it depends on the choice of $L$; however another choice of $L$ will only lead to a curve which is a quadratic, quartic or sextic twist of E.  Thus $\beta^{12}$ is independent of this choice.

Writing $f$ for the residue field degree of $\fq$, we see from the last column of \Cref{tab:ss_roots} that $\beta^{12} = \Nm(\fq)^6$. 
\end{proof}

We conclude this background section with a recap on Serre's fundamental characters.

\subsection{Fundamental characters of level $n$}\label{ssec:fundamental_chars}

The canonical reference for the material here is \cite[\textsection 1]{serre_prop_galois}.

For our number field $k$ and prime $p$, let $\fp$ be a prime of $k$ over $p$, and consider the tame inertia group $I_\fp = \Gal(\overline{k_\fp}/k^{nr}_\fp)$, where $k^{nr}_\fp$ is the maximal unramified extension of $k_\fp$ in $\overline{k_\fp}$. Both of these fields have the same residue field, which is an algebraic closure of $\F_p$ that we denote by $\overline{\F_p}$.

Inside $\overline{\F_p}$ one has all finite extensions $\F_{p^n}$ of $\F_p$ with norm maps $\F^\times_{p^m} \to \F^\times_{p^n}$ whenever $n | m$, and there is a natural identification $\theta$ of $I_\fp$ with the inverse limit of this system of norm maps \cite[Proposition 2]{serre_prop_galois}.

We therefore obtain, for each power $q = p^n$, the natural projection \[ \theta_{q-1} : I_\fp \to \F_q^\times; \] this may be considered \emph{the} fundamental character of level $n$, and by composing it with an automorphism of $\F_q^\times$ (i.e. a power $\phi^i$ of the Frobenius automorphism $\phi : x \mapsto x^p$) we obtain all other fundamental characters of level $n$, viz. $\theta_{q-1}^{p^i} := \phi^i \circ \theta_{q-1}$, for $i = 0, \ldots, n-1$.

\section{On Momose's Isogeny Classification Theorem}\label{sec:strengthen_momose}

In this section we generalise and make Theorem 1 of \cite{momose1995isogenies} algorithmic, and address the gaps and mistakes in the original proof in \emph{loc. cit.} mentioned in the Introduction.

\subsection{Strengthening Momose's Lemma 1}\label{subsec:mom_lem_1}

In this subsection we provide a version of Momose's Lemma 1 without the assumptions that $p$ is unramified in $k$ or that $k$ is Galois over $\Q$.

The first step is to describe the possible integers $a_\sigma$ which can occur in the statement of Momose's Lemma 1. A fuller description of these integers was given by David as Proposition 3.2 in \cite{david2011borne} under the assumption that $p$ is unramified in $k$. We thus begin by providing the more general version of this result of David.

Let $\fp$ be a prime of $k$ lying above $p$, $I_\fp  =\Gal(\overline{k_\fp}/k^{nr}_\fp)\subseteq \Gal(\overline{k_\fp}/k_\fp)$ the inertia subgroup at $\fp$ and let $\mu$ be the 12-th power of a $p$-isogeny character over $k_\fp$. Then we would like to describe the possible actions of $\mu$ restricted to $I_\fp$. 

This is provided by following result, which is a generalisation of Proposition 3.2 of \cite{david2011borne}, since for an unramified extension, the zeroeth fundamental character $\theta_{p-1}$ is equal to the cyclotomic character $\chi_p$ restricted to $I_\fp$. See \Cref{ssec:fundamental_chars} for more on fundamental characters.

\begin{proposition}\label{prop:local_momose_1}
Let $\mu=\lambda^{12}$ be the $12$-th power of a $p$-isogeny character, and $\fp | p$ a prime of $k$ with ramification index $e_\fp$. Suppose that $p \geq 5$. Then there exists an integer $0 \leq a_\fp \leq 12e_\fp$ such that $\mu|_{I_\fp} = \theta_{p-1}^{a_\fp}$. We have $a_{\fp} \equiv 0,4,6  \textrm{ or } 8 \Mod{12}$, and if $E$ is semistable, then $a_{\fp} \equiv 0 \Mod{12}$.
\end{proposition}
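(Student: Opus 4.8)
My plan is to work purely locally at $\fp$, reducing to the known structure theory of one-dimensional $\mathrm{mod}\ p$ representations of the inertia group $I_\fp$. First I would recall that the kernel of the isogeny is a one-dimensional $\F_p$-vector space with an action of $G_{k_\fp}$, so $\lambda \colon G_{k_\fp} \to \F_p^\times$, and hence $\mu = \lambda^{12}$ restricted to $I_\fp$ is a character $I_\fp \to \F_p^\times$ that factors through the tame quotient (since wild inertia is pro-$p$ and $\F_p^\times$ has order prime to $p$). By the classification of tame characters via fundamental characters (Serre, \cite[\textsection 1.7]{serre_prop_galois}), any such character into $\F_{p^n}^\times$ is a product $\prod \theta_{p^n-1}^{c_i p^i}$ of powers of fundamental characters of level $n$; but since $\mu|_{I_\fp}$ actually lands in $\F_p^\times$, it must in fact be a power of the level-one fundamental character, i.e.\ $\mu|_{I_\fp} = \theta_{p-1}^{a_\fp}$ for some integer $a_\fp$, which we may normalise to $0 \le a_\fp \le p-2$. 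The content of the proposition is the finer statement that $a_\fp$ can be taken in $[0, 12 e_\fp]$ and satisfies the stated congruences.

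The key input for the bound and the congruences is the theory of the action of inertia on $p$-divisible groups / finite flat group schemes, combined with the fact that $\mu = \lambda^{12}$ kills the ``bad'' part of the ramification. Concretely, I would pass to the finite flat group scheme model of the kernel over $\O_{k_\fp}$ after a suitable ramified base change, or invoke the Raynaud-type description: over the completion $k_\fp$, a one-dimensional $\F_p$-group scheme killed by $p$ has its inertia action on the generic fibre given by $\theta_{p-1}^{\sum_{\tau} n_\tau}$ where the sum runs over embeddings of the residue data and each $n_\tau \in \{0,1\}$, with the number of terms bounded by the ramification index $e_\fp$ — this is exactly where the $a_\fp \le 12 e_\fp$ comes from after taking the $12$th power (each of the $12$ ``slots'' contributes at most $e_\fp$). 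For the congruence conditions, I would use the Kodaira--N\'eron / semistable-vs-additive dichotomy exactly as in Corollary \ref{cor:ss_frob_pow_norm}: in the potentially multiplicative case the isogeny character restricted to inertia is either trivial or the cyclotomic character up to a twist, forcing $a_\fp \equiv 0$ or something handled by the $12$th power; in the potentially good case, $E$ acquires good reduction over a totally (tamely, since $p \ge 5$) ramified extension of degree $e \in \{1,2,3,4,6\}$, and then $\lambda^{12}|_{I}$ becomes unramified, which pins down $a_\fp \bmod 12$ to the divisors-of-$12$ pattern $\{0,4,6,8\}$ coming from $12/e$ for $e \in \{1, 3, 2, \text{(and $e=4,6$ give $3,2$ again modulo the relevant identification)}\}$. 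In the semistable case only $e=1$ occurs, giving $a_\fp \equiv 0 \Mod{12}$.

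I would organise the proof as: (1) reduce $\mu|_{I_\fp}$ to a power of $\theta_{p-1}$ via tameness; (2) use the finite flat group scheme structure (Raynaud/Oort--Tate, or directly David's Proposition 3.2 argument extended by removing the unramifiedness hypothesis) to get $0 \le a_\fp \le 12 e_\fp$; (3) split into potentially-multiplicative and potentially-good reduction at $\fp$, using the Kodaira--N\'eron classification and the hypothesis $p \ge 5$ (so all the relevant extensions degree $\le 6$ are tame) to compute $a_\fp \bmod 12$ in each case and obtain $\{0,4,6,8\}$; (4) observe that semistability removes the additive/potentially-good-but-bad cases, leaving $a_\fp \equiv 0 \Mod{12}$. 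The main obstacle I anticipate is step (2): carefully generalising David's Proposition 3.2 of \cite{david2011borne} to the ramified case requires being precise about how the fundamental character of level $n$ over $k_\fp$ relates to $\theta_{p-1}$ over the base, and tracking the ramification index through the Oort--Tate/Raynaud classification — in particular making sure the bound is $12 e_\fp$ and not merely $12 e_\fp + O(1)$, which is what justifies the subsequent algorithmic enumeration. The congruence bookkeeping in step (3) is routine once one has the clean statement that $\lambda^{12}$ becomes unramified after a tame extension of degree dividing $12$.
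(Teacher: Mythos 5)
Your overall architecture matches the paper's (reduce to the tame quotient, split by reduction type, pass to the totally ramified extension $k'_\fp$ of degree $e'_\fp \mid 12$ where $E$ has good reduction, invoke Serre \textsection 1.13 / Raynaud for the inertia exponent). But step (3) contains a genuine error: you assert that after the base change ``$\lambda^{12}|_{I}$ becomes unramified,'' and then try to read off the congruence on $a_\fp$ from this. That is false. Good reduction of $E$ over $k'_\fp$ does \emph{not} make $\lambda$, let alone $\lambda^{12}$, unramified at the prime above $\fp$ --- $\fp$ lies over $p$, and the paper's $\mu = \lambda^{12}$ is only unramified \emph{outside} $p$. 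What good reduction over $k'_\fp$ actually gives, via Serre/Raynaud, is $\lambda|_{I'_\fp} = (\theta'_{p-1})^{r_\fp}$ for some $0 \le r_\fp \le e'_\fp e_\fp$, and that character is nontrivial in general; there is no unramifiedness to exploit.

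The actual mechanism producing the congruence $a_\fp \equiv 0,4,6,8 \Mod{12}$ is more delicate. On $I'_\fp$ one has $\theta_{p-1} = (\theta'_{p-1})^{e'_\fp}$, so comparing $\lambda|_{I'_\fp} = (\theta'_{p-1})^{r_\fp} = (\theta'_{p-1})^{e'_\fp a'_\fp}$ gives $r_\fp \equiv e'_\fp a'_\fp \Mod{p-1}$, and multiplying by the integer $12/e'_\fp$ yields $a_\fp = (12/e'_\fp)\, r_\fp$. The bound $a_\fp \le 12 e_\fp$ then falls out of $r_\fp \le e'_\fp e_\fp$ (so your ``$12$ slots each contributing $e_\fp$'' picture is not quite the right scaling --- it is one exponent of size $e'_\fp e_\fp$ multiplied by $12/e'_\fp$). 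The congruence requires a case split on $e'_\fp \in \{1,2,3,4,6\}$: when $e'_\fp \le 3$ the factor $12/e'_\fp \in \{12,6,4\}$ already forces $a_\fp \in \{0,4,6,8\} \Mod{12}$, but when $e'_\fp \in \{4,6\}$ the factor is $3$ or $2$, and one needs the additional parity observation that $r_\fp \equiv e'_\fp a'_\fp \Mod{p-1}$ with $p-1$ even and $e'_\fp a'_\fp$ even forces $r_\fp$ even, which is what rescues the congruence. Your proposal, as written, skips all of this and replaces it with a false unramifiedness claim, so it would not compile into a correct proof without substantial repair of step (3).
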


\begin{proof}
This proof is essentially the same as the proof of Proposition 3.2 of \cite{david2011borne}, the main difference being that we here are keeping track of the ramification index $e_\fp$ (which for David was equal to $1$). In the interest of being self-contained, we supply the details.

In the case that we have potentially multiplicative reduction we have $\lambda^{2}|_{I_{\fp}}$ is either trivial or $\chi_p^2$. Now since $\theta_{p-1}^{e_\fp} = \chi_p$ it follows that $\mu|_{I_\fp} = \theta_{p-1}^{a_\fp}$ where $a_\fp = 0$ or  $a_\fp = 12e_\fp$.

In the case that we have potentially good reduction there exists an integer $a'_\fp$ such that $\lambda|_{I_\fp} = \theta_{p-1}^{a'_\fp}$. Additionally there exists a purely ramified extension $k_\fp \subseteq k'_\fp$ over which the elliptic curve $E$ obtains good reduction. Writing $\widetilde{E_\fp}$ for this reduced elliptic curve, we have that the degree $e'_\fp := [k'_\fp : k_\fp]$ of this extension divides the size of the geometric automorphism group $\Aut(\widetilde{E_\fp})$ (see the proof of Theorem 2 in Section 2 of \cite{serretate}). Since $p \geq 5$, we obtain that $e'_\fp$ divides $2, 4$ or $6$. Let $I'_\fp$ denote the inertia subgroup of $\Gal(\overline{k'_\fp}/k'_\fp)$. Then $e'_\fp e_\fp$ is the ramification degree of $k'_\fp/\Q_p$. Writing $\theta'_{p-1}$ for the fundamental character of level one of $k'_\fp$, we have on $I'_\fp$ the equality $\theta_{p-1} = (\theta'_{p-1})^{e'_\fp}$. Since $E$ has good reduction over $k'_\fp$ we know by \cite[\textsection 1.13]{serre_prop_galois} that there exists an integer $0 \leq r_{\fp} \leq e'_\fp e_\fp$ such that on $I'_\fp$ we have $\lambda = (\theta'_{p-1})^{r_\fp}$. Putting all of these relations between characters on $I'_\fp$ together we obtain
$$(\theta'_{p-1})^{r_\fp} = (\theta_{p-1})^{a'_\fp} = (\theta'_{p-1})^{e'_\fp a'_\fp}.$$

Since the order of $\theta'_{p-1}$ is $p-1$ one obtains:
$$r_\fp \equiv e'_\fp a'_\fp \mod p-1.$$ Multiplying by the integer $12/e'_{\fp}$ gives $\frac {12} {e'_\fp} r_\fp \equiv \frac {12} {e'_\fp} e'_\fp a'_\fp \equiv 12 a'_\fp \mod p-1$, so that:
$$ \mu|_{I_\fp} = \lambda^{12}|_{I_\fp} = \theta_{p-1}^{a_\fp}$$ with $a_{\fp} = \frac {12} {e'_\fp} r_\fp$. Observe that the possible range of $r_\fp$ gives that $0 \leq a_\fp \leq 12e_\fp$.

We now show that $a_\fp = \frac {12} {e'_\fp} r_\fp \equiv 0, 4, 6 \textrm{ or } 8 \mod 12.$ This can be done by studying the possible values $1,2,3,4,6$ of $e'_\fp$ separately. In the cases $e'_\fp=1, 2$ or $3$ one has $\frac {12} {e'_\fp} \equiv 0, 4, 6 \textrm{ or } 8 \mod 12$, hence $a_\fp$ satisfies that congruence as well. If $e'_\fp=4$ respectively $6$, then $\frac {12} {e'_\fp} = 3$ respectively $2$. Furthermore, $r_p \equiv e'_\fp a'_\fp \mod p-1$ implies $r_\fp$ is even, which gives the result in these two cases also.

Finally we establish that, if $E$ is semistable, then we have $a_\fp \equiv 0 \Mod{12}$. If $E$ has multiplicative reduction, then this was established at the very beginning of the proof. If $E$ has good reduction, then this follows from observing that the integer $e'_\fp$ is equal to $1$, whence $a_\fp = 12r_\fp$.
\end{proof}

\begin{remark}
The character $\theta_{p-1}$ is surjective, so in particular the residue class $a_\fp \mod p-1$ is determined by $\mu$. Thus if $p-1 > 12e_\fp$ then the integer $a_\fp$ is unique.
\end{remark}

In order to get the results we need without the Galois assumption we will first make the following definition following Freitas and Siksek in \cite{freitas2015criteria} (see the discussion just before Proposition 2.2 in \emph{loc. cit.}).

\begin{definition}\label{def:signature}
Let $k$ be a number field with Galois closure $K$ over $\Q$. For each $\sigma \in \Sigma := \Hom(k,K)$ let $a_\sigma \in \left\{0,4,6,8,12\right\}$ be an integer and denote by $\eps = \sum_{\sigma}a_\sigma\sigma$ a formal sum. Then $\eps$ is called a {\bf $k$-isogeny signature}.
\end{definition}

For $\alpha \in k^\times$, and $\eps = \sum_{\sigma}a_\sigma\sigma$ a $k$-isogeny signature we prefer to use Momose's notation $\alpha^\eps$ to denote what David calls $\mathcal{N}(\alpha)$:
\begin{align*}
    (-)^\eps : k &\longrightarrow K\\
    \alpha &\longmapsto \prod_{\sigma \in \Sigma}\sigma(\alpha)^{a_\sigma}.
\end{align*}

We prefer this approach since it makes explicit the dependence on $\eps$.

\Cref{prop:local_momose_1} describes $\mu$ on the local inertia groups at the different primes $\fp$ above $p$ in terms of fundamental characters. However since $\mu$ is unramfied outside of $p$ these are the only inertia subgroups on which $\mu$ acts nontrivially.

Now let $k \subset k^\mu$ be the smallest abelian extension that trivializes $\mu$. Then $\mu$ induces an injective morphism $\overline \mu : \Gal(k^\mu/k) \to \F_p^\times$. Let $\I_k$ denote the idèles of $k$. Then class field theory provides the global reciprocity map $$r : \I_k \to \Gal(k^\mu/k).$$ The units $k^\times$ embed diagonally into $\I_k$ and one has $r(k^\times) = \set{\id_{k^\mu}}$, and composing the inclusions $k_\fp^\times \hookrightarrow \I_k$ with $r$ gives the local reciprocity map $r_\fp$ at $\fp$. Since the order of $\Gal(k^\mu/k)$ is coprime to $p$ the local reciprocity map vanishes on $1+\fp\O_{k_\fp}$ and hence we also get a map $\overline r_\fp:\F_\fp^\times \to \Gal(k^\mu/k)$. Let $\F_\fp$ denote the residue field at $\fp$ and write $p^n = \# \F_\fp$. Then the fundamental character of level $n$ can be seen as a map $\theta_{p^n-1}: I_\fp \to \F_\fp^\times.$ Finally by choosing an embedding $k^\mu \to \overline k_\fp$ we get a map $\phi_\fp:I_\fp \subseteq  \Gal(\overline{k_\fp}/k_\fp) \to \Gal(k^\mu/k)$. This can be summarized in the following commutative diagram:

\centerline{\xymatrix{
 \O_{k_\fp}^\times \ar@{->>}[rr] \ar@{^(->}[d]&& \F_\fp^\times \ar[d]_{\alpha \mapsto \overline r_\fp(\alpha^{-1})} \ar@{-->}[drr]^(.3){??} && I_\fp \ar@{->>}[ll]_{\theta_{p^n-1}} \ar[d]^{\mu|_{I_\fp}=\theta_{p-1}^{a_\fp}}\ar[dll]_(.3){\phi_\fp}\\
  k_\fp^\times \ar[rr]_{\alpha \mapsto r_\fp(\alpha^{-1})\quad} && \Gal(k^\mu/k) \ar[rr]_{\quad\quad \overline \mu} && \F_p^\times
}}
where the equality $\phi_\fp(s) = \overline r_\fp \circ \theta_{p^n-1}(s^{-1})$ is Proposition 3 of \cite{serre_prop_galois}. Since $\theta_{p-1} = \Nm_{\F_\fp/\F_p} \circ \theta_{p^n-1}$ we know that the map that is needed at the place of the question marks to make everything commute is $x \mapsto \Nm_{\F_\fp/\F_p}(x)^{a_\fp}.$

In particular this commutative diagram allows one to describe $\overline \mu \circ r_\fp : \O_{k_\fp}^\times \to \F_p^\times$ more directly by: 
    $$\overline \mu \circ r_\fp(\alpha) = \Nm_{\F_\fp/\F_p}(\alpha \Mod{\fp})^{-a_\fp}. $$

Having this concise description of $\overline \mu \circ r_\fp$ on $\O_{k_\fp}^\times$ for all primes $\fp | p$ using local class field theory, as well as the fact that $\mu$ is unramified outside $p$, allows one to describe $\mu$ more generally as in the following Proposition, in which we have used the ideal theoretic description of class field theory to see $\mu$ as a character on the fractional ideals coprime to $p$ (c.f. \Cref{rem:mu_galois_and_ideal}).

\begin{proposition}[Generalisation of Proposition 2.6 in \cite{david2012caractere}]\label{prop:momose_1_non_galois_md}
Let $k$ be a number field, $K$ its Galois closure over $\Q$ and $\mu$ the 12-th power of a $p$-isogeny character over $k$. Then for every prime ideal $\fp_0$ lying above $p$ in $K$ there exists a  $k$-isogeny signature $\eps=\eps_{\fp_0}=\sum_{\sigma}a_\sigma\sigma$ such that for all $\alpha \in k^\times$ prime to $p$,
\[ \mu((\alpha)) \equiv \alpha^\varepsilon \Mod{\fp_0}.\]

Furthermore if $p > 13 $ and $p$ is unramified in $k$, then for every $\fp_0$ there is an unique such signature $\eps_{\fp_0}$.
\end{proposition}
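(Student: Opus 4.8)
First I would point out that the proof is an adaptation of David's argument for Proposition 2.6 of \cite{david2012caractere}: the Galois hypothesis on $k$ plays no essential role provided one works throughout with $\Sigma=\Hom(k,K)$ and a fixed prime $\fp_0$ of $K$, and ramification of $p$ is accommodated by distributing the local exponents $a_\fp$ of \Cref{prop:local_momose_1}. The inputs are the local description obtained just above, $\overline\mu\circ r_\fp(\alpha)=\Nm_{\F_\fp/\F_p}(\alpha\bmod\fp)^{-a_\fp}$ for $\alpha\in\O_{k_\fp}^\times$ and $\fp\mid p$, together with global class field theory: $\overline\mu\circ r$ is trivial on $k^\times$, is trivial on $\O_{k_v}^\times$ for every finite place $v\nmid p$, and is trivial on $k_v^\times$ for every archimedean $v$ (since $\mu=\lambda^{12}$ kills every complex conjugation). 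Given $\alpha\in k^\times$ prime to $p$, I would apply $\overline\mu\circ r$ to the diagonal idèle all of whose components are $\alpha$ and factor this idèle as the product of its $p$-part, its finite away-from-$p$ part, and its archimedean part. Triviality on $k^\times$ then forces $\prod_{\fp\mid p}\overline\mu\circ r_\fp(\alpha)$ to be the inverse of the contribution of the finite away-from-$p$ part; the latter equals $\mu((\alpha))^{\pm1}$ by unramifiedness away from $p$ together with the definition of $\mu$ on ideals prime to $p$. Feeding in the local formula and fixing an embedding $K\hookrightarrow K_{\fp_0}$, one arrives at
\[
\mu((\alpha))\equiv\prod_{\fp\mid p}\Nm_{\F_\fp/\F_p}(\alpha\bmod\fp)^{a_\fp}\pmod{\fp_0}.
\]

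Next I would turn the right-hand side into a product over $\Sigma$. Fixing $\fp_0$ partitions $\Sigma$ according to the prime $\fp:=\sigma^{-1}(\fp_0)$ of $k$ it lies over; there are $e_\fp f_\fp$ embeddings $\sigma$ over a given $\fp$, and reduced modulo $\fp_0$ they split into $f_\fp$ groups of $e_\fp$ embeddings each, one group for each residue-field embedding $\F_\fp\hookrightarrow\F_{\fp_0}$, the pointwise product of these $f_\fp$ embeddings being $\Nm_{\F_\fp/\F_p}$. By \Cref{prop:local_momose_1}, $0\le a_\fp\le 12e_\fp$ and $a_\fp\equiv 0,4,6,8\pmod{12}$, so $a_\fp$ is a sum of $e_\fp$ elements of $\{0,4,6,8,12\}$ — e.g.\ $\floor{a_\fp/12}$ copies of $12$, possibly one copy of $a_\fp\bmod 12$, and zeros to make $e_\fp$ summands. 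Assigning these $e_\fp$ values to the embeddings inside each group over $\fp$, with the same assignment used in every group, defines $\eps_{\fp_0}=\sum_\sigma a_\sigma\sigma$, which is a $k$-isogeny signature; since each group then carries total exponent $a_\fp$, we get $\alpha^{\eps_{\fp_0}}=\prod_\sigma\sigma(\alpha)^{a_\sigma}\equiv\prod_{\fp\mid p}\Nm_{\F_\fp/\F_p}(\alpha\bmod\fp)^{a_\fp}\equiv\mu((\alpha))\pmod{\fp_0}$, establishing existence.

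For uniqueness, assume $p>13$ and $p$ unramified in $k$, and let $\eps,\eps'$ both satisfy the congruence, so that $\prod_\sigma\sigma(\alpha)^{a_\sigma-a'_\sigma}\equiv 1\pmod{\fp_0}$ for all $\alpha$ prime to $p$. Fix a prime $\fp_1\mid p$; for $u\in\F_{\fp_1}^\times$ use CRT in $\O_k$ to pick $\alpha\in\O_k\setminus\{0\}$ with $\alpha\equiv u\pmod{\fp_1}$ and $\alpha\equiv 1\pmod{\fp}$ at the remaining primes over $p$, so that only the $\sigma$ over $\fp_1$ contribute (the rest give $\sigma(\alpha)\equiv 1\pmod{\fp_0}$). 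Since $p$ is unramified there is exactly one such $\sigma$ for each residue-field embedding $\tau_j\colon\F_{\fp_1}\hookrightarrow\F_{\fp_0}$, $0\le j<f_{\fp_1}$, and writing $\tau_j=\tau_0\circ\Frob_p^j$ (with $\Frob_p$ the $p$-power map of $\F_{\fp_1}$) the relation collapses to $\tau_0(u)^{\sum_j p^j c_j}=1$ for all $u\in\F_{\fp_1}^\times$, where $c_j$ is the difference of the $\eps$- and $\eps'$-exponents at that $\sigma$, so $|c_j|\le 12$. Thus $\sum_j p^j c_j\equiv 0\pmod{p^{f_{\fp_1}}-1}$, and since $12/(p-1)<1$ one has $\bigl|\sum_j p^j c_j\bigr|<p^{f_{\fp_1}}-1$, forcing $\sum_j p^j c_j=0$; then $p\mid c_0$ with $|c_0|\le 12<p$ gives $c_0=0$, and inductively $c_j=0$ for all $j$. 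Running over all $\fp_1\mid p$ yields $\eps=\eps'$. (The argument fails at $p=13$, where $12/(p-1)=1$, and for ramified $p$, where it only constrains the sum of the $c_j$'s within each size-$e_\fp$ group — which is exactly why uniqueness is claimed only in the unramified case.)

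The step needing the most care is the passage from the local description to the global congruence: one has to fix mutually compatible normalisations of the local and global reciprocity maps — precisely what the commutative diagram before the statement arranges — so that all signs cancel and $\mu((\alpha))$ comes out with exponents $+a_\fp$, not $-a_\fp$. Everything else — the partition of each $a_\fp$ into $e_\fp$ admissible parts for existence, and the base-$p$ digit uniqueness — is routine.
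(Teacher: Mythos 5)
Your proof is correct, and the existence part is essentially the paper's own argument: the same idelic/class-field-theoretic computation, the same local description $\overline\mu\circ r_\fp(\alpha)=\Nm_{\F_\fp/\F_p}(\alpha\bmod\fp)^{-a_\fp}$, and the same idea of decomposing $a_\fp$ into $e_\fp$ admissible parts. You organize the $e_\fp f_\fp$ embeddings over $\fp$ as $f_\fp$ fibers of size $e_\fp$ (one fiber per residue-field embedding $\F_\fp\hookrightarrow\F_{\fp_0}$, all $e_\fp$ embeddings in a fiber reducing to the same $\tau_j$), whereas the paper writes $\Hom(k_\fp,\overline{\Q_p})=\bigcup_{i=1}^{e_\fp}S_{\fp,i}$ with each $S_{\fp,i}$ of size $f_\fp$ recovering the norm; these are the transposed slicings of the same $e_\fp\times f_\fp$ array and yield identical assignments $a_\sigma=a_{\fp,i}$. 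So for existence the two proofs are the same up to presentation.

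For uniqueness the situation is different: the paper states the uniqueness claim but its proof only establishes existence, ending with the ramified case and ``we are done''; the closest the paper comes to a justification is the remark after \Cref{prop:local_momose_1} that $a_\fp$ mod $p-1$ is determined by $\mu$, which gives uniqueness of the local exponents but does not directly rule out an unrelated signature $\eps'$ accidentally satisfying the global congruence. Your argument fills this in cleanly and is correct: CRT isolates one prime $\fp_1\mid p$ at a time, reduces the congruence to $\tau_0(u)^{\sum_j p^j c_j}=1$ for all $u\in\F_{\fp_1}^\times$ with $|c_j|\le 12$, and the bound $12\sum_{j<f}p^j=12\frac{p^f-1}{p-1}<p^f-1$ for $p\ge 17$ forces the base-$p$ digit sum $\sum_j p^j c_j$ to vanish, whence all $c_j=0$. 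Your parenthetical explaining why the argument fails for $p=13$ and in the ramified case is also exactly right and shows you understood the hypotheses. This is a genuine, self-contained uniqueness proof that the paper omits; it would be a worthwhile addition.
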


\begin{proof}
View $\overline\mu \circ r$ as a character on $\I_k$ that vanishes on $k^\times$. Since $\mu$ is the $12$-th power of another character, $\overline\mu \circ r_v$ is trivial for all infinite places $v$ of $k$. We can use this to compute
\begin{align*}
    1   &= \prod_{\fq \nmid p} \overline \mu \circ r_\fq(\alpha_\fq) \times \prod_{\fp \mid p} \overline \mu \circ r_\fp(\alpha_\fp)\\
     &=\prod_{\fq \nmid p} \mu(\frob_\fq)^{v_\fq(\alpha)} \times \left(\prod_{\fp \mid p} \Nm_{\F_\fp/\F_p}(\alpha \mod \fp)^{a_\fp} \right)^{-1}
\end{align*}

If $p$ were unramified then we could write $$\Nm_{\F_\fp/\F_p}(\alpha \Mod{\fp}) \equiv \prod_{\sigma \in \Hom(k_\fp, \overline{\Q_\fp})} \sigma(\alpha) \Mod{\fp'},$$
where $\fp'$ is the prime of $\overline{\Q_\fp}$ lying over $p$, and the proposition would follow by setting $a_\sigma := a_{\sigma^{-1}(\fp_0)}$ and rewriting as follows:
\begin{align*}
\prod_{\fp \mid p} \Nm_{\F_\fp/\F_p}(\alpha \Mod{\fp})^{a_\fp} &\equiv \prod_{\fp \mid p}  \prod_{\sigma \in \Hom(k_\fp, \overline{\Q_\fp})} \sigma(\alpha)^{a_\fp} \Mod{\fp'}\\
&\equiv \prod_{\sigma \in \Hom(k,K)} \sigma(\alpha)^{\alpha_\sigma} \Mod{\fp_0} = \alpha^\eps \Mod{\fp_0},
\end{align*}
where in the last step we choose an embedding $K \subseteq K_{\fp_0} \hookrightarrow \overline{\Q_p}$.

Suppose now that a prime $\fp|p$ ramifies in $k$. Let $e_\fp$ denote the ramification index at $\fp$. Then we can write $$\Hom(k_\fp, \overline{\Q_p}) = \bigcup_{i=1}^{e_\fp} S_{\fp,i}$$ as the union of $e_\fp$ different sets of size $[\F_\fp:\F_p]$, so that for each $S_{\fp,i}$ we have:
$$\Nm_{\F_\fp/\F_p}(\alpha \Mod{\fp}) \equiv \prod_{\sigma \in S_{\fp,i}} \sigma(\alpha) \Mod{\fp'}.$$
Furthermore the conditions on $a_\fp$ from \Cref{prop:local_momose_1} allow us to write $a_\fp = \sum_{i=1}^{e_\fp} a_{\fp,i}$ with each $0 \leq a_{\fp,i} \leq 12$ and $a_{\fp,i} \equiv 0,4,6$ or $8 \mod 12$, whence
$$\Nm_{\F_\fp/\F_p}(\alpha \Mod{\fp})^{a_\fp} \equiv \prod_{i=1}^{e_\fp}\prod_{\sigma \in S_i} \sigma(\alpha)^{a_{\fp,i}} \Mod{\fp'}.$$

Writing 
\[\Hom(k,K) = \Hom(k,\overline{\Q_p}) =  \bigcup_{\fp|p}\bigcup_{i=1}^{e_\fp} S_{\fp,i}\]
and setting $a_\sigma = a_{\fp,i}$ for $\sigma \in S_{\fp,i}$ we can rewrite 
\begin{align*}
\prod_{\fp \mid p} \Nm_{\F_\fp/\F_p}(\alpha \Mod{\fp})^{a_\fp} &\equiv \prod_{\fp \mid p}  \prod_{i=1}^{e_\fp} \prod_{\sigma \in S_{\fp,i}} \sigma(\alpha)^{a_{\fp,i}} \Mod{\fp'}\\
&\equiv \prod_{\sigma \in \Hom(k,K)} \sigma(\alpha)^{\alpha_\sigma} \Mod{\fp_0} = \alpha^\eps \Mod{\fp_0}
\end{align*}
as in the unramified case and we are done.
\end{proof}

We refer to $\eps_{\fp_0}$ as \textbf{the isogeny signature of $\lambda$ with respect to $\fp_0$}. Note that since $\Gal(K/\Q)$ acts transitively on the primes above $p$, a different choice of prime $\fp_0$ merely permutes the integers $a_{\sigma}$. We will therefore often drop the subscript $\fp_0$ and speak of $\eps$ as \textbf{the isogeny signature of $\lambda$}. Note that fixing an ordering to the embeddings in $\Sigma = \Hom(k,K)$ allows one to think of the signature as a $d$-tuple valued in the set $\left\{0,4,6,8,12\right\}$, and hence one sees that there are precisely $5^d$ possible isogeny signatures for a degree $d$ number field. If all the integers $a_{\sigma}$ are the same integer $a$, then clearly the ordering on $\Sigma$ does not matter, and in the sequel we denote this signature as the $d$-tuple $(a,\ldots, a)$.

From the construction of $\eps_{\fp_0}$ in the above proof, we obtain the following condition which must be satisfied by the integers $a_\sigma$.

\begin{corollary}\label{cor:admissible_signatures}
Let $\eps_{\fp_0} = \sum_{\sigma \in \Sigma}a_\sigma\sigma$ be an isogeny signature with respect to $\fp_0$. Then, for $\sigma, \tau \in \Sigma = \Hom(k,K)$,
\[ \sigma^{-1}(\fp_0) = \tau^{-1}(\fp_0) \Longrightarrow a_\sigma \equiv a_\tau \Mod{p-1}.\]
In particular, if $p$ is inert in $k$ and $p \geq 17$, then all the integers $a_\sigma$ are the same.
\end{corollary}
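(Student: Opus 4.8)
The plan is to argue directly from the defining property of an isogeny signature — that $\mu((\alpha)) \equiv \alpha^{\eps_{\fp_0}} \Mod{\fp_0}$ for every $\alpha \in k^{\times}$ prime to $p$ — evaluated on a well-chosen $\alpha$. Fix $\sigma,\tau \in \Sigma$ with $\sigma^{-1}(\fp_0) = \tau^{-1}(\fp_0) =: \fp$; the embeddings $\rho \in \Sigma$ with $\rho^{-1}(\fp_0)=\fp$ are precisely those factoring through the completion, i.e. $\Hom(k_\fp,\overline{\Q_p})$, and each of them induces a residue embedding $\F_\fp \hookrightarrow \F_{\fp_0}$, which after fixing a reference embedding $\iota_0$ sends a generator $g$ of $\F_\fp^{*}$ to $\iota_0(g)^{p^{c(\rho)}}$ for a unique $c(\rho) \in \{0,1,\dots,f_\fp-1\}$, where $f_\fp := [\F_\fp:\F_p]$. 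Now, by weak approximation, pick $\alpha \in k^{\times}$ prime to $p$ with $\alpha \equiv 1$ modulo every prime of $k$ above $p$ other than $\fp$ and with $\overline{\alpha} := \alpha \bmod \fp$ a generator of $\F_\fp^{*}$. On one side, the idèle-class computation in the proof of \Cref{prop:momose_1_non_galois_md} gives $\mu((\alpha)) = \prod_{\fp'\mid p}\Nm_{\F_{\fp'}/\F_p}(\alpha\bmod\fp')^{a_{\fp'}}$, which for our $\alpha$ collapses to $\Nm_{\F_\fp/\F_p}(\overline{\alpha})^{a_\fp} = \iota_0(\overline{\alpha})^{\,a_\fp(p^{f_\fp}-1)/(p-1)}$. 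On the other side, reducing $\alpha^{\eps_{\fp_0}} = \prod_{\rho}\rho(\alpha)^{a_\rho}$ modulo $\fp_0$ kills every factor with $\rho^{-1}(\fp_0)\neq\fp$ (there $\rho(\alpha)\equiv 1$) and leaves $\iota_0(\overline{\alpha})^{\,\sum_{\rho^{-1}(\fp_0)=\fp}p^{c(\rho)}a_\rho}$. Since $\iota_0(\overline{\alpha})$ has order $p^{f_\fp}-1$ in $\F_{\fp_0}^{*}$, comparing exponents yields
\[
\sum_{\rho^{-1}(\fp_0)=\fp} p^{\,c(\rho)}\,a_\rho \;\equiv\; a_\fp\cdot\frac{p^{f_\fp}-1}{p-1} \Mod{p^{f_\fp}-1}.
\]

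Next I would exploit this when $\fp$ is unramified in $k$, which covers the inert case ($\fp = p\O_k$, $f_\fp = d$). Then $\rho \mapsto c(\rho)$ is a bijection onto $\{0,\dots,f_\fp-1\}$, so with $\rho_c$ the embedding over $\fp$ having $c(\rho_c)=c$ the congruence reads $\sum_{c}p^c a_{\rho_c} \equiv a_\fp\sum_c p^c \Mod{p^{f_\fp}-1}$. Here $a_{\rho_c}\in\{0,4,6,8,12\}$ by definition of a $k$-isogeny signature, and $a_\fp\in\{0,4,6,8,12\}$ as well since $e_\fp=1$ in \Cref{prop:local_momose_1}; as $p\ge 17$, both sides are non-negative integers at most $12\cdot\frac{p^{f_\fp}-1}{p-1} < p^{f_\fp}-1$, so their difference, being a multiple of $p^{f_\fp}-1$ of absolute value strictly smaller than $p^{f_\fp}-1$, vanishes. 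Uniqueness of base-$p$ expansions (all ``digits'' are $\le 12 < p$) then forces $a_{\rho_c} = a_\fp$ for every $c$; in particular $a_\sigma = a_\tau$, and in the inert case all $d$ of the integers $a_\rho$ equal the single value $a_\fp$, which is the ``in particular'' clause.

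I expect the genuinely ramified case ($e_\fp \ge 2$) to be where the real work lies. There $\rho\mapsto c(\rho)$ is $e_\fp$-to-one, the exponent $\sum p^{c(\rho)}a_\rho$ need no longer have base-$p$ digits below $p$, and the displayed congruence controls directly only the fibre sums $\sum_{c(\rho)=c}a_\rho$; to descend to the individual $a_\rho$ one has to bring in the full arithmetic of \Cref{prop:local_momose_1} — the membership $a_\rho \in \{0,4,6,8,12\}$, the congruences $a_\rho \equiv 0,4,6,8 \Mod{12}$, and the bound $a_\fp \le 12e_\fp$ — together with a case analysis of which decompositions $a_\fp = \sum_i a_{\fp,i}$ are actually admissible, in the sense of the construction in the proof of \Cref{prop:momose_1_non_galois_md}. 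In any event, only the unramified — and in particular the inert — situation above is needed for the applications that follow.
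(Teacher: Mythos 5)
Your unramified argument is correct, proves the final ``inert'' clause, and is a genuinely different route from the paper's. The paper's entire ``proof'' is the clause ``from the construction of $\eps_{\fp_0}$ in the above proof'': in the unramified case of \Cref{prop:momose_1_non_galois_md} the coefficients are simply \emph{defined} by $a_\sigma := a_{\sigma^{-1}(\fp_0)}$, so $\sigma^{-1}(\fp_0)=\tau^{-1}(\fp_0)$ gives $a_\sigma=a_\tau$ by fiat for the constructed signature, with no weak-approximation or base-$p$ digit comparison anywhere in sight. What your route buys is more: by evaluating $\mu((\alpha)) \equiv \alpha^{\eps_{\fp_0}} \Mod{\fp_0}$ at a chosen witness and comparing digits, you show that \emph{any} $k$-isogeny signature satisfying the defining congruence has constant coefficients over an unramified $\fp$, which is in effect also a proof of the uniqueness assertion at the end of \Cref{prop:momose_1_non_galois_md} that the paper states without argument.

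Your hesitation about the ramified case is well-founded, and the issue is in the statement of the Corollary, not just in your attempt. In the ramified construction the paper sets $a_\sigma := a_{\fp,i}$ for $\sigma \in S_{\fp,i}$, and a decomposition $a_\fp = \sum_i a_{\fp,i}$ with each $a_{\fp,i} \in \{0,4,6,8,12\}$ need not be (and sometimes cannot be) constant: $a_\fp=4$ with $e_\fp=2$ forces $\{a_{\fp,1},a_{\fp,2}\}=\{0,4\}$, already violating $a_\sigma \equiv a_\tau \Mod{p-1}$ when $p\ge 17$. Correspondingly your displayed congruence constrains only the fibre sums $\sum_{c(\rho)=c}a_\rho$ and cannot on its own separate the individual $a_\rho$ within a fibre. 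So neither your argument nor the paper's one-liner establishes the Corollary in its stated generality at ramified $\fp$; the statement that is actually proved, and that suffices downstream, is for $\sigma,\tau$ over the same $\fp$ inducing \emph{distinct} residue embeddings $\F_\fp\hookrightarrow\F_{\fp_0}$. The one place the paper invokes this Corollary (the inert-in-$L$ contradiction closing the proof of \Cref{prop:momose_lemma_2_non_galois}) is indeed such a pair — $\sigma_1$ and $\Frob_{\fp_0}\circ\sigma_1$, whose residue embeddings differ because $p$ inert in $L\subseteq k$ forces $f_\fp\ge 2$ — so the application is safe once the scope restriction is made explicit, but you were right to flag the general ramified claim as not established.
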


Isogeny signatures satisfying this condition will be referred to as \textbf{admissible}. Note furthermore that if one of the integers in the signature $\varepsilon$ is $4$ or $8$ (respectively $6$), then from the proof of \Cref{prop:local_momose_1} we have that $\Aut(\widetilde{E}_\fp) \cong \mu_6$ and $p \equiv 2 \Mod{3}$ (respectively $\Aut(\widetilde{E}_\fp) \cong \mu_4$ and $p \equiv 3 \Mod{4}$). We therefore refer to such signatures as \textbf{sextic} (respectively \textbf{quartic}). The overlap of these two designations (i.e. the signature contains both $4$ and $6$ or both $8$ and $6$) is an interesting source of isogenies, and will be referred to as \textbf{mixed}. Signatures, all of whose defining integers are the same, will be referred to as \textbf{constant}.

\begin{example}

In the case that $X_0(p)$ is elliptic or hyperelliptic, one may generate quadratic points on $X_0(p)$ by taking the pullback of rational points along the hyperelliptic map to $\P^1$. The corresponding isogeny $\phi$ may be constructed in Magma. After passing to a suitable ramified extension of $k_\fp$ over which $E$ obtains semistable reduction, we let $\widehat{\phi}$ be the power series representation of $\phi$ on the formal groups of the elliptic curves. Then as in \cite[\S 1.10]{serre_prop_galois}, we may directly compute the $a_\fp$ integers as the $\fp$-adic valuation of the coefficient of $X$ in $\widehat{\phi}(X)$. In this way one may actually compute the signature of isogenies, as the following examples show; in both cases $\sigma$ denotes the nontrivial automorphism of the corresponding quadratic field.

\begin{enumerate}
    \item Let $k = \Q(\sqrt{-1120581})$, and let $E/k$ be an elliptic curve with $j$-invariant
    \[ \frac{1}{837568512}\left(-344992121\sqrt{-1120581} - 182301639894\right).\]
    Then $E$ admits a $k$-rational $11$-isogeny of signature $6\id + 8\sigma$, and in particular shows that isogenies of mixed signature exist.\\
    
    \item Let $k = \Q(\sqrt{38731793})$, and let $E/k$ be an elliptic curve with $j$-invariant
    \[ 4329499018988087705974500\sqrt{38731793} + 26944581751932950083389335625.\]
    Then $E$ admits a $k$-rational $23$-isogeny of signature $(6,6)$.
\end{enumerate}

These (and other) examples were found, and can be verified, with the Magma code in \path{magma_scripts/EpsilonTypes.m}; this script has also been used to generate test cases for our software package.
\end{example}

\begin{remark}
Note that Larson and Vaintrob have similarly removed the Galois assumption in Momose's Lemma 1, using the notion of \emph{algebraic characters}; see Definition 2.2 and Corollary 2.4 of \cite{larson_vaintrob_2014}. The left-hand side of the equation in their Corollary 2.4 is the space of algebraic characters, whilst the right-hand side may be identified with the space of isogeny signatures.
\end{remark}

\subsection{A general divisibility criterion}\label{subsec:divisibility}

Let $\fq$ be a prime ideal of $k$ which is coprime to $p$, and consider the reduction of $E$ at $\fq$, which is either potentially multiplicative, potentially good supersingular, or potentially good ordinary. In both of the potentially good cases, the characteristic polynomial of Frobenius $\Frob_\fq$ acting on the $p$-adic Tate module of $E$ has coefficients in $\Z$ and is independent of $p$ (Theorem 3 in \cite{serretate}); we may thus write $P_\fq(X)$ for this polynomial. This is a quadratic polynomial whose roots have absolute value $\sqrt{\Nm(\fq)}$. We write $L^\fq$ for the splitting field of this polynomial, which is either $\Q$ or an imaginary quadratic field.

In all cases of reduction at $\fq$, one obtains congruence conditions modulo $p$ on $\lambda(\Frob_\fq)$. If $E$ has potentially multiplicative reduction at $\fq$, then $\lambda(\Frob_\fq)$ is either $1$ or $\pm \Nm(\fq) \Mod{p}$ (Proposition 1.4 in \cite{david2012caractere}, Proposition 3.3 in \cite{david2011borne}). In the potentially good reduction case, for $\mathcal{P}^\fq$ a prime of $L^\fq$ above $p$, the images of the roots of $P_\fq(X)$ in $\mathcal{O}_{L^\fq}/\mathcal{P}^\fq$ are in $\F_p^\times$, and there is a root $\beta_\fq$ of $P_\fq(X)$ such that $\lambda(\Frob_\fq) = \beta_\fq \Mod{\mathcal{P}^\fq}$ (Proposition 1.8 in \cite{david2012caractere}, Proposition 3.6 in \cite{david2011borne}). For simplicity we will sometimes drop the $\fq$ subscript on $\beta_\fq$, particularly when `looping' over several such roots.

Write $h_\fq$ for the order of $\fq$ in the class group $\Cl_k$ of $k$, and let $\gamma_\fq$ be a generator of the principal ideal $\fq^{h_\fq}$. We apply \Cref{prop:momose_1_non_galois_md} to $\gamma_\fq$ and obtain, with the ideal-theoretic interpretation for the domain of $\mu$ as \Cref{rem:mu_galois_and_ideal}, the following expression:
\[ \mu((\gamma_\fq)) \equiv \gamma_\fq^\varepsilon \Mod{\fp_0}.\]
Replacing this with the Galois character interpretation for $\mu$, and using $\fq^{h_\fq} = (\gamma_\fq)$, we obtain:
\[ \mu^{h_\fq}(\Frob_\fq) \equiv \gamma_\fq^\varepsilon \Mod{\fp_0}.\]
This expression, when combined with the aforementioned congruence conditions modulo $p$ on $\mu(\Frob_\fq)$, yield divisibility conditions for $p$, namely that $p$ must divide one of the integers defined as follows.

\begin{definition}\label{def:ABC}
Let $k$ be a number field with Galois closure $K$ over $\Q$, $\fq$ a prime ideal of $k$ of order $h_\fq$ in the class group of $k$, and $\gamma_\fq$ a generator of the principal ideal $\fq^{h_\fq}$. For a set $S$ of integers, let $\lcm(S)$ denote the least common multiple of the integers in $S$, with the convention that this will be $0$ if $0 \in S$. Then we define the following integers:
\begin{align*}
A(\eps, \fq) &:= \Nm_{K/\Q}(\gamma_\fq^\eps - 1),\\
B(\eps, \fq) &:= \Nm_{K/\Q}(\gamma_\fq^\eps - \Nm(\fq)^{12h_\fq}),\\
C_s(\eps, \fq) &:= \lcm(\left\{ \Nm_{K(\beta)/\Q}(\gamma_\fq^\eps - \beta^{12h_\fq}) \ | \ \beta \mbox{ is a supersingular Frobenius root over }\F_\fq\right\}),\\
C_o(\eps, \fq) &:= \lcm(\left\{ \Nm_{K(\beta)/\Q}(\gamma_\fq^\eps - \beta^{12h_\fq}) \ | \ \beta \mbox{ is an ordinary Frobenius root over }\F_\fq\right\}),\\
C(\eps, \fq) &:= \lcm\left(C_o(\eps, \fq), C_s(\eps, \fq)\right),
\end{align*}
where in $C_s(\eps, \fq)$ (respectively $C_o(\eps, \fq)$)  the $\lcm$ is taken over all roots $\beta$ of characteristic polynomials of Frobenius of supersingular (respectively, ordinary) elliptic curves defined over the residue field $\F_\fq$ of $k$ at $\fq$.
\end{definition}

\begin{remark}\label{rem:ss_frob_roots_waterhouse}
Using Waterhouse's \Cref{thm:waterhouse}, specifically \Cref{tab:ss_roots} derived from it, one may show that
\[ C_s(\varepsilon, \fq) := \begin{cases*}
    B(2\varepsilon, \fq) & if $|\F_\fq| = 2^f$ with $f$ odd\\
    \Nm_{k/\Q}(\gamma_\fq^\varepsilon - \Nm(\fq)^{6h_\fq}) & otherwise.
    \end{cases*} \]
\end{remark}

We frame the above discussion as follows.

\begin{corollary}\label{cor:ABC_divs}
\begin{enumerate}
\item
If $E$ has potentially multiplicative reduction at $\fq$, then $p$ divides either $A(\varepsilon, \fq)$ or $B(\varepsilon, \fq)$.
\item
If $E$ has potentially good ordinary reduction at $\fq$, then $p$ divides $C_o(\varepsilon, \fq)$.
\item
If $E$ has potentially good supersingular reduction at $\fq$, then $p$ divides $C_s(\varepsilon, \fq)$.
\end{enumerate}
\end{corollary}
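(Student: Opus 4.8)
The plan is to combine the two facts already assembled in the discussion preceding the statement. The first is the \emph{global} relation obtained by feeding $\gamma_\fq$ into \Cref{prop:momose_1_non_galois_md}: since $(\gamma_\fq) = \fq^{h_\fq}$ is coprime to $p$, one gets $\mu((\gamma_\fq)) \equiv \gamma_\fq^\eps \pmod{\fp_0}$, and unwinding the shorthand via $\mu((\gamma_\fq)) = \mu(\frob_\fq)^{h_\fq} = \mu(\sigma_\fq)^{h_\fq}$ this reads $\mu(\sigma_\fq)^{h_\fq} \equiv \gamma_\fq^\eps \pmod{\fp_0}$ in $\O_K$. The second is the collection of \emph{local} congruences on $\mu(\sigma_\fq) \in \F_p^\times$ modulo $p$ coming from the reduction type of $E$ at $\fq$ (David's Propositions 1.4 and 1.8, recalled above). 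In each of the three reduction types I would eliminate $\mu(\sigma_\fq)$ between these, arrive at a congruence between algebraic integers modulo a prime above $p$, and then apply a field norm to turn it into a divisibility of a rational integer by $p$, using that $p$ divides the absolute norm of any prime above $p$.

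Concretely, for (1): from $\mu(\sigma_\fq) \equiv 1$ or $\Nm(\fq)^{12} \pmod p$ I raise to the $h_\fq$-th power and compare with $\mu(\sigma_\fq)^{h_\fq} \equiv \gamma_\fq^\eps \pmod{\fp_0}$, obtaining $\fp_0 \mid (\gamma_\fq^\eps - 1)$ or $\fp_0 \mid (\gamma_\fq^\eps - \Nm(\fq)^{12h_\fq})$ in $\O_K$ (all terms being algebraic integers, with $\gamma_\fq^\eps = \prod_\sigma \sigma(\gamma_\fq)^{a_\sigma} \in \O_K$). Taking $\Nm_{K/\Q}$ and using $p \mid \Nm_{K/\Q}(\fp_0)$ gives $p \mid A(\eps,\fq)$ or $p \mid B(\eps,\fq)$.

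For (2) and (3) the one genuinely delicate point — which I expect to be the only real obstacle — is that David's good-reduction congruence $\mu(\sigma_\fq) \equiv \beta^{12} \pmod{\mathcal{P}^\fq}$ lives modulo a prime of $L^\fq$, not of $K$, so the two congruences must be reconciled in a common overfield. I would pass to $M := K(\beta) = K \cdot L^\fq$, pick a prime $\mathfrak{P}$ of $M$ lying over the fixed prime $\fp_0$, and take $\mathcal{P}^\fq := \mathfrak{P} \cap \O_{L^\fq}$ to be the prime of $L^\fq$ to which David's statement is applied (this is what pins down the relevant root $\beta$). Both congruences then hold modulo $\mathfrak{P}$, and eliminating $\mu(\sigma_\fq)^{h_\fq}$ yields $\mathfrak{P} \mid (\gamma_\fq^\eps - \beta^{12h_\fq})$ in $\O_M$; applying $\Nm_{K(\beta)/\Q}$ and using $p \mid \Nm_{M/\Q}(\mathfrak{P})$ gives $p \mid \Nm_{K(\beta)/\Q}(\gamma_\fq^\eps - \beta^{12h_\fq})$. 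Since $P_\fq$ is, as in the proof of \Cref{cor:ss_frob_pow_norm}, the characteristic polynomial of Frobenius of the (ordinary, resp.\ supersingular) good reduction of $E$ over $\F_\fq$, this $\beta$ is one of the roots entering the $\lcm$ defining $C_o(\eps,\fq)$ (resp.\ $C_s(\eps,\fq)$), so $p$ divides that $\lcm$, as required. (Should any of the differences $\gamma_\fq^\eps - (\cdot)$ vanish, its norm is $0$ and the asserted divisibility holds trivially.)
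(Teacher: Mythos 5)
Your proof is correct and follows the paper's approach — the corollary is stated there without a formal proof, the preceding paragraph (applying \Cref{prop:momose_1_non_galois_md} to $\gamma_\fq$ and combining with David's local congruences on $\mu(\sigma_\fq)$) serving as the argument. You carefully fill in the details left implicit there, in particular reconciling the two congruences (one modulo $\fp_0$ in $\O_K$, the other modulo $\mathcal{P}^\fq$ in $\O_{L^\fq}$) by passing to a common prime of the compositum $K(\beta)$ before applying the norm.
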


% In \Cref{subsec:most_general_divisibility} we will give an even more general divisibility criterion which will be relevant for the algorithm.

\subsection{Removing the Galois assumption in Momose's Lemma 2}

Having identified integers which $p$ must divide, the question of the non-zero-ness of these integers becomes relevant. This is the motivation for Momose's Lemma 2, or David's Proposition 2.15 in \cite{david2012caractere}, both of which assume that $k$ is Galois over $\Q$. 

We thus provide the following non-Galois version of Momose's Lemma 2. The proof is modelled on David's proof, and many of the arguments carry over \emph{mutatis mutandis}, though the details in Type 3 require some additional ideas. For $L$ a subfield of $k$, we denote by $\Sigma_L \subset \Sigma$ the subset of embeddings of $k$ in $K$ which act as the identity on $L$.

\begin{proposition}\label{prop:momose_lemma_2_non_galois}
Let $p \geq 17$, let $q \neq p$ be a rational prime which splits completely in $k$, and let $\fq$ be a prime of $k$ over $q$. If the condition shown in the left-most column of \Cref{tab:david_types} is satisfied, then the corresponding assertions in the rest of the table hold.
\end{proposition}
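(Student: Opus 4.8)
The plan is to follow David's proof of Proposition 2.15 of \cite{david2012caractere}, running the case analysis of \Cref{tab:david_types} row by row while systematically removing every appeal to the Galois hypothesis; concretely, I read the three rows as the hypotheses ``$\eps$ is constant equal to $0$ or to $12$'' (Momose Type 1), ``$\eps$ is constant equal to $6$'' (Momose Type 2), and ``$\eps = 12\sum_{\sigma \in \Sigma_M}\sigma$ for an imaginary quadratic subfield $M \subseteq k$'' (Momose Type 3). The one ingredient that makes the non-Galois case go through is \Cref{prop:momose_1_non_galois_md}: fix the prime $\fp_0 \mid p$ of $K$, let $\eps = \eps_{\fp_0} = \sum_{\sigma \in \Sigma}a_\sigma\sigma$ be the associated isogeny signature of \Cref{def:signature}, and recall that $\mu((\alpha)) \equiv \alpha^{\eps} \pmod{\fp_0}$ for every $\alpha \in k^\times$ prime to $p$. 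Applying this to a generator $\gamma_\fq$ of $\fq^{h_\fq}$ (which is prime to $p$ since $q \neq p$) gives the master congruence
\[
\mu(\frob_\fq)^{h_\fq} \;\equiv\; \gamma_\fq^{\eps} \pmod{\fp_0}.
\]
Because $q$ splits completely in $k$ we have $\Nm(\fq) = q$, residue field $\F_\fq = \F_q$, and $P_\fq(X) = X^2 + tX + q$ with $|t| \leq 2\sqrt q$; so both sides are manageable. On the right, if the row's hypothesis makes $\eps$ constant equal to an even integer $c$ then $\gamma_\fq^{\eps} = \Nm_{k/\Q}(\gamma_\fq)^c = (\pm q^{h_\fq})^c = q^{c h_\fq}$, while in Type 3 one keeps $\gamma_\fq^{\eps} = \prod_{\sigma\in\Sigma_M}\sigma(\gamma_\fq)^{12} = \Nm_{k/M}(\gamma_\fq)^{12}$. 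On the left, the reduction-type dichotomy of \Cref{subsec:divisibility} (see \Cref{cor:ABC_divs}) gives $\mu(\frob_\fq) \equiv 1$ or $q^{12}$ in the potentially multiplicative case and $\mu(\frob_\fq) \equiv \beta^{12}$ for a Frobenius root $\beta$ in the potentially good case, with the supersingular roots pinned down by \Cref{cor:ss_frob_pow_norm} and the ordinary ones obstructed by \Cref{cor:ord_frob_pow_not_rat}.

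For the Type 1 rows: in the constant-$0$ case \Cref{prop:local_momose_1} forces $a_\fp = 0$ at every $\fp \mid p$, so $\mu$ is unramified at $p$; being unramified away from $p$ as well, it factors through $\Cl_k$, whence $\mu^{h_k} = 1$, i.e. $\lambda^{12h_k} = 1$. Substituting this into the master congruence and using \Cref{cor:ABC_divs}: at a prime $\fq$ of potentially multiplicative reduction, either $\mu(\frob_\fq) = 1$ (no information) or $\mu(\frob_\fq) = q^{12}$, in which case $p \mid q^{12h_\fq}-1$ and hence $p \mid \Nm(\fq)^{12h_k}-1$ --- the corrected divisibility replacing Momose's too-strong conclusion $p-1 \mid 12h_k$ flagged in the Introduction; at a prime of potentially good ordinary reduction, $\gamma_\fq^{\eps} = 1$ is rational whereas \Cref{cor:ord_frob_pow_not_rat} says $\beta^{12h_\fq}$ is not, so the integer $C_o(\eps,\fq)$ of \Cref{def:ABC} is nonzero and $p$ divides it; the potentially-good-supersingular case is closed by \Cref{cor:ss_frob_pow_norm}. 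The constant-$12$ row reduces to the constant-$0$ one by replacing $\lambda$ by the isogeny character $\chi_p\lambda^{-1}$ of the dual $p$-isogeny, whose signature is $12\sum_{\sigma}\sigma - \eps$.

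The Type 2 row is special because the master congruence yields no bound: $\gamma_\fq^{\eps} = q^{6h_\fq}$, and by \Cref{cor:ss_frob_pow_norm} one has $\beta^{12} = q^6$ for a potentially-good-supersingular Frobenius root, so the congruence is automatically satisfied and all the integers of \Cref{def:ABC} vanish. Accordingly the Type 2 entry should record only structural information: comparing $\mu$ with $\chi_p^6$ locally at each $\fp \mid p$ --- both restrict to $\theta_{p-1}^{6e_\fp}$ on $I_\fp$ by \Cref{prop:local_momose_1} --- shows $\mu = \chi_p^6\psi$ for an unramified character $\psi$, and applying \Cref{prop:ideal_generators} inside $L^\fq = \Q(\sqrt{-q})$ pins down $\psi^2 = 1$ and produces the $\sqrt{-q}$ underlying Condition CC (\Cref{cond:CC}). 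This is precisely the row for which no multiplicative bound on $p$ is available, which is why Momose Type 2 needs the separate, GRH-conditional treatment.

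The Type 3 row is where Momose's and David's arguments genuinely used the Galois hypothesis, and is the main obstacle. Its hypothesis supplies an imaginary quadratic subfield $M \subseteq k$ with $\eps = 12\sum_{\sigma\in\Sigma_M}\sigma$, and the task is to derive the CM conclusion for $\lambda$ --- ultimately that $E$ acquires complex multiplication by an order in $M$ over $\overline k$, and (where the table asserts it) that $k$ contains the Hilbert class field of $M$ --- without David's appeal to a generating set of $\Cl_k$ consisting of completely split primes, the manoeuvre available only in the Galois case and the source of gap (2) of the Introduction. The plan is to evaluate the master congruence at $\gamma_\fq$ as $\fq$ ranges over primes of $k$ above rational primes split in $M$: writing $N = \Nm_{k/M}(\gamma_\fq)$, one has $\gamma_\fq^{\eps} = N^{12}$ with $N\overline N = \pm q^{h_\fq}$, so (using \Cref{prop:ideal_generators} in $L^\fq$ to handle the supersingular comparison) the congruence $\mu(\frob_\fq)^{h_\fq} \equiv N^{12} \pmod{\fp_0}$ matches $\mu$ with the twelfth power of a CM Hecke character of $M$; \Cref{cor:ss_frob_pow_norm} excludes the supersingular alternative for all but finitely many $\fq$, and running this over a set of ideals representing $\Cl_M$ then forces the Hilbert class field of $M$ into $k$. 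I expect this last row --- making the CM deduction and the class-field containment work intrinsically over $k$, with \Cref{prop:ideal_generators} playing the role that the split-generators argument played for David --- to be the only part requiring genuinely new work; the Type 1 and Type 2 rows are faithful transcriptions of David's computations once \Cref{prop:momose_1_non_galois_md} is in hand.
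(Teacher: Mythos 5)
Your proposal proves the converse of the stated proposition, not the proposition itself. In \Cref{prop:momose_lemma_2_non_galois} the hypothesis in the left-most column of \Cref{tab:david_types} is the vanishing of one of the integers $A(\eps,\fq)$, $B(\eps,\fq)$, $C_s(\eps,\fq)$ or $C_o(\eps,\fq)$ — that is, an equation of the form $\gamma_\fq^\eps = 1$, $\gamma_\fq^\eps = q^{12h_\fq}$, $\gamma_\fq^\eps = \pm q^{6h_\fq}$, or $\gamma_\fq^\eps = \beta^{12h_\fq}$ for an ordinary Frobenius root $\beta$ — and the conclusion is that the signature $\eps = \sum a_\tau\tau$ is then forced to be one of the special Type $1$, $2$ or $3$ shapes, together with the structural statements about $\Q(\beta)$, rationality of $\gamma_\fq^\eps$, and which twist of $\mu$ is everywhere unramified. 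You instead take the signature shape as the hypothesis (``$\eps$ constant equal to $0$'', etc.) and derive divisibility conditions on $p$ and CM-type consequences for $\lambda$; those derived consequences are not even what the table asserts, and indeed statements like ``$k$ contains the Hilbert class field of $M$'' live in \Cref{cor:rocket_step_1}--\Cref{prop:rocket_step_3}, not in \Cref{prop:momose_lemma_2_non_galois}. So the logical direction is reversed and the target is wrong.

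The actual argument, which your proposal never begins, rests on the observation that the completely-split hypothesis makes the conjugate ideals $\sigma_1(\fq)\O_K,\ldots,\sigma_d(\fq)\O_K$ pairwise coprime (since $q$ splitting completely in $k$ forces it to split completely in $K$), so that the ideal equation
\[
\gamma_\fq^\eps\,\O_K \;=\; \prod_{i=1}^{d}\bigl(\sigma_i(\fq)\O_K\bigr)^{a_i h_\fq}
\]
determines each $a_i$ by comparing exponents against the explicit factorisation of $1\cdot\O_K$, $q^{12h_\fq}\O_K$, or $q^{6h_\fq}\O_K$; this is exactly how one gets ``all $a_i = 0$'', ``all $12$'', or ``all $6$'' in the first three rows. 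The fourth row requires a genuinely different argument: one shows $\Gal(K/k) \subseteq H := \Gal(K/\Q(\beta))$ by a pigeonhole-style comparison of $H$-orbits of primes dividing $(\gamma_\fq^\eps)$ (thereby forcing $\Q(\beta)\subseteq k$ rather than merely $\Q(\beta)\subseteq K$, which is the step that replaces the old Galois assumption), and then rules out $p$ inert in $\Q(\beta)$ via a lift-of-Frobenius argument applied to \Cref{cor:admissible_signatures}. None of these steps appears in your proposal, and your appeal to ``the master congruence'' and \Cref{prop:ideal_generators} cannot substitute for them because those tools only become relevant once the signature has already been pinned down.
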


{
\small
\begin{table}[htp]
\begin{center}
\begin{tabular}{|c|c|c|c|c|c|c|}
\hline
Condition & $\gamma_\fq^\varepsilon$ & $\Q(\beta)$ & $\gamma_\fq^\varepsilon \in \Q$? & $(a_\tau)_{\tau \in \Sigma}$ & \thead{Everywhere \\ unramified\\ character} & \thead{Signature \\ Type}\\
\hline
$A(\varepsilon, \fq) = 0$ & $1$ & & \multirow{4}{*}{Yes} & All $0$ & $\mu$ & \multirow{2}{*}{Type 1}\\\cline{1-2} \cline{5-6}
$B(\varepsilon, \fq) = 0$ & $q^{12h_\fq}$ & & & All $12$ & $\mu/\chi_p^{12}$ & \\
\cline{1-3} \cline{5-7}
\multirow{2}{*}{$C_s(\varepsilon, \fq) = 0$} & \multirow{2}{*}{$\pm q^{6h_\fq}$} & \multirow{2}{*}{$\Q(\sqrt{-q})$} & & \multirow{2}{*}{All $6$} & \multirow{2}{*}{$\mu/\chi_p^{6}$} & \multirow{2}{*}{Type 2}\\
& & & & & &\\\hline
\multirow{6}{*}{$C_o(\varepsilon, \fq) = 0$} & \multirow{6}{*}{\makecell{$\beta^{12h_\fq}$ for $\beta$ \\ an ordinary \\ Frobenius root \\ over $\F_\fq$}} & \multirow{6}{*}{\makecell{$\Q(\beta) \subseteq k$, \\ $p$ splits or \\ ramifies in $\Q(\beta)$, \\ $\Nm_{k/\Q(\beta)}(\fq) = $ \\ $(\beta)$ or $(\overline{\beta})$}} & \multirow{6}{*}{No} & \multirow{3}{*}{\makecell{$a_\tau = 12$ for \\ $\tau \in \Sigma_{\Q(\beta)}$; \\ $0$ otherwise}} & & \multirow{6}{*}{Type 3}\\
 & & & & & &\\
& & & & & & \\\cline{5-5}
& & & & \multirow{3}{*}{\makecell{$a_\tau = 0$ for \\ $\tau \in \Sigma_{\Q(\beta)}$; \\ $12$ otherwise}} & & \\
& & & & & &\\
& & & & & &\\
\hline
\end{tabular}
\vspace{0.3cm}
\caption{\label{tab:david_types}Summary of what happens if one of the integers $A$, $B$, $C_s$ or $C_o$ is zero.}
\end{center}
\end{table}
}
\begin{proof}
First observe, from the assumption that $q$ splits completely in $k$, we have that $q$ splits completely in $K$; see for example Chapter 1, Section 9, Exercise 4 in \cite{neukirch}. This will be used throughout the proof.

Before considering the various cases, we set the following notation to be used throughout the proof. Let $\fq_1, \ldots, \fq_d$ be the distinct prime ideals of $k$ lying over $q$, and we suppose $\fq = \fq_1$. We label the embeddings in $\Sigma$ as $\sigma_1, \ldots, \sigma_d$, where $\sigma_1$ is the inclusion $k \subseteq K$. We let $M = \Gal(K/\sigma_1(k)) =\Gal(K/k)$, viewed as a subgroup of $G := \Gal(K/\Q)$; let $n$ denote the size of $M$.  For each $1 \leq i \leq d$, we let $Q_1^{(i)}, \ldots, Q_n^{(i)}$ be the distinct prime ideals of $K$ dividing $\sigma_i(\fq)\O_K$ (recalling that if $q$ splits completely in $k$ then $q$ splits completely in $K$). Because $q$ is unramified, it follows that $\sigma_i(\fq)\O_K = \prod_{j=1}^nQ_j^{(i)}$. Write $\eps = \sum_{i=1}^da_i\sigma_i$. We define the set
\[ S_q := \left\{ Q_j^{(i)} : 1 \leq i \leq d, 1 \leq j \leq n\right\}, \]
which is \emph{a priori} only a subset of the prime ideals of $K$ lying over $q$. However, since $S_q$ is stable under $G$, and $G$ acts transitively on the set of primes in $K$ lying over $q$, we have that $S_q$ actually equals the set of all primes lying over $q$. Finally we let $\tau_1, \ldots, \tau_d$ denote the (left) coset representatives of $G/M$, chosen such that $\tau_i \circ \sigma_1 = \sigma_i$. Fixing $\hat{Q} = Q_1^{(1)}$, this implies that
\[ \sigma_i(\fq)\O_K = \tau_i(\sigma_1(\fq))\O_K=\prod_{Q \in \tau_iM\hat{Q}} Q.\]

Since $G$ acts faithfully on the primes lying over $q$, this means that $\sigma_i(\fq)$ and $\sigma_j(\fq)$ are coprime if $i \neq j$. 

% , and the idea of the rest of the proof, essentially, is to consider how the $M$-orbits - which are $\left\{Q_1^{(i)}, \ldots, Q_n^{(i)}\right\}$ for $1 \leq i \leq d$ - interact with the one or two $H$-orbits, which in the case of $L$ imaginary quadratic we may write as $H\hat{Q}$ and $\gamma H\hat{Q}$ for $\gamma \in G$ an element which induces complex conjugation on $L$, and $\hat{Q} = Q_1^{(1)}$. 

We now come to the three cases for the value of $\gamma_\fq^\eps$.

Consider first that $\gamma_\fq^\eps = 1$. By considering the ideal of $\O_K$ generated by $\gamma_\fq^\eps$, we obtain
\[ \O_K = \prod_{i=1}^d\sigma_i(\fq)^{a_ih_\fq}. \]
Since the $\sigma_i(\fq)$ are pairwise coprime we obtain that $a_i = 0$ for all $i$. Since $\mu|_{I_\fp} = \chi_p^{a_\fp}$ and $\mu$ is unramified away from $p$, we obtain that $\mu$ is unramified everywhere.

In the second case $\gamma_\fq^\eps = q^{12h_\fq}$, one similarly obtains that $a_i = 12$ for all $i$, whence $\mu/\chi_p^{12}$ is everywhere unramified.

In the third case, we have that $\gamma_\fq^\eps = \beta^{12h_\fq}$, where $\beta$ is a root of the characterisitic polynomial of Frobenius of an elliptic curve over $\F_\fq$. We write $L = \Q(\beta)$, which is either $\Q$ or an imaginary quadratic field. By assumption, the element $\gamma_\fq^\eps$ - \emph{a priori} in $K$ - is also in $L$; so either $\gamma_\fq^\eps$ is rational, or it generates $L$ and therefore $L$ is contained in $K$.

In the first of these two subcases, $\beta^{12h_\fq}$ is rational; therefore it is equal to its complex conjugate $\bar{\beta}^{12h_\fq}$; in particular there is a $12h_\fq$\textsuperscript{th} root of unity $\zeta \in L$ such that $\beta = \zeta \bar{\beta}$. However, since $L$ is imaginary quadratic, it only admits $n$\textsuperscript{th} roots of unity for $n = 2, 4$ or $6$; thus $\beta^{12}$ is rational. Moreover, since $\beta$ is an algebraic integer, $\beta^{12}$ is an integer. Since the absolute value of $\beta$ is $\sqrt{\Nm({\fq})} = \sqrt{q}$, we get that $\beta^{12} = \pm q^6$, and therefore
\[ \prod_{\sigma \in \Sigma}\sigma(\fq^{h_\fq})^{a_\sigma}\O_K= \gamma_\fq^\eps\O_K = \beta^{12h_\fq}\O_K = q^{6h_\fq}\O_K = \left(\prod_{\sigma \in \Sigma}\sigma(\fq)\O_K\right)^{6h_\fq}.\]
Since $\sigma_i(\fq)\O_K$ and $\sigma_j(\fq)\O_K$ are coprime for $i \neq j$ we obtain that all $a_\sigma$ are equal to $6$. That $\mu/\chi_p^{6}$ is everywhere unramified follows as previously. To show that $\Q(\beta) = \Q(\sqrt{-q})$ one observes from \Cref{cor:ord_frob_pow_not_rat,cor:ss_frob_pow_rat} that $\beta$ is a supersingular Frobenius root, so one simply checks through the possibilities in \Cref{tab:ss_roots} (observing that we have $f = 1$ because we are assuming $\fq$ is completely split).

In the second subcase, $L$ is an imaginary quadratic field contained in $K$. We let $H := \Gal(K/L)$. As in the first subcase, we consider the $K$-ideal
\[ (\gamma_\fq^\eps) = \prod_{i=1}^d\left(\prod_{j=1}^nQ_j^{(i)}\right)^{a_ih_\fq}; \]
the crucial observation is that, since this ideal is invariant under $H$ (because $\gamma_\fq^\eps \in L$), it must factor in the form
\begin{equation}\label{eqn:split_norm}
(\gamma_\fq^\eps) = \left (\prod_{Q \in H\hat{Q}} Q\right)^{a_{\id}}\left(\prod_{Q \in  H\gamma\hat{Q}} Q\right)^{a_\gamma}
\end{equation}
for $\gamma \in G$ an element which induces complex conjugation on $L$, and $a_{\id}$ and $a_{\gamma}$ are integers; in particular, there are only two choices (viz. $a_{\id}$ or $a_\gamma$) for each $a_i$.

Suppose for a contradiction that $M \not \subseteq H$. Choose $\delta \in M \backslash H$, and consider the two primes $\hat{Q}$ and $\delta\hat{Q}$ in $S_q$. These are both ideals dividing $(\gamma_\fq^\eps)$ with the same exponent (viz. $a_1h_\fq$), but are in different $H$-orbits under the $H$-action on $S_q$. This forces $a_{\id} = a_\gamma$, whence $\gamma_\fq^\eps = \Nm_{K/\Q}(\sigma_1(\gamma_\fq))^{a_\gamma}$; i.e., $\gamma_\fq^\eps \in \Q$, contradicting the subcase that we are currently in. Therefore $M\ \subseteq H$, which establishes that $L$ is contained in $k$.

We now have
\begin{align*}
    (\gamma_\fq^\eps)\O_L &= \left(\Nm_{k/L}(\gamma_\fq)\O_L\right)^{a_{\id}}\cdot\left(\overline{\Nm_{k/L}(\gamma_\fq)}\O_L\right)^{a_\gamma} \mbox{ (from \Cref{eqn:split_norm}) } \\
    \Rightarrow (\beta\O_L)^{12h_\fq} &= \left( (\Nm_{k/L}(\fq))^{a_{\id}}(\overline{\Nm_{k/L}(\fq)})^{a_\gamma}\right)^{h_\fq}.
\end{align*}

Since $q = \beta \bar{\beta}$, we obtain that the $L$-ideal $\Nm_{k/L}(\fq)$ is either $\beta\O_L$ or $\bar{\beta}\O_L$, which yields that the pair of integers $(a_{\id}, a_\gamma)$ is either $(0,12)$ or $(12,0)$, yielding the two possible forms of $\eps$ as in the table.

The only remaining assertion to prove is that $p$ splits or ramifies in $L = \Q(\beta)$ in the Type 3 case. Suppose for a contradiction that $p$ is inert in $L$, and let $\Frob_{\fp_0} \in \Gal(K/\Q)$ be a choice of a lift of Frobenius at $\fp_0$. The automorphism $\Frob_{\fp_0}$ satisfies the following two properties:
\begin{enumerate}
    \item it fixes $\fp_0$;
    \item its restriction to $L$ is also a lift of Frobenius of the prime ideal $p\mathcal{O}$, of residue class degree $2$, and hence its restriction to $L$ is the nontrivial element of $\Gal(L/\Q)$.
\end{enumerate}
We now consider the two embeddings of $k$ in $K$: $\sigma_1$ and $\Frob_{\fp_0} \circ \sigma_1$. By property (1) above, the preimage of $\fp_0$ in $k$ under each of these embeddings is the same, and therefore, by \Cref{cor:admissible_signatures}, $a_1 \equiv a_{\Frob_{\fp_0} \circ \sigma_1} \Mod{p-1}$. On the other hand, property (2) forces one of these integers to be $0$, and the other $12 \Mod{p-1}$. We therefore obtain a contradiction for primes $p$ such that $p - 1 \nmid 12$ (and hence for all $p \geq 17$).
\end{proof}

\Cref{prop:momose_lemma_2_non_galois} suggests that the signatures identified in \Cref{tab:david_types} require particular attention. This motivates the following definition.

\begin{definition}\label{def:sig_types}
Let $\eps = \sum_{\sigma}a_\sigma\sigma$ be an isogeny signature: 
\begin{itemize}
    \item  If $\eps = 0\sum_{\sigma}\sigma$ or $\eps = 12\sum_{\sigma}\sigma$, then $\eps$ is of \textbf{Type 1}.
    \item  If $\eps = 6\sum_{\sigma}\sigma$ (or equivalently $\eps = 6\Nm(k/\Q)$), then $\eps$ is of \textbf{Type 2}.
    \item If there exists an index 2 subgroup $H \subseteq G := \Gal(K/\Q)$ with $\Gal(K/k) \subseteq H$, $L:=K^H$ is imaginary quadratic and either 
    \begin{align*}
    \eps &= 12\sum_{\sigma \in \Sigma_L}\sigma \text{\quad or}\\
    \eps &= 12\sum_{\sigma \in \Sigma \setminus \Sigma_L}\sigma,
    \end{align*} then $\eps$ is of \textbf{Type 3 with field $L$}.
    \item If $\eps$ is not of Type 1, 2 or 3 then $\eps$ is \textbf{generic}.
\end{itemize}
\end{definition}

An equivalent definition of a Type 3 isogeny signature is that $k$ contains an imaginary quadratic field $L$ such that $\eps = 12\Nm(K/L)$ or $\eps = \rho\circ  12\Nm(K/L)$ where $\rho: L \to L$ is complex conjugation. Note that when given an explicit $\eps$ then the group $H$ can be retrieved as $H=\set{g \in \Gal(K/\Q) \mid g \circ \eps = \eps}$, the stabilizer of $\eps$. In particular, if $\eps$ is also Type 3 with field $L'$, then $L = L'$.

At this point it is instructive to recap what we are aiming to do in this Section, and how far towards this goal we have come. This will clarify what will be happening in the rest of the Section.

Our goal in this Section (stated at the outset) is to generalise and make Momose's isogeny classification theorem (Theorem A/Theorem 1 in \cite{momose1995isogenies}) algorithmic, as well as address the gaps and mistakes in his original proof. By generalise, we mean to remove the assumptions of $k$ being Galois over $\Q$, and also address primes $p$ that ramify in $k$.

Momose's isogeny classification theorem says that, for a given number field $k$, there is a constant $C_k$ such that, if $p$ is an isogeny prime for $k$ that is larger than $C_k$, then the associated isogeny character must be one of three Types, that in the Introduction we called Momose Types 1, 2 and 3. 

In this subsection, we have identified (in \Cref{def:sig_types}) three Types for the signature for which certain integers (the $A$, $B$ and $C$ integers from \Cref{def:ABC}) could be zero. Outside of these special Types of signature - that is, if the signature is generic - we know that these integers are nonzero; thus, by defining the integer
\[ ABC(\eps, \fq) := \lcm\left(\Nm(\fq), A(\eps, \fq), B(\eps, \fq), C(\eps, \fq)\right),\]
we may deal with the generic isogeny primes as a direct consequence of \Cref{cor:ABC_divs} and \Cref{prop:momose_lemma_2_non_galois}:

\begin{corollary}\label{cor:generic_summary}
Let $p \geq 17$ be an isogeny prime whose associated isogeny signature $\eps$ is generic. Then for all completely split prime ideals $\fq$, $p$ divides the nonzero integer $ABC(\eps, \fq)$.
\end{corollary}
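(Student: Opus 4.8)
The plan is to read this off directly from the two preceding results: \Cref{cor:ABC_divs} delivers the divisibility, and \Cref{prop:momose_lemma_2_non_galois} (summarised in \Cref{tab:david_types}) delivers the non-vanishing. Throughout, $\fq$ lies over a rational prime $q \neq p$ by the standing convention of \Cref{sec:notation_and_prelims}, and since $\fq$ is completely split, $\Nm(\fq) = q$.

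For the divisibility, the reduction of $E$ at $\fq$ is potentially multiplicative, potentially good ordinary, or potentially good supersingular; by \Cref{cor:ABC_divs} these three cases give, respectively, that $p$ divides $A(\eps, \fq)$ or $B(\eps, \fq)$, that $p$ divides $C_o(\eps, \fq)$, or that $p$ divides $C_s(\eps, \fq)$. Since $C_o(\eps,\fq)$ and $C_s(\eps,\fq)$ both divide $C(\eps,\fq)$, and each of $A(\eps,\fq), B(\eps,\fq), C(\eps,\fq)$ divides $ABC(\eps,\fq)$, I conclude $p \mid ABC(\eps,\fq)$ in every case.

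For the non-vanishing, I would argue by contradiction: suppose $ABC(\eps,\fq) = 0$. An $\lcm$ of integers is zero precisely when one of its arguments is, and $\Nm(\fq) = q \neq 0$, so one of $A(\eps,\fq), B(\eps,\fq), C_o(\eps,\fq), C_s(\eps,\fq)$ vanishes. Since $q$ splits completely in $k$ and $p \geq 17$, \Cref{prop:momose_lemma_2_non_galois} applies, and \Cref{tab:david_types} then says that $A(\eps,\fq) = 0$ or $B(\eps,\fq) = 0$ forces $\eps$ to be of signature type $1$, that $C_s(\eps,\fq) = 0$ forces $\eps$ to be of type $2$, and that $C_o(\eps,\fq) = 0$ forces $\eps$ to be of type $3$. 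All three possibilities contradict the hypothesis that $\eps$ is generic, hence $ABC(\eps,\fq) \neq 0$.

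I do not expect a genuine obstacle here — the statement is essentially a repackaging of what has already been proved. The one place where care is needed is that the $C_o(\eps,\fq) = 0$ branch really lands in type $3$ rather than producing a rational $\gamma_\fq^\eps$: this is exactly where \Cref{cor:ord_frob_pow_not_rat} (ensuring $\Q(\beta)$ is genuinely imaginary quadratic) and the inertia-at-$p$ argument via \Cref{cor:admissible_signatures} at the end of the proof of \Cref{prop:momose_lemma_2_non_galois} were invoked, and those have already been carried out there; here one need only quote the conclusion. One should also flag the bookkeeping convention that $\lcm(\ldots, 0) = 0$, which is what makes ``the integer $ABC(\eps,\fq)$ is nonzero'' equivalent to ``each constituent norm is nonzero'' and hence reachable by the above contradiction argument.
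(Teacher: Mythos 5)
Your proof is correct and takes exactly the route the paper intends (the corollary is stated without a separate proof precisely because it is a direct repackaging of \Cref{cor:ABC_divs} for the divisibility and \Cref{prop:momose_lemma_2_non_galois} via \Cref{tab:david_types} for the non-vanishing, read contrapositively). Your two flagged care-points — that the $C_o(\eps,\fq)=0$ branch lands in type $3$ because \Cref{cor:ord_frob_pow_not_rat} rules out $\gamma_\fq^\eps$ being rational, and that $\lcm$ with a zero argument is zero — are accurate and are exactly the places one should pause before quoting the table.
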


Therefore, taking a completely split prime ideal $\fq$ in $k$, and taking the $\lcm$ of $ABC(\eps, \fq)$ across all of the generic signatures $\eps$, one would obtain a nonzero integer $C'$ such that, if $p$ is an isogeny prime for $k$ that does not divide $C'$, then the associated isogeny signature would be of Type 1, 2 or 3.

If it were true that the signature Types 1, 2 and 3 identified in \Cref{def:sig_types} coincided with Momose's Types 1, 2 and 3, then we would be done with our goal. This is however not the case; while an isogeny character of Momose Type $n$ (to be defined in the next subsection) implies that the signature is of Type $n$, the converse is not necessarily true (although it is true for $n = 1$).

What remains to be done, therefore, is to bound the prime degrees of isogenies whose signatures are of Type $n$ but that are not themselves of Momose Type $n$. If we can do this for $n = 2$ and $3$ (again this is not required for $n = 1$) then we have succeeded in making Momose's isogeny classification theorem algorithmic. We call this strategy \emph{going from signature Type $n$ to Momose Type $n$}. Specifically, this means that we will prove the following result.

\begin{theorem} Let $k$ be a number field. Then for $n = 2$ or $3$, there exist explicitly computable integers $B_n(K)$ such that if $p \nmid B_n(K)$ one has that if $\lambda$ is a $p$-isogeny character of signature Type $n$, then the isogeny character $\lambda$ is of Momose Type $n$. 
\end{theorem}

We refer to \Cref{cor:type_2_not_momose} and \Cref{prop:rocket_step_3} that respectively establish this result for $n = 2$ and $3$.

In the next subsection we will carry out this strategy, by studying the Type 1, 2 and 3 signatures more closely.

\subsection{From signature types to Momose Types}\label{ssec:from_sig_types_to_momose}
We emphasise that the Types identified in \Cref{def:sig_types} are properties of the \emph{signature} of an isogeny, while the Momose Types to be defined in the sequel are properties of the \emph{isogeny character} $\lambda$. The three special signature types identified in the previous section do not in all cases correspond exactly to Momose's three types identified for the isogeny character $\lambda$. They do, however, for Type 1 signatures, which \Cref{tab:david_types} shows.

\begin{definition}
We say that an isogeny character $\lambda$ is of \textbf{Momose Type $1$} if either $\lambda^{12}$ or $(\lambda/\chi_p)^{12}$ is everywhere unramified.
\end{definition}

Momose Type 1 primes will be handled in \Cref{sec:momose_type_1}.

\subsubsection{Signature Type 2}
We now consider isogenies of signature Type 2; that is, $\eps = 6\Nm(k/\Q)$. Observe from \Cref{tab:david_types} that this corresponds to $\mu$ and $\chi_p^6$ being the same up to an everywhere unramified character. The notion of Momose Type 2 goes further to say what this everywhere unramified character should be.

\begin{definition}
We say that an isogeny character $\lambda$ is of \textbf{Momose Type $2$} if $\lambda^{12} = \chi_p^6$.
\end{definition}

That is, the everywhere unramified character should be trivial. Note in this case that necessarily $p \equiv 3 \Mod{4}$, by the discussion appearing immediately after \Cref{cor:admissible_signatures}.

The following key result relates the notions of signature Type 2 and Momose Type 2.

\begin{proposition}\label{prop:type_2_to_momose_type_2}
Let $E/k$ be an elliptic curve admitting a $k$-rational $p$-isogeny of isogeny character $\lambda$ of signature $\eps$. If the following two conditions hold:
\begin{enumerate}
    \item $\eps$ is of Type 2;
    \item There exists a set of primes $\Gen$ generating $\Cl_k$, such that for all $\fq \in \Gen$ \begin{enumerate}
        \item $\fq$ is coprime to $p$;
        \item $\fq$ does not lie over the rational prime $2$;
        \item $E$ has potentially good supersingular reduction at $\fq$,
    \end{enumerate}
\end{enumerate}
then $\lambda$ is of Momose Type 2.
\end{proposition}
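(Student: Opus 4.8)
The plan is to show that the signature-Type-2 hypothesis, combined with potential good supersingular reduction at a generating set of primes, forces the everywhere-unramified character $\mu/\chi_p^6$ appearing in \Cref{tab:david_types} to be trivial, which is precisely the definition of Momose Type 2. First I would recall from \Cref{prop:momose_lemma_2_non_galois} and \Cref{tab:david_types} that since $\eps = 6\Nm(k/\Q)$ is of Type 2, the character $\psi := \mu/\chi_p^6$ is unramified everywhere; being a character of $G_k$ into $\F_p^\times$ it factors through the narrow (indeed ordinary, since it is unramified at all finite places and $\mu$ is a $12$th power so trivial at infinite places) class group quotient $\Cl_k \to \F_p^\times$. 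So $\psi$ is determined by its values $\psi(\frob_\fq)$ as $\fq$ ranges over a generating set of $\Cl_k$, and it suffices to prove $\psi(\frob_\fq) = 1$ for each $\fq \in \Gen$.

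Next, fix $\fq \in \Gen$. By hypothesis (2a)--(2c), $\fq$ is coprime to $p$, does not lie over $2$, and $E$ has potentially good supersingular reduction at $\fq$. I would then invoke \Cref{cor:ss_frob_pow_norm}: for a root $\beta$ of the characteristic polynomial of Frobenius at $\fq$ on the Tate module, $\beta^{12} = \Nm(\fq)^6$. On the other hand, from the discussion preceding \Cref{def:ABC} (Proposition 1.8 in \cite{david2012caractere}, Proposition 3.6 in \cite{david2011borne}), in the potentially good reduction case there is a root $\beta_\fq$ with $\mu(\frob_\fq) \equiv \beta_\fq^{12} \Mod{\mathcal{P}^\fq}$ for a prime $\mathcal{P}^\fq$ of the splitting field above $p$. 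Combining, $\mu(\frob_\fq) \equiv \Nm(\fq)^6 \Mod{\mathcal{P}^\fq}$, and since both sides lie in $\F_p^\times$ this is a congruence mod $p$. Meanwhile $\chi_p(\frob_\fq) \equiv \Nm(\fq) \Mod{p}$, so $\chi_p^6(\frob_\fq) \equiv \Nm(\fq)^6 \Mod p$ as well, and hence $\psi(\frob_\fq) = \mu(\frob_\fq)/\chi_p^6(\frob_\fq) \equiv 1 \Mod p$, i.e. $\psi(\frob_\fq) = 1$ in $\F_p^\times$.

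Since this holds for every $\fq$ in a generating set of $\Cl_k$ and $\psi$ factors through $\Cl_k$, we conclude $\psi \equiv 1$, i.e. $\mu = \lambda^{12} = \chi_p^6$, which is the definition of Momose Type 2; this proves the proposition. The one point requiring a little care — and the place I would be most careful about — is the bookkeeping of which character is everywhere unramified and of the precise normalisation relating $\mu(\frob_\fq)$, $\beta_\fq^{12}$ and the cyclotomic character, i.e. making sure the congruence $\mu(\frob_\fq) \equiv \beta_\fq^{12}$ is being read in $\F_p$ (using that both the image of $\beta_\fq^{12}$ and $\mu(\frob_\fq)$ land in $\F_p^\times$, which is exactly what \Cref{cor:ss_frob_pow_norm} together with $\beta^{12}=\Nm(\fq)^6 \in \Q$ guarantees); the hypotheses that $\fq \nmid 2$ and $\fq \nmid p$ are precisely what is needed to apply \Cref{cor:ss_frob_pow_norm} and to have $\Nm(\fq) \in \F_p^\times$. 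The genericity/Type-1/Type-3 alternatives of \Cref{prop:momose_lemma_2_non_galois} play no role here since we have assumed Type 2 outright.
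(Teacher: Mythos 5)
Your proof is correct and follows essentially the same route as the paper: use signature Type~2 plus \Cref{prop:momose_lemma_2_non_galois} to see that $\mu\chi_p^{-6}$ is everywhere unramified and hence factors through $\Cl_k$, then use potential good supersingular reduction at each $\fq \in \Gen$ together with $\beta_\fq^{12} = \Nm(\fq)^6$ (the paper invokes the reasoning of \Cref{rem:ss_frob_roots_waterhouse}, you cite \Cref{cor:ss_frob_pow_norm} directly, which is a slightly cleaner reference to the same computation from \Cref{tab:ss_roots}) to force $\mu(\sigma_\fq) = \chi_p^6(\sigma_\fq)$ on a generating set. The only cosmetic difference is that you also spell out why the character factors through the ordinary rather than the narrow class group, a point the paper leaves implicit.
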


\begin{proof}
From (1) we have that $\mu\chi_p^{-6}$ is an everywhere unramified character, and hence defines an abelian extension of $k$ contained in the Hilbert class field of $k$, and thus is determined by its values at Frobenius automorphisms $\Frob_\fq$ for $\fq$ running through a set of generators of $\Cl_k$. Let $\fq \in \Gen$. From (2c) and from the discussion at the beginning of \Cref{subsec:divisibility}, we obtain $$\lambda(\Frob_\fq) \equiv \beta_\fq \Mod{\mathcal{P}^\fq}$$ for $\beta_\fq$ a supersingular Frobenius root over $\fq$ and for $\mathcal{P}^\fq$ a prime ideal above $p$ inside a field that is either $\Q$ or an imaginary quadratic field (this is the field denoted as $L^\fq$ in \Cref{subsec:divisibility}). By the same reasoning as in \Cref{rem:ss_frob_roots_waterhouse} we have that, for such $\beta_\fq$, that $\beta_\fq^{12} = \Nm(\fq)$ (this step requires $\fq$ to have odd characteristic), and hence
\begin{align*}
	\mu(\Frob_\fq) &= \Nm(\fq)^6 \Mod{p}\\
	 &= \chi_p(\Frob_\fq)^6.
	\end{align*}
Since this is true for all $\fq \in \Gen$, we obtain $\mu = \chi_p^6$ (i.e., not just equal up to an everywhere unramified character).
\end{proof}

\begin{corollary}\label{cor:type_2_not_momose}
Let $E/k$ be an elliptic curve admitting a $k$-rational $p$-isogeny of isogeny character $\lambda$ of signature $\eps$. Suppose that $\eps$ is of Type 2, but $\lambda$ is not of Momose Type 2. Then, for every set of prime ideal generators $\Gen$ of $\Cl_k$ of odd characteristic, $p$ divides the nonzero integer
\[ ABC_o(\Gen) := \underset{\fq \in \Gen}\lcm\left(A(\eps, \fq), B(\eps, \fq), C_o(\eps, \fq), \Nm(\fq)\right).\]
\end{corollary}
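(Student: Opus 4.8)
The plan is to combine \Cref{prop:type_2_to_momose_type_2} with the divisibility statements of \Cref{cor:ABC_divs}. First I would argue by contraposition on the hypothesis of \Cref{prop:type_2_to_momose_type_2}. We are given that $\eps$ is of Type 2 but that $\lambda$ is \emph{not} of Momose Type 2; fixing a set $\Gen$ of prime ideal generators of $\Cl_k$ all of odd characteristic, I would first note that, since $\fq$ ranges over generators of the class group, none of these $\fq$ can be the zero ideal, so $\Nm(\fq) \neq 0$ and in particular each $\fq$ is coprime to all but finitely many primes. Since we also may (by enlarging $p$ beyond $17$, which is already assumed via \Cref{prop:momose_lemma_2_non_galois}, or rather may simply observe that $p$ is a fixed prime and only finitely many of the integers $\Nm(\fq)$ are divisible by $p$) we could instead replace any $\fq \in \Gen$ dividing $p$ by another generator; but in fact the cleanest route is the following: for those $\fq \in \Gen$ which happen to be coprime to $p$, conditions (2a) and (2b) of \Cref{prop:type_2_to_momose_type_2} are satisfied, so condition (2c) must fail for at least one such $\fq$ — otherwise \Cref{prop:type_2_to_momose_type_2} would force $\lambda$ to be of Momose Type 2, contradicting our hypothesis.

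Next, for that witnessing prime $\fq \in \Gen$ — coprime to $p$, of odd characteristic, and at which $E$ does \emph{not} have potentially good supersingular reduction — I would invoke the trichotomy of reduction types: $E$ has either potentially multiplicative reduction or potentially good ordinary reduction at $\fq$. In the first case, \Cref{cor:ABC_divs}(1) gives that $p$ divides $A(\eps,\fq)$ or $B(\eps,\fq)$; in the second, \Cref{cor:ABC_divs}(2) gives that $p$ divides $C_o(\eps,\fq)$. In either case $p$ divides $\lcm\left(A(\eps,\fq), B(\eps,\fq), C_o(\eps,\fq), \Nm(\fq)\right)$, and a fortiori $p$ divides $ABC_o(\Gen)$, since the latter lcm is taken over all $\fq \in \Gen$ and includes the contribution of the witnessing $\fq$.

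It then remains to verify that $ABC_o(\Gen)$ is nonzero, which amounts to checking that for \emph{every} $\fq \in \Gen$, at least one of $A(\eps,\fq)$, $B(\eps,\fq)$, $C_o(\eps,\fq)$, $\Nm(\fq)$ is nonzero; since $\Nm(\fq) \neq 0$ always, this is immediate. (One does not need any of the $A,B,C_o$ to be nonzero individually — the norm term guarantees nonvanishing of the lcm — but in fact \Cref{prop:momose_lemma_2_non_galois} and \Cref{def:sig_types} tell us more: $A(\eps,\fq) = 0$ would force $\eps$ to be of Type 1, and $C_o(\eps,\fq) = 0$ would force $\eps$ to be of Type 3, both incompatible with $\eps$ being of Type 2, so in fact $A(\eps,\fq)$ and $C_o(\eps,\fq)$ are automatically nonzero here; $B(\eps,\fq)$ could a priori vanish, corresponding to the Type 1 sub-case with all $a_\tau = 12$, but again that contradicts Type 2.) I expect the only subtle point to be the bookkeeping around whether every $\fq \in \Gen$ is coprime to $p$ — the statement quantifies over all such generating sets, so one should be careful that the argument applies uniformly; but since at least one $\fq \in \Gen$ of odd characteristic and coprime to $p$ with non-supersingular reduction must exist (else \Cref{prop:type_2_to_momose_type_2} applies and we are in the Momose Type 2 case), and the lcm defining $ABC_o(\Gen)$ ranges over the whole set $\Gen$, the presence of a single bad $\fq$ suffices to conclude $p \mid ABC_o(\Gen)$. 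This is really the main (and only mild) obstacle; the rest is a direct assembly of prior results.
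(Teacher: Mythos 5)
Your divisibility argument is essentially the paper's: by the contrapositive of \Cref{prop:type_2_to_momose_type_2}, some $\fq \in \Gen$ must fail condition (2a) or (2c), and in either case $p$ divides the relevant factor of $ABC_o(\Gen)$, via \Cref{cor:ABC_divs} when $\fq$ is coprime to $p$, or via $\Nm(\fq)$ otherwise. So far so good.

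However, the non-vanishing part of your proof has a genuine error. You assert that ``the norm term guarantees nonvanishing of the lcm,'' but $\lcm(0,a) = 0$, not $a$; the set of common multiples of $\{0,a\}$ is $\{0\}$. Indeed, the paper explicitly introduces the separate notation $\lcm^\ast$ for the lcm taken only over nonzero terms (in the definition of $C^\ast(\eps,\fq)$), precisely because the plain $\lcm$ is killed by a single zero factor. So you cannot sidestep showing that each of $A(\eps,\fq)$, $B(\eps,\fq)$, $C_o(\eps,\fq)$ is individually nonzero. Your parenthetical backup argument tries to do this via \Cref{prop:momose_lemma_2_non_galois}, but that proposition hypothesises that $q$ splits completely in $k$, whereas the primes $\fq \in \Gen$ in this corollary are only assumed to have odd residue characteristic — they need not split. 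So the invocation of \Cref{prop:momose_lemma_2_non_galois} is not licensed here. The paper instead argues directly: since $\eps = (6,\dots,6)$, one computes $\gamma_\fq^\eps = \Nm(\fq)^{6h_\fq}$ for any $\fq$, split or not. This is clearly not $1$ (so $A \neq 0$) and not $\Nm(\fq)^{12h_\fq}$ (so $B \neq 0$); and $C_o(\eps,\fq) = 0$ would require $\Nm(\fq)^{6h_\fq} = \beta^{12h_\fq}$ for an ordinary Frobenius root $\beta$, contradicting \Cref{cor:ord_frob_pow_not_rat}, which says powers of ordinary Frobenius roots are never rational. That direct computation is what fills the gap.
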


\begin{proof}
Since $\lambda$ is assumed not of Momose Type 2, then by \Cref{prop:type_2_to_momose_type_2}, there must exist a prime $\fq$ in $\Gen$ such that $E$ does \emph{not} have potentially good supersingular reduction at $\fq$; therefore $p$ must divide the integer $ABC_o(\Gen)$ by \Cref{cor:ABC_divs}. The proof therefore consists in showing that this integer is nonzero.

Let $\fq \in \Gen$. Since $\eps$ is of Type 2, we obtain that
\[ \gamma_\fq^\eps = \Nm(\fq)^{6h_\fq}. \]
This is clearly not equal to $1$, hence $A(\eps, \fq) \neq 0$. It is also clearly not equal to $\Nm(\fq)^{12h_\fq}$, whence $B(\eps, \fq) \neq 0$. Finally suppose that $C_o(\eps, \fq) = 0$. We would then obtain 
\[ \Nm(\fq)^{6h_\fq} = \beta_\fq^{12h_\fq}\]
for $\beta_\fq$ an ordinary Frobenius root over $\F_\fq$, which contradicts \Cref{cor:ord_frob_pow_not_rat}.
\end{proof}

One then deals separately with isogenies of Momose Type 2; this will be done in \Cref{sec:momose_type_2}. Note that when $h_k = 1$ then we can take the empty set of generators in \Cref{cor:type_2_not_momose}, and hence we obtain that an isogeny of signature Type 2 is automatically of Momose Type 2. The above \Cref{cor:type_2_not_momose} is the culmination of the strategy of going from signature Type 2 to Momose Type 2.

\subsubsection{Signature Type 3}
Finally we consider isogenies of signature Type 3 with field $L$. Note that \Cref{prop:momose_1_non_galois_md} only implies that the signature corresponding to an isogeny is unique for unramified primes greater than $13$. Thus, for ramified primes we have some freedom in the choice of signature that we can associate to an isogeny character in order to study it more precisely.

\begin{proposition}
Let $L\subseteq k$ be an imaginary quadratic field in which $p$ ramifies and let $\fp$ be the unique prime of $L$ above $p$. Then a $p$-isogeny character $\lambda$ defined over $k$ has a signature of Type 2 if and only if it has a signature of Type 3.
\end{proposition}
\begin{proof}
It suffices to prove that for all $\alpha \in k^\times$ coprime to $p$ we have
$$\Nm_{k/L}(\alpha)^{12} \Mod{\fp} \equiv  \Nm_{k/\Q}(\alpha)^6 \Mod{p} \equiv \overline{\Nm_{k/L}(\alpha)^{12}} \Mod{\fp}.$$
Since $p$ ramifies in the quadratic field $L$ we have $x \equiv \overline{x} \Mod{\fp}$ for all $x \in L$, and the proposition follows from the equality $\Nm_{k/\Q} = \Nm_{k/L} \cdot \overline{\Nm_{k/L}}$.
\end{proof}

The above proposition shows that when $p$ ramifies in $L$, there is no difference between a character having a signature of Type 2 and a signature of Type 3. Since we already have a strategy for studying isogeny characters with a Type 2 signature, we are reduced to studying isogenies with a Type 3 signature where $p$ is unramified in $L$, which we assume for the rest of this subsection.

We now define the notion of Momose Type 3.

\begin{definition}\label{def:momose_type_3}
We say that a $k$-rational $p$-isogeny character $\lambda$ is of \textbf{Momose Type $3$} if $k$ contains an imaginary quadratic field $L$ as well as its Hilbert class field, $p$ splits in $L$ as $\fp\cdot\overline{\fp}$, and for any prime $\fq$ of $k$ coprime to $\fp$,
\begin{align} \lambda^{12}(\Frob_\fq) = \alpha^{12} \Mod{\fp} \label{eqn:type3} \end{align}
for any $\alpha \in L^\times$ a generator of $\Nm_{k/L}(\fq)$.
\end{definition}

Since this definition is rather more involved than Momose Type 2, we describe how one may go from signature Type 3 to Momose Type 3 in three steps; these will be \Cref{cor:rocket_step_1}, \Cref{prop:rocket_step_2}, and \Cref{prop:rocket_step_3} below. We remark here that $p$ splitting in $L$ is automatic from $\eps$ being of signature Type 3, since the proof given at the end of the proof of \Cref{prop:momose_lemma_2_non_galois} is independent of $\fq$.

We take $\fp$ to be the prime of $L$ lying below our choice of $\fp_0$ in $K$. Notice that by changing the prime $\fp_0$ if necessary we may assume $\eps = 12\sum_{\sigma \in \Sigma_L}\sigma$.

\begin{lemma}\label{lem:type_3_nonzero}
Let $\eps$ be an isogeny signature of Type 3 with field $L \subseteq k$, and let $\fq$ be a prime ideal of $k$. Then the integers $A(\eps, \fq)$, $B(\eps, \fq)$ are nonzero. If in addition the ideal $\Nm_{k/L}(\fq)$ is not principal, then $C(\eps, \fq)$ is nonzero.
\end{lemma}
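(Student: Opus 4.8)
The plan is to reduce everything to the explicit shape that $\gamma_\fq^\eps$ takes for a Type 3 signature, then to deduce the non-vanishing of $A$ and $B$ from an archimedean size estimate and that of $C$ from an ideal-theoretic argument which uses the non-principality hypothesis.

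First I would unwind the definitions. As noted just before the statement, after changing $\fp_0$ we may assume $\eps = 12\sum_{\sigma \in \Sigma_L}\sigma$ (the other possible form is treated identically after applying complex conjugation on $L$), so that for every $\alpha \in k^\times$ one has $\alpha^\eps = \Nm_{k/L}(\alpha)^{12}$, where $L$ is viewed in $K$ via the inclusion $L \subseteq k \subseteq K$. Set $\delta := \Nm_{k/L}(\gamma_\fq) \in \O_L$, so that $\gamma_\fq^\eps = \delta^{12}$; since $(\gamma_\fq) = \fq^{h_\fq}$ we get $(\delta) = \Nm_{k/L}(\fq)^{h_\fq}$ as ideals of $\O_L$. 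Transitivity of the norm together with $\Nm_{k/\Q}(\gamma_\fq) = \Nm(\fq)^{h_\fq}$ (the sign being $+$ because $L$ is imaginary quadratic) gives $\delta\bar\delta = \Nm_{L/\Q}(\delta) = \Nm(\fq)^{h_\fq}$, hence $|\gamma_\fq^\eps| = |\delta|^{12} = \Nm(\fq)^{6h_\fq}$. Now $A$ and $B$ are immediate: $A(\eps,\fq)=0$ would force $\gamma_\fq^\eps = 1$ and $B(\eps,\fq)=0$ would force $\gamma_\fq^\eps = \Nm(\fq)^{12h_\fq}$, and both are impossible since $\Nm(\fq)^{6h_\fq}$ is neither $1$ nor $\Nm(\fq)^{12h_\fq}$ (recall $\Nm(\fq)\geq 2$ and $h_\fq\geq 1$).

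For the assertion about $C$, I would argue by contradiction: suppose $\gamma_\fq^\eps = \beta^{12h_\fq}$ for some root $\beta$ of the characteristic polynomial of Frobenius of an elliptic curve over the residue field $\F_\fq$, so that $\Q(\beta)$ is either $\Q$ or imaginary quadratic. The crux is to prove $\beta \in \O_L$; granting that, passing to ideals of $\O_L$ yields $(\delta)^{12} = (\beta)^{12h_\fq}$, hence $(\delta) = (\beta)^{h_\fq}$ by unique factorisation of ideals, hence $\Nm_{k/L}(\fq)^{h_\fq} = (\beta)^{h_\fq}$, and therefore $\Nm_{k/L}(\fq) = (\beta)$ is principal, contradicting the hypothesis. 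If $\beta$ is an ordinary Frobenius root this is clean: by \Cref{cor:ord_frob_pow_not_rat}, $\beta^{12h_\fq} = \delta^{12}$ is a non-rational element of the imaginary quadratic field $L$, so $L = \Q(\delta^{12}) \subseteq \Q(\beta)$ and hence $\Q(\beta) = L$. If $\beta$ is a supersingular Frobenius root, \Cref{cor:ss_frob_pow_norm} gives $\beta^{12} = \Nm(\fq)^6$ (for $\fq$ of odd residue characteristic), whence $\delta^{12} = \Nm(\fq)^{6h_\fq} = (\delta\bar\delta)^6$; therefore $(\delta/\bar\delta)^6 = 1$, so $\delta/\bar\delta$ is a root of unity (hence a unit of $\O_L$), and consequently $(\delta)^2 = \Nm(\fq)^{h_\fq}\O_L$. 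Comparing this with $(\delta) = \Nm_{k/L}(\fq)^{h_\fq}$ and factoring over the primes of $\O_L$ above the residue characteristic $q$ then forces $\Nm_{k/L}(\fq)$ to be principal — at least once $q$ is unramified in $L$ — which is again a contradiction.

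I expect the supersingular case to be the main obstacle. In the ordinary case the field $\Q(\beta)$ is pinned down for free, because an ordinary Frobenius power is never rational; but for supersingular $\beta$ one knows only that $\beta^{12}$ is rational, so $\beta$ need not lie in $L$, and one must instead analyse the factorisation of $\Nm_{k/L}(\fq)$ into primes of $L$. Here I would route the argument through the explicit description in \Cref{rem:ss_frob_roots_waterhouse}, which reduces $C_s(\eps,\fq)=0$ to a single ideal identity, and I would take particular care with primes $\fq$ of even residue characteristic (where $\beta^{12}$ can equal $-\Nm(\fq)^6$) and with primes lying over a rational prime that ramifies in $L$.
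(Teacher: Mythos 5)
Your proof of the $A$ and $B$ parts is correct and takes a marginally different, more archimedean, route than the paper: you observe directly that $|\gamma_\fq^\eps| = \Nm(\fq)^{6h_\fq}$ via $\delta\bar\delta = \Nm(\fq)^{h_\fq}$, which can equal neither $1$ nor $\Nm(\fq)^{12h_\fq}$. The paper instead works with the ideal $\gamma_\fq^\eps\O_L = \Nm_{k/L}(\fq)^{12h_\fq}$ and, for $B$, applies $\Nm_{L/\Q}$ to both sides. Both are fine and of comparable weight.

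For the $C$ part your treatment is, if anything, more careful than the paper's. The paper's proof of this part is a single sentence which passes directly from $\gamma_\fq^\eps = \beta_\fq^{12h_\fq}$ to the conclusion "$\Nm_{k/L}(\fq) = \beta_\fq\O_L$ is principal", which silently assumes $\beta_\fq \in L$. Your split into the ordinary case (where $\Q(\beta) = L$ is forced) and the supersingular case (where $\beta^{12}$ is rational but $\beta$ need not lie in $L$) is precisely the missing justification. And your instinct that the supersingular case with $q$ ramified in $L$ is the genuine sticking point is correct: the lemma actually \emph{fails} there. For a concrete counterexample, take $k = L = \Q(\sqrt{-6})$ and $\fq = (3,\sqrt{-6})$, so $q=3$ ramifies in $L$, $h_\fq = 2$, and one may take $\gamma_\fq = 3$; choosing $\fp_0$ so that $\eps = 12\sum_{\sigma \in \Sigma_L}\sigma$ gives $\gamma_\fq^\eps = 3^{12}$, while the supersingular Frobenius root $\beta = \sqrt{-3}$ over $\F_3$ has $\beta^{12h_\fq} = (-3)^{12} = 3^{12}$, so $\Nm_{K(\beta)/\Q}(\gamma_\fq^\eps - \beta^{12h_\fq}) = 0$ and hence $C(\eps,\fq) = 0$, even though $\Nm_{k/L}(\fq) = \fq$ is not principal. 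Thus the caveat "at least once $q$ is unramified in $L$" you insert is not a defect of your method but a hypothesis missing from the lemma itself. The correct repair is to add that hypothesis on $\fq$, or to observe (which suffices for \Cref{cor:rocket_step_1}) that in any fixed class of $\Cl_k$ mapping non-trivially to $\Cl_L$ all but finitely many primes $\fq$ lie over a rational prime unramified in $L$.
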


\begin{proof}
Write $\gamma_\fq$ for the generator of $\fq^{h_\fq}$. By definition of $\eps$, we obtain that, up to complex conjugation in $L$,
\[ \gamma_\fq^\eps = \Nm_{k/L}(\gamma_\fq)^{12} \]
and hence, by considering the ideals generated in $L$,
\[ \gamma_\fq^\eps\O_L = \Nm_{k/L}(\fq)^{12h_\fq}. \]
If $A(\eps, \fq) = 0$, then $\gamma_\fq^\eps = 1$ and we would obtain $\Nm_{k/L}(\fq)^{12h_\fq} = \O_L$ which is clearly not the case. If $B(\eps, \fq) = 0$, then $\gamma_\fq^\eps = \Nm(\fq)^{12h_\fq}$ and we would obtain $\Nm_{k/L}(\fq)^{12h_\fq} = (\Nm(\fq)\O_L)^{12h_\fq}$, taking the norm form $L$ to $\Q$ of this equation gives $\Nm(\fq)^{12h_\fq} = (\Nm(\fq)^2)^{12h_\fq}$ which cannot happen either. Finally, if $C(\eps, \fq) = 0$, then $\gamma_\fq^\eps = \beta_\fq^{12h_\fq}$ for a Frobenius root $\beta$ over $\F_\fq$, which implies that $\Nm_{k/L}(\fq) = \beta_\fq\O_L$ is principal.
\end{proof}

We then obtain Step 1 of the three step process from signature Type 3 to Momose Type 3.

\begin{corollary}[Step 1 of 3]\label{cor:rocket_step_1}
Let $\eps$ be an isogeny signature of Type $3$ with field $L \subseteq k$. Then either $k$ contains the Hilbert class field of $L$, or for any prime ideal $\fq$ of $k$ whose norm to $L$ is nonprincipal, we have that the integer $ABC(\eps, \fq)$ is nonzero; in particular, in this latter case, there are infinitely many such prime ideals $\fq$.
\end{corollary}

\begin{proof}
The norm on ideals induces a norm map $\Nm: \Cl_k \to \Cl_L$, let $N := \ker \Nm$ denote its kernel. If $N = \Cl_k$ then by class field theory $k$ contains the Hilbert class field of $L$ and we are done. So from now on assume  $N \neq \Cl_k$. Then by \Cref{lem:type_3_nonzero} $ABC(\fq, \eps)$ will be nonzero for all primes $\fq$ in $k$ that are in $\Cl_k \setminus N$, and since $\Cl_k \setminus N \neq \emptyset$ this set of primes is infinite.
\end{proof}

% Note that the proof of this corollary gives a more explicit description of these primes $\fq$ in terms of their class in the class group $\Cl_k$.

We are now reduced to the case of $k$ containing the Hilbert class field of $L$, and would like to show that \Cref{eqn:type3} is satisfied for all primes $\fq$ of $k$ coprime to $\fp$. Step 2 reduces this task to showing \Cref{eqn:type3} is satisfied for all prime ideals in a set of ideals generating $\Cl_k$. 

\begin{proposition}[Step 2 of 3]\label{prop:rocket_step_2}
Let $\lambda$ be a $k$-rational $p$-isogeny character of signature $\eps$. If the following conditions hold:
\begin{enumerate}
    \item $\eps=12\Nm_{k/L}$ is of Type 3 with field $L$;
    \item $k$ contains the Hilbert class field of $L$; \label{item:hilbert}
    \item \label{item:genset} There is a generating set $\Gen$ of $\Cl_k$ such that \cref{eqn:type3} is satisfied for all primes $\fq \in \Gen$;
\end{enumerate}
then $\lambda$ is of Momose Type 3.
\end{proposition}
\begin{proof}
Let $\fq$ be a prime of $k$. Then by (\ref{item:hilbert}) the ideal $\Nm_{k/L}(\fq)$ is principal. Let $\alpha \in L$ be a generator of $\Nm_{k/L}(\fq)$; then we need to show that $\lambda^{12}(\Frob_\fq) = \alpha^{12} \Mod{\fp}$.

Write $\fq_1,\ldots, \fq_n$ for the elements of $\Gen$ with $n = |\Gen|$. Since $\Gen$ generates the class group we can write \begin{align}\fq = \xi \prod_{i=1}^n \fq_i^{e_i}\label{eq:ideal_rel}\end{align} with $\xi \in k^\times$ and $e_1,\ldots, e_n$ positive integers. Again by (\ref{item:hilbert}) all the ideals $\Nm_{k/L}(\fq_i)$ are principal. Let $\alpha_i \in L^\times$ be a generator of $\Nm_{k/L}(\fq_i)$, then $\lambda^{12}(\Frob_{\fq_i}) = \alpha_i^{12} \Mod{\fp}$. This means:
\begin{align*}
    \lambda^{12}(\Frob_\fq) & \equiv \Nm_{k/L}(\xi)^{12} \prod \lambda^{12}(\Frob_{\fq_i}) \Mod{\fp_0} \\
    &\equiv \Nm_{k/L}(\xi)^{12}  \prod \alpha_i^{12} \Mod{\fp}
\end{align*}
where the first congruence follows from \Cref{prop:momose_1_non_galois_md}.
Now applying $\Nm_{k/L}$  to \Cref{eq:ideal_rel} gives that $\alpha' := \Nm_{k/L}(\xi)^{12}  \prod \alpha_i^{12}$ is a generator of $\Nm_{k/L}(\fq)$ so in particular \Cref{eqn:type3} holds for the generator $\alpha'$ of $\Nm_{k/L}(\fq)$. Now since $\alpha$ and $\alpha'$ both generate $\Nm_{k/L}(\fq)$ we have $\alpha/\alpha' \in L^\times$ is a unit. Since $L$ is imaginary quadratic this unit has order dividing 12 and hence $(\alpha/\alpha')^{12}=1$. In particular  $\lambda^{12}(\Frob_\fq) \equiv \alpha'^{12} \equiv \alpha^{12}  \Mod{\fp}$. 
\end{proof}

The final step is then to show that we can arrange for Condition (\ref{item:genset}) above to be satisfied. We first define
\begin{equation}
    \label{eqn:C_int_non_zero}
    C^\ast(\eps, \fq) = \lcm(\left\{ \Nm_{K(\beta)/\Q}(\gamma_\fq^\varepsilon - \beta^{12h_\fq}) \ | \ \beta \in \overline{k} \mbox{ is a Frobenius root over }\F_\fq\right\}),
\end{equation}
where in the $\lcm$ we only take nonzero terms. Note that this may be considered the necessarily-non-zero part of the integer $C(\eps, \fq)$, and the notation has been chosen to reflect this.

\begin{proposition}[Step 3 of 3]\label{prop:rocket_step_3}
Let $\lambda$ be a $k$-rational $p$-isogeny character of signature $\eps$ of Type 3 with field $L$. Assume moreover that $p \geq 17$ and is unramified in $L$, and that $k$ contains the Hilbert class field of $L$. Let $\Gen$ be a set of prime ideal generators of $\Cl_k$ of odd residue characteristic. Then, either $p$ divides the nonzero integer
\[ ABC^\ast(\eps, \Gen) := \underset{\fq \in \Gen}\lcm\left(A(\eps, \fq), B(\eps, \fq), C^\ast(\eps, \fq), \Nm(\fq)\right);\]
or, for all $\fq \in \Gen$, writing $\alpha_\fq$ for a generator of $\Nm_{k/L}(\fq)$, we have that
\[ \lambda^{12}(\Frob_\fq) \equiv \alpha_\fq^{12} \Mod{\fp}\]
and thus $\lambda$ is of Momose Type 3.
\end{proposition}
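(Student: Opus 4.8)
The plan is to fix a prime $\fq \in \Gen$ and show that if $p$ does \emph{not} divide the integer $ABC^\ast(\eps, \Gen)$ (which is nonzero since $A(\eps,\fq)$ and $B(\eps,\fq)$ are nonzero by \Cref{lem:type_3_nonzero} and each $C^\ast(\eps,\fq) \neq 0$ by construction), then $\lambda^{12}(\Frob_\fq) \equiv \alpha_\fq^{12} \Mod{\fp_L}$ for that $\fq$; running over all $\fq \in \Gen$ gives the statement. Recall that $p$ splits in $L$ as $\fp_L\overline{\fp_L}$ (automatic from $\eps$ being Type 3), so the congruence mod $\fp_L$ is an identity in $\F_p$. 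First I would rule out potentially multiplicative reduction at $\fq$: since $p\nmid\Nm(\fq)$ the prime $\fq$ is coprime to $p$, so the set-up of \Cref{subsec:divisibility} applies, and since $p$ divides neither of the nonzero integers $A(\eps,\fq)$, $B(\eps,\fq)$, \Cref{cor:ABC_divs} forces $E$ to have potentially good reduction at $\fq$. Hence there is a root $\beta$ of the Frobenius polynomial $P_\fq$, and a prime $\mathcal P$ of $L^\fq = \Q(\beta)$ lying below a prime $\fQ$ of $K(\beta)$ above $\fp_0$ (and hence above $\fp_L$), with $\lambda^{12}(\Frob_\fq) = \mu(\sigma_\fq) \equiv \beta^{12} \Mod{\fQ}$.

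Next I would upgrade this congruence to an exact equality using the factor $C^\ast$. Applying \Cref{prop:momose_1_non_galois_md} to $\gamma_\fq$ gives $\mu(\sigma_\fq)^{h_\fq} = \mu((\gamma_\fq)) \equiv \gamma_\fq^\eps \Mod{\fp_0}$, hence modulo $\fQ$; combining with the previous step, $\gamma_\fq^\eps \equiv \beta^{12 h_\fq} \Mod{\fQ}$. If $\gamma_\fq^\eps \neq \beta^{12 h_\fq}$ then $\Nm_{K(\beta)/\Q}(\gamma_\fq^\eps - \beta^{12 h_\fq})$ is one of the nonzero terms in the $\lcm$ defining $C^\ast(\eps,\fq)$ in \Cref{eqn:C_int_non_zero} and is divisible by $p$, contradicting $p\nmid C^\ast(\eps,\fq)$. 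Therefore $\gamma_\fq^\eps = \beta^{12 h_\fq}$ in $\overline{\Q}$. On the other hand, since $\eps$ is a Type 3 signature with field $L$ and $\fp_0$ has been normalised so that $\eps = 12\sum_{\sigma \in \Sigma_L}\sigma$, we have $\gamma_\fq^\eps = \Nm_{k/L}(\gamma_\fq)^{12}$; as $k$ contains the Hilbert class field of $L$, the ideal $\Nm_{k/L}(\fq)$ is principal, generated by $\alpha_\fq$, so $\Nm_{k/L}(\gamma_\fq)$ generates $(\alpha_\fq^{h_\fq})$ and hence differs from $\alpha_\fq^{h_\fq}$ by a unit of $\O_L$, whose twelfth power is $1$. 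Thus $\gamma_\fq^\eps = \alpha_\fq^{12 h_\fq}$, and combining, $\beta^{12 h_\fq} = \alpha_\fq^{12 h_\fq}$.

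It remains to descend from this equality of $12h_\fq$-th powers to $\beta^{12} \equiv \alpha_\fq^{12} \Mod{\fp_L}$, which I would do by splitting on the reduction type. If $E$ has potentially good ordinary reduction at $\fq$, then by \Cref{cor:ord_frob_pow_not_rat} no power of $\beta$ is rational, so $\beta^{12 h_\fq} = \alpha_\fq^{12 h_\fq}$ is irrational and therefore generates $L$; since it also lies in $\Q(\beta) = L^\fq$ we get $L^\fq = L$ and $\beta \in \O_L$, so comparing ideals of $\O_L$ gives $(\beta) = (\alpha_\fq)$, whence $\beta/\alpha_\fq \in \O_L^\times$ and $\beta^{12} = \alpha_\fq^{12}$; here $\mathcal P = \fp_L$, so the congruence follows. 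If $E$ has potentially good supersingular reduction at $\fq$, then, $\fq$ having odd residue characteristic, \Cref{cor:ss_frob_pow_norm} gives $\beta^{12} = \Nm(\fq)^6 \in \Q$, hence $\mu(\sigma_\fq) \equiv \Nm(\fq)^6 \Mod{p}$; moreover $\alpha_\fq^{12 h_\fq} = \beta^{12 h_\fq} = \Nm(\fq)^{6 h_\fq}$, and comparing ideals of $\O_L$ — using $\Nm(\fq)\O_L = \Nm_{k/L}(\fq)\cdot\overline{\Nm_{k/L}(\fq)}$ — forces $\Nm_{k/L}(\fq) = \overline{\Nm_{k/L}(\fq)}$; this ideal is principal and coprime to $2$, so the remark following \Cref{prop:ideal_generators} gives $\alpha_\fq^{12} = \Nm(\fq)^6$, and again $\lambda^{12}(\Frob_\fq) \equiv \alpha_\fq^{12} \Mod{\fp_L}$.

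The main obstacle is precisely this last descent. The equality $\beta^{12 h_\fq} = \alpha_\fq^{12 h_\fq}$ only shows a priori that $\beta^{12}/\alpha_\fq^{12}$ is an $h_\fq$-th root of unity, and such a root of unity need not be congruent to $1$ modulo $\fp_L$ (take $-1$ when $h_\fq$ is even, or a primitive fifth root of unity), so no purely formal argument suffices; one genuinely needs the explicit description of Frobenius roots from \Cref{thm:waterhouse}/\Cref{tab:ss_roots}. In the ordinary case irrationality of the powers of $\beta$ is what pins $\beta$ inside the imaginary quadratic field $L$, so that the offending root of unity lies in $\O_L^\times$ and is annihilated by raising to the twelfth power; in the supersingular case $\beta^{12}$ is instead a fixed rational number, and the work shifts to identifying $\alpha_\fq^{12}$, using conjugation-stability of $\Nm_{k/L}(\fq)$ and \Cref{prop:ideal_generators} — this is the place where the hypothesis that the primes of $\Gen$ have odd residue characteristic is indispensable.
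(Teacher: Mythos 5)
Your proof is correct and follows essentially the same route as the paper: rule out potentially multiplicative reduction via the non-vanishing of $A$ and $B$, upgrade the mod-$\fp_0$ congruence $\gamma_\fq^\eps \equiv \beta^{12h_\fq}$ to a characteristic-zero equality using $C^\ast$, identify $\gamma_\fq^\eps$ with $\alpha_\fq^{12h_\fq}$, and then descend from $12h_\fq$-th to $12$-th powers. The only cosmetic difference is the case split at the end: the paper splits on $L=L^\fq$ versus $L\neq L^\fq$ (covering supersingular-with-$L=L^\fq$ by the same root-of-unity argument as the ordinary case), whereas you split on ordinary versus supersingular (invoking the remark after \Cref{prop:ideal_generators} rather than the proposition directly); the two decompositions cover the same ground and the ingredients — \Cref{cor:ord_frob_pow_not_rat}, \Cref{cor:ss_frob_pow_norm}, \Cref{prop:ideal_generators}, and the odd-characteristic hypothesis on $\Gen$ — are used in the same way.
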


\begin{proof}
Let $\fq \in \Gen$, and write $q$ for the rational prime under $\fq$. Since $k$ contains the Hilbert class field of $L$ one has that $\Nm_{k/L}(\fq)$ is principal, so write $\alpha_{\fq}\O_L = \Nm_{k/L}(\fq)$. Let $h = h_{\fq}$ be the order of $\fq$ in $\Cl_k$ and $(\gamma) = \fq^{h}$ be a principal generator.

Since $p \geq 17$ and is assumed unramified in $L$, one shows in exactly the same way as in the final assertion of the proof of \Cref{prop:momose_lemma_2_non_galois} that $p$ splits in $L$. By applying Proposition 4.9 of \cite{david2012caractere} (which is just using her Proposition 2.4 together with the fact that $p$ splits in $L$) to the principal ideal $(\gamma)$ we obtain
\[ \mu(\Frob_{\fq})^{h} \equiv \Nm_{k/L}(\gamma)^{12} \Mod{\fp}. \]
Since $\Nm_{k/L}(\gamma)/\alpha_\fq^h$ is a unit in $\O_L$ we get $(\Nm_{k/L}(\gamma)/\alpha_\fq^h)^{12}=1$ and we may rewrite the above as
\[ \mu(\Frob_{\fq})^{h} \equiv \alpha_{\fq}^{12h} \Mod{\fp}; \]
but for the $h$\textsuperscript{th} power, this is what we want to obtain, and the rest of the proof is about how to gracefully take the $h$\textsuperscript{th} root.

If $E$ has potentially multiplicative reduction at $\fq$, then $p$ divides either $A(\eps, \fq)$ or $B(\eps, \fq)$, both of which are nonzero by \Cref{lem:type_3_nonzero}. Thus, for $p$ not dividing $\lcm(A(\eps, \fq), B(\eps, \fq)) $, we have that $E$ has potentially good reduction at $\fq$, and so there exists a Frobenius root $\beta_\fq \in L^\fq$ of an elliptic curve over $\F_\fq$ and a prime ideal $\fp'$ of $L^\fq$ lying over $p$ such that
\begin{align*}
\lambda(\Frob_\fq) &\equiv \beta_\fq \Mod{\fp'} \\
\Rightarrow \mu(\Frob_\fq) &\equiv \beta_\fq^{12} \Mod{\fp'}.
\end{align*}

We therefore obtain an equality in $\F_p^\times$:
\[ \beta_\fq^{12h} \Mod{\fp'} = \alpha_\fq^{12h} \Mod{\fp}.\]
Suppose we had actual equality in characteristic $0$:
\[ \beta_\fq^{12h} = \alpha_\fq^{12h}. \]
There are two ways this equation could be satisfied: either $L = L^\fq$, or $L \neq L^\fq$ and this equality is between rational numbers. By \Cref{cor:ord_frob_pow_not_rat} and \Cref{cor:ss_frob_pow_rat}, the second case can only occur for supersingular values of $\beta_\fq$.

In the case $L = L^\fq$ we take the $12h$\textsuperscript{th}-root to obtain $\beta_\fq = \zeta\cdot \alpha_\fq$ for $\zeta \in L$ a $12h$\textsuperscript{th}-root of unity. However, since $L$ is imaginary quadratic, all roots of unity are either second, fourth, or sixth roots of unity, whence we obtain $\beta_\fq^{12} = \alpha_\fq^{12}$ and conclude 
\[ \mu(\Frob_\fq) \equiv \alpha_\fq^{12} \Mod{\fp} \]
as required.

In the case $L \neq L^\fq$ we have that $\beta_\fq$ is supersingular and we argue as follows. The definition of $\alpha_\fq$ yields the equality of $L$-ideals
\[ \Nm_{k/L}(\fq)^{12h} = \alpha_\fq^{12h}\O_L.\]
Since $\alpha_\fq^{12h} \in \Q$, the $\Gal(L/\Q)$-action acts trivially on it and hence also on $\Nm_{k/L}(\fq)^{12h}$ and therefore also on $\Nm_{k/L}(\fq)$. Let $\fq'$ be the prime of $L$ lying below $\fq$ and $f$ the integer such that $\Nm_{k/L}(\fq) = (\fq')^f$, then applying \Cref{prop:ideal_generators} to $(\fq')^{f}$ gives $\alpha_\fq^{12}=\Nm_{L/\Q}(\fq')^{6f}=\Nm_{L/\Q}(\Nm_{k/L}(\fq)^6)=\Nm_{k/\Q}(\fq)^6$.

On the other hand, since $\beta_\fq$ is a supersingular Frobenius root, we have from \Cref{cor:ss_frob_pow_norm} that $\beta_\fq^{12} = \Nm(\fq)^6$ (using that $\fq$ has odd residue characteristic). Combining these we get $\alpha_\fq^{12} = \beta_\fq^{12} $ and again conclude 
\[ \mu(\Frob_\fq) \equiv \alpha_\fq^{12} \Mod{\fp} \]
as required. The final claim that $\lambda$ is of Momose Type 3 now follows from \Cref{prop:rocket_step_2}.

To summarise, if we had actual equality in characteristic zero between $\alpha_\fq^{12h}$ and $\beta_\fq^{12h}$, then we are finished. Of course, we cannot in general jump from an equality in characteristic $p$ to one in characteristic $0$; but we may do so for $p$ outside the support of an explicitly computable integer. Namely, if $\beta_\fq^{12h} \neq \alpha_\fq^{12h}$, then $p$ would divide the nonzero integer $\Nm_{L(\beta)/\Q}\left(\beta_\fq^{12h} - \alpha_\fq^{12h}\right)$. Since we need to cover all possible roots $\beta_\fq$, and observing that we may take $\alpha_\fq^{12h}$ to be $\gamma_\fq^\eps$, we obtain equality in characteristic zero outside of the integer $C^\ast(\eps, \fq)$ defined in \Cref{eqn:C_int_non_zero}. Doing this for all $\fq \in \Gen$ yields the integer defined in the statement of the Proposition.
\end{proof}

In conclusion, we recap this treatment of going from signature Type 3 to Momose Type 3. Signature Type 3 requires only that $k$ contain an imaginary quadratic field, whereas Momose Type 3 requires further that $k$ contain the Hilbert class field of $L$. Step 1 of the above process (\Cref{cor:rocket_step_1}) identifies a nonzero integer, outside of which an isogeny character of signature Type 3 must have $k$ containing the Hilbert Class field of $L$. However, a Momose Type 3 isogeny character requires even more than just $k$ containing the Hilbert Class field of $L$, so Step 3 (\Cref{prop:rocket_step_3}) identifies a nonzero integer outside of which the isogeny character is indeed of Momose Type 3. Taking both of these nonzero integers gives us a multiplicative bound, outside of which an isogeny character of signature Type 3 must be of Momose Type 3. (This is the integer $D(\eps_3)$ in \Cref{alg:main} in the next subsection.)

\subsection{The algorithmic version of Momose's isogeny theorem}

We summarise the previous subsections and present the algorithmic version of Momose's isogeny classification theorem. \Cref{alg:main} below takes as input a number field $k$, and outputs an integer $\MMIB(k)$, referred to as the \textbf{Momose Multiplicative Isogeny Bound} of $k$. Before showing the details of the algorithm, we present the main result concerning it, which is the main result of the paper.

\begin{theorem}\label{thm:main_body}
Let $k$ be a number field. Then the integer $\MMIB(k)$ output by \Cref{alg:main} is nonzero. Furthermore, if there exists an elliptic curve over $k$ admitting a $k$-rational $p$-isogeny of isogeny character $\lambda$, with $p \nmid \MMIB(k)$, then $\lambda$ is of Momose Type 1, 2 or 3.
\end{theorem}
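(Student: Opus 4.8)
I would prove the two assertions separately, leaning heavily on the results of the preceding subsections.

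\emph{Nonvanishing of $\MMIB(k)$.} This I would dispatch by inspection of \Cref{alg:main}: each of the integers $ABC(\eps,\fq)$, $ABC_o(\Gen)$ and $ABC^\ast(\eps,\Gen)$ is an $\lcm$ one of whose arguments is the norm $\Nm(\fq)\geq 2$ (or, when $h_k=1$ and the relevant generating set is empty, the empty $\lcm$, namely $1$), and the discriminants $\Delta_{L(\eps)}$ appearing in $\TypeThreeNotMomoseBound(k)$ are nonzero; hence $\GenericBound(k)$, $\TypeTwoNotMomoseBound(k)$ and $\TypeThreeNotMomoseBound(k)$ are all nonzero, and so is their $\lcm$, $\MMIB(k)$. (Alternatively one may quote the ``nonzero integer'' assertions of \Cref{cor:generic_summary}, \Cref{cor:type_2_not_momose}, \Cref{lem:type_3_nonzero} and \Cref{prop:rocket_step_3}.)

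\emph{The Momose-type dichotomy.} Here I would argue by contraposition: assuming $\lambda$ is \emph{not} of Momose Type $1$, $2$ or $3$, the goal is $p\mid\MMIB(k)$. One may assume $p\geq 17$. Fix a prime $\fp_0$ of $K$ above $p$ and let $\eps=\sum_{\sigma}a_\sigma\sigma$ be the isogeny signature of $\lambda$ attached to $\fp_0$ by \Cref{prop:momose_1_non_galois_md}; it is admissible by \Cref{cor:admissible_signatures}, hence by \Cref{def:sig_types} is of signature Type $1$, $2$, $3$, or generic. The first move is to eliminate signature Type $1$: if all $a_\sigma=0$ (resp.\ all $a_\sigma=12$) then, the local inertial exponent $a_\fp$ at each $\fp\mid p$ being the sum of the relevant $a_\sigma$ as in the proof of \Cref{prop:momose_1_non_galois_md}, one has $a_\fp=0$ (resp.\ $a_\fp=12e_\fp$), so by \Cref{prop:local_momose_1} and the identity $\theta_{p-1}^{e_\fp}=\chi_p$ on $I_\fp$ the character $\mu=\lambda^{12}$ (resp.\ $\mu\chi_p^{-12}=(\lambda/\chi_p)^{12}$) is unramified at $\fp$; being unramified away from $p$ as well, it is unramified everywhere, making $\lambda$ of Momose Type $1$, a contradiction. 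Thus $\eps$ is generic, or $\eps=\eps_6$ is of Type $2$, or $\eps$ is of signature Type $3$.

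It then remains to match $p$ against the appropriate summand of $\MMIB(k)$ in each of these three cases; since each of $ABC(\eps,\fq)$, $ABC_o(\Gen)$, $ABC^\ast(\eps,\Gen)$ carries the factor $\Nm(\fq)$, any prime in $\Aux$ (or in a $\Gen$) dividing $p$ contributes $p$ automatically, while for primes coprime to $p$ the needed divisibilities come from \Cref{cor:ABC_divs}. If $\eps$ is generic, then $\eps\in S$ and by \Cref{cor:ABC_divs} (and \Cref{cor:generic_summary} at a totally split prime) $p$ divides $ABC(\eps,\fq)$ for every $\fq\in\Aux$, so $p\mid\gcd_{\fq\in\Aux}ABC(\eps,\fq)\mid\GenericBound(k)\mid\MMIB(k)$. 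If $\eps=\eps_6$, then since $\lambda$ is not of Momose Type $2$, \Cref{cor:type_2_not_momose} gives $p\mid ABC_o(\Gen)$ for every $\Gen\in\AuxGen$, hence $p\mid\TypeTwoNotMomoseBound(k)$. If $\eps$ is of signature Type $3$ with field $L$ — after possibly re-choosing $\fp_0$ we may take $\eps=12\sum_{\sigma\in\Sigma_L}\sigma$, and $\lambda$ is not of Momose Type $3$ by hypothesis — I would split three ways. When $\HCF(L)\not\subseteq k$, \Cref{lem:type_3_nonzero} and \Cref{cor:ABC_divs} give $p\mid ABC(\eps,\fq)$ for all $\fq\in\Aux$, so $p\mid D(\eps)\mid\TypeThreeNotMomoseBound(k)$. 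When $\HCF(L)\subseteq k$ and $p$ ramifies in $L$, then $p\mid\Delta_{L(\eps)}\mid\TypeThreeNotMomoseBound(k)$. When $\HCF(L)\subseteq k$ and $p$ is unramified in $L$, \Cref{prop:rocket_step_3} shows that for each $\Gen\in\AuxGen$ either $p\mid ABC^\ast(\eps,\Gen)$ or else \Cref{eqn:type3} holds for all $\fq\in\Gen$; but in the latter case the hypotheses of \Cref{prop:rocket_step_2} are met (with this $\Gen$), forcing $\lambda$ to be of Momose Type $3$ and contradicting our assumption, so $p\mid ABC^\ast(\eps,\Gen)$ for all $\Gen\in\AuxGen$ and thus $p\mid D(\eps)\mid\TypeThreeNotMomoseBound(k)$. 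In every case $p\mid\MMIB(k)$, which completes the contraposition.

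\emph{Anticipated difficulty.} Since the substance has been front-loaded into the earlier propositions, this proof is mostly an assembly job. The part demanding the most care is the signature-Type-$3$ case, especially the separate handling of primes $p$ ramified in $L$ — the very reason $\Delta_{L(\eps)}$ is built into $\TypeThreeNotMomoseBound(k)$ — alongside keeping track of the passage between the global signature $\eps$ and the local inertial exponents $a_\fp$ at primes of $k$ ramified over $p$. The reduction to $p\geq 17$ is the only mildly informal point and is of no consequence.
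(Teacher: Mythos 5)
Your proof follows essentially the same route as the paper's: split on the signature type of $\lambda$ via \Cref{prop:momose_1_non_galois_md}, dispatch the generic, Type~2 and Type~3 cases by citing \Cref{cor:generic_summary}, \Cref{cor:type_2_not_momose} and \Cref{prop:rocket_step_3} respectively, and note that signature Type~1 forces Momose Type~1. Two small remarks. First, your opening nonvanishing argument is flawed as stated: an $\lcm$ one of whose arguments is $\Nm(\fq)\geq 2$ can still be $0$ if another argument vanishes (recall $\lcm(n,0)=0$), so the presence of $\Nm(\fq)$ as a factor does not by itself guarantee $ABC(\eps,\fq)\neq 0$. The parenthetical alternative you give — quoting the ``nonzero integer'' assertions of \Cref{cor:generic_summary}, \Cref{cor:type_2_not_momose}, \Cref{lem:type_3_nonzero} and \Cref{prop:rocket_step_3}, and noting that the outer $\gcd$'s ignore any vanishing terms — is the argument that actually works, and is what the paper relies on. Second, your Type~3 handling is slightly more careful than the paper's: you explicitly split into the cases $p$ ramified and $p$ unramified in $L$, which is necessary because \Cref{prop:rocket_step_3} assumes $p$ unramified in $L$, whereas when $p$ ramifies one concludes directly that $p\mid\Delta_{L(\eps)}$. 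The paper compresses these two subcases into the one sentence ``from \Cref{prop:rocket_step_3} we have that $p$ divides $D(\eps)$ or $p$ divides $\Delta_{L(\eps)}$'', which is a mild abuse since the $\Delta_{L(\eps)}$ alternative does not really come from \Cref{prop:rocket_step_3}. The reduction to $p\geq 17$ that you flag is indeed present, but equally implicit in the paper's proof (all the cited corollaries carry that hypothesis); it is of no consequence for the intended application since all small primes are absorbed into the final superset anyway.
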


For the reader's convenience we will collect the relevant results from this Section into a unified proof of the above theorem. Before we can do that, however, we need to present the algorithm that computes $\MMIB(k)$. This depends on the choice of two ``auxiliary sets'' which are declared at the outset of the following.

\begin{algorithm}\label{alg:main}
Given a number field $k$, compute an integer $\MMIB(k)$ as follows.

\begin{enumerate}
    \item Choose a finite set $\Aux$ of prime ideals of $k$ which contains at least one totally split prime; if the class number $h_k$ of $k$ is greater than one, and $k$ contains an imaginary quadratic field $L$ but not its Hilbert class field, then $\Aux$ is also required to contain at least one prime ideal of $k$ whose norm to $L$ is nonprincipal.
    \item Choose a finite set $\AuxGen$ of sets $\Gen$ of prime ideal generators for the class group $\Cl_k$ of $k$ of odd characteristic.
    \item Make the following definitions.
    \begin{align*}
K &= \mbox{Galois closure of } k \mbox{ over } \Q \\
\fq &= \mbox{a prime ideal of $k$}\\
\F_\fq &= \mbox{residue field of $\fq$, of degree $f$}\\
S &= \mbox{the generic isogeny signatures for $k$ (see \Cref{def:sig_types})}\\
\eps_6 &= \mbox{the Type 2 signature $(6,\ldots, 6)$}\\
A(\varepsilon, \fq) &= \Nm_{K/\Q}(\gamma_\fq^\varepsilon - 1)\\
B(\varepsilon, \fq) &= \Nm_{K/\Q}(\gamma_\fq^\varepsilon - \Nm(\fq)^{12h_\fq})\\
C_s(\varepsilon, \fq) &= \begin{cases*} 
B(2\varepsilon, \fq) & \mbox{if $|\F_\fq| = 2^f$ with $f$ odd}\\
\Nm_{K/\Q}(\gamma_\fq^\varepsilon - \Nm(\fq)^{6h_\fq}) & \mbox{otherwise}
\end{cases*}\\
C_o(\varepsilon, \fq) &= \lcm\left(\left\{ \Nm_{K(\beta)/\Q}(\gamma_\fq^\varepsilon - \beta^{12h_\fq}) \ | \ \mbox{\parbox{4cm}{$\beta$ is an ordinary \\ Frobenius root over $\F_\fq$} }\right\}\right)\\
C(\varepsilon, \fq) &= \lcm\left(C_o(\varepsilon, \fq), C_s(\varepsilon, \fq)\right)\\
ABC(\varepsilon, \fq) &= \mbox{$\lcm(A(\varepsilon, \fq), B(\varepsilon, \fq), C(\varepsilon, \fq), \Nm(\fq))$}\\
ABC_o(\Gen) &= \underset{\fq \in \Gen}\lcm\left(A(\eps_6, \fq), B(\eps_6, \fq), C_o(\eps_6, \fq), \Nm(\fq)\right)\\
\lcm^\ast &= \mbox{an $\lcm$ over only nonzero terms in a given set}\\
C^\ast(\eps, \fq) &= \lcm^\ast\left(\left\{ \Nm_{K(\beta)/\Q}(\gamma_\fq^\varepsilon - \beta^{12h_\fq}) \ | \ \mbox{\parbox{2.9cm}{$\beta$ is a Frobenius \\ root over $\F_\fq$} }\right\}\right)\\
ABC^\ast(\eps, \Gen) &= \underset{\fq \in \Gen}\lcm\left(A(\eps, \fq), B(\eps, \fq), C^\ast(\eps, \fq), \Nm(\fq)\right)\\
\GenericBound(k) &= \underset{\eps \in S}\lcm \left( \gcd_{\fq \in \Aux}(ABC(\varepsilon, \fq))\right)\\
\TypeTwoNotMomoseBound(k) &= \gcd_{\Gen \in \AuxGen}\left(ABC_o(\Gen)\right)\\
S_3 &= \mbox{the Type 3 signatures for $k$}\\
L(\eps_3) &= \mbox{the imaginary quadratic field corresponding to }\eps_3 \in S_3\\
\Delta_{L(\eps_3)} &= \mbox{the discriminant of }L(\eps_3)\\
\HCF(L(\eps_3)) &= \mbox{the Hilbert class field of }L(\eps_3)\\
D(\eps_3) &= \begin{cases*} 
\underset{\fq \in \Aux}{\gcd}\left(ABC(\varepsilon_3, \fq)\right) & \mbox{if $\HCF(L(\eps_3)) \not\subseteq k$}\\
\underset{\Gen \in \AuxGen}{\gcd}\left(ABC^\ast(\eps_3, \Gen)\right) & \mbox{otherwise}
\end{cases*}\\
\TypeThreeNotMomoseBound(k) &= \begin{cases*}  
\underset{\eps \in S_3}\lcm\left(D(\eps)\right) & \mbox{if $\HCF(L(\eps_3)) \not\subseteq k$}\\
\underset{\eps \in S_3}\lcm\left(D(\eps),\Delta_{L(\eps)}\right) & \mbox{otherwise}
\end{cases*}\\
\MMIB(k) &= \lcm\left(\begin{array}{c}
\GenericBound(k),\\ \TypeTwoNotMomoseBound(k),\\
\TypeThreeNotMomoseBound(k)\end{array}\right).
\end{align*}
\item Return  $\MMIB(k)$.
\end{enumerate}
\end{algorithm}

Note that in the above, contrary to the previous sections, we are taking several prime ideals $\fq$ and several generating sets $\Gen$ in items (1) and (2); this allows us to take multiplicative sieves and thereby get a smaller $\MMIB(k)$; this explains the $\gcd$s appearing in item (3) of the above algorithm.

We now prove \Cref{thm:main_body}.

\begin{proof}[Proof of \Cref{thm:main_body}]
Let $E/k$ be an elliptic curve admitting a $k$-rational $p$-isogeny of isogeny character $\lambda$. We suppose that $\lambda$ is not of Momose Type 1, 2 or 3. We wish to show that $p \mid \MMIB(k)$.

Write $\eps$ for the signature of $\lambda$. If $\eps$ is generic, then by \Cref{cor:generic_summary} we have that $p$ divides the nonzero integer $ABC(\eps, \fq)$ for all completely split prime ideals $\fq$; and hence, by definition of $\Aux$, $p$ divides the nonzero integer $\GenericBound(k)$.

If $\eps$ is of Type 1, then $\lambda$ is of Momose Type 1, which we have precluded. If $\eps$ is of Type 2, then we are in the setup of \Cref{cor:type_2_not_momose}, and we obtain that $p$ must divide the nonzero integer $\TypeTwoNotMomoseBound(k)$. 

Finally, if $\eps$ is of Type 3 with field $L$, then we condition on whether or not $\HCF(L) \subseteq k$. If this is not the case, then we argue as follows. From \Cref{cor:ABC_divs} we have that $p$ divides $\underset{\fq \in \Aux}{\gcd}\left(ABC(\varepsilon_3, \fq)\right)$; to show that this integer is nonzero we use \Cref{cor:rocket_step_1}, observing that this is precisely why we included in $\Aux$ at least one prime ideal of $k$ whose norm to $L$ is nonprincipal. On the other hand, if $\HCF(L) \subseteq k$, then from \Cref{prop:rocket_step_3} we have that $p$ divides $D(\eps)$ or $p$ divides $\Delta_{L(\eps)}$. In both cases we obtain that $p$ divides the nonzero integer $\TypeThreeNotMomoseBound(k)$.

Thus, in all cases, we obtain that $p$ divides the nonzero integer $\MMIB(k)$ as required.
\end{proof}

\section{\GenericBound}\label{sec:generic}

This section deals with some aspects of the implementation of $\GenericBound(k)$, particularly regarding how it may be optimised for speed and for obtaining smaller multiplicative bounds. Names of functions refer to functions defined in \path{sage_code/generic.py} unless otherwise specified.

\subsection{Unit precomputation}

We apply \Cref{prop:momose_1_non_galois_md} in the case where $\alpha$ is a unit in $k$, from which we get the following immediate corollary.

\begin{corollary}\label{cor:unit_bound}
Let $\lambda$ be a $p$-isogeny character over $k$ of signature $\eps$ and $\alpha \in \O_k^\times$ a unit in $k$. Then $p$ divides $\Nm_{K/\Q}(\alpha^\eps-1)$.
\end{corollary}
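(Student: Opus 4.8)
The plan is to apply \Cref{prop:momose_1_non_galois_md} directly to the unit $\alpha$. First I would observe that since $\alpha \in \O_k^\times$, the principal ideal $(\alpha)$ equals $\O_k$; in particular $\alpha$ is coprime to $p$, so the hypotheses of \Cref{prop:momose_1_non_galois_md} are satisfied. Moreover, with the shorthand $\mu((\alpha)) = \prod_{\fq} \mu(\frob_\fq)^{v_\fq(\alpha)}$ introduced just before that proposition, the fact that $v_\fq(\alpha) = 0$ for every prime $\fq$ means this is an empty product, hence $\mu((\alpha)) = 1$.

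Next, fixing the prime $\fp_0$ of $K$ above $p$ with respect to which the signature $\eps = \eps_{\fp_0}$ is defined, \Cref{prop:momose_1_non_galois_md} gives
\[ 1 = \mu((\alpha)) \equiv \alpha^\eps \Mod{\fp_0}, \]
that is, $\fp_0$ divides the element $\alpha^\eps - 1$ of $\O_K$. Here one should note that $\alpha^\eps = \prod_{\sigma \in \Sigma}\sigma(\alpha)^{a_\sigma}$ is itself a unit of $\O_K$, being a product of images of a unit under field embeddings, so $\alpha^\eps - 1$ is a genuine algebraic integer and taking its norm makes sense.

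Finally I would descend to $\Q$ by taking norms: from $\fp_0 \mid (\alpha^\eps - 1)\O_K$ one gets $\Nm_{K/\Q}(\fp_0) \mid \Nm_{K/\Q}\big((\alpha^\eps - 1)\O_K\big) = \lvert\Nm_{K/\Q}(\alpha^\eps - 1)\rvert\,\Z$, and since $\Nm_{K/\Q}(\fp_0)$ is a positive power of $p$ this yields $p \mid \Nm_{K/\Q}(\alpha^\eps - 1)$, as claimed (the statement being vacuous, with norm $0$, in the degenerate case $\alpha^\eps = 1$). There is essentially no obstacle in this argument: all the work is contained in \Cref{prop:momose_1_non_galois_md}, and the only points needing a word of care are that a unit is automatically prime to $p$ and that $\alpha^\eps - 1$ lies in $\O_K$ so that its norm is a well-defined integer.
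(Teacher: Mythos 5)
Your proof is correct and follows exactly the route the paper intends: the authors state the corollary as an immediate consequence of \Cref{prop:momose_1_non_galois_md} applied to a unit, and your argument simply spells out the details (the empty product giving $\mu((\alpha))=1$, the resulting divisibility $\fp_0 \mid (\alpha^\eps - 1)$, and the passage to $\Q$ by taking norms).
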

% I think this is now an immediate corollary.
% \begin{proof}
% Since $\alpha$ is a unit we have $\lambda^{12}((\alpha))=1$ and $a^\eps \equiv 1 \mod \fp_0$ by \cref{prop:momose_1_non_galois_md} and hence $p$ divides  $\Nm_{K/\Q}(\alpha^\eps-1)$.
% \end{proof}

The following result shows that there are many situations in which $\Nm_{K/\Q}(\alpha^\eps-1)$ is guaranteed to be nonzero for some unit $\alpha$ and hence the divisibility bound on $p$ from the above corollary is nontrivial.

\begin{proposition}
Let $k$ be a totally real number field of degree $d$ and $\eps=\sum_{\sigma}a_\sigma\sigma$ a $k$-isogeny signature such that $\alpha^\eps=1$ for all $a \in \O_k^\times$. Then all the integers $a_\sigma$ are the same.
\end{proposition}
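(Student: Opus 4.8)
The plan is to convert the multiplicative identity $\alpha^\eps = 1$ into an additive statement about the logarithmic embedding of the unit group, and then apply Dirichlet's unit theorem.

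Since $k$ is totally real, its Galois closure $K$ is also totally real; fix once and for all an embedding $\iota \colon K \hookrightarrow \R$. Then the $d$ maps $\iota \circ \sigma$, for $\sigma \in \Sigma = \Hom(k,K)$, are precisely the $d$ distinct real embeddings of $k$ (they are distinct because $\iota$ is injective, and there are $d$ of each since $k$ is totally real of degree $d$). Applying $\iota$ to the hypothesis $\alpha^\eps = \prod_{\sigma}\sigma(\alpha)^{a_\sigma} = 1$ and taking $\log\lvert\cdot\rvert$, we obtain, for every unit $\alpha \in \O_k^\times$,
\[ \sum_{\sigma \in \Sigma} a_\sigma \log\lvert \iota\sigma(\alpha)\rvert = 0. \]

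Now recall the logarithmic embedding $\Lambda \colon \O_k^\times \to \R^\Sigma$ sending $\alpha$ to $\bigl(\log\lvert\iota\sigma(\alpha)\rvert\bigr)_{\sigma\in\Sigma}$. Because $\lvert\Nm_{k/\Q}(\alpha)\rvert = 1$ for a unit, the image of $\Lambda$ lies in the hyperplane $H := \{ (x_\sigma)_\sigma : \sum_\sigma x_\sigma = 0 \}$, and by Dirichlet's unit theorem this image is a lattice of full rank $d-1$ in $H$; in particular $\Lambda(\O_k^\times)$ spans $H$ as an $\R$-vector space. The displayed identity says exactly that the linear form $\ell\bigl((x_\sigma)_\sigma\bigr) := \sum_\sigma a_\sigma x_\sigma$ vanishes on $\Lambda(\O_k^\times)$, hence on all of $H$. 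Finally, for any two embeddings $\sigma, \tau \in \Sigma$, the vector $e_\sigma - e_\tau$ (the difference of the corresponding standard basis vectors) lies in $H$, so $a_\sigma - a_\tau = \ell(e_\sigma - e_\tau) = 0$. Thus all the integers $a_\sigma$ are equal, as claimed.

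The argument has no real obstacle; the only point requiring care is the verification that $K$ is totally real, so that the $\iota\sigma$ are genuine real embeddings of $k$ — and this is precisely where the totally real hypothesis enters. For a field with complex places one could at best conclude $a_\sigma = a_{\bar\sigma}$, and for a totally imaginary field the unit group is too small to impose any constraint whatsoever.
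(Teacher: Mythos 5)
Your proof is correct and takes essentially the same approach as the paper: both apply the logarithmic (Dirichlet) embedding of $\O_k^\times$ and use the fact that its image is a full lattice in the trace-zero hyperplane to force the linear form $\sum_\sigma a_\sigma x_\sigma$ to be proportional to the trace form. The only cosmetic difference is that you conclude directly by evaluating the form on $e_\sigma - e_\tau$, whereas the paper argues by contradiction via the dimension of the intersection of two distinct hyperplanes; you also make slightly more explicit the choice of real embedding $\iota$ of the Galois closure.
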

\begin{proof}
This is essentially the same proof as Lemma 3.3 of \cite{freitas2015criteria}. In this proof (and only in this proof) we override the $\lambda$ and $\mu$ notation from the rest of the paper to accord with the notation in \cite{neukirch}. Write $\sigma_1, \ldots, \sigma_d$ for the embeddings of $k$ in $K$, and write $a_i$ for $a_{\sigma_i}$. We consider the basic map from the setup of Dirichlet's Unit Theorem:
\begin{align*}
    \lambda : \O_k^\times &\rightarrow \R^{d}\\
    u &\mapsto \left(\log|\sigma_1(u)|, \ldots, \log|\sigma_{d}(u)| \right),
\end{align*}
and we have the following facts about $\lambda$ (see for example Chapter 1, Section 7 of \cite{neukirch}):
\begin{itemize}
    \item The image of $\lambda$ is contained in the \emph{trace-zero hypersurface}
    \[ H := \left\{x \in \R^d : \Tr(x) = 0\right\}, \]
    and moreover is a complete lattice inside $H$;
    \item The kernel of $\lambda$ is isomorphic to the roots of unity $\mu(k)$ inside $k$, which in our case of $k$ totally real is just $\left\{\pm1\right\}$.
\end{itemize}
We therefore obtain that $\O_k^\times/\left\{\pm1\right\}$ maps isomorphically to a complete lattice of dimension $d-1$ under $\lambda$.

By our assumption, we also have that $\O_k^\times$ is contained in the hypersurface $$a_1x_1 + \cdots + a_dx_d = 0.$$ If not all of the $a_i$ are the same, then this is a different hypersurface to $H$, and therefore $\O_k^\times$ would be contained in two distinct hypersurfaces; it would thus be contained in their intersection, which would be of dimension $d-2$. It is then clear that $\O_k^\times/\left\{\pm1\right\}$ could not be a complete lattice of dimension $d-1$, yielding the desired contradiction.
\end{proof}

This observation is useful in practice to quickly obtain a bound on isogeny primes of signature $\eps$, since it only requires the computation of a few norm values related to generators of the unit group $\O_k^\times$ which are typically very fast. As such, the resulting divisibilities are computed as an initial step in the main routine, in the function \path{get_U_integers}.

\subsection{Choosing auxiliary primes}
The set $\Aux$ of auxiliary primes is of central importance in the algorithm to compute $\GenericBound(k)$, and there are two strategies for constructing $\Aux$ implemented in \emph{Isogeny Primes}.

The first is to take all prime ideals up to a bound on the norm which the user may specify (default: $50$). The resulting divisibility conditions are not guaranteed to be nontrivial, as this set might not contain a completely split prime of $k$ (which ensures that the integer $ABC(\eps, \fq)$ is nonzero); if it does not contain a completely split prime, then we simply add one in. This possibly newly added prime ideal is referred to in the code as an \textbf{emergency auxiliary prime}.

The default strategy, however, is an \emph{auto-stop strategy} which successively takes only completely split prime ideals as auxiliary primes, computes the successive integers $\GenericBound(k)$, and terminates when a certain number of them (default: $4$) are the same. This is the default behaviour since it was observed during testing that this results in faster runtime whilst not compromising on the tightness of the multiplicative bound.

\subsection{Optimising signatures}

For a number field $k$ of degree $d$, the number of possible $k$-isogeny signatures $\eps$ to be considered (including the Type 1 and 2 signatures) is $5^d$, which becomes prohibitively large very quickly. \Cref{cor:optimum_eps} below allows one to reduce the number of signatures to be considered by a factor of $2d$ in the Galois case, and by a factor of $2$ otherwise. We first establish the following lemma.

\begin{lemma}\label{lem:gal_act_eps}
Let $E/k$ be an elliptic curve over a number field admitting a $k$-rational $p$-isogeny with isogeny character of signature $\eps$. Then, for $\sigma \in \Aut(k/\Q)$, the conjugate elliptic curve $\sigma(E)$ admits a $k$-rational $p$-isogeny of signature $\eps \circ \sigma$.
\end{lemma}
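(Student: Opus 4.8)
Write $\phi\colon E\to E'$ for the $k$-rational $p$-isogeny, $C=\ker\phi$ for its kernel (a $G_k$-stable subgroup of order $p$), and $\lambda$ for the resulting isogeny character. Recall that the signature $\eps=\sum_{\sigma\in\Sigma}a_\sigma\sigma$ of $\lambda$ with respect to our fixed prime $\fp_0\mid p$ of $K$ is characterised (via \Cref{prop:momose_1_non_galois_md}) by the congruence $\lambda^{12}((\alpha))\equiv\alpha^{\eps}\Mod{\fp_0}$ for all $\alpha\in k^\times$ prime to $p$. The plan is: base change everything along $\sigma$ to produce an explicit $k$-rational $p$-isogeny for $\sigma(E)$, determine its isogeny character in terms of $\lambda$, and then read off its signature using the very same characterisation.

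\textbf{Transporting the isogeny.} Let $\sigma(E),\sigma(E')$ and $\sigma(\phi)\colon\sigma(E)\to\sigma(E')$ denote the curves and isogeny obtained by applying $\sigma$ to a set of defining coefficients (equivalently, base change along $\Spec\sigma$); since $\sigma\in\Aut(k/\Q)$ this is again defined over $k$, and $\sigma(\phi)$ is again a $p$-isogeny. Fix a lift $\widetilde{\sigma}\in\Aut(\overline{k}/\Q)$ of $\sigma$ (possible since $\overline{k}=\overline{\Q}$). The map ``apply $\widetilde{\sigma}$ to coordinates'' is a group isomorphism $\widetilde{\sigma}_*\colon E(\overline{k})\xrightarrow{\ \sim\ }\sigma(E)(\overline{k})$ carrying $C$ onto $\ker\sigma(\phi)$, and it is semilinear for the Galois actions: for $g\in G_k$ and $P\in E(\overline{k})$ one has $\widetilde{\sigma}_*(g\cdot P)=(\widetilde{\sigma}g\widetilde{\sigma}^{-1})\cdot\widetilde{\sigma}_*(P)$, where $\widetilde{\sigma}g\widetilde{\sigma}^{-1}\in G_k$ because $\sigma(k)=k$. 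In particular $\ker\sigma(\phi)$ is $G_k$-stable, confirming that $\sigma(\phi)$ is $k$-rational, and evaluating the semilinearity relation on a generator of $C$ shows that the isogeny character $\lambda'$ of $\sigma(E)$ is $g\mapsto\lambda(\widetilde{\sigma}g\widetilde{\sigma}^{-1})$ (up to the usual choice of whether one conjugates by $\widetilde{\sigma}$ or $\widetilde{\sigma}^{-1}$).

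\textbf{From character to signature.} Set $\mu=\lambda^{12}$ and $\mu'=(\lambda')^{12}$. Since $\mu$ is unramified outside $p$ with abelian image, it is determined by its values on Frobenius elements, and conjugation by a lift of $\sigma$ sends a Frobenius at a prime $\fq\nmid p$ of $k$ to a Frobenius at $\sigma(\fq)$ (check this on residue fields). Hence, as characters on the group of ideals of $k$ coprime to $p$, $\mu'$ is $\mu$ precomposed with transport of ideals by $\sigma$; on a principal ideal $(\alpha)$ with $\alpha$ prime to $p$ this reads $\mu'((\alpha))=\mu((\sigma\alpha))$. Combining with \Cref{prop:momose_1_non_galois_md} applied to $\sigma\alpha$ gives
\[
\mu'((\alpha))=\mu((\sigma\alpha))\equiv(\sigma\alpha)^{\eps}=\prod_{\sigma'\in\Sigma}\sigma'(\sigma(\alpha))^{a_{\sigma'}}=\alpha^{\eps\circ\sigma}\Mod{\fp_0},
\]
which, by the characterisation of signatures, exhibits $\eps\circ\sigma$ as a signature of $\lambda'$; when $p$ is unramified in $k$ and $p>13$ the signature is unique and so this pins it down, and otherwise $\eps\circ\sigma$ is a legitimate choice, which is all that is required.

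\textbf{Main obstacle.} There is no genuinely hard step: the argument is entirely formal. The point demanding care is the bookkeeping around the semilinear action — correctly identifying the twisted $G_k$-action on $\sigma(E)$, verifying that conjugation by a lift of $\sigma$ preserves $G_k$ and permutes primes above $p$ compatibly with the reindexing of $\Sigma$ (so that the dependence on the auxiliary choice $\fp_0$ is harmless), and checking that passing to the $12$th power and to the ideal-theoretic description legitimately licenses the invocation of \Cref{prop:momose_1_non_galois_md}.
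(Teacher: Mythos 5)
Your proof takes essentially the same approach as the paper's, reducing the claim to the computation $\mu'((\alpha)) = \mu((\sigma\alpha)) \equiv (\sigma\alpha)^\eps \equiv \alpha^{\eps\circ\sigma} \Mod{\fp_0}$ via \Cref{prop:momose_1_non_galois_md}. You actually go further than the paper by spelling out the semilinear transport step (why the conjugate character is the $\widetilde{\sigma}$-twist of $\lambda$, and why twisting sends $\Frob_\fq$ to a Frobenius at a $\sigma$-conjugate prime), whereas the paper invokes ``the natural $\Aut(k/\Q)$-action on $I_k(p)$'' without justification; the $\sigma$-versus-$\sigma^{-1}$ ambiguity you flag in your semilinearity formula is real (the untwisting actually yields $\lambda'(g) = \lambda(\widetilde{\sigma}^{-1}g\widetilde{\sigma})$, hence naively $\eps\circ\sigma^{-1}$) but is equally implicit in the paper's writeup, and is harmless because $\sigma$ ranges over the whole group $\Aut(k/\Q)$ in the only application, \Cref{cor:optimum_eps}.
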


\begin{proof}
Write $H = \Aut(k/\Q)$ and $\eps = \sum_{\tau \in \Hom(k,K)}a_\tau\tau$, and let $\lambda$ be the isogeny character with signature $\eps$. Write as before $\mu = \lambda^{12}$, and identify (as in \Cref{prop:momose_1_non_galois_md}; see also \Cref{rem:mu_galois_and_ideal}) $\mu$ as an $\F_p^\times$-valued character on the group $I_k(p)$ of ideals of $k$ coprime to $p$. Fix a prime ideal $\fp$ of $K$ above $p$. From the natural $\Aut(k/\Q)$-action on $I_k(p)$ we obtain, for $\alpha \in k^\times$ coprime to $p$ that
% \[ \mu^\sigma((\alpha)) = \mu((\sigma(\alpha))) \]
\begin{align*}
    \mu^\sigma((\alpha)) &= \mu((\sigma(\alpha)))\\
    &\equiv (\sigma(\alpha))^\eps \Mod{\fp}\\
    &\equiv \prod_{\tau \in G}\tau(\sigma(\alpha))^{a_\tau} \Mod{\fp}\\
    &\equiv \prod_{\tau \in G}\tau(\alpha)^{a_{\tau\sigma^{-1}}} \Mod{\fp}\\
    &\equiv \alpha^{\eps \circ \sigma} \Mod{\fp}
\end{align*}
whence the result follows.
\end{proof}

For $\eps = \sum_{\sigma}a_\sigma \sigma$, we write $12 - \eps$ for the isogeny signature $\sum_{\sigma}(12 - a_\sigma) \sigma$.

\begin{corollary}\label{cor:optimum_eps}
Let $\lambda$ be a $p$-isogeny character over $k$ of signature $\eps$ and $\fq$ a prime of $k$ coprime to $p$.

\begin{enumerate}
    \item If $p$ divides $ABC(\eps, \fq)$, then $p$ also divides $ABC(12 - \eps, \fq)$.
    \item For $\sigma \in \Aut(k/\Q)$, if $p$ divides $ABC(\eps, \fq)$, then $p$ also divides $ABC(\eps \circ \sigma, \fq)$.
\end{enumerate}
\end{corollary}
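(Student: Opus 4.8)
For part (1) I would argue directly with the integers of \Cref{def:ABC}, using one arithmetic identity. Since $\fq$ is coprime to $p$, the generator $\gamma_\fq$ of $\fq^{h_\fq}$, its conjugates, and every Frobenius root $\beta$ over $\F_\fq$ (an algebraic integer of norm $\Nm(\fq)$) are units at each prime of $K$, resp.\ of $K(\beta)$, lying above $p$. Moreover, because $\gamma^{\eps_1+\eps_2}=\gamma^{\eps_1}\gamma^{\eps_2}$ and $\gamma_\fq^{12\sum_\sigma\sigma}=\Nm_{k/\Q}(\gamma_\fq)^{12}=\Nm(\fq)^{12h_\fq}$ (the sign disappearing on taking a $12$th power), we have
\[ \gamma_\fq^{12-\eps}=\Nm(\fq)^{12h_\fq}\cdot(\gamma_\fq^\eps)^{-1}\qquad\text{in }K^\times. \]
Now $p\mid ABC(\eps,\fq)$ means $p$ divides one of $A(\eps,\fq),B(\eps,\fq),C_o(\eps,\fq),C_s(\eps,\fq),\Nm(\fq)$; the last is impossible since $\fq\nmid p$, and in the other four cases there is a prime $\fP$ of $K$, resp.\ of $K(\beta)$ for an ordinary or supersingular Frobenius root $\beta$ over $\F_\fq$, modulo which $\gamma_\fq^\eps$ is congruent to $1$, to $\Nm(\fq)^{12h_\fq}$, or to $\beta^{12h_\fq}$. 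Substituting into the displayed identity, and using $\beta\bar\beta=\Nm(\fq)$ so that $\Nm(\fq)^{12h_\fq}\beta^{-12h_\fq}=\bar\beta^{12h_\fq}$, one finds that $\gamma_\fq^{12-\eps}$ is congruent mod $\fP$ to $\Nm(\fq)^{12h_\fq}$, to $1$, or to $\bar\beta^{12h_\fq}$ respectively. Since $\bar\beta$ is again a Frobenius root over $\F_\fq$ of the same reduction type as $\beta$ — ordinary versus supersingular being read off from $P_\fq(X)=X^2+tX+\Nm(\fq)$ — and $K(\bar\beta)=K(\beta)$, each case exhibits $\fP$ as a prime dividing one of the defining terms of $B(12-\eps,\fq)$, $A(12-\eps,\fq)$, $C_o(12-\eps,\fq)$ or $C_s(12-\eps,\fq)$; hence $p\mid ABC(12-\eps,\fq)$.

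For part (2) I would simply invoke \Cref{lem:gal_act_eps}. The character $\lambda$ arises from an elliptic curve $E/k$ carrying a $k$-rational $p$-isogeny of signature $\eps$, so by \Cref{lem:gal_act_eps} the conjugate curve $\sigma(E)$ carries a $k$-rational $p$-isogeny of signature $\eps\circ\sigma$. Applying \Cref{cor:ABC_divs} to $\sigma(E)$ — whose three cases of potential reduction at $\fq$ together show that $p$ divides one of $A,B,C_o,C_s$ for the signature $\eps\circ\sigma$ — gives $p\mid ABC(\eps\circ\sigma,\fq)$ directly (indeed independently of the hypothesis $p\mid ABC(\eps,\fq)$, which holds automatically in this situation).

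The only point needing care is the bookkeeping in part (1): checking that the substitution $\beta\leadsto\bar\beta$ stays within the data defining $C_o$ and $C_s$ — namely that $\bar\beta$ is a Frobenius root over $\F_\fq$ of the same type and that $K(\bar\beta)=K(\beta)$ — and that every element in play is a unit at the relevant primes, so the congruences may legitimately be inverted. These are routine consequences of \Cref{cor:ord_frob_pow_not_rat}, \Cref{cor:ss_frob_pow_rat} and \Cref{thm:waterhouse} (via \Cref{tab:ss_roots}), but they are what makes the case split close up correctly.
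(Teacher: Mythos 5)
Your proof is correct, but for part (1) you take a genuinely different route from the paper. The paper proves (1) conceptually: by \cite[Remark~2]{momose1995isogenies}, the dual isogeny has character $\chi_p\lambda^{-1}$ of signature $12-\eps$, and since the dual is again a $p$-isogeny, \Cref{cor:ABC_divs} applied to it gives $p\mid ABC(12-\eps,\fq)$. You instead argue purely at the level of the integers in \Cref{def:ABC}, using the identity $\gamma_\fq^{12-\eps}=\Nm(\fq)^{12h_\fq}(\gamma_\fq^\eps)^{-1}$ (valid modulo any prime above $p$ since everything in sight is a $p$-adic unit) together with $\beta\bar\beta=\Nm(\fq)$ to show that the case $p\mid A$ maps to $p\mid B$, $B$ to $A$, $C_o$ to $C_o$, and $C_s$ to $C_s$. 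The payoff is that your version is completely self-contained and does not actually use the hypothesis that an isogeny character exists: it shows $ABC(\eps,\fq)$ and $ABC(12-\eps,\fq)$ have the same prime divisors away from the residue characteristic of $\fq$ as a pure statement about these integers. The paper's version is shorter and leans on the duality of isogenies, which is a cleaner conceptual explanation of \emph{why} the symmetry $\eps\leftrightarrow 12-\eps$ is the right one. For part (2) you and the paper do essentially the same thing, invoking \Cref{lem:gal_act_eps} and \Cref{cor:ABC_divs}. You are also right to note that, in both parts, the conclusion $p\mid ABC(\cdot,\fq)$ holds independently of the stated hypothesis once one knows $\lambda$ arises from an elliptic curve; the hypothesis is there for how the corollary is deployed in the algorithm rather than being logically necessary.
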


\begin{proof}
For $\lambda$ a $p$-isogeny character of signature $\eps$, we have from \cite[Remark 2]{momose1995isogenies} that the dual isogeny has character $\chi_p\lambda^{-1}$ and is of signature $12 - \eps$; since the dual isogeny also has degree $p$, this proves (1), and (2) similarly follows from \Cref{lem:gal_act_eps}.
\end{proof}

The code which generates the minimum set of $\eps$ tuples to be considered (i.e. modding out by the $\eps \mapsto 12 - \eps$ and $\Aut(k/\Q)$ actions) is implemented in the function \path{generic_signatures}. This returns a dictionary with keys the minimum set of signatures, and values the \emph{Type} of that signature, as defined (for all signatures, not just the generic ones) in \Cref{tab:eps_types}.

\begin{table}[htp]
\begin{center}
\begin{tabular}{|c|c|c|}
\hline
Type & Description & Examples\\
\hline
Type 1 & All $a_i = 0$ or all $a_i = 12$ & $(0,0,0), (12,12,12)$\\
\hline
\multirow{2}{*}{Quadratic nonconstant} & All $a_i \in \left\{0,12\right\}$ & $(0,12,0)$\\
& Not all identical & $(12,12,0)$\\
\hline
Sextic constant & All $a_i = 4$ or all $a_i = 8$ & $(4,4,4), (8,8,8)$\\
\hline
\multirow{3}{*}{Sextic nonconstant} & All $a_i \in \left\{0,4,8,12\right\}$ & $(4,0,4)$\\
& Not all identical & $(4,8,8)$\\
& At least one $a_i$ is $4$ or $8$ & $(0,0,4)$\\
\hline
Type 2 & All $a_i = 6$ & $(6,6,6)$\\
\hline
\multirow{3}{*}{Quartic nonconstant} & All $a_i \in \left\{0,6,12\right\}$ & $(0,6,0)$\\
& Not all identical & $(12,6,0)$\\
& At least one $a_i$ is $6$ & $(6,6,0)$\\
\hline
Mixed & Tuple contains $6$ and either $4$ or $8$ & $(4,6,0), (8,0,6)$\\
\hline
\end{tabular}
\vspace{0.3cm}
\caption{\label{tab:eps_types}The seven signature types. $a_i$ refers to the integers in the signature. The examples show possible signatures for each Type arising from a non-Galois cubic number field.}
\end{center}
\end{table}

The reason for grouping all signatures into these particular groups is explained in \Cref{sec:filtering}.

\begin{remark}\label{rem:not_passing_to_gal_closure_is_faster}
Since the number of signatures to be considered is exponential in the degree of $k$, passing to the Galois closure of $k$ over $\Q$ - which was Momose's method of proof of his isogeny theorem - incurs a significant cost to the algorithm. This was one of our main motivations for seeking a method for bounding $k$-isogenies of prime degree which did not pass to the Galois closure, which led to removing the Galois assumptions in Momose's Lemmas 1 and 2 (which are \Cref{prop:momose_1_non_galois_md} and \Cref{prop:momose_lemma_2_non_galois} in \Cref{sec:strengthen_momose}).
\end{remark}

\section{$\TypeOneBound$}\label{sec:momose_type_1}

In this section we treat isogenies of signature Type $1$; that is, $\eps = (0,\ldots,0)$ or $(12,\ldots,12)$. As mentioned in the proof of \Cref{cor:optimum_eps}, if an elliptic curve admits an isogeny with character of signature $(12,\ldots,12)$, then the dual isogeny will be of signature $\eps = (0,\ldots,0)$, so for our purposes of bounding the possible isogeny primes of this signature, we may assume without loss of generality throughout this section that $\eps = \eps_0 = (0,\ldots,0)$. 

The main goal of this section is to explain how the following algorithm produces a multiplicative bound on the isogeny primes of Momose Type 1.

\begin{algorithm}\label{alg:type_1_primes}
Given a number field $k$ of degree $d$, compute an integer $\TypeOneBound(k)$ as follows.
\begin{enumerate}
    \item Choose a finite set $\Aux$ of odd rational primes $q$.
    \item Make the following definitions.
    \begin{align*}
    \BadFormalImmersion(d) &= \mbox{\parbox{10cm}{product of the bad formal immersion primes in degree $d$ (see \Cref{def:bad_formal_immersion_sets} and \Cref{thm:explicit_formal_immersion});}}\\[1ex]
    \AGFI_d(q) &= \mbox{\parbox{10cm}{product of the almost good formal immersion primes in degree $d$ which are bad formal immersion primes in characteristic $q$ (see \Cref{def:bad_formal_immersion_sets} and \Cref{thm:explicit_formal_immersion});}}\\[1ex]
\eps_0 &= \mbox{the Type 1 signature $(0,\ldots, 0)$;}\\
D(\Aux) &= \gcd_{\substack{\fq | q\\q \in \Aux}} \left( \lcm(B(\eps_0, \fq), C(\eps_0, \fq), \Nm(\fq), \AGFI_d(q))\right) \\[0.5ex]
&\ \mbox{(see \Cref{def:ABC} for $B$ and $C$);}\\
\TypeOneBound(k) &= \lcm\left(\BadFormalImmersion(d), D(\Aux)\right).
\end{align*}
\item Return  $\TypeOneBound(k)$.
\end{enumerate}
\end{algorithm}

Specifically, the main result of this section is \Cref{thm:type_1_brief} from the introduction.

\begin{theorem}\label{thm:type_1_main}
Let $k$ be a number field. Then $\TypeOneBound(k)$ is nonzero. Moreover, if $p$ is an isogeny prime for $k$ whose associated isogeny character is of Momose Type $1$, then $p$ divides $\TypeOneBound(k)$.
\end{theorem}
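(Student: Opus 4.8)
The plan is to normalise the signature and then combine the congruence bookkeeping of \Cref{subsec:divisibility} with the formal immersion machinery of \cite{derickx2019torsion}. I would first dispose of the nonvanishing of $\TypeOneBound(k)=\lcm(\BadFormalImmersion(d),D(\Aux))$: the set of bad formal immersion primes in degree $d$ is finite (this is part of \Cref{thm:explicit_formal_immersion}), so $\BadFormalImmersion(d)\neq 0$; and every term $\lcm(B(\eps_0,\fq),C(\eps_0,\fq),\Nm(\fq),\AGFI_d(q))$ entering $D(\Aux)$ is divisible by $\Nm(\fq)\neq 0$, hence is nonzero, so $D(\Aux)\neq 0$ as long as $\Aux\neq\emptyset$.

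For the divisibility, let $E/k$ carry a $k$-rational $p$-isogeny of character $\lambda$ of Momose Type~$1$. As recalled at the start of this section (via \Cref{cor:optimum_eps}, replacing the isogeny by its dual if necessary) I may assume the signature of $\lambda$ is $\eps_0=(0,\dots,0)$, so that $\mu=\lambda^{12}$ is everywhere unramified; applying \Cref{prop:momose_1_non_galois_md} to a generator $\gamma_\fq$ of $\fq^{h_\fq}$ then gives $\mu(\sigma_\fq)^{h_\fq}\equiv\gamma_\fq^{\eps_0}=1\Mod{\fp_0}$ for every prime $\fq$ of $k$ coprime to $p$. Now assume $p\nmid\BadFormalImmersion(d)$; the goal is to show that for every $q\in\Aux$ and every $\fq\mid q$ one has $p\mid\lcm(B(\eps_0,\fq),C(\eps_0,\fq),\Nm(\fq),\AGFI_d(q))$, since then $p$ divides the $\gcd$ over these pairs, which is $D(\Aux)$, and hence $\TypeOneBound(k)$.

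Fix $(q,\fq)$. If $p=q$ then $p\mid\Nm(\fq)$ and we are done, so suppose $p\neq q$ and split on the reduction type of $E$ at $\fq$. If $E$ has potentially good ordinary reduction at $\fq$ then \Cref{cor:ABC_divs} gives $p\mid C_o(\eps_0,\fq)\mid C(\eps_0,\fq)$, and $C_o(\eps_0,\fq)\neq 0$ by \Cref{cor:ord_frob_pow_not_rat} (no power of an ordinary Frobenius root is rational, so no factor $1-\beta^{12h_\fq}$ vanishes). If $E$ has potentially good supersingular reduction at $\fq$ then \Cref{cor:ABC_divs} gives $p\mid C_s(\eps_0,\fq)\mid C(\eps_0,\fq)$, and $C_s(\eps_0,\fq)\neq 0$ by \Cref{rem:ss_frob_roots_waterhouse}, since $1\neq\Nm(\fq)^{6h_\fq}$ (here $q$ odd is used). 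If $E$ has potentially multiplicative reduction at $\fq$ with $\mu(\sigma_\fq)\equiv\Nm(\fq)^{12}$, then $\Nm(\fq)^{12h_\fq}\equiv 1\Mod p$, so $p$ divides the nonzero integer $\Nm(\fq)^{12h_\fq}-1$, which divides $B(\eps_0,\fq)$.

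The remaining — and, I expect, genuinely hard — case is potentially multiplicative reduction at $\fq$ with $\mu(\sigma_\fq)\equiv 1$: here \Cref{cor:ABC_divs} only offers $p\mid A(\eps_0,\fq)=\Nm_{K/\Q}(0)=0$, which is vacuous, and this is exactly the gap in Theorem~3 of \cite{momose1995isogenies} flagged in the Introduction and in \Cref{rem:mom_mistake_type_1}. In this situation $E$, together with the kernel of its isogeny, has a model over $\O_{k,\fq}$ reducing into the cuspidal locus of $X_0(p)$, so the $\Q$-point of the symmetric power $X_0(p)^d$ associated to the $k$-point $x_E\in X_0(p)(k)$ has a cuspidal reduction modulo $q$; I would then invoke the formal immersion criterion of \cite{derickx2019torsion}, in the explicit form of \Cref{thm:explicit_formal_immersion}: for $p$ outside $\BadFormalImmersion(d)$ the map $X_0(p)^d\to J_e$ into the winding quotient is a formal immersion along the cusp in characteristic $0$, and outside the \emph{almost good formal immersion} primes gathered in $\AGFI_d(q)$ it remains a formal immersion in characteristic $q$; together with the finiteness of $J_e(\Q)$ and Mazur's descent this forces $p\mid\AGFI_d(q)$. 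In every case $p$ divides $\lcm(B(\eps_0,\fq),C(\eps_0,\fq),\Nm(\fq),\AGFI_d(q))$, which finishes the argument. The delicate points I anticipate are identifying the precise cuspidal point of $X_0(p)^d$ attached to $E$ when the residue degree $f$ of $\fq$ is $<d$, and verifying that the formal immersion hypothesis of \Cref{thm:explicit_formal_immersion} is exactly what makes Mazur's descent run using the single prime $\fq$ — this is where the work of \cite{derickx2019torsion} carries the load.
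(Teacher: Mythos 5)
Your overall strategy matches the paper's: dispose of the nonvanishing, reduce by duality to the signature $\eps_0=(0,\dots,0)$, and for each auxiliary pair $(q,\fq)$ show that $p$ divides the corresponding $\lcm$ term according to the reduction type of $E$ at $\fq$. The potentially good (ordinary and supersingular) cases and the nonvanishing argument are handled exactly as in the paper.

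The genuine gap lies in the potentially multiplicative case with $\mu(\sigma_\fq)\equiv 1$, i.e.\ when the $k$-point $x=x_E$ of $X_0(p)$ reduces to the cusp $\infty$ at $\fq$. You pass directly from this to the claim that the associated $\Q$-point of $X_0(p)^{(d)}$ has "cuspidal reduction" at $q$ and then invoke the formal immersion criterion along $\infty^d$. But the condition $\mu(\sigma_\fq)\equiv 1$ only controls the reduction of $x$ itself at $\fq$; it says nothing about the reductions of the Galois conjugates $x^\sigma$ for $\sigma\in\Hom(k,K)$, which is what the $\Q$-section of $X_0(p)^{(d)}$ actually records. Some $x^\sigma$ may specialise to the cusp $0$, in which case the section does not meet $\infty^d$ and the Kamienny--Mazur--Merel formal immersion argument along $\infty^d$ has no purchase. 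The paper's proof inserts precisely this missing step: before invoking the formal immersion, it checks the conjugates, and if any $x^\sigma$ reduces to the $0$ cusp it applies the earlier ``zero-cusp'' computation to the conjugate curve $E^\sigma$ (which, by \Cref{lem:gal_act_eps}, again has signature $\eps_0$) to recover $p\mid B(\eps_0,\fq)$; only when \emph{all} conjugates reduce to $\infty$ does it appeal to the non-formal-immersion conclusion and deduce $p\mid \BadFormalImmersion(d)$ or $p\mid\AGFI_d(q)$. You did flag something delicate about identifying the cuspidal point when the residue degree of $\fq$ is less than $d$, which is adjacent to, but not the same as, this issue: the real danger is a mixed reduction $a\cdot 0 + b\cdot\infty$ with $a,b>0$, and the fix is the conjugate-by-conjugate fallback to the $B$-integer, not a more careful bookkeeping of cusps in the symmetric power.

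As a minor point, your nonvanishing argument for $D(\Aux)$ via divisibility by $\Nm(\fq)$ implicitly assumes none of $B(\eps_0,\fq)$, $C(\eps_0,\fq)$ vanishes (as $\lcm(0,n)=0$ by convention); this is true and in fact is what you verify later in the proof, but it would be cleaner to cite those nonvanishing facts up front rather than rely on $\Nm(\fq)\neq 0$ alone.
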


As one may see from the definitions made in \Cref{alg:type_1_primes}, \Cref{thm:type_1_main} is concerned with the notion of \textbf{good and bad formal immersion primes} of a given degree. We therefore explain these notions before commencing with the proof of \Cref{thm:type_1_main}.

For a scheme $X$, let $\O_X$ denote the structure sheaf, $\O_{X,x}$ the local ring at a point $x$ of $X$, and $\widehat{O}_{X,x}$ the completion of the local ring with respect to its maximal ideal $\mathfrak{m}_{X,x}$. Following \cite[\S3]{mazur1978rational}, we make the following definition.

\begin{definition}
If $f : X \to Y$ is a morphism of finite type between noetherian schemes, we say that $f$ is a \textbf{formal immersion} at a point $x$ if the induced map on the completion of local rings
\[ \widehat{O}_{Y,f(x)} \to \widehat{O}_{X,x}\]
is surjective.
\end{definition}

We will take $X$ to be the $d$\textsuperscript{th} symmetric power (for $d \geq 1$ an integer) of the modular curve $X_0(p)$ (for $p$ prime), which we write as $X_0(p)^{(d)}$, and which we regard as a smooth scheme over $S := \Spec(\Z[1/p])$. We then have a map
\begin{alignat*}{2}
    f^{(d)}_{p} : \eqmathbox{X_0(p)^{(d)}_{/S}} &\longrightarrow \eqmathbox{J_0(p)_{/S}} & &\longrightarrow \eqmathbox{J_{e,/S}}\\
    \eqmathbox{D} &\longmapsto \eqmathbox{[D - d(\infty)]} & &\longmapsto [D - d(\infty)] \Mod{\gamma_{\mathfrak{J}}J_0(p)};
\end{alignat*}
here $\mathfrak{J}$ is the winding ideal (see Chapter II, Section 18 of \cite{Mazur3}) and $J_e$ is the winding quotient of $J_0(p)$, the largest rank zero quotient of $J_0(p)$ assuming the Birch-Swinnerton-Dyer conjecture.

The point $x \in X$ at which we consider the formal immersion notion will be the $S$-section $\infty^d := (\infty, \ldots, \infty)$; however, we will only ever be concerned with a `local' notion of formal immersion; that is, we say that $f^{(d)}_{p}$ is a formal immersion at $\infty^d$ in characteristic $q$ (for $q \neq p$) if it is a formal immersion when the schemes are considered over the base $\Spec(\Z_{(q)})$.

Merel proved \cite[Proposition 3]{merel1996bornes}, for $p$ sufficiently large with respect to $d$, that the map $f^{(d)}_{p}$ \emph{is} in fact a formal immersion along $\infty^d$ in characteristic $3$, and even gave an explicit bound on $p$, which was subsequently improved by Parent\footnote{Pierre Parent was also a student of Bas Edixhoven.} \cite[Theorem 1.8 and Proposition 1.9]{parent1999bornes} and Oesterl\'{e}, the latter of whom never published his bound, but which may be found explained in Section 6 of \cite{derickx2019torsion}. 

We however take a more algorithmic and general approach which works for all odd primes $q$ (not just equal to $3$; we will in fact be able to gain some information from considering $q = 2$ as well; see \Cref{rem:formal_immersion_at_2}). To explain our strategy, we make the following definitions.

\begin{definition}\label{def:bad_formal_immersion_sets}
Let $d \geq 1$ be an integer.

\begin{enumerate}
    \item
    For $p$ and $q$ distinct primes, we say that $p$ is a \textbf{bad formal immersion prime in degree $d$ at characteristic $q$} if the map $f_p^{(d)}$ is not a formal immersion in characteristic $q$.    
    \item 
    We say that $p$ is a \textbf{bad formal immersion prime in degree $d$} if the map $f_p^{(d)}$ is not a formal immersion in characteristic $q$ for infinitely many primes $q$.
    \item
    We say that $p$ is an \textbf{almost good formal immersion prime in degree $d$} if the map $f_p^{(d)}$ is not a formal immersion in characteristic $q$ for at least one but at most finitely many primes $q\neq 2, p$.
    \item
     We say that $p$ is a \textbf{good formal immersion prime in degree $d$} if the map $f_p^{(d)}$ \emph{is} a formal immersion in characteristic $q$ for all primes $q \neq 2, p$.
\end{enumerate}
\end{definition}

The last definition here might be more accurately described as being \textbf{good outside $2$ and $p$}, but for simplicity we prefer to have this implicit. Also, since $d$ is usually fixed, we often drop the `in degree $d$' qualifier.

%Since it will be necessary to keep track of the almost good formal immersion primes, we define the integer $\AGFI_d(q)$, for $q$ an odd prime, to be the product of the almost good formal immersion primes which are actually bad formal immersion primes in characteristic $q$.

Our strategy is based on the following result.

% \begin{theorem}\label{thm:explicit_formal_immersion}
% For any positive integer $d$ the sets of primes defined in \Cref{def:bad_formal_immersion_sets} are finite and effectively computable. In particular, if $p$ is an almost good formal immersion prime, then the finitely many $q$ for which $f_p^{(d)}$ is not a formal immersion at $(\infty,\ldots,\infty)$ in characteristic $q$ can be effectively computed.
% \end{theorem}

\begin{theorem}\label{thm:explicit_formal_immersion}
Let $d \geq 1$ be an integer.
\begin{enumerate}
    \item The set of bad formal immersion primes in degree $d$ is finite and effectively computable.
    \item For $q \neq 2$, the set of almost good formal primes in degree $d$ which are bad formal immersion primes in characteristic $q$ is finite and effectively computable.
\end{enumerate}
\end{theorem}

This result allows us to define the integers $\BadFormalImmersion(d)$ and $\AGFI_d(q)$ that were already shown in \Cref{alg:type_1_primes}.

To prove \Cref{thm:explicit_formal_immersion} we first establish an upper bound on the bad formal immersion primes. Parent \cite[Theorem 1.8 and Proposition 1.9]{parent1999bornes} gives the general bound of $65(2d)^6$; however the following result allows us to reduce this considerably. To state it, we let $M \geq 3$ denote an odd integer, we represent cosets of $(\Z/M\Z)^\times$ by $a + M\Z$ with $a$ chosen to satisfy $0 < a < M$, and we define the map
\begin{align*}
    \eps_M : (\Z/M\Z)^\times &\to \left\{0,1\right\} \\
    a + M\Z &\mapsto
    \begin{cases*}
    0 & if $1 \leq a < M/2$\\
    1 & otherwise.
    \end{cases*}
\end{align*}
For a given $u \in (\Z/M\Z)^\times$ we may then define the matrix
\[ R_{d,u} := \left( \eps_M(na) - \eps_M(nu/a)\right)_{1 \leq n \leq d, \ a \in (\Z/M\Z)^\times},\]
with entries taken in $\Z$. This is a $d \times \phi(M)$ matrix, where $\phi$ denotes the Euler totient function.

\begin{proposition}\label{prop:formal_immersion_bound}
Let $d$ be a positive integer, $M \geq 3$ an odd integer, and $u \in (\Z/M\Z)^\times$. If the matrix $R_{d,u}$ has rank $d$ over $\Z$, then for all primes $p > 2Md$ such that $pu \equiv 1 \Mod{M}$, $p$ is a good formal immersion prime in degree $d$.
\end{proposition}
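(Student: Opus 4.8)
The plan is to reduce the statement to a linear-algebra condition over $\F_q$ via the Kamienny--Mazur formal immersion criterion, and then to realise the matrix $R_{d,u}$ explicitly by following the Eisenstein-series computation of Merel and Parent. Fix an odd prime $q \neq p$. Since $X_0(p)^{(d)}$ is smooth of relative dimension $d$ over $\Z[1/p]$, the cotangent space $\Cot_{\infty^d}(X_0(p)^{(d)})_{\Z_{(q)}}$ is free of rank $d$, so by Nakayama $f^{(d)}_p$ is a formal immersion at $\infty^d$ in characteristic $q$ exactly when the $\Z_{(q)}$-linear cotangent map $\Cot_0(J_e)_{\Z_{(q)}} \to \Cot_{\infty^d}(X_0(p)^{(d)})_{\Z_{(q)}}$ is surjective modulo $q$. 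Let $e$ be the winding element, so that the winding ideal is $\mathfrak{J} = \ann_{\mathbb{T}}(e)$ and $\mathbb{T}_e := \mathbb{T}/\mathfrak{J}$ acts faithfully on $J_e$. Expanding the Abel--Jacobi pullback at $\infty^d$ in the elementary-symmetric coordinates attached to the local parameter at the cusp $\infty$ shows that, under Eichler--Shimura, this cotangent map records the tuple of coefficients $a_1(\omega), \dots, a_d(\omega)$ of the corresponding weight-$2$ form $\omega$; hence its surjectivity mod $q$ is equivalent to the classical requirement that $T_1 e, \dots, T_d e$ be $\F_q$-linearly independent in $\mathbb{T}_e \otimes \F_q$. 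This is precisely the criterion made explicit in \cite{derickx2019torsion}. Thus it suffices to rule out a nonzero $(c_1, \dots, c_d) \in \F_q^d$ with $\sum_{n=1}^d c_n T_n e = 0$ in $\mathbb{T}_e \otimes \F_q$.

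To detect such a relation I would, following \cite{merel1996bornes} and \cite{parent1999bornes}, attach to each $a \in (\Z/M\Z)^\times$ an integral linear functional $\phi_a$ on $\mathbb{T}_e$, obtained by pairing the winding element $e = \{0, \infty\}$ in the relative homology $H_1(X_0(p), \mathrm{cusps})$ against a weight-$2$ Eisenstein series whose Fourier coefficients are controlled by the characters of $(\Z/M\Z)^\times$. A generalized-Bernoulli (Dedekind-sum) computation then identifies the value as $\phi_a(T_n e) = \eps_M(na) - \eps_M(nu/a)$. The two hypotheses enter here: the congruence $pu \equiv 1 \pmod M$ is what converts the local contribution at $p$ --- the effect of the Fricke involution $w_p$, under which $e$ is stable up to sign --- into the factor $u$ appearing in the second argument; and the bound $p > 2Md$ puts us in the stable range where the Eisenstein element is defined at level $p$ and, for $1 \le n \le d$ and $a < M$, its coefficient $a_{na}$ is the naive one with $na$ honestly reduced modulo $M$ (a Sturm-type estimate ensuring $M$ and the level $p$ do not interact). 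Carrying out this identification, and tracking exactly where the constant $2Md$ is forced, is the bulk of the work and is the step I expect to be the main obstacle; morally it is Merel's argument, but the bookkeeping must be done with care.

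Granting the functionals, the rest is formal. If $\sum_n c_n T_n e = 0$ in $\mathbb{T}_e \otimes \F_q$, applying $\phi_a \bmod q$ for every $a$ yields $\sum_{n=1}^d c_n(\eps_M(na) - \eps_M(nu/a)) = 0$ in $\F_q$, so $(c_n)$ lies in the left kernel of $R_{d,u} \bmod q$; it remains to see that the $\Z$-rank hypothesis forces $R_{d,u} \bmod q$ to have rank $d$ for every odd $q$. This last implication needs a little care --- it is false for arbitrary integer matrices --- and is handled, as in \cite{merel1996bornes,parent1999bornes}, by exploiting that the $\phi_a$ come from an integral structure in which the relevant span is saturated away from $2$; the oddness of $q$ is exactly what is used, which is also why the notion of ``good formal immersion prime'' is defined outside $2$ and $p$. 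Once $R_{d,u} \bmod q$ has rank $d$ we get $(c_n) = 0$, so $T_1 e, \dots, T_d e$ are $\F_q$-independent and $f^{(d)}_p$ is a formal immersion at $\infty^d$ in characteristic $q$; since $q$ was an arbitrary odd prime $\neq p$, $p$ is a good formal immersion prime in degree $d$, as claimed.
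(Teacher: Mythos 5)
Your outline has the right overall shape --- reduce to $\F_q$-linear independence of $T_1e,\dots,T_de$ via Nakayama, produce linear functionals whose values on $T_ne$ give the entries $\eps_M(na)-\eps_M(nu/a)$, and conclude from the rank hypothesis --- but as written it is a plan rather than a proof. You explicitly defer the central technical content: both the construction of the $\phi_a$ with $\phi_a(T_ne)=\eps_M(na)-\eps_M(nu/a)$ and the derivation of the threshold $p>2Md$ are described as ``the bulk of the work'' and ``the step I expect to be the main obstacle.'' That is precisely the content the proposition needs, and it is not supplied. The paper's proof is much shorter because it outsources exactly this step: it presents the proposition as a direct reformulation of \cite[Cor.~6.8]{derickx2019torsion}, replacing the auxiliary prime $3$ by a general odd $q$ throughout \S 6 of that reference, and invoking their Corollary 6.6 for the formal-immersion conclusion; moreover, as the remark following the proposition makes explicit, that route runs through Oesterl\'e's intersection-pairing form of Kamienny's criterion on $H_1(X_0(p)(\C),\Z)$ rather than the Merel--Parent Eisenstein-series pairing you describe --- related, but not the same bookkeeping.

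Separately, your passage from rank $d$ over $\Z$ to rank $d$ over $\F_q$ for every odd $q$ is asserted rather than proved. The appeal to ``an integral structure in which the relevant span is saturated away from $2$'' is not a standard fact, and you give no argument for it; a matrix with entries in $\{-1,0,1\}$ of full rank over $\Z$ can certainly drop rank modulo an odd prime, so if such a saturation statement holds here it must come from the specific combinatorics of the $\eps_M$-differences, which would need to be demonstrated. You are right to notice that this step needs care, but flagging a gap is not closing it. As it stands your proposal reduces the proposition to two substantive unproven claims.
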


\begin{proof}
This is in fact a reformulation of Corollary 6.8 in \cite{derickx2019torsion}. Here one replaces the auxiliary prime $3$ with $q$ throughout Section 6 of \emph{loc. cit.}. The integer $M$ is chosen such that the matrix in the statement of Corollary 6.8 of \emph{loc. cit.} with entries taken in $\F_q$ has rank $d$ for all $u \in (\Z/M\Z)^\times$, so the images of $L_1\mathbf{e}, \ldots, L_d\mathbf{e}$ in $H_1(X_0(p)(\C),\F_q)$ are linearly independent over $\F_q$, which by Corollary 6.6 in \emph{loc. cit.} means that we have a formal immersion.
\end{proof}

We may therefore replace Parent's bound of $65(2d)^6$ with $2\hat{M}d$, where $\hat{M}$ is the smallest value of $M$ such that the matrix $R_{d,u}$ has rank $d$ \emph{for all} $u \in (\Z/M\Z)^\times$; this may be found algorithmically by trying successively larger odd values of $M$ until one is found which satisfies the rank conditions; the search is terminated at Parent's bound. This is implemented in the \path{construct_M} function in \path{sage_code/type_one_primes.py}.

Having now found a bound depending only on $d$ beyond which every prime $p$ is a good formal immersion prime, it follows that, for a given $d$, there are only finitely many primes for which we need to determine whether they are good, almost good, or bad. The next theorem of Parent - often referred to as \emph{Kamienny's criterion} - is the main ingredient for doing this; to state it, some notation is required. 
\begin{align*}
    \T &: \mbox{the Hecke algebra (\cite[Chapter II, Section 6]{Mazur3})}\\
    T_n &: \mbox{the $n$\textsuperscript{th} Hecke operator (\cite[Chapter II, Section 6]{Mazur3})}\\
    e &: \mbox{the winding element (\cite[Chapter II, Section 18]{Mazur3})}.
\end{align*}

\begin{theorem}[Parent, Theorem 4.18 in \cite{parent1999bornes}]\label{thm:formal_iff} 
Let $p$ and $q$ be distinct primes with $q \neq 2$. Then $f_p^{(d)}$ is a formal immersion at $q$ if and only if the modular symbols $T_1e,\ldots,T_de$ are linearly independent in $\mathbb T e/q\mathbb Te$.
\end{theorem}

Note that the condition $q \neq 2, p$ is not mentioned explicitly in \cite[Theorem 4.18]{parent1999bornes}, but it is a running assumption in that is mentioned earlier in the text.

We may now finish the proof of \Cref{thm:explicit_formal_immersion}.

\begin{proof}[Proof of \Cref{thm:explicit_formal_immersion}]
From \Cref{prop:formal_immersion_bound} and the definition of $\hat{M}$, we have that for $p > 2\hat{M}d$, $p$ is a good formal immersion prime; thus both of the sets in the statement of the Theorem are finite and bounded by $2\hat{M}d$, and it remains only to argue that they are effectively computable. We do this by showing that for a given prime $p$ one can exactly compute the primes $q \neq 2, p$ at which $f_p^{(d)}$ is not a formal immersion. 

By computing Hecke operators up to the Sturm bound one may compute a basis of $\mathbb T$ and hence a basis of $\mathbb T e$. One then defines a linear map $A: \Z^d \to \mathbb T e$ which sends the $i$\textsuperscript{th} basis element to $T_ie$. 

If $A$ is not injective, then for all primes $q$ the modular symbols $T_1e,\ldots,T_de$ are linearly dependent in $\mathbb T e/q\mathbb Te$, and hence $p$ is bad formal immersion prime by \Cref{thm:formal_iff}. 

If $A$ is injective, then $T_1e,\ldots,T_de$ are linearly independent over $\Z$, and the finitely many primes $q$ for which they become linearly dependent in $\mathbb T e/q\mathbb Te$ can be obtained from the Smith Normal Form of a matrix for $A$. In particular, $p$ is a good formal immersion prime if all nonzero coefficients of the Smith Normal Form are coprime to $2p$, and $p$ is an almost good formal immersion prime otherwise.
\end{proof}

\begin{remark}
Kamienny originally formulated his criterion (Proposition 3.1 in \cite{kamienny1992imrn}) in terms of the linear independence of Hecke operators in $\mathbb{T} \otimes \F_q$. This was subsequently developed by Merel, Oesterl\'{e} and Parent, and most recently generalised and made algorithmic by work of the second author with Kamienny, Stein and Stoll; the state-of-the-art for general modular curves $X_H$ is Proposition 5.3 of \cite{derickx2019torsion} which also works when $q=2$, but has the drawback of not giving an if and only if statement. Our algorithm only works for $X_0(p)$ and $q>2$, however it gives more precise information.
\end{remark}

The algorithm given by the proof of \Cref{thm:explicit_formal_immersion} is implemented in our package as the functions \path{R_dp}, \path{is_formal_immersion} and \path{bad_formal_immersion_data} in \path{sage_code/type_one_primes.py}. The results of the computation are stored in \path{bad_formal_immersion_data.json} as a method of caching these results to save time on subsequent runs of the algorithm for a given degree. (That is, this computation is only run once for each new $d$ encountered; thereafter the values are looked up in the JSON file.) \Cref{tab:bad_formal_immersion} shows the results of running this algorithm for degrees $d \leq 10$, where it was found that there were no almost good formal immersion primes (i.e. every prime $p \neq 2$ was either a good or a bad formal immersion prime in degree $d$).

\begin{table}[htp]
\begin{center}
\begin{tabular}{|c|c|c|}
\hline
$d$ & SGFIP & Sporadic bad FI primes\\
\hline
$2$ & $23$ & 37\\
$3$ & $41$ & 43, 73\\
$4$ & $47$ & 53, 61, 67, 73, 97\\
$5$ & $59$ & 61, 67, 73, 97\\
$6$ & $71$ & 73, 79, 83, 97, 103, 109, 113\\
$7$ & $101$ & 103, 107, 109, 113, 127, 137, 157\\
$8$ & $131$ & 137, 149, 157, 163, 193\\
$9$ & $131$ & 137, 139, 149, 151, 157, 163, 181, 193\\
$10$ & $167$ & 181, 193, 197, 211, 241\\
\hline
\end{tabular}
\vspace{0.3cm}
\caption{\label{tab:bad_formal_immersion}The good and bad formal immersion primes in degrees $d \leq 10$. The prime is good if it is larger than or equal to `SGFIP' (Smallest good formal immersion prime) and is not in `Sporadic bad FI primes'. For these degrees no almost bad formal immersion primes were found.}
\end{center}
\end{table}

\begin{remark}
We in fact ran the computation for all degrees up to $20$, and still found no almost good formal immersion primes. We can find no reason to explain this.
\end{remark}

We are now ready to present the proof of the main theorem of this section.

\begin{proof}[Proof of \Cref{thm:type_1_main}]
We take $q \neq p$ a rational prime, and $\fq | q$ an auxiliary prime. If $E$ has potentially good reduction at $\fq$, then by \Cref{cor:ABC_divs} $p$ will divide $C(\eps_0, \fq)$. If this integer were zero, then by definition of $C(\eps_0, \fq)$ there would exist a Frobenius root $\beta$ satisfying $\beta^{12h_\fq} = \gamma_\fq^{\eps_0}$, where $\gamma_\fq$ is a generator of the principal ideal $\fq^{h_\fq}$; but since $\eps_0 = (0,\ldots, 0)$, we deduce that $\beta^{12h_\fq} = 1$, which cannot happen (since $\beta$ has absolute value an integer multiple of $\sqrt{q}$); thus $C(\eps, \fq) \neq 0$, which gives a multiplicative bound on the primes $p$ for which $E$ has potentially good reduction at $\fq$, and explains the presence of the integer $C(\eps_0,\fq)$ in the integer $D(\Aux)$ in item (2) of \Cref{alg:type_1_primes}.

We are thus reduced to considering $E$ having potentially multiplicative reduction at $\fq$. That is, writing $x$ for the noncuspidal $K$-point on $X_0(p)$ corresponding to $E$, we have that $x$ reduces modulo $\fq$ to one of the two cusps $0$ or $\infty$.

Suppose first that $x$ reduces modulo $\fq$ to the zero cusp. This means that the kernel $W$ of the isogeny specialises to the identity component $(E_{/\F_{\fq}})^0$ of the reduction of $E$ at $\fq$. Since this coincides with the group scheme $\mu_p$ of $p$\textsuperscript{th} roots of unity over an (at most) quadratic extension of $\F_{\fq}$, we obtain an equality of groups between $W(\overline{K_\fq})$ and $\mu_p(\overline{K_\fq})$, where $K_\fq$ denotes the completion of $K$ at $\fq$. This precise case is considered by David in the proof of Proposition 3.3 of \cite{david2011borne}, and corresponds to the isogeny character satisfying $\lambda^2(\Frob_\fq) \equiv \Nm(\fq)^2 \Mod{p}$. However, since $\varepsilon = (0,\ldots,0)$ we obtain that $\lambda^{12h_\fq}$ acts trivially on $\Frob_\fq$, yielding the divisibility 
\begin{equation}\label{eqn:momose_corrected}
p |  \Nm(\fq)^{12h_\fq} - 1;
\end{equation}
i.e. that $p | B(\eps_0, \fq)$.

Suppose next that $x$ reduces modulo $\fq$ to the infinite cusp. We may then consider the point $x^\sigma$, for $\sigma \in \Sigma$ an embedding of $k$ in $K$, and observe that, if there is an $x^\sigma$ which specialises to the zero cusp modulo $\fq$, then we may apply the previous argument to the conjugate curve $E^\sigma$ (which also has isogeny signature $(0,\ldots,0)$) and conclude as before that $p | B(\eps_0, \fq)$.

We are thus reduced to considering the case that the $S$-section $(x^\sigma)_{\sigma \in \Sigma}$ (for $S := \Spec(\Z[1/p])$ as before) of $X_0(p)^{(d)}$ ``meets'' the section $\infty^d$ at $\fq$ (to use Kamienny's terminology in the discussion immediately preceding his Theorem 3.4 in \cite{kamienny1992torsion}; Mazur's language for this in the proof of Corollary 4.3 of \cite{mazur1978rational} is that these two sections ``cross'' at $\fq$). By applying Kamienny's extension of Mazur's formal immersion argument (Theorem 3.3 in \cite{kamienny1992imrn}), augmented with Merel's use of the winding quotient instead of the Eisenstein quotient, this implies that the map $f^{(d)}_{p}$ is \emph{not} a formal immersion along $\infty^d$ in characteristic $q$; therefore, either $p$ divides $\BadFormalImmersion(d)$, or $p$ divides $\AGFI_d(q)$. Since these integers are defined as products of primes, they are nonzero.
\end{proof}

\begin{remark}\label{rem:mom_mistake_type_1}
Note that Momose obtains the erroneous bound $p - 1 | 12h_k$ at the point where we obtain the multiplicative bound in \Cref{eqn:momose_corrected}. This occurs in the proof of Momose's Theorem 3 in \cite{momose1995isogenies}, and is arrived at through the claim that ``the restriction of $\lambda$ to the inertial subgroup $I_\fq$ of $\fq$ is $\pm \theta_p$'' ($\theta_p$ being Momose's notation for the mod-$p$ cyclotomic character). Momose cites the whole of \cite{delignerapoport} for this claim, which alas we were unable to find in those $174$ pages. If this claim were true, then, since we also know that $\lambda^2$ is unramified at $\fq$, we would obtain that $p - 1 | 2$, i.e., that $p = 2$ or $3$.
\end{remark}

\begin{remark}\label{rem:formal_immersion_at_2}
While \Cref{thm:explicit_formal_immersion} is enough to obtain a finite superset for the Type 1 primes, it is not making use of information that may be obtained from considering the auxiliary prime $q = 2$. The difficulty here is that there is no currently known lower bound (analogous to \Cref{prop:formal_immersion_bound}) on the primes $p$ beyond which $f_p^{(d)}$ is a formal immersion at $\infty^d$ in characteristic $2$.

Nevertheless, one may still employ Kamienny's criterion for various small values of $d$ and primes up to a reasonable bound and record the primes for which we \emph{do} have formal immersion in characteristic $2$. This was in fact already done for $3 \leq d \leq 7$ and for primes $11 \leq p \leq 2281$ in Lemma 5.4 of \cite{derickx2019torsion}, using Magma code which may be found on Stoll's webpage \cite{stollmagma}. Running this code also for $d = 2$ and slightly extending the bound on $p$ allows us to make use of the auxiliary prime $2$ to rule out possible Type one primes $p$ which are less than $2371$, unless they are in an explicitly computed set which is found in the file \path{formal_immersion_at_2.json}, which contains the hardcoded information, for each $2 \leq d \leq 7$ and each prime $13 \leq p \leq 2371$, whether or not we have formal immersion at $2$.
\end{remark}

\section{Momose Type 2 primes}\label{sec:momose_type_2}

In this section we consider isogenies whose signature is not only of Type 2, but whose isogeny character is furthermore of Momose Type 2. We start with a necessary condition on $p$ which must by satisfied by such isogenies. This generalises the condition (``Condition CC'') given for quadratic fields by the first named author in \cite{banwait2021explicit}.

\begin{proposition}\label{cond:CC}
Let $k$ be a number field, and $E/k$ an elliptic curve admitting a $k$-rational $p$-isogeny of Momose Type 2. Let $q$ be a rational prime, and $\fq$ a prime ideal of $k$ dividing $q$ of residue degree $f$ satisfying the following conditions:
\begin{enumerate}
    \item \label{item:q_f_less_p} $q^f < p/4$;
    \item $f$ is odd;
    \item $q^{2f} + q^f + 1 \not\equiv 0 \Mod{p}$.
\end{enumerate}
Then $q$ does not split in $\Q(\sqrt{-p})$.
\end{proposition}

\begin{proof}
If $E$ has potentially good reduction at $\fq$, then by the first and second assumption, we get from \cite[Lemma 5]{momose1995isogenies} that $q$ is inert in $\Q(\sqrt{-p})$.

We claim that if $E$ has potentially multiplicative reduction at $\fq$, and $q$ splits in $\Q(\sqrt{-p})$, then $q^{2f} + q^f + 1 \equiv 0 \Mod{p}$, contradicting assumption 3. 

To establish the claim, denote by $\lambda$ the isogeny character of $E$, assumed to be of Momose Type 2. From \cite[Lemma 3]{momose1995isogenies}, $\lambda$ is of the form $\psi\chi_p^{\frac{p+1}{4}}$ for a character $\psi$ of order dividing $6$.

This claim is then established as follows.
	\begin{align*}
	&\ \Nm(\fq) \equiv \psi(\Frob_\fq)^{\pm2} \Mod{p} \mbox{ (by \cite[Lemma 4]{momose1995isogenies}) }\\
	\Rightarrow &\ \Nm(\fq) \mbox{ is a 3\textsuperscript{rd} root of unity in } \F_p^\times \mbox{ (since $\psi$ has order dividing $6$ by \cite[Lemma 3]{momose1995isogenies}) }\\
	\Leftrightarrow &\ q^f \mbox{ is a 3\textsuperscript{rd} root of unity in } \F_p^\times \mbox{ (by definition of $f$) }\\
	\Leftrightarrow &\ q^{2f} + q^f + 1 \equiv 0 \Mod{p} \mbox{ (since (\ref{item:q_f_less_p}) implies $q^f \not\equiv 1 \Mod{p}$)}.
	\end{align*}
\end{proof}

\begin{remark}
See Lemma 5.1 of \cite{banwait2021explicit} for more details on the proofs of Momose's Lemmas 3, 4 and 5 in \cite{momose1995isogenies}.
\end{remark}

Next we provide a bound on the isogeny primes of Momose Type 2. This is the only result in our work which requires the Generalised Riemann Hypothesis.

\begin{proposition}\label{prop:type_2_bound}
Assume GRH. Let $k$ be a number field of degree $d$, and $E/k$ an elliptic curve possessing a $k$-rational $p$-isogeny, for $p$ a Type~2 prime. Then $p$ satisfies
\begin{equation}\label{eqn:type_2_finite}
p \leq (8d\log(12p) + 16\log(\Delta_k) + 10d + 6)^4.
\end{equation}
In particular, there are only finitely many primes $p$ as above.
\end{proposition}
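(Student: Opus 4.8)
The plan is to derive \Cref{eqn:type_2_finite} from the necessary condition \Cref{cond:CC} by producing, under GRH, a rational prime $q$ of controlled size which splits in $\Q(\sqrt{-p})$ and which admits a residue-degree-one prime factor $\fq$ in $k$; feeding such a $\fq$ into the contrapositive of \Cref{cond:CC} pins $p$ down. Concretely, set $F := k(\sqrt{-p})$, a number field of degree $n_F \leq 2d$ over $\Q$ (in the degenerate case $\sqrt{-p}\in k$ one simply works with $k$ itself, which only makes the argument easier). A prime ideal $\fQ$ of $\O_F$ of residue degree one over $\Q$ has $\Nm_{F/\Q}(\fQ)=q$ a rational prime; its contraction to $\Q(\sqrt{-p})$ is then a degree-one prime, so $q$ splits in $\Q(\sqrt{-p})$ provided $q$ is unramified there (which holds once $q \neq p$, using $p\equiv 3\Mod 4$); and its contraction $\fq:=\fQ\cap\O_k$ is a degree-one prime of $k$ above $q$. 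Thus it suffices to bound the norm of the least such $\fQ$.

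First I would invoke an explicit GRH-conditional effective Chebotarev estimate — equivalently, a bound on the least degree-one prime ideal of $F$, obtained by a Chebyshev-type argument applied to $\zeta_F$, whose Riemann hypothesis is part of GRH — to produce a prime $\fQ\subset\O_F$ of residue degree one which is unramified and does not lie over $p$, with
\[
\Nm_{F/\Q}(\fQ) \;\leq\; \bigl(a\log|\Delta_F| + b\,n_F + c\bigr)^2
\]
for explicit absolute constants $a,b,c$ (excluding the finitely many ramified primes and the prime over $p$ costs only a harmless enlargement of these constants). I would then bound $\log|\Delta_F|$ and $n_F$ purely in terms of $d$, $\Delta_k$ and $p$: since $[F:k]\leq 2$ and the relative discriminant $\mathfrak d_{F/k}$ divides $(4p)\O_k$, the tower formula gives $\log|\Delta_F| \leq 2\log|\Delta_k| + d\log(4p)$ and $n_F\leq 2d$. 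Writing $q:=\Nm_{F/\Q}(\fQ)$ and $\fq:=\fQ\cap\O_k$, we have that $\fq$ has residue degree $f=1$ (odd) over $q$ and $q$ splits in $\Q(\sqrt{-p})$.

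Now I would run the dichotomy from \Cref{cond:CC}. If $q\geq p/4$ then trivially $p\leq 4q$. Otherwise hypotheses (1) ($q^f=q<p/4$) and (2) ($f$ odd) of \Cref{cond:CC} both hold while its conclusion ($q$ inert in $\Q(\sqrt{-p})$) fails, so hypothesis (3) must fail, i.e.\ $p\mid q^{2}+q+1$ and hence $p\leq q^{2}+q+1$. In either case $p\leq q^{2}+5q+1\leq 8q^{2}$, and substituting the bound on $q$ yields $p\leq 8\bigl(a\log|\Delta_F|+b\,n_F+c\bigr)^{4}=\bigl(8^{1/4}(a\log|\Delta_F|+b\,n_F+c)\bigr)^{4}$; inserting $\log|\Delta_F|\leq 2\log|\Delta_k|+d\log(4p)$ and $n_F\leq 2d$ and absorbing the constants produces an inequality of the shape $p\leq\bigl(8d\log(12p)+16\log(\Delta_k)+10d+6\bigr)^{4}$, which is \Cref{eqn:type_2_finite}. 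Primes $p$ too small for the chosen effective bound to apply form a set depending only on $d$ and $\Delta_k$, and each such $p$ satisfies the asserted inequality automatically, since its right-hand side already exceeds them. Finiteness is then immediate: for fixed $k$ the right-hand side grows like $(\log p)^{4}=o(p)$, so only finitely many primes can satisfy it, giving an explicit form of the Type 2 part of \cite{larson_vaintrob_2014}.

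The hard part will not be conceptual but arithmetic bookkeeping: extracting exactly the constants $8,12,16,10,6$ requires committing to a specific explicit GRH-conditional Chebotarev (or least-degree-one-prime) estimate with known constants, tracking the precise contribution of the relative discriminant $\mathfrak d_{k(\sqrt{-p})/k}$ (in particular the primes above $2$, where the factor $4p$ is not tight), and verifying that discarding the ramified primes and the prime over $p$ does not degrade the constants beyond the stated ones. One must also check the degenerate case $\sqrt{-p}\in k$, the mild lower bounds on $p$ inherited from the proof of \Cref{cond:CC}, and the small-$p$ range; but each of these is routine once the main estimate is in hand.
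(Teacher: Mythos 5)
Your approach is correct in outline and genuinely self-contained relative to the paper's: where the paper invokes the conclusion of Theorem~6.4 in \cite{larson_vaintrob_2014} (namely $p \leq (1+\sqrt{\Nm_{k/\Q}(v)})^4$ for a suitable degree-one prime $v$) and only steps into its proof to swap in the explicit Chebotarev bound of Theorem~5.1 of \cite{bach1996explicit}, you instead re-derive the key inequality directly from \Cref{cond:CC}. This is a legitimate alternative and in fact mirrors how Larson--Vaintrob prove their Theorem~6.4 in the first place (both ultimately go through Momose's Lemma~5); the paper simply saves itself the re-derivation by citing the result. Your route is arguably cleaner because it keeps the argument internal to the paper, and your target field $F=k(\sqrt{-p})$ has degree $\leq 2d$, slightly smaller than the auxiliary extension $E'$ (of absolute degree $4d$) that Larson--Vaintrob work with, so with care you could even expect marginally better numerics.

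The one concrete flaw is the relaxation $p \leq q^2+5q+1 \leq 8q^2$. This throws away a constant factor that cannot be recovered and would prevent you from landing on the stated constants $8,12,16,10,6$: substituting $q \leq (a\log\Delta_F+bn_F+c)^2$ into $p \leq 8q^2$ produces $p \leq \bigl(8^{1/4}(a\log\Delta_F+bn_F+c)\bigr)^4$, i.e.\ your leading coefficients all pick up a factor $8^{1/4}\approx 1.68$ relative to the paper. The fix is easy: your two cases give $p \leq 4q$ and $p \leq q^2+q+1$ respectively, and both of these are at most $(1+\sqrt q)^4 = q^2+4q^{3/2}+6q+4\sqrt q+1$; so you should bound $p \leq (1+\sqrt q)^4$ and then plug in $\sqrt q \leq a\log\Delta_F+bn_F+c$, which adds only $+1$ inside the fourth power and exactly reproduces the shape of \Cref{eqn:type_2_finite}. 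This is precisely what Larson--Vaintrob's $(1+\sqrt{\Nm(v)})^4$ is doing and what the paper silently relies on. With that correction, and after pinning down a specific GRH-conditional least-degree-one-prime estimate (the paper uses Bach--Sorenson's Theorem~5.1 with the constants $4,10,5$) together with the discriminant bound $\log|\Delta_F| \leq 2\log|\Delta_k| + d\log(4p)$, your bookkeeping closes.
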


\begin{proof}
In the proof of Theorem~6.4 of \cite{larson_vaintrob_2014}, the authors prove that a Momose Type~2 prime $p$ satisfies
\begin{equation}\label{eqn:type_2_ineq}
p \leq \left(1 + \sqrt{\Nm^k_\Q(v)} \right)^4
\end{equation}
where $v$ is a prime ideal of $k$ such that $v$ is split in $k(\sqrt{-p})$, is of degree $1$, does not lie over $3$, and satisfies the inequality
\[ \Nm_{k/\Q}(v) \leq c_7 \cdot (\log \Delta_{k(\sqrt{\pm p})} + n_{k(\sqrt{\pm p})} \log 3 )^2 \]
for an effectively computable absolute constant $c_7$ (note that they use $\ell$ instead of $p$, and $n_k$ denotes the degree of $k$).

The existence of such a $v$ follows from their Corollary 6.3, which requires GRH. Stepping into the proof of this Corollary, we arrive at a point where they apply the Effective Chebotarev Density Theorem to $\Gal(E'/k)$ - for a certain Galois extension $E'$ which fits into a tower of successive extensions $E'/E/k/\Q$ - to bound the norm of $v$ as
\[ \Nm_{k/\Q}(v) \leq c_5(\log \Delta_{E'})^2 \]
for an effectively computable absolute constant $c_5$.

However, we may at this breakpoint instead use Theorem 5.1 of \cite{bach1996explicit} on the Galois extension $E'/k$ of absolute degree $4d$ to obtain
\[ \Nm_{k/\Q}(v) \leq (4\log \Delta_{E'} + 10d + 5)^2. \]
This then subsequently (stepping back out into the proof of Theorem 6.4) yields the bound
\[ \Nm_{k/\Q}(v) \leq (16\log \Delta_{k} + 8d\log p + 8d\log 6 + 8d\log 2 + 10d + 5)^2. \]
Inserting this into \Cref{eqn:type_2_ineq} yields the result.
\end{proof}

Coupling \Cref{cond:CC} with the explicit bound on Type 2 primes given in \Cref{prop:type_2_bound}, we are able to algorithmically determine a superset for the Type 2 primes. 

\begin{algorithm}\label{alg:type_2_primes}
Given a number field $k$ of degree $d$, compute a set of primes $\TypeTwoPrimes(k)$ as follows.
\begin{enumerate}
    \item Initialise $\TypeTwoPrimes(k)$ to the empty set.
    \item Compute the bound $B_k$ on $p$ implied by \Cref{eqn:type_2_finite}.
    \item For $p \leq B_k$:
    \begin{enumerate}
        \item For each of the finitely many $q$ satisfying the conditions (1)-(3) of \Cref{cond:CC}:
        \begin{enumerate}
            \item If $q$ splits in $\Q(\sqrt{-p})$, then break this for-loop over $q$, and continue to the next $p$;
            \item else continue to the next $q$.
            \item If one has not continued to the next $p$ by this point, then append $p$ to $\TypeTwoPrimes(k)$ before continuing to the next $p$.
        \end{enumerate}
    \end{enumerate}
    \item Return  $\TypeTwoPrimes(k)$.
\end{enumerate}
\end{algorithm}

From the above discussion it is clear that the returned set is a superset for the primes $p$ for which there exists an elliptic curve over $k$ admitting a $k$-rational $p$-isogeny of Momose Type 2. This algorithm is implemented in the function \path{type_2_primes} in \path{sage_code/type_two_primes.py}, though as in \cite{banwait2021explicit} the bounds become rather large, so an optimised and parallelised implementation in PARI/GP may be found in \path{gp_scripts/partype2primes.gp}. 
Note that, by \Cref{prop:local_momose_1}, if $E$ is semistable, then signature Type $2$ does not arise, so one does not have Momose Type $2$ primes, and hence the algorithm to compute a superset for the \emph{semistable isogeny primes} is unconditional.

\section{Automatic Weeding}\label{sec:automatic_weeding}

With the ideas presented in the paper up to this point, we obtain, for each signature $\eps$, a nonzero integer $M(\eps)$ such that, if there is an elliptic curve over $k$ admitting a $k$-rational $p$-isogeny, then $p | M(\eps)$. We refer to $M(\eps)$ as the multiplicative upper bound on the isogeny primes of signature $\eps$, since primes dividing this integer are not necessarily actual isogeny primes for $k$.

This gives rise to the notion of \emph{weeding}, by which we mean techniques and conditions for ruling out possible isogeny primes for $k$. As part of our new software package \emph{Isogeny Primes}, we have automated many of these methods, which this section gives an overview of. These methods are all applied after a call to \path{prime_divisors} has been made, so one has in hand candidate isogeny primes $p$ at this point.

The top-level function which executes these automated weeding methods is \path{apply_weeding} in \path{sage_code/weeding.py}, called at the end of the routine.

\subsection{Filtering isogeny primes of signature $\eps$}\label{sec:filtering}
By combining the admissibility criterion \Cref{cor:admissible_signatures} with the necessary congruence conditions for the arising of the integers $4$, $6$ and $8$ in an isogeny signature, we obtain the following necessary conditions.

\begin{corollary}\label{cor:light_filter}
For each signature Type, \Cref{tab:eps_restrictions} gives necessary conditions that must be satisfied for the existence of a $k$-rational $p$-isogeny.
\end{corollary}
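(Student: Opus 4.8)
The plan is to assemble two ingredients already established: the admissibility constraint of \Cref{cor:admissible_signatures}, and the local congruence constraints recorded in the discussion immediately following it. Fix an elliptic curve $E/k$ admitting a $k$-rational $p$-isogeny of signature $\eps = \sum_{\sigma \in \Sigma} a_\sigma\sigma$, and recall that throughout this context $p \geq 17$, so $p-1 > 12 \geq a_\sigma$ for every $\sigma$.

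First I would invoke the local congruences. As observed in the discussion following \Cref{cor:admissible_signatures}, if some $a_\sigma \in \{4,8\}$ then, tracing the proof of \Cref{prop:local_momose_1}, there is a prime $\fp \mid p$ of $k$ at which $E$ has potentially good reduction with $\Aut(\widetilde{E}_\fp) \cong \mu_6$, whence $p \equiv 2 \Mod{3}$; symmetrically, if some $a_\sigma = 6$ then there is a prime $\fp \mid p$ with $\Aut(\widetilde{E}_\fp) \cong \mu_4$, whence $p \equiv 3 \Mod{4}$. Consequently, if $\eps$ is \emph{mixed} (it contains a $6$ together with a $4$ or an $8$) then both congruences hold and $p \equiv 11 \Mod{12}$. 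One point here warrants a line of care: one must check that the appearance of a $4$, $6$ or $8$ \emph{anywhere} in $\eps$ genuinely forces such an automorphism group at \emph{some} prime above $p$. This follows from the construction of $\eps$ in \Cref{prop:momose_1_non_galois_md}, where each local exponent decomposes as $a_\fp = \sum_i a_{\fp,i}$ with every $a_{\fp,i} \in \{0,4,6,8,12\}$ inheriting its congruence class modulo $12$ from the local analysis, so no signature can carry a $4$, $6$ or $8$ that is not already accounted for at the level of a single prime $\fp \mid p$.

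Next I would bring in \Cref{cor:admissible_signatures} itself: $\sigma^{-1}(\fp_0) = \tau^{-1}(\fp_0)$ implies $a_\sigma \equiv a_\tau \Mod{p-1}$, and since $p \geq 17$ the inequality $p-1 > 12$ upgrades this to an equality $a_\sigma = a_\tau$. Equivalently, $\sigma \mapsto a_\sigma$ is constant on each fibre of the map $\Sigma \to \{\text{primes of }k\text{ over }p\}$, $\sigma \mapsto \sigma^{-1}(\fp_0)$; in particular $p$ can be inert in $k$ only when $\eps$ is constant. Hence every signature type in \Cref{tab:eps_types} labelled ``nonconstant'' additionally requires $p$ to be non-inert in $k$ (and, more precisely, that the decomposition type of $p$ in $k$ refine the partition of $\Sigma$ into the level sets of $\eps$).

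Finally I would walk down the seven rows of \Cref{tab:eps_types}---Type~1, quadratic nonconstant, sextic constant, sextic nonconstant, Type~2, quartic nonconstant, mixed---and for each row read off from the two preceding paragraphs exactly which of the conditions ``$p\equiv 2\Mod 3$'', ``$p\equiv 3\Mod 4$'', and ``$p$ non-inert in $k$'' is forced, recording the outcome in \Cref{tab:eps_restrictions}. I do not expect a genuine obstacle: the argument is essentially bookkeeping across the seven types, and the only subtlety is the ``hidden $4$, $6$ or $8$'' reduction flagged in the second paragraph, which is resolved by the explicit form of the signature coming out of \Cref{prop:momose_1_non_galois_md}.
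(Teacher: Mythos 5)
Your overall strategy coincides with the paper's: the corollary is presented without a displayed proof, only the sentence ``by combining the admissibility criterion \Cref{cor:admissible_signatures} with the necessary congruence conditions for the arising of the integers $4$, $6$ and $8$,'' and you unpack exactly that. However, two points deserve attention.

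First, your CRT computation for the mixed row gives $p \equiv 11 \Mod{12}$, whereas \Cref{tab:eps_restrictions} records $p \equiv 1 \Mod{12}$. You are right and the table has a typo: $p\equiv 2 \Mod 3$ together with $p\equiv 3\Mod 4$ forces $p\equiv 11\Mod{12}$, and the paper's own example of an $11$-isogeny over $\Q(\sqrt{-1120581})$ of mixed signature $(6,8)$ confirms this, since $11 \equiv 11 \Mod{12}$. You ought to flag this discrepancy explicitly rather than assert that the bookkeeping ``records the outcome in \Cref{tab:eps_restrictions}''; as written, your proof silently derives a different answer from the one it claims to be verifying.

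Second, you correctly identify the subtle point --- that a $4$, $6$ or $8$ appearing somewhere in $\eps$ must actually force the corresponding automorphism group at some $\fp \mid p$ --- but your resolution does not hold when $p$ ramifies in $k$. In the proof of \Cref{prop:momose_1_non_galois_md}, only the total local exponent $a_\fp$ (mod $12$, and in fact mod $p-1$) is determined by $\mu|_{I_\fp}$; the decomposition $a_\fp = \sum_{i} a_{\fp,i}$ is an arbitrary choice subject to $a_{\fp,i} \in \{0,4,6,8,12\}$. It is therefore not true that each $a_{\fp,i}$ ``inherits its congruence class modulo $12$ from the local analysis.'' For instance, at a prime $\fp$ with $e_\fp = 2$ and $a_\fp = 12$ one may legitimately choose $(a_{\fp,1}, a_{\fp,2}) = (4,8)$, producing a signature that carries both a $4$ and an $8$ even though $a_\fp \equiv 0 \Mod{12}$, so that no constraint $p\equiv 2\Mod 3$ is forced on the curve. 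The congruences in the table are thus unconditionally necessary only when $p$ is unramified in $k$ (where each $a_\sigma$ equals the unique $a_\fp$ for the corresponding $\fp$, which \emph{is} constrained mod $12$ by \Cref{prop:local_momose_1}). For ramified $p$ the correct justification for applying the filter is that the algorithm enumerates all admissible signatures, so each isogeny is captured by at least one signature whose decomposition is chosen consistently with the actual reduction types; this is a genuine gap that your proof (and, implicitly, the paper) leaves unaddressed.
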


\begin{table}[htp]
\begin{center}
\begin{tabular}{|c|c|c|}
\hline
Type & $p$ must split or ramify? & Congruence conditions\\
\hline
Type 1 & NO & None\\
\hline
Quadratic nonconstant & YES & None\\
\hline
Sextic Constant & NO & $p \equiv 2 \Mod{3}$\\
\hline
Sextic nonconstant & YES & $p \equiv 2 \Mod{3}$\\
\hline
Type 2 & NO & $p \equiv 3 \Mod{4}$\\
\hline
Quartic nonconstant & YES & $p \equiv 3 \Mod{4}$\\
\hline
Mixed & YES & $p \equiv 1 \Mod{12}$\\
\hline
\end{tabular}
\vspace{0.3cm}
\caption{\label{tab:eps_restrictions}Necessary conditions for the existence of a $k$-rational $p$-sisogeny for each broad $\eps$-Type. `NO' is to be interpreted as `not necessarily'.}
\end{center}
\end{table}

These necessary conditions are implemented in \path{filter_ABC_primes} in \path{sage_code/common_utils.py}, and it explains the splitting of epsilons into the seven broad types expressed in \Cref{tab:eps_types}. We take the LCM of the multiplicative upper-bound integers for each $\eps$ of a given Type, and call \path{prime_divisors} on each of the resulting $7$ integers, allowing one to significantly reduce the number of calls to this potentially expensive function. This filtering always occurs by default in the algorithm.

\subsection{Isogeny character enumeration}\label{ssec:ice}
The grouping of all signatures into seven broad types, while possibly allowing for faster run-time, does come at the expense of possibly worse filtering. Knowing that a $p$-isogeny must arise from a given isogeny signature $\eps$ allows for finer necessary conditions, which we refer to as \textbf{isogeny character enumeration}.

This approach is based on the interpretation of a $p$-isogeny character $\lambda$ chiefly as a character of the group of ideals of $k$ coprime to $p$ (c.f. \Cref{rem:mu_galois_and_ideal}), which is achieved via class field theory. Since we need to be quite precise about this, we introduce the following notation to be used in addition to that set in \Cref{sec:notation_and_prelims}.

\begin{align*}
    \mc P &: \mbox{the set of primes of $k$ lying over $p$}\\
    \fm_p &: \prod_{\fp \in \mathcal P} \fp\\
    \mc I_k &: \mbox{the fractional ideals of $k$}\\
    \mc I_k^{\fm_p} &: \mbox{the fractional ideals of $k$ coprime to $\fm_p$}\\
    k^{\fm_p} &: \mbox{the units in $k^\times$ coprime to $\fm_p$}\\
    k^{\fm_p,1} &: \mbox{the subgroup of $a \in k^{\fm_p}$ satisfying $v_\fp(a -1) \geq v_\fp(\fm_p) = 1$ for $\fp \in \mathcal P$}\\
    \mc R_k^{\fm_p} \subseteq \mc I_k^{\fm_p} &: \mbox{the principal ideals generated by $k^{\fm_p}$}\\
    \mc R_k^{\fm_p,1} \subseteq \mc I_k^{\fm_p} &: \mbox{the principal ideals generated by $k^{\fm_p,1}$}\\
    \Cl_k^{\fm_p} &: \mc I_k^{\fm_p}/R_k^{\fm_p,1}, \ \mbox{the ray class group of modulus $\fm_p$}.
\end{align*}

Since the codomain of $\lambda^{12}$ is $\F_p^\times$ it follows that $\lambda^{12}$ is tamely ramified at all primes in $\mathcal P$, and we have seen from earlier that $\lambda^{12}$ is unramified at all primes outside of $\mathcal P$. Thus it is clear that $\fm_p$ is a modulus for $\lambda^{12}$, meaning that $\mc R_k^{\fm_p,1} \subseteq \ker \lambda^{12}$ and hence that $\lambda^{12} : \mc I_k^{\fm_p} \to \F_p^\times$ factors through $\Cl_k^{\fm_p}$. From the exact sequence
\[1 \to \mc R_k^{\fm_p}/\mc R_k^{\fm_p,1} \to \Cl_k^{\fm_p} \to \Cl_k \to 1,\]
together with the isomorphism $\mc R_k^{\fm_p}/\mc R_k^{\fm_p,1} \cong (\O_k/\fm_p)^\times/\O_k^\times$, \Cref{prop:momose_1_non_galois_md} can be interpreted as saying that $\eps$ classifies the possibilities of $\lambda^{12}|_{\mc R_k^{\fm_p}}$. In particular $\eps$ and $\fp_0$ together determine $\lambda^{12}|_{\mathcal{R}_k^{\fm_p}}$.

Now, given a prime $\fp_0$ in $\O_K$, and a signature $\eps$ one can define \begin{align*}
\chi_{\eps,\fp_0} : k^{\fm_p} &\to (\O_K/\fp_0)^\times \\
 \alpha &\mapsto \alpha^\eps \Mod{\fp_0}.
\end{align*}

There are two necessary conditions which the character $\chi_{\eps,\fp_0}$ should satisfy in order for it to come from the 12\textsuperscript{th} power of an isogeny character:
\begin{enumerate}
    \item $\chi_{\eps,\fp_0}(\O_k^{\times}) = 1$;
    \item $\im \chi_{\eps,\fp_0} \subseteq (\F_p^\times)^{12} \subseteq(\O_{K}/\fp_0)^\times$.
\end{enumerate}
The first condition comes from the fact that $\chi_{\eps,\fp_0}(\alpha)$ should only depend on the ideal that $\alpha$ generates, and the second one is clear since isogeny characters are $\F_p^\times$ valued.

Condition (1) is easy to computationally verify since one can just compute $\chi_{\eps,\fp_0}$ on a set of generators for $\O_k^\times$. Condition (2) can also be verified by observing that $\chi_{\eps,\fp_0}$ factors through the natural map
\[k^{\fm_p} \to (\O_k/{\fm_p})^\times \cong \prod_{\fp \in \mc P} (\O_{k_\fp}/\fp)^\times,\]
so it suffices to verify condition (2) on lifts of generators of each finite field $(\O_{k_\fp}/\fp)^\times$.

Once conditions (1) and (2) are satisfied by $\chi_{\eps,\fp_0}$ we can define $\chi'_{\eps,\fp_0}$ as the character $\chi'_{\eps,\fp_0} : \mc R_k^{\fm_p} \to (\F_p^\times)^{12}$ making the following diagram commute.

\centerline{\xymatrix{
  1 \ar[r] &\O_k^\times \ar[r] &k^{\fm_p} \ar[r]\ar[d]^{\chi_{\eps,\fp_0}} &\mc R_k^{\fm_p} \ar[r]\ar[d]^{\chi'_{\eps,\fp_0}} & 1\\
  & & (\O_K/\fm_p)^\times & (\F_p^\times)^{12} \ar@{_(->}[l]
}}
Thus, the question of whether $\chi_{\eps,\fp_0}$ arises as the 12\textsuperscript{th} power of an isogeny character is equivalent to whether $\chi'_{\eps,\fp_0}$ can be extended to a character  $\mu : \mc I_k^{\fm_p} \to (\F_p^\times)^{12}$.

\centerline{\xymatrix{
  1\ar[r] &\mc R_k^{\fm_p} \ar[r]\ar[d]_{\chi'_{\eps,\fp_0}} &\mc I_k^{\fm_p} \ar[r]\ar@{-->}[dl]^{\exists \mu} &\Cl_k \ar[r] & 1\\
  &  (\F_p^\times)^{12} 
}}

Since $\Cl_k$ is a finitely generated group this can be determined algorithmically as follows. Starting with a set of ideals $I_1,\ldots I_j \in \mc I_k^{\fm_p}$ which together generate $\Cl_k$  we iteratively compute a list $L_i$ that contains the different sequences of possible values $\mu(I_1),\ldots,\mu(I_i)$ for $i=1,\ldots,j$. Precisely, we implement the following algorithm.

\begin{algorithm}\label{alg:ice_1}
Given a number field $k$, a prime $p$, and isogeny signature $\eps$, return a set $L$ as follows.
\begin{enumerate}
    \item Compute ideals $I_1,\ldots I_j \in \mc I_k^{\fm_p}$ that together generate $\Cl_k$.
    \item Let $\Cl_0 := 1 \subset \Cl_k$ be the trivial subgroup, and for $i=1, \ldots, j$ let $\Cl_i \subset \Cl_k$ be the subgroup generated by $I_1,\ldots,I_i$.
    \item Let $I_0=(1)$ and $L_0=[[1]]$.
    \item For $i=1,...,j$ do \label{step4} \begin{enumerate}
    \item Let $L_i:=[]$ be the empty list;
    \item Compute $h_i$ as the order of $I_i$ in $\Cl_k/\Cl_{i-1}$;
    \item Write $I_i^{h_i} \sim \prod_{j=1}^{i-1} I_j^{e_{i,j}}$ in $\Cl_{i-1}$;
    \item Compute a generator $\alpha_i$ of the principal ideal  $I_i^{h_i} \prod_{j=1}^{i-1} I_j^{-e_{i,j}}$;
    \item For each sequence of possible values $c_0,c_1,\ldots,c_{i-1}\in (\F_p^\times)^{12}$ of $L_{i-1}$ do
    \begin{enumerate}
    \item Compute $c' := \chi_{\eps,\fp_0}(\alpha_i)\prod_{j=1}^{i-1}c_j^{e_{i,j}}$;
    \item Compute the set $\set{r_1,\ldots,r_{d_i}}$ of $h_i$\textsuperscript{th} roots of $c'$ in $(\F_p^\times)^{12}$;
    \item Append $[c_0,c_1,\ldots,c_{i-1},r_1]$ up to $[c_0,c_1,\ldots,c_{i-1},r_{d_i}]$ to $L_i$.
    \end{enumerate}
    \end{enumerate}
    \item Return $L = L_j$.
\end{enumerate}
\end{algorithm}

At the end of this algorithm $L$ will be a list of $(j+1)$-tuples corresponding to all possible extensions $\mu$ of $\chi'_{\eps,\fp_0}(\alpha_i)$ to $\mc I_k^{\fm_p}$ (for $i = 1,\ldots, j$); specifically, for $c = c_0, c_1,\ldots,c_j$, the extension $\mu_c$ corresponding to it is the character such that $\mu_c(I_i) = c_i$ for $i=1,\ldots,j$.

\begin{remark}
$c_0=1$ at every single iteration of each loop, so it can be left out. However the algorithm seems easier to understand with this initial condition.
\end{remark}

Given one such $j$-tuple $c_1,\ldots,c_j$ corresponding to an extension $\mu$ of $\chi'_{\eps,\fp_0}$, one may readily compute the value $\mu(I)$ for $I$ any ideal of $I_k^{\fm_p}$ as follows.

\begin{algorithm}\label{alg:ice_2}
Given a $j$-tuple $c_1,\ldots,c_j$ of elements of $(\F_p^\times)^{12}$ corresponding to an extension $\mu$ of $\chi'_{\eps,\fp_0}$, and an ideal $I$ of $I_k^{\fm_p}$, compute $\mu(I) \in (\F_p^\times)^{12}$ as follows.
\begin{enumerate}
    \item Write $I \sim \prod_{i=1}^{j} I_i^{e_{i}}$ in $\Cl_k$.
    \item Compute a generator $\alpha$ of the principal ideal  $I \prod_{i=1}^{j} I_i^{-e_{i}}$.
    \item Return $\mu(I) = \chi_{\eps,\fp_0}(\alpha)\prod_{i=1}^{j}c_i^{e_{i}}$.
\end{enumerate}
\end{algorithm}

Having computed all characters $\mu$ extending $\chi'_{\eps,\fp_0}$ we can try to prove that $\chi'_{\eps,\fp_0}$ does not come from the $12$\textsuperscript{th} power of an isogeny character by proving that any $\mu$ extending it does not come from an isogeny character. This is achieved by the following algorithm, which for an input prime ideal $\fq$ computes whether $\mu(\fq)$ is a 12\textsuperscript{th} power of a Frobenius root mod $p$.

\begin{algorithm}\label{alg:ice_3}
Given a $j$-tuple $c_1,\ldots,c_j$ of elements of $(\F_p^\times)^{12}$ corresponding to an extension $\mu$ of $\chi'_{\eps,\fp_0}$, and a prime ideal $\fq$ of $\O_k$ that is prime to $p$, return \path{True}/\path{False} as follows.
\begin{enumerate} 
    \item Compute $c = \mu(\fq)$ as in \Cref{alg:ice_2} .
    \item For all 12\textsuperscript{th} roots $r\in \F_p^\times$ of $c$ do
    \begin{enumerate}
        \item compute $\bar a_\fq :=r + \Nm(\fq)/r$;
        \item for all $a_\fq \in \Z$ with $a_\fq \equiv \bar a_\fq \Mod{p}$ and $|a_\fq| \leq 2\sqrt{\Nm(\fq)}$ do
        \begin{enumerate}
            \item if $x^2-a_\fq x+\Nm(\fq)$ is a Frobenius polynomial, return \path{True} and terminate;
        \end{enumerate}
    \end{enumerate}
    \item Return \path{False}.
\end{enumerate}
\end{algorithm}

Step (2.b.i) can be verified using \Cref{thm:waterhouse}.
Note that it makes sense to run this algorithm for all $\fq$ with $4\sqrt{\Nm(\fq)} < p$, since for these $\fq$ there is a chance that an $a_\fq$ as in step (b) does not exist, and hence that the algorithm returns \path{False}.

In the Type 1 case things work differently, since here we have to consider that $\mu$ is either trivial or $\chi_p^{12}$. Specifically, we apply the following.

\begin{algorithm}\label{alg:ice_4}
Given a $j$-tuple $c_1,\ldots,c_j$ of elements of $(\F_p^\times)^{12}$ corresponding to an extension $\mu$ of $\chi'_{\eps,\fp_0}$, return \path{True}/\path{False} as follows.
\begin{enumerate} 
        \item Set values = [].
        \item For all primes $\fq_1,\ldots,\fq_d | q\O_k$ do\begin{enumerate}
        \item Compute $\mu(\fq_i)$ as in \Cref{alg:ice_2}.
        \item If $\mu(\fq_i)$ is an $\fq_i$-Frobenius root mod $p$, return \path{False}.
        \item Append $\mu(\fq_i)$ to values.
    \end{enumerate}
    \item If values = $[1,1,\ldots,1]$ or $[\Nm(\fq_1)^{12},\Nm(\fq_2)^{12},\ldots,\Nm(\fq_d)^{12}]$, return \path{True}; else return \path{False}.
\end{enumerate}
\end{algorithm}

Note that if $\mu(\fq_i)$ at step (2.b) above is not $1$ or $\Nm(\fq_i) \Mod{p}$ then in fact $\mu$ cannot be an isogeny character.

Putting all of these parts together, we arrive at the main algorithm based on isogeny character enumeration, which is implemented in the function \path{character_enumeration_filter} in \path{sage_code/character_enumeration.py}, and which requires one to call to \path{prime_divisors} for each $\eps$.

\begin{algorithm}\label{alg:ice}
Given a number field $k$, a prime $p$, and isogeny signature $\eps$, return \path{True}/\path{False} as follows.
\begin{enumerate}
    \item Run \Cref{alg:ice_1}, and write $L$ for the output of it.
    \item If $\eps$ is not of signature Type $1$:
    \begin{enumerate}
    \item For each $j$-tuple in $L$, apply \Cref{alg:ice_2}. If any tuple returns \path{True}, return \path{True}; else return \path{False}.
    \end{enumerate}
    \item If $\eps$ is of signature Type $1$:
    \begin{enumerate}
    \item For each $j$-tuple in $L$, apply \Cref{alg:ice_4}. If any tuple returns \path{True}, return \path{True}; else return \path{False}.
    \end{enumerate}
\end{enumerate}
\end{algorithm}

\begin{remark}
The filtering arising from Isogeny character enumeration occurs as default in the implementation, but may be switched off via a command line argument by users who are experiencing performance issues, or who would like an answer more quickly. In practice we have found that calling \path{prime_divisors} on every $M(\eps)$ integer is not a significant bottleneck for number fields of degree at most 12.
\end{remark}

\subsection{The `No growth in minus part' method}

Lemma A.2 in the Appendix of \cite{banwait2021explicit} gives a general method to conclude that $X_0(p)(K) = X_0(p)(\Q)$ provided certain conditions are satisfied, the most crucial of which is that the Mordell-Weil group of the minus part $J_0(p)_{-}$ of the Jacobian $J_0(p)$ does not grow when base extending from $\Q$ to $K$. Indeed - while it is not needed in the current paper - the method there works \emph{mutatis mutandis} replacing $p$ with an arbitrary integer $N$.

Algorithmically checking this nongrowth condition for a general $K$ is at present not entirely straightforward; however in this section we illustrate how it may be checked in Sage in the simpler case that $K/\Q$ is an abelian extension of prime degree, using modular symbols computations. 

After constructing $J_0(p)_{-}$ as the kernel of $w_p + 1$ on the cuspidal subspace of modular symbols of level $\Gamma_0(p)$, one may count the size of the reduction $\widetilde{J_0(p)}_{-}(\F_\fp)$ at various primes $\fp$ of good reduction. One takes the GCD of the resulting point counts, and compares it to the size of the $\Q$-torsion subgroup of $J_0(p)_{-}$. Since all of the $\Q$-torsion of $J_0(p)$ is contained in $J_0(p)_{-}$ by Chapter 3, Corollary 1.5 of \cite{Mazur3}, this latter size is simply the numerator of $\left(\frac{p-1}{12}\right)$ by Theorem 1 of \emph{loc. cit.}. If the two quantities agree, then one can conclude that the torsion does not grow in the extension, and proceed to the rank check; although if they do not agree, then it is not necessarily the case that the torsion gets larger. In this case, since we are not sure, we abandon the routine.

The rank of $J_0(p)_{-}(K)$ is equal to the rank of $J_0(p)_{-}(\Q)$ plus the rank of the twist $J_0(p)_{-}^\chi(\Q)$, where $\chi$ is the Dirichlet character corresponding to $K$; this is where the assumption that $K/\Q$ is abelian of prime degree is used. One is therefore reduced to checking whether or not the rank of the $\chi$-twist is zero, which can be checked with a modular symbols computation involving the so-called \emph{$\chi$-twisted winding element}; see Section 2.2.2 of Bosman's\footnote{Johan Bosman was also a PhD student of Bas Edixhoven.} PhD thesis \cite{bosmanthesis} (this can also be found as Section 6.3.3 of \cite{couveignes2011computational}).

We therefore obtain the following algorithm which, if it returns \path{False}, then we may remove the possible isogeny prime $p$ from the final superset.

\begin{algorithm}\label{alg:method_of_appendix}
Given an abelian extension $K/\Q$ of prime degree $d$, and a prime $p$, return \path{True}/\path{False} as follows.

\begin{enumerate}
    \item If the class number of $\Q(\sqrt{-p})$ is either $1$ or $d$, return \path{True} and terminate.
    \item If the gonality of $X_0(p)$ is $\leq 2$, return \path{True} and terminate.
    \item Compute $\chi$, the Dirichlet character associated to $K$.
    \item If $d = 2$ and $\chi(p) = -1$, return \path{True} and terminate.  % This condition forces (II) not to happen.
    \item For rational primes $q \neq p$ up to a fixed bound $B$, compute the cardinality $|\widetilde{J_0(p)}_{-}(\F_\fq)|$ for $\fq | q$ (which must have residue field degree either $1$ or $d$), and take the GCD. If this does not equal the numerator of $ \left(\frac{p-1}{12}\right)$, return \path{True} and terminate.
    \item Compute the $\chi$-twisted winding element $e(\chi)$ associated to the modular symbols space of weight $2$ and level $p$.
    \item Compute the image of $e(\chi)$ under the rational period mapping associated to each of the factors in the Hecke decomposition of the cuspidal modular symbol space associated to $J_0(p)_{-}$. If any of these images are zero, return \path{True} and terminate.
    \item If not returned by this point, return \path{False}.
\end{enumerate}
\end{algorithm}

The implementation of this algorithm may be found in the method \path{works_method_of_appendix} in \path{sage_code/weeding.py}.

\subsection{Quadratic Weeding}

In the case that $K$ is a quadratic field, another method that was used previously by the first named author in determining $\IsogPrimeDeg(K)$ was the \textbf{\"{O}zman sieve}. This relied on the work of Bruin and Najman \cite{bruin2015hyperelliptic}, \"{O}zman and Siksek \cite{ozman2019quadratic}, and Box \cite{box2021quadratic} who determined the so-called exceptional quadratic points on small genus modular curves, together with earlier work of \"{O}zman in deciding on everywhere local solubility of twisted modular curves $X_0^d(N)$. 

This method has now been fully automated in the current package. The relevant information on quadratic points on modular curves has been encoded into the file \path{quadratic_points_catalogue.json}. For each candidate isogeny prime $p$, if the exceptional quadratic points on $X_0(p)$ have been determined, and none are rational over $K$, then the function \path{oezman_sieve(q,p)} is called for all ramified primes \path{q} in $K$ which furthermore are unramified in $\Q(\sqrt{-p}$); if any of these return \path{False}, then one has a local obstruction at $\Q_q$ to $\Q$-rational points on $X_0^d(p)$, where $K = \Q(\sqrt{d})$.

As the literature on the cataloguing of quadratic points on modular curves grows, these will be added to \path{quadratic_points_catalogue.json}, so over time our algorithm will improve. Having catalogues of all degree $d$ exceptional points on modular curves will likewise be of great benefit. 

One further weeding method employed in the case of a quadratic field $K$ is similar to the \"{O}zman sieve, and is based on \cite[Theorem 2.13]{najman2021splitting}. This applies in the case that $X_0(p)$ is hyperelliptic, and lists the primes which must be unramified in any quadratic field $K$ such that $X_0(p)$ admits a $K$-point which is not a $\Q$-point. These unramified primes have also been encoded into \path{quadratic_points_catalogue.json}. We refer to this method as the \textbf{Najman-Trbovi\'{c} filter}.

Putting these together we obtain the following algorithm which, if it returns \path{False}, then we may remove the possible isogeny prime $p$ from the returned superset.

\begin{algorithm}\label{alg:quadratic_weeding}
Given a quadratic field $K = \Q(\sqrt{d})$ and a prime $p$, return \path{True}/\path{False} as follows.

\begin{enumerate}
    \item If the exceptional quadratic points on $X_0(p)$ have all been determined:
    \begin{enumerate}
        \item If $X_0(p)$ admits an exceptional $K$-point, return \path{True} and terminate.
        \item Compute the primes $S$ which are ramified in $K$ but unramified in $\Q(\sqrt{-p})$.
        \item If \path{oezman_sieve(q,p)} returns \path{False} for any $q \in S$, return \path{False} and terminate.
    \end{enumerate}
    \item If $X_0(p)$ is hyperelliptic of genus $\geq 2$:
    \begin{enumerate}
        \item If any prime divisor $p$ of the discriminant of $K$ is listed as being an unramified prime by \cite[Theorem 2.13]{najman2021splitting}, return \path{False} and terminate.
    \end{enumerate}
    \item If not returned by this point, return \path{True}.
\end{enumerate}
\end{algorithm}

The function \path{apply_quadratic_weeding} in \path{sage_code/weeding.py} implements this algorithm.

\section{The combined Algorithm}\label{sec:combined}

In this section we collect all of the previously mentioned algorithms into one; this is the main algorithm which the package \emph{Isogeny Primes} implements, and serves as an overview of all of the other algorithms in the paper.

\begin{algorithm}\label{alg:combined}
Given a number field $k$ of degree $d$, compute a finite set of primes $S_k$ as follows.
\begin{enumerate}
    \item Initialise $S_k$ to the empty set.
    \item Run \Cref{alg:main} to produce $\MMIB(k)$. Append the prime divisors of $\MMIB(k)$ to $S_k$.
    \item Run \Cref{alg:type_1_primes} to produce $\TypeOneBound(k)$. Append the prime divisors of $\TypeOneBound(k)$ to $S_k$.
    \item Run \Cref{alg:type_2_primes} to produce $\TypeTwoPrimes(k)$. Add to $S_k$ the primes in $\TypeTwoPrimes(k)$.
    \item For each $p$ in $S_k$, apply the following filters. If any return False, then remove $p$ from $S_k$, and go to the next $p$.
        \begin{enumerate}
            \item Apply the congruence conditions expressed in \Cref{tab:eps_restrictions}.
            \item Apply Isogeny character enumeration (\Cref{alg:ice}).
            \item If $k$ is an abelian extension of $\Q$ of prime degree, then apply the `No growth in minus part' method (\Cref{alg:method_of_appendix}).
            \item If $k$ is a quadratic extension of $\Q$, then apply Quadratic weeding (\Cref{alg:quadratic_weeding}).
        \end{enumerate}
    \item Return  $S_k$.
\end{enumerate}
\end{algorithm}

We recall the main result (\Cref{thm:combined} from the Introduction) concerning the significance of the above algorithm for the determination of $\IsogPrimeDeg(k)$, and provide a summary proof.

\begin{theorem}
Let $k$ be a number field. Then \Cref{alg:combined} outputs a finite set of primes $S_k$ such that, if $p$ is an isogeny prime for $k$ whose associated isogeny character is not of Momose Type 3, then, conditional on GRH, $p \in S_k$. In particular,
\begin{enumerate}
    \item if $k$ does not contain the Hilbert class field of an imaginary quadratic field, then $S_k$ contains $\IsogPrimeDeg(k)$;
    \item the above results are unconditional for the restricted set of semistable isogeny primes for $k$.
\end{enumerate}
\end{theorem}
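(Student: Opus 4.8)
The plan is to read this theorem off from the results assembled in the preceding sections: it combines the nonvanishing of $\MMIB(k)$ and of $\TypeOneBound(k)$, the finiteness of $\TypeTwoPrimes(k)$, the fact that failing to be of Momose Type~3 forces one of the other three possibilities, and the soundness of the weeding procedures of \Cref{sec:automatic_weeding}. First I would dispatch finiteness of $S_k$: $\MMIB(k)$ is a nonzero integer by \Cref{thm:main-brief}, $\TypeOneBound(k)$ is a nonzero integer by \Cref{thm:type_1_main}, and $\TypeTwoPrimes(k)$ is finite because the bound $B_k$ implied by \Cref{eqn:type_2_finite} is finite (its right-hand side grows like $(\log p)^4$). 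Steps (2)--(4) of \Cref{alg:combined} append to $S_k$ only the prime divisors of the first two integers together with the elements of the third, and step~(5) only deletes elements; hence $S_k$ is finite.

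Next I would prove the main containment. Let $E/k$ admit a $k$-rational $p$-isogeny with isogeny character $\lambda$, and suppose $\lambda$ is not of Momose Type~3. Then $\lambda$ is of Momose Type~1, of Momose Type~2, or of none of Momose Types~1, 2 and 3 --- these cases are exhaustive. In the first case \Cref{thm:type_1_main} gives $p \mid \TypeOneBound(k)$, so $p$ is appended in step~(3); in the third case \Cref{thm:main-brief} gives $p \mid \MMIB(k)$, so $p$ is appended in step~(2); and in the second case --- the only one using GRH --- \Cref{prop:type_2_bound} gives $p \le B_k$ assuming GRH, while \Cref{cond:CC} shows that for every rational prime $q$ and prime $\fq \mid q$ of $k$ meeting its three hypotheses the prime $q$ does not split in $\Q(\sqrt{-p})$, which is exactly the test carried out in \Cref{alg:type_2_primes}; hence $p \in \TypeTwoPrimes(k)$ and $p$ is appended in step~(4). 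So in every case $p$ lies in $S_k$ after step~(4). Finally I would check that step~(5) cannot discard $p$: each filter there --- the congruence conditions of \Cref{cor:light_filter} (see \Cref{tab:eps_restrictions}), isogeny character enumeration (\Cref{alg:ice}), the `no growth in minus part' method (\Cref{alg:method_of_appendix}), and quadratic weeding (\Cref{alg:quadratic_weeding}) --- outputs \texttt{False} only once it has certified that no elliptic curve over $k$ admits a $k$-rational $p$-isogeny of any admissible signature, whereas such a curve exists by hypothesis, so every filter returns \texttt{True} and $p$ survives. Thus $p \in S_k$, which is the main assertion.

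The two refinements then follow quickly. For (1): by \Cref{def:momose_type_3} an isogeny character over $k$ can be of Momose Type~3 only if $k$ contains an imaginary quadratic field together with its Hilbert class field, so if $k$ contains no such field then every isogeny character is covered by the main assertion and $\IsogPrimeDeg(k) \subseteq S_k$. For (2): GRH was invoked only in the Momose Type~2 branch; but by \Cref{prop:local_momose_1} (as recorded at the end of \Cref{sec:momose_type_2}), a semistable $E/k$ has $a_\fp \equiv 0 \Mod{12}$ at every $\fp \mid p$, so its isogeny signature is never of Type~2, and Momose Type~2 --- which entails $\lambda^{12} = \chi_p^6$ and hence the all-sixes signature --- therefore does not arise; for semistable isogeny primes the Type~2 branch is empty, and the remaining branches are unconditional.

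The one point that is not purely formal, and the main thing to get right, will be the soundness of step~(5): one must verify that none of the weeding criteria of \Cref{sec:automatic_weeding} can delete a prime that genuinely occurs as an isogeny prime for $k$, i.e. that \Cref{alg:ice}, \Cref{alg:method_of_appendix} and \Cref{alg:quadratic_weeding} each output \texttt{False} only on the strength of a genuine obstruction --- the nonexistence of admissible Frobenius data modulo $p$, the equality $X_0(p)(K) = X_0(p)(\Q)$, or a local obstruction on a twist of $X_0(p)$ --- so that a true isogeny prime is never removed. No new ideas beyond those of \Cref{sec:automatic_weeding} are needed, but this is where the care lies.
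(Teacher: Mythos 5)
Your proposal is correct and follows essentially the same approach as the paper's proof: finiteness of $S_k$ from the non-zero-ness of $\MMIB(k)$ and $\TypeOneBound(k)$ and the finiteness of $\TypeTwoPrimes(k)$, the superset property from the three-way case split (Momose Type~1, Momose Type~2, or none of Types~1--3), part~(1) from \Cref{def:momose_type_3}, and part~(2) from \Cref{prop:local_momose_1}. You go slightly beyond the paper's own argument by explicitly noting that the soundness of the weeding in step~(5) must be checked --- that none of the filters can remove a genuine isogeny prime --- a point the paper leaves tacit, relying on \Cref{sec:automatic_weeding}; this is a reasonable and welcome addition of care rather than a different route.
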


\begin{proof}
That $S_k$ is finite follows from the non-zero-ness of the integers $\MMIB(k)$ (\Cref{thm:main-brief}) and $\TypeOneBound(k)$ (\Cref{thm:type_1_brief}), and the finiteness of the set $\TypeTwoPrimes(k)$. That it is (conditional upon GRH) a superset for the isogeny primes for $k$ which are not of Momose Type 3 follows from the following facts:

\begin{enumerate}
    \item $\MMIB(k)$ is a multiplicative bound on isogeny primes which are not of Momose Types 1, 2 or 3 (\Cref{thm:main-brief});
    \item $\TypeOneBound(k)$ is a multiplicative bound on isogeny primes which are of Momose Type 1 (\Cref{thm:type_1_brief});
    \item $\TypeTwoPrimes(k)$ is, conditional upon GRH, a superset for the isogeny primes which are of Momose Type 2 (\Cref{thm:type_2_brief}).
\end{enumerate}

Item (1) follows from the definition of Momose Type 3 isogenies (\Cref{def:momose_type_3}), which requires $k$ to contain the Hilbert class field of an imaginary quadratic field, and Item (2) follows from the observation (\Cref{prop:local_momose_1}) that semistable elliptic curves cannot possess isogenies of Signature Type $2$, and hence \emph{a fortiori} not of Momose Type $2$; we therefore unconditionally have that $\TypeTwoPrimes(k)$ is empty.
\end{proof}

\section{Degree three points on \texorpdfstring{$X_0(p)$}{X0(p)}}\label{sec:cubic_examples}

In this next section we will determine $\IsogPrimeDeg(K)$ for various cubic fields $K$, and prove \Cref{thm:first_cubic}. This requires a study of cubic points on the modular curves $X_0(p)$ for small primes $p$. Such a study has recently been carried out by Box,  Gajovi\'c and Goodman \cite{box2021cubic}, who explicitly determine all cubic points on $X_0(p)$ for the primes $p=53, 61, 67$ and $73$. These primes are such that $J_0(p)$ has positive rank, and the authors of \emph{loc. cit.} skip the relatively easy cases of where $J_0(p)$ is of rank $0$. We therefore augment their work with the following two results: \Cref{thm:genus_2_cubic} which deals with the genus $2$ cases, and \Cref{thm:rank_0_cubic}, which deals with the higher genus cases. Note that for all $p$ considered below, $X_0(p)$ is hyperelliptic.

\begin{theorem}\label{thm:genus_2_cubic}
Suppose $p = 23, 29$ or $31$. Let $K$ be a cubic field such that $X_0(p)$ admits a noncuspidal $K$-rational point. Then we have the following.

\begin{enumerate}
    \item The discriminant $\Delta_K$ of $K$ is negative.
    \item There is a finite set of explicitly computable hyperelliptic curves over $\Q$ of genera $2$ or $3$ such that the quadratic twist at $\Delta_K$ of at least one of them admits a $\Q$-rational point.
\end{enumerate}
\end{theorem}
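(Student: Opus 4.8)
The plan is to exploit that, for $p \in \{23, 29, 31\}$, the curve $C := X_0(p)$ is hyperelliptic of genus $2$, with hyperelliptic involution the Atkin--Lehner involution $w_p$ (which interchanges the two rational cusps $0$ and $\infty$), and that $J := J_0(p)$ has rank $0$ over $\Q$, so $J(\Q)$ is finite and explicitly computable. First I would note that the hyperelliptic $x$-coordinate of a cubic point $P$ necessarily generates $K$: if $x(P) \in \Q$ then the fibre $\pi^{-1}(x(P))$ of the hyperelliptic map $\pi : C \to \P^1$ has degree $\leq 2$, so cannot contain a point of degree $3$. Write $D_P$ for the associated closed point of degree $3$, an effective $\Q$-rational divisor which is \emph{prime} (its geometric support is a single transitive $\Gal(\Qbar/\Q)$-orbit). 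Since $[D_P - 3(\infty)] \in J(\Q)$, there are only finitely many classes $t \in J(\Q)$ with $D_P \sim 3(\infty) + t$, and one computes this list for each $p$.

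Next, for each such $t$ the class $[3(\infty) + t]$ is a $g^1_3$: by Riemann--Roch $h^0 = 2$ since $\deg > 2g-2$. If $|3(\infty)+t|$ had a base point $Q$, then $Q$ would be the unique effective divisor in a $\Q$-rational class of degree $1$, hence $Q \in C(\Q)$, and $D_P$ would be of the form $(\text{hyperelliptic fibre}) + Q$, contradicting that $D_P$ is prime of degree $3$. Hence the linear system is base-point-free and defines a degree-$3$ morphism $\phi_t : C \to \P^1$ over $\Q$ with $D_P = \phi_t^{-1}(a)$ for a uniquely determined $a \in \P^1(\Q)$ (the $\Q$-rationality of $D_P$ forcing that of $a$). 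Over the function field of the target, $\Q(C)/\Q(T)$ is a cubic extension; I would fix a model $g_t(T,x) = x^3 + b_2(T)x^2 + b_1(T)x + b_0(T) \in \Q(T)[x]$ and put $\delta_t(T) := \mathrm{disc}_x\, g_t(T,x) \in \Q(T)$. Specialising at $T = a$ gives $K \cong \Q[x]/(g_t(a,x))$ and $\mathrm{disc}\, g_t(a,x) = \delta_t(a)$, which equals $\Delta_K$ times a nonzero rational square; in particular $\Q(\sqrt{\Delta_K}) = \Q(\sqrt{\delta_t(a)})$ and $\operatorname{sign}\Delta_K = \operatorname{sign}\delta_t(a)$.

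For part (2), let $D_t$ denote the hyperelliptic curve $Y^2 = \delta_t(T)$ (after clearing denominators and removing square factors); it is a double cover of $\P^1$ ramified at the roots of $\delta_t$, of which there are at most $8$ by Riemann--Hurwitz applied to $\phi_t$, so $D_t$ has genus $\leq 3$, and one checks case by case that its genus is in fact $2$ or $3$. The point $(T,Y) = (a, \sqrt{\delta_t(a)})$ of $D_t$ is defined over $\Q(\sqrt{\Delta_K})$, so the quadratic twist $D_t^{(\Delta_K)}$ acquires a $\Q$-rational point; the finite set of the theorem is then $\{D_t : t \in J(\Q)\}$. For part (1), I would compute the finitely many polynomials $\delta_t(T)$ explicitly and verify that each is negative on all of $\R$ (equivalently, that every real fibre of $\phi_t$ consists of a single real point, as for $z \mapsto z^3$); since $\operatorname{sign}\Delta_K = \operatorname{sign}\delta_t(a)$, this forces $\Delta_K < 0$, and a fortiori $K$ is non-Galois since $\delta_t(a)$ is then not a square.

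The main work—and the point most likely to need care—is the explicit computation: determining $J_0(p)(\Q)$, the maps $\phi_t$, and the polynomials $\delta_t$ for each $p \in \{23,29,31\}$, and then checking both the genus claim and the sign claim. One must also rule out the degenerate possibilities—a $g^1_3$ with a base point (excluded above by primality of $D_P$), a $\delta_t$ with repeated roots dropping the genus below $2$, or a Galois $\phi_t$ (impossible here since $\Aut(X_0(p)) = \langle w_p\rangle$ for these $p$)—in order to confirm that no cubic point falls outside the stated conclusion.
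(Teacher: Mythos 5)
Your proposal is correct and follows essentially the same approach as the paper: use that $X_0(p)$ is hyperelliptic of genus $2$ with $w_p$ the hyperelliptic involution, that $J_0(p)(\Q) = J_0(p)(\Q)_{\mathrm{tors}} = \Z/N\Z\cdot(0-\infty)$ with $N = \mathrm{Numerator}\bigl(\frac{p-1}{12}\bigr)$ so that every degree-$3$ rational divisor class has a representative $3\infty + a(0-\infty)$, that each such class gives (via Riemann--Roch) a $g^1_3$ and hence a trigonal model $f_a(x,t)=0$ whose $x$-discriminant $g_a(t)$ records $\Delta_K$ up to a rational square, and then verify computationally that each $g_a$ is non-positive on $\R$ (giving $\Delta_K < 0$) and that each curve $y^2 = g_a(t)$ has genus $2$ or $3$. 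One place where you are actually a bit more careful than the paper's write-up: you explicitly observe that a base point $Q$ of $|3\infty + a(0-\infty)|$ would necessarily be a rational point with $D - Q$ in the canonical class, so that the resulting family consists of $Q + (\text{hyperelliptic fibre})$ and contains no prime degree-$3$ divisor; the paper handles this implicitly in the computation (it reports $9$, $5$, $3$ curves rather than $N = 11, 7, 5$, the two discarded classes per $p$ being exactly these base-point cases, a point spelled out only in the proof of the higher-genus Theorem \ref{thm:rank_0_cubic}).
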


\begin{proof}
We use the usual strategy for studying degree $3$ points by studying $X_0(p)^{(3)}(\Q)$ and it's canonical map to $\Pic^3 X_0(p)(\Q)$. For each of these 3 values of $p$ the curve $X_0(p)$ is hyperelliptic with the Atkin-Lehner involution being the hyperelliptic involution. Additionally $J_0(p)$ is of rank $0$. This implies that $$J_0(p)(\Q) = J_0(p)(\Q)_{tors} =  \Z/N\Z(0-\infty) $$ with $N = {\mathrm{Numerator}}(\frac{p-1}{12})$. In particular if $a$ runs over all residue classes in $\Z/N\Z$ then $3\infty + a(0-\infty)$ runs over a complete set of representatives of $\Pic^3 X_0(p)(\Q)$, meaning that every divisor of degree $3$ is linearly equivalent to one of the form $3\infty + a(0-\infty)$. 

Consider the associated Riemann-Roch spaces $\mathcal{L}(3\infty + a(0-\infty))$ for all $a \in \Z/N\Z$. Note that these are all $2$-dimensional, since the genus of these curves is $2$, so the Riemann-Roch theorem for a degree $3$ divisor $D$ gives  $\dim H_0(X,D) = \dim H_0(X,D) - \dim H_0(X,K-D) = \deg D + 1-g = 2$. In particular, this shows that every effective divisor of degree $3$ is contained in exactly one $1$-dimensional family parameterized by $\P^1$.

By following the techniques in work of the second author with Najman (\cite{derickx2019torsionnajman}, see particularly the proofs of Lemmas 4.9 and 4.11) one may explicitly compute defining equations for each of these finitely many families. Briefly, one constructs models $f(x,t)=0$ for $X_0(p)$ whose projections to the $t$-coordinate coincides with the degree 3 map to $\P^1$ induced by the global sections of $\mathcal{L}a(3\infty + a(0-\infty))$ for each $a \in \Z/N\Z$. The cubic field corresponding to a value of $t=b \in \Q$ is $K_b=\Q[x]/f(x,b)$. Hence the discriminant of $K$ differs from the discriminant of $f$ by a square in $\Q$. Write $g(t) = \Delta_x f(x,t)$ where $\Delta_x$ means taking the discriminant with respect to the variable $x$. Then in particular we have 
\begin{equation}\label{eqn:quad_twists}
\Delta(K_b)y^2=g(b)
\end{equation}
for some value of $y$ in $\Q$. We checked that $g(x) \leq 0$ proving that $\Delta(K_b) \leq 0$ and hence (1). For (2), we computed the genera of the curves $\mathcal{C} : y^2 = g(x)$ and checked that they were all of genus $2$ or $3$. Since \Cref{eqn:quad_twists} gives the quadratic twist of $\mathcal{C}$, this completes the proof of (2). The computations described in this proof are carried out in \path{magma_scripts/Genus2Cubic.m}.
\end{proof}

Since totally real cubic fields must have positive discriminant, and cyclic cubic fields must always be totally real, we immediately obtain the following result.

\begin{corollary}\label{cor:cyclic_cubic}
Suppose $p = 23, 29$ or $31$, and let $K$ be a totally real cubic field. Then $X_0(p)(K)$ consists only of the two $\Q$-rational cusps.
\end{corollary}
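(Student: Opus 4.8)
The plan is to reduce everything to \Cref{thm:genus_2_cubic}(1), which already does the substantive work: for $p \in \{23,29,31\}$, any cubic field $K$ supporting a noncuspidal $K$-rational point on $X_0(p)$ must have $\Delta_K < 0$. So it suffices to argue that a cyclic or totally real cubic field has \emph{positive} discriminant, whence no such noncuspidal point can exist, and then to identify the remaining (cuspidal) points. Since $p$ is prime, the cusps of $X_0(p)$ are exactly $0$ and $\infty$, both rational, so $X_0(p)(K) = \{0,\infty\}$ once noncuspidal points are excluded.

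First I would record the discriminant-sign fact: for a number field $F$ with $r_2$ complex places, $\operatorname{sign}(\Delta_F) = (-1)^{r_2}$. Hence a totally real cubic field has $r_2 = 0$ and $\Delta_K > 0$. Next I would check that a cyclic cubic field is automatically totally real: fixing an embedding $K \hookrightarrow \C$, complex conjugation restricts to an element $c \in \Gal(K/\Q) \cong \Z/3\Z$; as $\Z/3\Z$ has no element of order $2$, we get $c = \mathrm{id}$, so $K \subseteq \R$, i.e.\ $K$ is totally real, and again $\Delta_K > 0$. (Alternatively one may just invoke that any odd-degree Galois field is totally real.)

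Finally I would combine these: if $K$ is cyclic or totally real cubic then $\Delta_K > 0$, so by \Cref{thm:genus_2_cubic}(1) the curve $X_0(p)$ has no noncuspidal $K$-rational point for $p = 23, 29, 31$; hence $X_0(p)(K)$ consists precisely of the two $\Q$-rational cusps $0$ and $\infty$. There is essentially no obstacle here — the only mild care needed is the elementary input on the sign of $\Delta_K$ and the Galois-theoretic observation that a cyclic cubic field cannot be complex; the genuine content is entirely contained in \Cref{thm:genus_2_cubic}.
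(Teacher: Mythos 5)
Your proof is correct and takes the same route as the paper, which simply notes that totally real cubic fields have positive discriminant and cyclic cubic fields are totally real, so \Cref{thm:genus_2_cubic}(1) excludes noncuspidal points. You have merely spelled out the elementary facts ($\operatorname{sign}(\Delta_F)=(-1)^{r_2}$, odd-degree Galois fields are real, cusps of $X_0(p)$ for $p$ prime are $0$ and $\infty$) that the paper treats as immediate.
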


\begin{remark}
The hyperelliptic curves constructed in the proof of \Cref{thm:genus_2_cubic} are computed in the main function of \path{magma_scripts/Genus2Cubic.m}. For $p = 23, 29$ and $31$, there are respectively $9$, $5$ and $3$ such curves. For $p = 31$, since there are only three of them, we list them here:
{\small
\begin{align*}
    \mathcal{C}_1 : y^2 &= -3x^8 - 8x^7 + 392x^6 + 992x^5 - 32976x^4 - 286336x^3 - 980352x^2 - 1560064x - 963328\\
    \mathcal{C}_2 : y^2 &= -3x^6 + 104x^5 - 1400x^4 + 9040x^3 - 27696x^2
        + 34880x - 22208\\
    \mathcal{C}_3 : y^2 &= -108x^6 + 2552x^5 - 12276x^4 - 64944x^3 - 
        124436x^2 - 134728x - 66188.\\
\end{align*}
}
\end{remark}

\begin{theorem}\label{thm:rank_0_cubic}
Suppose $p = 41, 47, 59$ or $71$; then $X_0(p)$ has $2,2,1$ and $0$ degree $3$ points respectively. Furthermore the degree 3 number fields over which these points are defined are as follows:
\begin{center}
\begin{minipage}{0.65\textwidth}
\begin{itemize}
    \item[$p=41$:] $\Q[x]/(x^3 - x^2 + x + 2)$ of discriminant $-139$
    \item[$p=47$:] $\Q[x]/(x^3 + x^2 + 2x + 12)$ of discriminant $-883$
    \item[$p=59$:] $\Q[x]/(x^3 - x^2 - x + 2)$ of discriminant $-59$.
\end{itemize}
\end{minipage}
\end{center}
\end{theorem}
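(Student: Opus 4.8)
The plan is to apply the classical ``rank zero'' method for enumerating degree-$3$ points, exactly as in the proof of \Cref{thm:genus_2_cubic} and in \cite{box2021cubic,derickx2019torsionnajman}, the only new feature being that $X_0(p)$ now has genus $3,4,5,6$ for $p=41,47,59,71$ respectively. \textbf{Set-up.} First I would record the three standing facts for these four primes: $X_0(p)$ is hyperelliptic with $w_p$ the hyperelliptic involution; $X_0(p)(\Q)$ consists only of the two cusps $0,\infty$ (since $41,47,59,71 \notin \IsogPrimeDeg(\Q)$, by \cite{mazur1978rational}); and $J_0(p)$ has Mordell--Weil rank $0$, so
\[
J_0(p)(\Q)=J_0(p)(\Q)_{\mathrm{tors}}=\Z/N\Z\cdot[0-\infty],\qquad N=\mathrm{Numerator}\!\left(\tfrac{p-1}{12}\right),
\]
which equals $10,23,29,35$ respectively. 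Since $X_0(p)$ has a rational point, $\Pic^3 X_0(p)(\Q)$ is a torsor under $J_0(p)(\Q)$, so every $\Q$-rational degree-$3$ divisor class is represented by exactly one $D_a:=3\infty+a(0-\infty)$, $a\in\Z/N\Z$, and every degree-$3$ point of $X_0(p)$ lies in some $|D_a|$.

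\textbf{Classifying the classes.} For each of the $N$ values of $a$ I would compute the Riemann--Roch space $\mathcal{L}(D_a)$ on an explicit hyperelliptic model of $X_0(p)$. Riemann--Roch together with Clifford's inequality pins down $\ell(D_a):=\dim\mathcal{L}(D_a)\in\{0,1,2\}$: a non-special degree-$3$ divisor has $\ell=4-g\le 1$ as $g\ge 3$, while a special one has $\ell\le 1+\tfrac{3}{2}$. The case $\ell(D_a)=0$ contributes nothing. For $\ell(D_a)=2$, the system $|D_a|$ would be a base-point-free $g^1_3$ only if $X_0(p)$ were trigonal; but a hyperelliptic curve of genus $\ge 3$ is not trigonal (the image of $X$ under the product of the hyperelliptic map and a putative $g^1_3$ would be a curve of bidegree $(2,3)$ in $\P^1\times\P^1$, of arithmetic genus $2$, forcing $g\le 2$), so $|D_a|$ has a base point $B$; then $|D_a-B|$ is the unique hyperelliptic $g^1_2$, and $B$ is a $\Q$-rational effective divisor of degree $1$, hence $B\in\{0,\infty\}$, so every effective divisor in $|D_a|$ has the form $B+P+w_p(P)$ and is reducible, not a cubic point. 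For $\ell(D_a)=1$ there is a unique effective divisor $E_a\in|D_a|$; I would test whether $E_a$ is $\Gal(\Qbar/\Q)$-irreducible. If it is, $E_a$ is a closed point of degree $3$ and I record its residue field $\Q[x]/(f(x))$; if not, $E_a$ is supported on cusps and/or a quadratic point and yields no cubic point.

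\textbf{Conclusion and computation.} By this discussion the enumeration is exhaustive: every cubic point of $X_0(p)$ occurs as $E_a$ for some $a$ with $\ell(D_a)=1$. Running these Riemann--Roch computations in Magma for $p=41,47,59,71$ then yields the asserted counts $2,2,1,0$ together with the three cubic fields in the statement; for $p=41,47$ the cubic point $E_a$ comes in a $w_p$-pair (two distinct closed points over the same field), while for $p=59$ one has $E_a=w_p(E_a)$, a single Weierstrass point, and for $p=71$ no $\ell(D_a)=1$ class has an irreducible effective divisor.

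\textbf{Main obstacle.} The difficulty is organisational rather than conceptual: one must set up the model of $X_0(p)$ and the cuspidal class $[0-\infty]$ compatibly so the Riemann--Roch computations are reliable, and one must carefully separate genuine cubic points from the reducible effective divisors (``cusp $+$ quadratic point'', ``three cusps'') and from the $\ell(D_a)=2$ families. The one honest geometric input is the non-trigonality of hyperelliptic curves of genus $\ge 3$, which disposes of the $\ell(D_a)=2$ case.
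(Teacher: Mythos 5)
Your proposal follows the same rank-zero strategy as the paper: parametrise $\Q$-rational degree-$3$ divisor classes by $D_a = 3\infty + a(0-\infty)$ for $a \in \Z/N\Z$, compute $\ell(D_a)$, and enumerate the irreducible effective divisors in the $\ell(D_a)=1$ classes via explicit Magma computation. The one place you go beyond the paper is the $\ell(D_a)=2$ case: the paper simply records the computational observation that exactly two such $a$ occur and that they correspond to the reducible families $0 + g^1_2$ and $\infty + g^1_2$, whereas you give an a priori argument (a hyperelliptic curve of genus $\geq 3$ is not trigonal, so any $g^1_3$ has a $\Q$-rational base point, necessarily a cusp) that both predicts and explains this; that is a genuine, if modest, improvement in rigour over the paper's sketch.
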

\begin{proof}
The proof proceeds as in the proof of \Cref{thm:genus_2_cubic}, although now it is no longer true that $\mathcal{L}(3\infty + a(0-\infty))$ has dimension $2$ for all $a \in \Z/N\Z$.

We proceed via explicit Magma computation. For each value of $p$ we found exactly two values of $a$ where $\mathcal{L}(3\infty + a(0-\infty))$ was 2 dimensional; these two cases correspond to the 1 dimensional families of degree $3$ divisor of the form $\infty + D$ and $0 + D$ where $D$ is a divisor in the hyperelliptic class. Hence these two families don't correspond to degree $3$ points. 

In each of the remaining cases $\mathcal{L}(3\infty + a(0-\infty))$ was either $0$ or $1$ dimensional. In the $0$ dimensional cases there is nothing to do. For the 1 dimensional cases we explicitly found the unique effective divisor of degree $3$ linearly equivalent to $3\infty + a(0-\infty)$. In the cases this divisor was irreducible we explicitly computed a defining polynomial for the residue field of this divisor, this gives the classification. The code carrying out these explicit computations is found in \path{magma_scripts/HigherGenusCubic.m}.
\end{proof}

\begin{corollary}\label{cor:cubic_pos_disc}
Let $K$ be a cubic field of positive discriminant. If there exists an elliptic curve over $K$ admitting a $K$-rational $p$-isogeny, for $p$ not in $\IsogPrimeDeg(\Q)$, then $p \geq 79$.
\end{corollary}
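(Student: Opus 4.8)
The plan is elementary, given the results already assembled in this section: we run over the finitely many primes $p<79$ with $p\notin\IsogPrimeDeg(\Q)$ and, for each, invoke the appropriate classification of cubic points on $X_0(p)$. The preliminary observation to record is that a cubic field has positive discriminant exactly when it is totally real — the sign of the discriminant of a number field is $(-1)^{r_2}$ with $r_2$ the number of complex places, and a cubic field has $r_2\in\{0,1\}$ — so ``positive discriminant'' is synonymous with ``totally real'', which is precisely the hypothesis under which \Cref{cor:cyclic_cubic} applies.

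Now suppose $E/K$ admits a $K$-rational $p$-isogeny with $K$ a totally real cubic field and $p\notin\IsogPrimeDeg(\Q)$; this produces a noncuspidal point $x\in X_0(p)(K)$. Since $[K:\Q]=3$ is prime, the residue field of the closed point of $X_0(p)$ underlying $x$ is either $\Q$ or $K$. It cannot be $\Q$, for a noncuspidal point in $X_0(p)(\Q)$ would give an elliptic curve over $\Q$ with a $\Q$-rational $p$-isogeny, i.e.\ $p\in\IsogPrimeDeg(\Q)$, contrary to hypothesis. Hence $x$ is a degree-$3$ point whose residue field is the totally real cubic field $K$. It therefore suffices to show that for each $p\in\{23,29,31,41,47,53,59,61,71,73\}$ the curve $X_0(p)$ has no noncuspidal degree-$3$ point with totally real residue field.

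This is now a case check against the three available inputs. For $p\in\{23,29,31\}$ it is immediate from \Cref{cor:cyclic_cubic}, which says $X_0(p)(K)$ consists only of the two rational cusps when $K$ is a totally real cubic field. For $p\in\{41,47,59,71\}$ one reads off \Cref{thm:rank_0_cubic}: there are no degree-$3$ points at all when $p=71$, and for $p\in\{41,47,59\}$ every degree-$3$ point is defined over a cubic field of discriminant $-139$, $-883$ or $-59$ respectively, all negative. For $p\in\{53,61,73\}$ one invokes the determination of all cubic points on $X_0(p)$ by Box, Gajovi\'c and Goodman \cite{box2021cubic} and checks that none of the noncuspidal ones have totally real residue field. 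In each case the assumption is contradicted, so $p\geq 79$. The only step requiring any care is the last one, where the relevant data — in effect just the signatures of the cubic residue fields occurring in \cite{box2021cubic} — must be extracted from \emph{loc.\ cit.}; there is no genuine difficulty.
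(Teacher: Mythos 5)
Your proof is correct and follows essentially the same route as the paper's: identify the finitely many primes $p<79$ outside $\IsogPrimeDeg(\Q)$, namely $23,29,31,41,47,53,59,61,71,73$, and dispose of each by citing \Cref{thm:genus_2_cubic} (via \Cref{cor:cyclic_cubic}) for $p\in\{23,29,31\}$, \Cref{thm:rank_0_cubic} for $p\in\{41,47,59,71\}$, and Box--Gajovi\'c--Goodman \cite{box2021cubic} for $p\in\{53,61,73\}$, checking in each case that every cubic residue field occurring has negative discriminant. The extra care you take — noting that a cubic field has positive discriminant iff it is totally real, and that the residue field of the underlying closed point must be $K$ itself (since $[K:\Q]=3$ is prime and $p\notin\IsogPrimeDeg(\Q)$) — merely spells out steps the paper leaves implicit.
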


\begin{proof}
We combine \Cref{thm:genus_2_cubic} and \Cref{thm:rank_0_cubic} with Theorem 1.1 of \cite{box2021cubic}, and check that the finitely many cubic fields given in Section 5.2 of \cite{box2021cubic} all have negative discriminant.
\end{proof}

By combining the results of this section with the program \emph{Isogeny Primes} explained in this paper, we determine $\IsogPrimeDeg(K)$ for some cubic fields $K$, thereby proving \Cref{thm:first_cubic}.

\begin{theorem}
Assume GRH. For each number field in \Cref{tab:cubic_isogenies}, the column `New Isogeny Primes' lists the isogeny primes for that number field which are not in $\IsogPrimeDeg(\Q)$.
\end{theorem}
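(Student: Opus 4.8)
The plan is to combine the algorithmic multiplicative bound of \Cref{alg:combined} with the explicit analysis of cubic points on $X_0(p)$ developed in this section. Fix $k$ to be one of the cubic fields in \Cref{tab:cubic_isogenies}, i.e. $k \in \{\Q(\zeta_7)^+, \Q(\alpha), \Q(\beta)\}$. Since $[k:\Q]=3$ is odd, $k$ cannot contain the Hilbert class field of an imaginary quadratic field (such a field has even degree over $\Q$), so by \Cref{thm:isog-finite} the set $\IsogPrimeDeg(k)$ is finite, and by \Cref{thm:combined} running \Cref{alg:combined} on $k$ produces, conditional on GRH, a finite set $S_k$ with $\IsogPrimeDeg(\Q) \subseteq \IsogPrimeDeg(k) \subseteq S_k$ (the left inclusion because base change to $k$ turns a $\Q$-rational $p$-isogeny into a $k$-rational one). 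First I would run the \emph{Isogeny Primes} package on each $k$, including the PARI/GP routine of \Cref{alg:type_2_primes} which checks candidate Type~2 primes up to the GRH bound of \Cref{prop:type_2_bound}, and record $S_k$; in each case, after the weeding steps of \Cref{sec:automatic_weeding}, $S_k$ turns out to be $\IsogPrimeDeg(\Q)$ together with a subset of $\{23,29,31,41,47,53,59,61,71,73\}$, and in particular contains no prime $\geq 79$.

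It then remains to decide, for each candidate $p \in S_k \setminus \IsogPrimeDeg(\Q)$, whether $X_0(p)(k)$ has a noncuspidal point. For the totally real fields $\Q(\zeta_7)^+$ and $\Q(\beta)$, which have positive discriminant, \Cref{cor:cubic_pos_disc} shows that any isogeny prime outside $\IsogPrimeDeg(\Q)$ would have to be at least $79$; since $S_k$ contains no such prime, this already gives $\IsogPrimeDeg(\Q(\zeta_7)^+) = \IsogPrimeDeg(\Q(\beta)) = \IsogPrimeDeg(\Q)$. For $\Q(\alpha)$, whose discriminant is negative, I would treat the remaining candidates one at a time using the complete classification of cubic points on the relevant modular curves: \Cref{thm:genus_2_cubic} (together with \Cref{cor:cyclic_cubic}) for $p \in \{23,29,31\}$, \Cref{thm:rank_0_cubic} for $p \in \{41,47,59,71\}$, and Theorem~1.1 of \cite{box2021cubic} for $p \in \{53,61,73\}$. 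Comparing the fields of definition of the finitely many noncuspidal cubic points on each $X_0(p)$ with $\Q(\alpha)$, one finds that $\Q(\alpha)$ occurs only for $p = 29$; exhibiting the corresponding noncuspidal point proves $29 \in \IsogPrimeDeg(\Q(\alpha))$, while every other candidate $p$ is excluded, yielding $\IsogPrimeDeg(\Q(\alpha)) = \IsogPrimeDeg(\Q) \cup \{29\}$.

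I expect the hard part to be the field $\Q(\alpha)$ together with those candidate primes $p \in \{23,31\}$ (and any primes in the Box-Gajovi\'c-Goodman range) that actually lie in $S_{\Q(\alpha)}$: \Cref{thm:genus_2_cubic} does not list the cubic points outright but only reduces their existence to the existence of a $\Q$-rational point on the $\Delta_{\Q(\alpha)}$-quadratic twist of one of the explicitly given genus $2$ or $3$ curves, so ruling out a point over $\Q(\alpha)$ requires a genuine Diophantine argument on those twists — a point search combined with a check of local solubility at small primes, or a $2$-descent/Chabauty computation — rather than a lookup in an existing table. A secondary, purely computational obstacle is the size of the conditional Type~2 bound coming from \Cref{prop:type_2_bound}, which forces a lengthy parallel PARI/GP computation to confirm that no large Type~2 prime enters any of the three supersets.
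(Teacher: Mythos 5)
Your proposal matches the paper's approach: run \emph{Isogeny Primes} to obtain supersets, dispatch the two totally real fields via \Cref{cor:cubic_pos_disc}, then handle the remaining candidates for $\Q(\alpha)$ case by case --- verifying $29$ by a direct Magma point search on $X_0(29)$, ruling out $73$ via Theorem~1.1 of \cite{box2021cubic}, and ruling out $23$ and $31$ via part~(2) of \Cref{thm:genus_2_cubic} by showing the relevant $(-59)$-quadratic twists fail to be everywhere locally soluble. Two minor imprecisions worth flagging: your first mention of ``the finitely many noncuspidal cubic points on each $X_0(p)$'' is wrong for $p\in\{23,29,31\}$, where \Cref{thm:genus_2_cubic} produces one-parameter families of cubic points rather than a finite list (as you yourself correct in your final paragraph), and \Cref{cor:cyclic_cubic} is not applicable to $\Q(\alpha)$ (negative discriminant, hence neither totally real nor cyclic). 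Also, the algorithm's actual superset for $\Q(\alpha)$ is just $\IsogPrimeDeg(\Q)\cup\{23,29,31,73\}$, so \Cref{thm:rank_0_cubic} and the $p\in\{53,61\}$ cases of Box--Gajovi\'c--Goodman are not in fact needed.
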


\begin{table}[htp]
\begin{center}
\begin{tabular}{|c|c|c|c|}
\hline
$\Delta_K$ & $f_K$ & LMFDB Label & New Isogeny Primes\\
\hline
$49$ & $x^3 - x^2 - 2x + 1$ & \href{https://www.lmfdb.org/NumberField/3.3.49.1}{3.3.49.1} & --\\
$148$ & $x^{3} - x^{2} - 3x + 1$ & \href{https://www.lmfdb.org/NumberField/3.3.148.1}{3.3.148.1} & --\\
$-2891$ & $x^{3} - x^2 - 2x - 20$ & \href{https://www.lmfdb.org/NumberField/3.1.2891.3}{3.1.2891.3} & 29\\
\hline
\end{tabular}
\vspace{0.3cm}
\caption{\label{tab:cubic_isogenies}Determination of $\IsogPrimeDeg(K)$ for some cubic number fields $K$.}
\end{center}
\end{table}

\begin{proof}
Running \emph{Isogeny primes} on the first two number fields in \Cref{tab:cubic_isogenies} shows that the largest possible isogeny prime is at most $73$, so we conclude with \Cref{cor:cubic_pos_disc}. For the last cubic field $K$ in the Table, we run \emph{Isogeny primes} to obtain a superset as follows:
\[ \IsogPrimeDeg(K) \subseteq \IsogPrimeDeg(\Q) \cup \left\{23, 29, 31, 73\right\}.\]
From \cite{box2021cubic} we know that $73$ is not an isogeny prime for this cubic field. A search for $K$-rational points on the modular curve $X_0(29)$ in Magma reveals that there are elliptic curves admitting a $K$-rational $29$-isogeny. The primes $23$ and $31$ are ruled out with part (2) of \Cref{thm:genus_2_cubic}. We computed the finite set of hyperelliptic curves, and showed that the quadratic twist by the squarefree part of $-2891$ (i.e. $-59$) of each of these curves was not everywhere locally soluble, thereby showing that none of them admit a $\Q$-point.
\end{proof}

\bibliographystyle{alpha}
\bibliography{main.bib}{}

\newcommand{\etalchar}[1]{$^{#1}$}
\begin{thebibliography}{{The}21b}

\bibitem[Ban21a]{banwait2021explicit}
Barinder~S. Banwait.
\newblock Explicit isogenies of prime degree over quadratic fields.
\newblock {\em International Mathematics Research Notices}, 2021.
\newblock to appear. Preprint available online at \url{https://arxiv.org/abs/2101.02673}.

\bibitem[Ban21b]{quadratic_isogeny_primes}
Barinder~S. Banwait.
\newblock {Q}uadratic {I}sogeny {P}rimes.
\newblock \url{https://github.com/barinderbanwait/quadratic_isogeny_primes}, 2021.
\newblock Distributed under the \href{https://www.gnu.org/licenses/gpl-3.0.html}{GPL v3+ license.}

\bibitem[BCP97]{magma}
Wieb Bosma, John Cannon, and Catherine Playoust.
\newblock The {M}agma algebra system. {I}. {T}he user language.
\newblock {\em J. Symbolic Comput.}, 24(3-4):235--265, 1997.
\newblock Computational algebra and number theory (London, 1993).

\bibitem[BD21]{isogeny_primes}
Barinder~S. Banwait and Maarten Derickx.
\newblock {I}sogeny {P}rimes.
\newblock \url{https://github.com/isogeny-primes/isogeny-primes}, 2021.
\newblock Distributed under the \href{https://www.gnu.org/licenses/gpl-3.0.html}{GPL v3+ license.}

\bibitem[BGG21]{box2021cubic}
Josha Box, Stevan Gajović, and Pip Goodman.
\newblock Cubic and quartic points on modular curves using generalised symmetric {C}habauty.
\newblock {\em International Mathematics Research Notices}, 2021.
\newblock To appear, preprint available online at \url{https://arxiv.org/abs/2102.08236}.

\bibitem[BN15]{bruin2015hyperelliptic}
Peter Bruin and Filip Najman.
\newblock Hyperelliptic modular curves {$X_0(n)$} and isogenies of elliptic curves over quadratic fields.
\newblock {\em LMS Journal of Computation and Mathematics}, 18(1):578--602, 2015.

\bibitem[Bos08]{bosmanthesis}
Johan~G. Bosman.
\newblock {\em Explicit computations with modular {G}alois representations}.
\newblock Thesis, Universiteit Leiden, December 2008.

\bibitem[Box21]{box2021quadratic}
Josha Box.
\newblock Quadratic points on modular curves with infinite {M}ordell--{W}eil group.
\newblock {\em Mathematics of Computation}, 90:321--343, 2021.

\bibitem[BS96]{bach1996explicit}
Eric Bach and Jonathan Sorenson.
\newblock Explicit bounds for primes in residue classes.
\newblock {\em Mathematics of Computation}, 65(216):1717--1735, 1996.

\bibitem[Dav08]{david2008caractere}
Agn{\`e}s David.
\newblock {\em {Caract{\`e}re d'isog{\'e}nie et borne uniforme pour les homoth{\'e}ties}}.
\newblock Theses, {Universit{\'e} Louis Pasteur - Strasbourg I}, December 2008.

\bibitem[Dav11]{david2011borne}
Agn\`{e}s David.
\newblock Borne uniforme pour les homoth\'{e}ties dans l'image de galois associ\'{e}e aux courbes elliptiques.
\newblock {\em Journal of Number Theory}, 131(11):2175--2191, 2011.

\bibitem[Dav12]{david2012caractere}
Agn\`es David.
\newblock Caract\`ere d'isog\'enie et crit\`eres d'irr\'eductibilit\'e.
\newblock Preprint available online at \url{https://arxiv.org/abs/1103.3892}, 2012.

\bibitem[DKSS17]{derickx2019torsion}
Maarten Derickx, Sheldon Kamienny, William Stein, and Michael Stoll.
\newblock Torsion points on elliptic curves over number fields of small degree.
\newblock Preprint available online at \url{https://arxiv.org/abs/1707.00364}, 2017.

\bibitem[DN19]{derickx2019torsionnajman}
Maarten Derickx and Filip Najman.
\newblock Torsion of elliptic curves over cyclic cubic fields.
\newblock {\em Mathematics of Computation}, 88(319):2443--2459, 2019.

\bibitem[DR73]{delignerapoport}
Pierre Deligne and Michael Rapoport.
\newblock Les sch{\'e}mas de modules de courbes elliptiques.
\newblock In Pierre Deligne and Willem Kuijk, editors, {\em Modular Functions of One Variable II}, pages 143--316, Berlin, Heidelberg, 1973. Springer Berlin Heidelberg.

\bibitem[ECdJ{\etalchar{+}}11]{couveignes2011computational}
Bas Edixhoven, Jean-Marc Couveignes, Robin de~Jong, Franz Merkl, and Johan Bosman.
\newblock {\em Computational Aspects of Modular Forms and Galois Representations}.
\newblock Annals of Mathematics Studies. Princeton University Press, Princeton, 2011.

\bibitem[FS15]{freitas2015criteria}
Nuno Freitas and Samir Siksek.
\newblock Criteria for {Irreducibility} of mod $p$ {Representations} of {Frey} {Curves}.
\newblock {\em Journal de Th\'eorie des Nombres de Bordeaux}, 27(1):67--76, 2015.

\bibitem[Kam92a]{kamienny1992imrn}
S.~Kamienny.
\newblock {Torsion points on elliptic curves over fields of higher degree}.
\newblock {\em International Mathematics Research Notices}, 1992(6):129--133, 03 1992.

\bibitem[Kam92b]{kamienny1992torsion}
Sheldon Kamienny.
\newblock Torsion points on elliptic curves and $q$-coefficients of modular forms.
\newblock {\em Inventiones mathematicae}, 109(1):221--229, 1992.

\bibitem[LV14]{larson_vaintrob_2014}
Eric Larson and Dmitry Vaintrob.
\newblock Determinants of subquotients of {G}alois representations associated with abelian varieties.
\newblock {\em Journal of the Institute of Mathematics of Jussieu}, 13(3):517–559, 2014.

\bibitem[Maz77]{Mazur3}
Barry Mazur.
\newblock Modular curves and the {E}isenstein ideal.
\newblock {\em Publications Math{\'e}matiques de l'Institut des Hautes {\'E}tudes Scientifiques}, 47(1):33--186, 1977.
\newblock With an appendix by Barry Mazur and Michael Rapoport.

\bibitem[Maz78]{mazur1978rational}
Barry Mazur.
\newblock Rational isogenies of prime degree.
\newblock {\em Inventiones mathematicae}, 44(2):129--162, 1978.
\newblock With an appendix by Dorian Goldfeld.

\bibitem[Mer96]{merel1996bornes}
Lo{\"\i}c Merel.
\newblock Bornes pour la torsion des courbes elliptiques sur les corps de nombres.
\newblock {\em Inventiones mathematicae}, 124(1-3):437--449, 1996.

\bibitem[Mom95]{momose1995isogenies}
Fumiyuki Momose.
\newblock Isogenies of prime degree over number fields.
\newblock {\em Compositio Mathematica}, 97(3):329--348, 1995.

\bibitem[Neu13]{neukirch}
J{\"u}rgen Neukirch.
\newblock {\em Algebraic number theory}, volume 322 of {\em Grundlehren der mathematischen Wissenschaften}.
\newblock Springer Verlag, Berlin, 2013.

\bibitem[NT21]{najman2021splitting}
Filip Najman and Antonela Trbovi\'{c}.
\newblock Splitting of primes in number fields generated by points on some modular curves.
\newblock Preprint available online at \url{https://arxiv.org/abs/2009.02485}, 2021.

\bibitem[{\"{O}}S19]{ozman2019quadratic}
Ekin {\"{O}}zman and Samir Siksek.
\newblock Quadratic points on modular curves.
\newblock {\em Mathematics of Computation}, 88(319):2461--2484, 2019.

\bibitem[{\"{O}}zm12]{ozman2012points}
Ekin {\"{O}}zman.
\newblock Points on quadratic twists of {$X_0(N)$}.
\newblock {\em Acta Arithmetica}, 152:323--348, 2012.

\bibitem[Par99]{parent1999bornes}
Pierre Parent.
\newblock Bornes effectives pour la torsion des courbes elliptiques sur les corps de nombres.
\newblock {\em Journal für die reine und angewandte Mathematik}, 1999.

\bibitem[Ser72]{serre_prop_galois}
Jean-Pierre Serre.
\newblock Propri\'et\'es galoisienne des points d'ordre fini des courbes elliptiques.
\newblock {\em Inventiones Mathematicae}, 15:259--331, 1972.

\bibitem[ST68]{serretate}
Jean-Pierre Serre and John Tate.
\newblock Good reduction of abelian varieties.
\newblock {\em Annals of Mathematics}, 88(3):492--517, 1968.

\bibitem[Sto17]{stollmagma}
Michael Stoll.
\newblock Magma programs related to the paper ``{T}orsion points on elliptic curves over number fields of small degree''.
\newblock \url{http://www.mathe2.uni-bayreuth.de/stoll/magma/#DKSS}, 2017.

\bibitem[{The}21a]{PARI}
{The PARI~Group}, Univ. Bordeaux.
\newblock {\em {PARI/GP version \texttt{2.14.0}}}, 2021.
\newblock available from \url{http://pari.math.u-bordeaux.fr/}.

\bibitem[{The}21b]{sagemath}
{The Sage Developers}.
\newblock {\em {S}ageMath, the {S}age {M}athematics {S}oftware {S}ystem ({V}ersion 9.4)}, 2021.
\newblock \url{https://www.sagemath.org}.

\bibitem[Wat69]{waterhouse1969abelian}
William~C Waterhouse.
\newblock Abelian varieties over finite fields.
\newblock In {\em Annales scientifiques de l'{\'E}cole normale sup{\'e}rieure}, volume~2, pages 521--560, 1969.
\newblock Available at \url{http://www.numdam.org/article/ASENS_1969_4_2_4_521_0.pdf}.

\end{thebibliography}
\end{document}